\newtheorem*{morse_lemma}{Morse Lemma}
\numberwithin{equation}{section}
\newtheorem{theorem}[equation]{Theorem}
\newtheorem{lemma}[equation]{Lemma}
\newtheorem{proposition}[equation]{Proposition}
\newtheorem{corollary}[equation]{Corollary}
\newtheorem{fact}[equation]{Fact}
\newtheorem{observation}[equation]{Observation}
\theoremstyle{definition}
\declaretheorem[numbered=no,style=remark, qed=$\lozenge$]{notation}
\declaretheorem[style=remark,qed=$\lozenge$,sibling=theorem]{remark}
\declaretheorem[style=remark,qed=$\lozenge$,sibling=theorem]{example}
\declaretheorem[style=definition,qed=$\lozenge$,sibling=theorem]{definition}
\declaretheorem[style=definition,qed=$\lozenge$,sibling=theorem, name=Construction]{construction}
\newcommand{\bbR}{\mathbb{R}}
\newcommand{\bbZ}{\mathbb{Z}}
\newcommand{\bbH}{\mathbb{H}}
\newcommand{\bbD}{\mathbb{D}}
\newcommand{\acts}{\curvearrowright}
\newcommand{\racts}{\curvearrowleft}
\newcommand{\rquotient}[2]{{\left.\raisebox{.2em}{$#1$}\middle/\raisebox{-.2em}{$#2$}\right.}}
\newcommand{\lquotient}[2]{{\left.\raisebox{-.2em}{$#1$}\middle\backslash\raisebox{.2em}{$#2$}\right.}}
\newcommand{\cD}{\mathcal{D}}
\newcommand{\cP}{\mathcal{P}}
\newcommand{\cI}{\mathcal{I}}
\newcommand{\cF}{\mathcal{F}}
\newcommand{\bcH}{\bm{\mathcal{H}}}
\newcommand{\cL}{\mathcal{L}}
\newcommand{\bcL}{\bm{\mathcal{L}}}
\newcommand{\tcL}{\widetilde{\mathcal{L}}}
\newcommand{\bcD}{\bm{\mathcal{D}}}
\newcommand{\bcF}{\bm{\cF}}
\newcommand{\tcW}{\widetilde{\mathcal{W}}}
\newcommand{\bP}{{\mathbf{P}}}
\newcommand{\bQ}{{\mathbf{Q}}}
\newcommand{\bD}{\mathbf{D}}
\newcommand{\tM}{\widetilde{M}}
\newcommand{\tPhi}{\widetilde{\Phi}}
\newcommand{\tPsi}{\widetilde{\Psi}}
\newcommand{\be}{\mathbf{e}}
\newcommand{\bK}{\bm{K}}
\newcommand{\bL}{\bm{L}}
\newcommand{\ppmm}{{\scaleto{\pm \pm}{4pt}}}
\newcommand{\pp}{{\scaleto{+ +}{4pt}}}
\newcommand{\mm}{{\scaleto{- -}{4pt}}}
\newcommand{\td}{{\widetilde{d}}}
\newcommand{\tS}{\widetilde{S}}
\newcommand{\tG}{{\widetilde{G}}}
\newcommand{\flow}[1]{\left\langle #1 \right\rangle}
\newcommand{\bs}{\mathbf{s}}
\newcommand{\cA}{\mathcal{A}}
\newcommand{\cB}{\mathcal{B}}
\newcommand{\Om}{\mathcal{O}}
\newcommand{\tOm}{\widetilde{\mathcal{O}}}
\newcommand{\bOm}{\mathcal{\bm{\Om}}}
\newcommand{\Sp}{\mathcal{S}}
\newcommand{\tSp}{\widetilde{\mathcal{S}}}
\newcommand{\bSp}{\bm{\mathcal{S}}}
\definecolor{maroon}{RGB}{128,0,0}
\definecolor{navy}{RGB}{0,0,128}
\definecolor{dkgreen}{RGB}{0,128,0}
\DeclareMathOperator{\Hull}{H}
\DeclareMathOperator{\diam}{diam}
\newcommand{\lk}{\mathrm{L}}
\newcommand{\rake}{\mathrm{rake}}
\DeclareMathOperator{\Mon}{Mon}
\newcommand{\capdot}{\mathbin{\mathaccent\cdot\cap}}
\newcommand{\wt}{\widetilde}
\newcommand{\mr}{\mathring}
\newcommand{\del}{\partial}
\newcommand{\tTheta}{\widetilde{\Theta}}
\newcommand{\su}{{s/u}}
\newcommand{\ssuu}{{ss/uu}}
\newcommand{\bcZ}{\boldsymbol{\mathcal{Z}}}
\newcommand{\bZ}{\mathbf{Z}}
\newcommand{\setword}[2]{%
	\phantomsection
	#1\def\@currentlabel{\unexpanded{#1}}\label{#2}%
}
\renewcommand\part{%
   \if@noskipsec \leavevmode \fi
   \par
   \addvspace{4ex}%
   \@afterindentfalse
   \secdef\@part\@spart}
\def\@part[#1]#2{%
    \ifnum \c@secnumdepth >\m@ne
      \refstepcounter{part}%
      \addcontentsline{toc}{part}{\thepart\hspace{1em}#1}%
    \else
      \addcontentsline{toc}{part}{#1}%
    \fi
    {\parindent \z@ \center
     \interlinepenalty \@M
     \normalfont
     \ifnum \c@secnumdepth >\m@ne
       \Large\bfseries \partname \nobreakspace\thepart :
       %\par\nobreak
     \fi
     \Large \bfseries #2%
     \par}%
    \nobreak
    \vskip 3ex
    \@afterheading}
\def\@spart#1{%
    {\parindent \z@ \raggedright
     \interlinepenalty \@M
     \normalfont
     \Large \bfseries #1\par}%
     \nobreak
     \vskip 5ex
     \@afterheading}
\title{From quasigeodesic to pseudo-Anosov flows}
\author{Steven Frankel}
\address{Department of Mathematics\\Washington University in St. Louis\\St. Louis, MO}
\email{steven.frankel@wustl.edu}
\author{Michael Landry}
\address{Department of Mathematics and Statistics\\Saint Louis University\\St. Louis, MO}
\email{michael.landry@slu.edu}
\begin{document}

\begin{abstract}
    We prove Calegari's conjecture that every quasigeodesic flow on a closed hyperbolic $3$-manifold can be deformed to a flow that is simultaneously quasigeodesic and pseudo-Anosov.
\end{abstract}

\maketitle
\setcounter{tocdepth}{1}
\tableofcontents

\section{Introduction}
A flow on a $3$-manifold is \emph{quasigeodesic} if each flowline is coarsely comparable to a geodesic, and \emph{pseudo-Anosov} if it has a transverse contracting-expanding structure governed by weak stable and unstable singular foliations.
These are very different conditions that regard tangent and transverse behavior, respectively. Nevertheless, Calegari conjectured a close relationship between quasigeodesic and pseudo-Anosov flows in the presence of ambient hyperbolicity: that one can deform any quasigeodesic flow on a closed hyperbolic $3$-manifold into a flow that is simultaneously quasigeodesic and pseudo-Anosov. 

Pseudo-Anosov flows are very rigid objects. On a closed hyperbolic $3$-manifold, for example, every fibration is transverse to a unique pseudo-Anosov flow up to orbit equivalence \cite{Fried_fiberedfaces}, and each pseudo-Anosov flow is determined up to orbit equivalence by its free homotopy classes of periodic orbits \cite{BarthelmeFrankelMann}. A long-standing conjecture asserts that a closed $3$-manifold can admit only finitely many pseudo-Anosov flows up to orbit equivalence.

Quasigeodesic flows are much more flexible, in contrast, since quasigeodesity is a $C^0$-open condition. For example, every flow transverse to a codimension-$1$ fibration of a closed manifold is quasigeodesic \cite{Zeghib}, and one can easily produce uncountably many such flows in different orbit equivalence classes.

Along with this rigidity, the connection between local dynamics and global topology provided by their associated singular foliations makes pseudo-Anosov flows invaluable in the geometric study of $3$-manifolds. For example:
\begin{itemize}

\item A consequence of Fried's result above is that each pseudo-Anosov flow determines a complete fibered face of the unit ball of the Thurston norm. This was generalized by Mosher \cite{Mosher_norm1, Mosher_norm2} (see also \cite{LandryMinskyTaylor_transversesurfaces}), who conjectured that the Thurston norm of a 3-manifold can always be computed using finitely many pseudo-Anosov flows \cite{Mosher}.
 
\item Calegari \cite{Calegari_Rcovered} and Fenley \cite{Fenley_Rcovered} showed that every taut, $\bbR$-covered foliation of a closed atoroidal 3-manifold is transverse to a regulating pseudo-Anosov flow, which Fenley later showed is usually unique in a precise sense \cite{Fenley_Rcoveredrigidity} (see also \cite[Corollary 5.3.22]{Calegari_Rcovered}). These regulating flows have played an important role in the study of partially hyperbolic diffeomorphisms \cite{BFFP1, BFFP2} and in finding compact invariant sets for diffeomorphism with positive ``escape rate'' with respect to an $\bbR$-covered foliation \cite{GMP}.

\item Fenley used pseudo-Anosov flows to understand the asymptotic behavior of leaves of taut foliations in a hyperbolic 3-manifold \cite{Fenley_continuousextension} and the ideal geometry of the ambient manifold itself \cite{Fenley}. 

\item Alfieri and Tsang have recently established a connection between pseudo-Anosov flows and Heegaard Floer homology \cite{AlfieriTsang1, AlfieriTsang2}.
\end{itemize}

To say that a flow is pseudo-Anosov is to make a global statement about how orbits interact with each other in forward and backward time, while the quasigeodesic condition can be checked orbit-by-orbit. Together with their flexibility, this makes it much simpler in principle to tailor a quasigeodesic flow to a geometric problem, and to certify that a given flow is quasigeodesic. For example, Mosher constructs examples of flows transverse to finite depth foliations in \cite{Mosher_examples} and proves directly that they are quasigeodesic. The resulting flows can be made pseudo-Anosov, but this requires the long and currently incomplete machinery of Gabai-Mosher \cite{Mosher}.

Our main result is a proof of Calegari's conjecture:

\begin{restatable}[Main Theorem]{theorem}{maintheorem}\label{theorem_main}
	Let $\Phi$ be a quasigeodesic flow on a closed hyperbolic $3$-manifold $M$. Then there is
	\begin{enumerate}
		\item a quasigeodesic pseudo-Anosov flow $\Psi$ on $M$,
		\item a closed $\Phi$-invariant subset $M_\lk \subset M$, and
		\item a surjective map $M_\lk \to M$, homotopic to the inclusion, that takes each oriented orbit of $\Phi|_{M_\lk}$ monotonically to an oriented orbit of $\Psi$.
	\end{enumerate}
\end{restatable}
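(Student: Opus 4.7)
\medskip

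\noindent\textbf{Proof plan.} The plan is to extract from $\Phi$ a pair of transverse invariant decompositions encoding its asymptotic forward and backward behavior, use these to build a pseudo-Anosov flow $\Psi$ on $M$, and then identify $M_\lk$ as the part of $\Phi$ that maps onto $\Psi$ orbit-by-orbit. First, I would lift $\Phi$ to the universal cover $\wt M$; quasigeodesicity gives well-defined endpoint maps $e^\pm : \Om_\Phi \to S^2_\infty$ from the orbit space to the sphere at infinity. Pulling back fibers of $e^\pm$ produces a pair $(\cD^+, \cD^-)$ of $\pi_1(M)$-equivariant upper semicontinuous decompositions of $\Om_\Phi$ whose elements consist of flowlines sharing a common positive or negative ideal endpoint. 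The first task is to establish transversality of $\cD^\pm$, the universal circle / master sequence structure in the sense of Calegari, and the basic topological properties of the decomposition elements that one needs to treat them as candidate weak stable/unstable leaves.

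\medskip

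\noindent Second, I would collapse $\cD^\pm$ to obtain a quotient topological plane equipped with two transverse singular foliations carrying a $\pi_1(M)$-action, and reconstruct from this combinatorial data an honest flow $\Psi$ on $M$. The reconstruction realizes the quotient decompositions as a pair of transverse essential laminations in $M$ with controlled complementary regions, and appeals to a Fenley--Mosher style correspondence to recover a pseudo-Anosov flow $\Psi$ whose weak stable and weak unstable foliations are these laminations. Quasigeodesicity of $\Psi$ is then automatic: the endpoint maps for $\Psi$ descend from $e^\pm$ by construction, so each $\wt\Psi$-orbit lands at the same points on $S^2_\infty$ as the $\wt\Phi$-orbits it collapses, and uniform quasigeodesicity on the nose follows from cocompactness and the standard Morse lemma.

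\medskip

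\noindent Third, I would define the invariant set upstairs as the union $\wt M_\lk$ of $\wt\Phi$-orbits whose $(\cD^+, \cD^-)$-classes are both ``nondegenerate'' in the appropriate sense --- that is, orbits not swallowed by a collapsed complementary region. Equivariance makes $\wt M_\lk$ descend to a closed $\Phi$-invariant set $M_\lk\subset M$. The semiconjugacy then sends each $\wt x \in \wt M_\lk$ to a canonically chosen point on the $\wt\Psi$-orbit indexed by the $\cD^\pm$-classes of $\wt x$, using transverse coordinates to make the assignment monotone along each $\wt\Phi$-orbit (allowing constancy on subintervals where the $\cD^\pm$-classes stagnate, which is why the map is only a semiconjugacy). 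Descent to $M$ is equivariance, monotonicity along orbits is built in, and homotopy to the inclusion follows because the map moves each point a uniformly bounded distance in $\wt M$, a consequence of $\Phi$ and $\Psi$ sharing endpoint data and both being quasigeodesic.

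\medskip

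\noindent The main obstacle I expect is the second step: producing $\Psi$ as a genuine pseudo-Anosov flow from the abstract decomposition data extracted from $\Phi$. Obtaining candidate laminations on $\Om_\Phi$ and verifying the soft properties (invariance, upper semicontinuity, endpoint factorization) is relatively standard Calegari-style analysis. The real work is showing that the quotient foliations have only finitely many prongs along each singular orbit, ruling out pathological perfect-fit or branching configurations in $\cD^\pm$ that would prevent coherent collapsing, and verifying that the resulting lamination pair falls within the scope of a Fenley--Mosher reconstruction. I would expect this control to come from a careful interplay between the hyperbolicity of $M$ (which bounds the geometry of ideal endpoints) and uniform quasigeodesic constants for $\Phi$ (which bound the size of decomposition elements), but extracting it rigorously is where the paper's technical heart must lie.
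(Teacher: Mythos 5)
Your first and third steps track the paper reasonably well: the endpoint maps, the pair of invariant decompositions of the orbit space, the linked/nondegenerate locus as the domain of the semiconjugacy, and even the homotopy-to-inclusion argument (a bounded-distance equivariant map extending to the identity on $S^2_\infty$ is essentially how the paper argues, via its geodesification maps). The genuine gap is your second step. There is no Fenley--Mosher style black box that takes a pair of transverse singular foliations of a plane with a $\pi_1(M)$-action (or a pair of ``essential laminations with controlled complementary regions'' in $M$) and returns a pseudo-Anosov flow realizing them; realization of a flow from abstract orbit-space data is precisely the hard problem here, not a quotable correspondence, and nothing in the quasigeodesic hypothesis makes the weak positive/negative decompositions of $\Phi$ into essential laminations of $M$ in the first place. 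The paper does not reconstruct $\Psi$ inside $M$ at all: it builds the total space directly as $Q \times \bbR$ with a $\pi_1(M)$-action, takes the quotient $3$-manifold, identifies it with $M$ by geometrization plus Mostow rigidity, and then verifies the pseudo-Anosov property by hand --- via an adapted (``polygonal'') metric, contraction of strong leaves, and expansiveness (invoking Inaba--Matsumoto, Oka, Brunella, Iakovoglou to get the Markov partition) --- rather than via prong counts and a lamination-to-flow theorem.

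The second, related gap is your phrase ``using transverse coordinates to make the assignment monotone along each $\wt\Phi$-orbit.'' Deciding \emph{where along the target orbit} each point goes is the technical heart of the construction, and it cannot be done orbit-by-orbit: you need a globally coherent, equivariant, continuous choice of ``horizontal'' slices of $\wt M_\lk$ compatible with the collapsing. The paper manufactures this by reparametrizing $\Phi$ to admit a flow-equivariant comparison map $R\colon \tM \to T^1\tM$ to the geodesic flow, pulling back stable/unstable horospheres to get strong positive/negative decompositions, and proving there is a product structure over $P_\lk$ in which strong positive sprigs are horizontal; only then is the map $\tS\colon P_\lk\times\bbR \to Q\times\bbR$ well defined, is the induced $\pi_1(M)$-action on $Q\times\bbR$ free, properly discontinuous and cocompact, and is the resulting map a monotone orbit semiconjugacy. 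Without some substitute for this vertical structure, your step two produces at best a candidate orbit space for $\Psi$, with no flow, no semiconjugacy, and no pseudo-Anosov verification.
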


We will give an explicit description of the subset $M_\lk$, which we call the \emph{linked subset}. We call the map $M_\lk \to M$ an \emph{orbit semiconjugacy} from $\Phi|_{M_\lk}$ to $\Psi$. Note that even the statement that every closed hyperbolic $3$-manifold supporting a quasigeodesic flow also supports a pseudo-Anosov flow was not previously known.

\begin{remark}
    \Cref{theorem_main} is analogous to Handel's global shadowing theorem \cite{Handel}. This says that for any homeomorphism $g\colon S\to S$ homotopic to a pseudo-Anosov $f$ one can find a closed, $g$-invariant subset $X\subset S$ and a surjective map $\pi\colon X\to S$, homotopic to inclusion, that semiconjugates $g|_X$ to $f$.
\end{remark}

\begin{remark}
    Our main theorem provides a potential alternative construction of pseudo-Anosov flows in closed hyperbolic manifolds with positive first betti number. This result was outlined by Gabai and Mosher but never completed \cite{Mosher} (see also \cite{LandryTsang}).
    
    Given a closed hyperbolic 3-manifold $M$ with positive first betti number, Gabai constructs a taut, finite depth foliation of $M$ (\cite{Gabai}), and Fenley and Mosher construct a quasigeodesic flow transverse to such a foliation \cite{FenleyMosher}. One can now apply our \Cref{theorem_main} to turn such a flow pseudo-Anosov. 
    
    This argument is currently circular: the Fenley-Mosher construction works by showing that the Gabai-Mosher pseudo-Anosov flows are themselves quasigeodesic. However, this may be fixable since the Fenley-Mosher argument really only requires a flow that is transverse to a taut finite-depth foliation with Hausdorff flowspace.
\end{remark}

\begin{remark}
We do not address uniqueness here. It seems likely that for each quasigeodesic flow $\Phi$ there is a unique quasigeodesic pseudo-Anosov flow $\Psi$, up to orbit equivalence, such that the restriction of $\Phi$ to an invariant subset is orbit semiconjugate to $\Psi$. Furthermore, $M_\lk$ should be the maximal such subset.
\end{remark}

\begin{remark}
For $C^1$ flows one can take a deformation of flows to mean a homotopy of nonvanishing vector fields. In this context Calegari's conjecture is that every quasigeodesic flow on a closed hyperbolic $3$-manifold is homotopic, through quasigeodesic flows, to a flow that is simultaneuously quasigeodesic and pseudo-Anosov. See \cite{Calegari_blog}. This provides less dynamical information than \Cref{theorem_main}, which is the original version of Calegari's conjecture. In the $C^1$ context Calegari further conjectures that there is a unique quasigeodesic pseudo-Anosov flow, up to orbit equivalence, in each homotopy class of nonvanishing vector fields containing a quasigeodesic flow. 

In the topological context, a potential extension of \Cref{theorem_main} would be to find a path from $\Phi$ to $\Psi$ in the space of nonstationary $\bbR$-actions on $M$. Then an analogous uniqueness statement would be to show that there is a unique quasigeodesic pseudo-Anosov flow, up to orbit equivalence, in each component of this space that contains a quasigeodesic flow.
\end{remark}

\subsection{Outline}
We will now outline the main ideas in the proof of \Cref{theorem_main}. The paper is organized somewhat nonlinearly with respect to this proof so that \Cref{part:circledisc}, which constructs transverse pairs of singular $1$-dimensional foliations from data at infinity, can be made logically independent. One can read the paper linearly or begin with \Cref{sec:flowbackground} before returning to \Cref{part:circledisc}. The outline roughly follows the latter route.

\bigskip
Consider a closed hyperbolic $3$-manifold $M$, and let $\tM$ denote its universal cover. This may be geometrically identified with $\bbH^3$, from which it inherits a compactification by a $2$-sphere $S^2_\infty$ at infinity.

A \hyperref[sec:flowbackground]{\emph{flow}} on $M$ lifts to a flow on $\tM$. The set of orbits of the lifted flow has a natural topology as a quotient of $\tM$, and the deck action $\pi_1(M) \acts \tM$ induces an action on this \emph{\hyperref[subsection:flowspaces]{flowspace}}. When $\Phi$ is either quasigeodesic or pseudo-Anosov this flowspace is homeomorphic to a plane and denoted by $P$.

For both of these classes of flows there is a natural \emph{universal circle}, a topological circle $S^1_u$ that compactifies the flowspace to a closed disc $P \sqcup S^1_u$ on which the action of $\pi_1(M)$ extends (see \Cref{sec:flowbackground} for details):
\begin{itemize}
    \item Given a \hyperref[sec:pAflows]{\emph{pseudo-Anosov flow}} $\Phi$, the $2$-dimensional weak stable and unstable singular foliations on $M$ (\Cref{figure:2dfoliations}) lift to $2$-dimensional singular foliations on $\tM$ and project to a transverse pair of {$1$-dimensional} singular foliations on $P$ (\Cref{fig:qgneednotlink}, left). We call these the \emph{stable} and \emph{unstable foliations} and denote them by $\cF^s$ and $\cF^u$.
    
    Fenley constructed a natural compactification of the flowspace to a closed disc $P \sqcup S^1_u$, whose boundary circle $S^1_u$ is built out of the ends of leaves of these foliations. The flowspace  action $\pi_1(M) \acts P$ preserves the foliations $\cF^\su$, permuting the ends of leaves, and therefore extends to an action $\pi_1(M) \acts P \sqcup S^1_u$.

    \item A \hyperref[sec:QGbackground]{\emph{quasigeodesic flow}} $\Phi$ does not come with a pair of singular foliations, but Calegari constructed a pair of topological decompositions of $P$ into \emph{positive} and \emph{negative \hyperref[def:WeakOmnileavesLeaves]{leaves}}, denoted by $\cL^+$ and $\cL^-$, that serve an analogous role (\Cref{fig:qgneednotlink}, right). He used the Freudenthal ends of elements of these decompositions to construct a universal circle $S^1_u$ and showed that there is an induced action $\pi_1(M) \acts S^1_u$. Frankel showed that there is a topology making $P \sqcup S^1_u$ a closed disc, and that the actions combine to an action $\pi_1(M) \acts P \sqcup S^1_u$. 
\end{itemize}

\begin{figure}[h]
    \centering
    \includegraphics[height=1.8in]{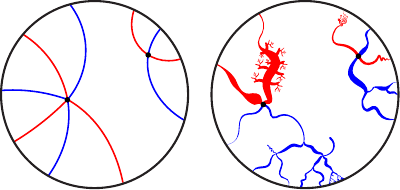}
    \caption{Left: the stable and unstable foliations of the flowspace of a pseudo-Anosov flow form a pair of transverse singular foliations. The ends of the positive and negative leaves through any point alternate in the universal circle. Right: the positive and negative decompositions of the flowspace of a quasigeodesic flow may be less nice. The positive and negative leaves through a point may or may not have ends that link in the universal circle.}
    \label{fig:qgneednotlink}
\end{figure}

A large part of our argument takes place in the flowspace $P$ and universal circle $S^1_u$. Given a quasigeodesic flow $\Phi$, we construct from the flowspace $P$ and decompositions $\cL^\pm$ a new ``straightened flowspace.'' This is a topological plane $Q$ that comes with an action $\pi_1(M) \acts Q$ that preserves transverse pair of singular foliations $\cF^\pm$. 

In fact, the straightened flowspace, along with its action and foliations, is entirely constructed out of data that lives in the universal circle $S^1_u$. The decompositions $\cL^\pm$ of $P$ live in a natural pair of decompositions $\bSp^\pm$ of the compactified flowspace $\bP = P \sqcup S^1_u$ whose elements are called {positive} and negative \hyperref[def:PomnileavesPsprigs]{\emph{sprigs}}. These restrict to a pair of decompositions $\Lambda^\pm$ of the universal circle $S^1_u = \partial \bP$.

Since it is likely of independent interest, we have abstracted the construction of the straightened flowspace from the decompositions $\Lambda^\pm$ in the logically independent \Cref{part:circledisc} of the paper.

\subsubsection*{\Cref{part:circledisc}: Decompositions of the circle and disc}
The decompositions $\Lambda^\pm$ on the universal circle of a quasigeodesic flow have special properties which we abstract in the definition of an \hyperref[definition:spS]{\emph{especial pair}}. We provide a straightforward hull-and-quotient construction that turns any especial pair $\Lambda^\pm$ on a circle $S^1$ into a pair of decompositions $\bcF^\pm$ of a closed disc. One first takes the convex hull of each element of $\Lambda^+$ and $\Lambda^-$, and shows that this yields a pair of decompositions $\bcH^\pm$ of the Euclidean disc with appropriate properties. Each nontrivial intersection of a positive hull with a negative hull is a convex polygon (\Cref{figure:Sectors}), and one can quotient each of these polygons to a point to obtain a new disc $\bQ$, on which the hull decompositions collapse to  decompositions $\bcF^\pm$ that intersect \hyperref[def:muuemuu]{\emph{efficiently}} (\Cref{fig:collapsingexamples}).

We then leverage this efficient intersection to show that the $\bcF^\pm$ on $\bQ$ restrict to a transverse pair of \hyperref[sec:emuupairsgivefoliations]{\emph{singular foliations}} $\cF^\pm$ on the interior $Q = \mathring{\bQ}$. 

If the decompositions $\Lambda^\pm$ come from restricting a pair of decompositions $\bSp^\pm$ of a disc $\bP$, then there is a natural identification between the boundary circles of $\bQ$ and $\bP$ (both are identified with $S^1_u$ in our case). Under this identification, the new decompositions $\bcF^\pm$ on $\bQ$ and the original decompositions $\bSp^\pm$ on $\bP$ restrict to the same especial pair $\Lambda^\pm$. This provides a natural map $\bs$ that associates certain points in $\bP$ to points in $\bQ$: a point $p \in \bP$ should be taken to a point $\bs(p) \in \bQ$ such that the elements of $\bSp^\pm$ through $p$ and the elements of $\bcF^\pm$ through $\bs(p)$ restrict to the same elements of $\Lambda^\pm$ on the boundary circle. This is only possible, however, if these elements are \emph{\hyperref[subsection:linking]{linked}} in the boundary circle, so that the corresponding convex hulls intersect nontrivially. See \Cref{fig:qgneednotlink}.
This yields a \hyperref[sec:Straightening]{\emph{straightening map}}
\[ \bs: \bP_\lk \to \bQ \]
that takes an appropriately defined \emph{linked subset} $\bP_\lk \subset \bP$ surjectively onto $\bQ$. The decompositions $\bSp^\pm$, restricted to $\bP_\lk$, are taken to the singular foliations $\bcF^\pm$.

In the case of a quasigeodesic flow, the decompositions $\bSp^\pm$ have some additional intersection properties that ensure $\bs$ takes interior points to interior points. It therefore restricts to a surjective straightening map
\[ s: P_\lk \to Q \]
that takes the decompositions $\cL^\pm$, restricted to $P_\lk = \bP_\lk \cap P$, to the singular foliations $\cF^\pm$.

All of this can be done in a way that respects group actions. In our case, the action $\pi_1(M) \acts S^1_u$ preserves $\Lambda^\pm$ and hence determines a unique action $\pi_1(M) \acts \bQ$ that preserves the singular foliations $\cF^\pm$. We now have have a potential flowspace for a pseudo-Anosov flow: $Q$.

\subsubsection*{\Cref{part:flowstraightening}: Straightening quasigeodesic flows}

To see that $Q$, together with the given action of $\pi_1(M)$, is the flowspace of a pseudo-Anosov flow, it suffices to construct a free and properly discontinuous action of $\pi_1(M)$ on $Q \times \bbR$ that projects to the given action under the projection to the first factor. Then the quotient
\[ N = \lquotient{\pi_1(M)}{Q \times \bbR} \]
would be a closed $3$-manifold (with fundamental group $\simeq \pi_1(M)$), and the foliation of $\wt N =Q \times \bbR$ by vertical lines $\{q\} \times \bbR$ would project to a foliation on $N$ that can be parametrized to form a flow. In other words, the action $\pi_1(M) \acts Q$ tells us the ``horizontal'' part of an action $\pi_1(M) \acts Q \times \bbR$, or how such an action should permute flowlines. We need to need to construct the ``vertical'' part of the action by deciding how flowlines are mapped to each other.

Let $\tM_\lk \subset \tM$ be the union of all flowlines that lie in the linked subset $P_\lk$. We have a surjective, $\pi_1(M)$-equivariant straightening map
\[ s\colon P_\lk \to Q. \]
This is in fact a quotient map, and one can think of the action $\pi_1(M) \acts Q$ as being induced by the action $\pi_1(M) \acts P_\lk$ via $s$. The universal cover $\tM$ may be identified with $P \times \bbR$ in such a way that flowlines of $\Phi$ correspond to vertical lines, but there are many choices of such identifications or \emph{\hyperref[section:productstructures]{product structures}}. Our approach is to find such a product structure, at least for the linked subset $\tM_\lk \simeq P_\lk \times \bbR$, such that the map
\begin{align*}
    \tS\colon P_\lk \times \bbR &\to Q\times \bbR \\
    (p, h) &\mapsto (s(p), h)
\end{align*}
is a quotient map that respects the action $\pi_1(M) \acts P_\lk$ and hence induces an action $\pi_1(M) \acts Q$.

The key to finding such a product structure is to construct ``strong positive and negative decompositions" for quasigeodesic flows analogous to the strong stable and unstable foliations of pseudo-Anosov flows. In \Cref{section:comparisonmaps} we construct well-behaved \emph{\hyperref[def:comparisonMap]{comparison maps}} that send the orbits of our quasigeodesic flow on $M$ to orbits of the geodesic flow on the unit tangent bundle $T^1 M$. This is an Anosov flow whose strong stable and unstable leaves correspond to horospheres. In \Cref{section:strongdecompositions} we pull back the strong stable and unstable foliations of the geodesic flow to obtain strong stable and unstable decompositions of our quasigeodesic flow. 

In \Cref{section:productstructures} we show that the topology of the strong decompositions on $\tM$ is simple, at least over the linked subset. In particular, we construct product structures $\tM_\lk \simeq P_\lk \times \bbR$ in which the strong positive leaves lie in horizontal slices $P_\lk \times \{h\}$. One can also make the strong negative leaves horizontal, but typically not at the same time.

In \Cref{section:StraighteningFlow} we show that these product structures are the right ones for our purposes. We construct a map
\[ \tS\colon P_\lk \times \bbR \to Q \times \bbR\]
as above and a $\pi_1(M)$-action on $Q \times \bbR$ with respect to which this map is equivariant. The quotient
\[ M_s = \lquotient{\pi_1(M)}{Q \times \bbR} \]
is a $3$-manifold. This comes with a flow, the quotient of the vertical flow on $Q\times \bbR$, and the map $\tS$ descends to a map
\[ S\colon M_\lk\to M_s \]
that semiconjugates the two flows. We then produce a homeomorphism $M_s \to M$ such that the induced map $M_\lk\to M$ is homotopic to inclusion.

Finally, it remains to show that the induced flow $\Psi$ on $M$ is quasigeodesic and pseudo-Anosov. Of these, checking the pseudo-Anosov property is more involved. This is carried out in \Cref{section:StraightenedFlowsPA}.

\subsection*{Acknowledgments}
Thanks to Ian Agol, Thomas Barthelme, Danny Calegari, Larry Conlon, Sérgio Fenley, David Gabai, Jeremy Kahn, Anatole Katok, Rafael Potrie, Rachel Roberts, Matthew Stover, Samuel Taylor, Chi Cheuk Tsang, Bob Williams, and Juliana Xavier for many helpful discussions.

SF was supported by the Eric and Wendy Schmidt Foundation, and the National Science Foundation under DMS-1128155,
DMS-1611768, and DMS-2045323. ML was supported by NSF DMS-2405453.

\section{Topological background}\label{sec:TopologicalBackground}

In this preliminary section we review the notions of Kuratowski and Hausdorff convergence, upper semicontinuous decompositions, and some topology of the disc and circle. The reader may wish to skip ahead and return as required.

\subsection{Kuratowski and Hausdorff convergence}
\subsubsection{Kuratowski convergence}\label{subsection:Kuratowskiconvergence}
Let $X$ be a topological space, and let $(A_i)_{i=1}^\infty$ be a sequence of subsets of $X$. The \emph{limit superior} $\limsup A_i$ is the set of all points $x \in X$ such that every neighborhood of $x$ intersects infinitely many of the $A_i$. The \emph{limit inferior} $\liminf A_i$ is the set of all points $x \in X$ such that every neighborhood of $x$ intersects all but finitely many $A_i$.

Equivalently, $p \in \limsup A_i$ if and only if $p = \lim p_{j_i}$ for a sequence of points $p_{j_i} \in A_{j_i}$ with $j_1 < j_2 < \cdots$, and $p \in \liminf A_i$ if and only if $p = \lim p_i$ for a sequence of points $p_i \in A_i$.
The limits superior and inferior are always closed. In fact, $\overline{\limsup A_i} = \limsup A_i = \limsup \overline{A_i}$, and $\overline{\liminf A_i} = \liminf A_i = \liminf \overline{A_i}$, where bars denote closure in $X$.

The sequence $(A_i)$ is \emph{Kuratowski convergent} if $\limsup A_i = \liminf A_i$. The \emph{Kuratowski limit}, or simply \emph{limit}, is $\lim_K A_i = \limsup A_i = \liminf A_i$.
Note that we allow $\lim_K A_i = \emptyset$.

\begin{lemma}[\cite{HockingYoung}, Lemma~2-101]\label{lem:connectedlimit}
	If $X$ is compact and Hausdorff, and $(A_i)$ is a sequence of connected subsets with $\liminf A_i \neq \emptyset$, then $\limsup A_i$ is connected.
\end{lemma}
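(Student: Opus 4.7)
The plan is to argue by contradiction: assume $\limsup A_i$ admits a disconnection and exploit the connectedness of each $A_i$ to produce a ``bridge'' point whose limit contradicts that disconnection. Since $\limsup A_i$ is always closed and $X$ is compact Hausdorff (hence normal), a hypothetical splitting $\limsup A_i = B \sqcup C$ with $B,C$ nonempty closed sets can be separated by disjoint open neighborhoods $U \supset B$ and $V \supset C$ in $X$.

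Next, I would use the $\liminf$ hypothesis asymmetrically. Pick $p \in \liminf A_i$; since $\liminf A_i \subset \limsup A_i = B \cup C$, without loss of generality $p \in B \subset U$, and the defining property of the limit inferior gives $A_i \cap U \neq \emptyset$ for \emph{all} sufficiently large $i$. On the other hand, any chosen $q \in C \subset \limsup A_i$ is a limit of points lying in the $A_i$, so $A_i \cap V \neq \emptyset$ for infinitely many $i$. Hence, for infinitely many $i$, the connected set $A_i$ meets both disjoint open sets $U$ and $V$ and therefore cannot be contained in $U \cup V$; select a bridge point $x_i \in A_i \setminus (U \cup V)$.

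The final step is to invoke compactness of $X$ a second time: $X \setminus (U \cup V)$ is closed, hence compact, so a subsequence of $(x_i)$ converges to some $x \in X \setminus (U \cup V)$. But by construction every neighborhood of $x$ meets infinitely many $A_i$, so $x \in \limsup A_i \subset U \cup V$, a contradiction.

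I do not anticipate a serious obstacle here; the compactness of $X$ is used twice (for normality and for the extraction of $x$), and the ambient Hausdorff property enters through the separation argument. The one conceptual point to flag is why the hypothesis $\liminf A_i \neq \emptyset$ is essential rather than the weaker $\limsup A_i \neq \emptyset$: it guarantees that one of the two sides $U,V$ is visited by \emph{cofinitely many} $A_i$, which combined with infinitely many visits to the other side forces infinitely many $A_i$ to bridge the gap. Without this, the $A_i$ could alternately alternate between the sides along disjoint subsequences and no individual $A_i$ would need to meet both.
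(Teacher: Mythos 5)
Your overall strategy is the standard one (and the paper itself gives no argument here: it simply cites Hocking--Young, Lemma~2-101, so there is nothing paper-internal to compare against). The separation step via normality, the asymmetric use of $\liminf$ versus $\limsup$ to get cofinitely many $A_i$ meeting $U$ and infinitely many meeting $V$, and the bridge points $x_i \in A_i \setminus (U \cup V)$ are all correct, and your closing remark about why $\liminf A_i \neq \emptyset$ is needed is exactly right.

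There is, however, one step that does not survive the stated level of generality: ``a subsequence of $(x_i)$ converges to some $x$.'' Compactness of $X \setminus (U \cup V)$ in a compact \emph{Hausdorff} space guarantees only that the sequence $(x_i)$ has a cluster point, not a convergent subsequence; compact Hausdorff spaces need not be sequentially compact (e.g. $\{0,1\}^{[0,1]}$), so as written the extraction fails precisely in the generality the lemma claims. The repair is immediate and does not change the architecture of your proof: the closed sets $\overline{\{x_j : j \geq n,\ j \in I\}}$ (where $I$ is your infinite set of bridging indices) have the finite intersection property, so by compactness they have a common point $x \in X \setminus (U \cup V)$; every neighborhood of $x$ then contains $x_j$ for infinitely many $j \in I$, hence meets infinitely many of the $A_j$, so $x \in \limsup A_i \subset U \cup V$, the desired contradiction. (In the situations where the paper actually invokes the lemma, $X$ is a compact metric space --- subsets of the disc or sphere --- so there sequential compactness holds and your original wording is harmless; but for the lemma as stated you should argue with a cluster point rather than a convergent subsequence.)
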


\begin{lemma}[\cite{HockingYoung}, Theorem~2-102]\label{lem:subseqhauscon}
	If $X$ is compact and metrizable, then any sequence of subsets has a Kuratowski convergent subsequence.
\end{lemma}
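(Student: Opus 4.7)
The plan is a standard diagonal argument using a countable basis. Since $X$ is compact and metrizable, I fix a countable basis $\{U_n\}_{n\geq 1}$ for its topology. The key observation I would use is that for any open set $U\subseteq X$ and any sequence $(B_i)$ of subsets, the index sets $\{i : B_i \cap U \neq \emptyset\}$ and $\{i : B_i \cap U = \emptyset\}$ partition $\mathbb{N}$, so at least one is infinite; restricting to the corresponding subsequence, all remaining $B_i$ are uniformly decided about meeting $U$.

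Applying this observation inductively to $U_1, U_2, \dots$ produces a nested chain of subsequences $(A_i) \supseteq (A_i^{(1)}) \supseteq (A_i^{(2)}) \supseteq \cdots$ in which every term of $(A_i^{(n)})$ has the same intersection status with $U_n$. The diagonal $B_k := A_k^{(k)}$ is then eventually a subsequence of each $(A_i^{(n)})$, so for every $n$ the truth value of ``$B_k$ meets $U_n$'' is constant for $k \geq n$. I claim this diagonal subsequence Kuratowski converges.

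To verify convergence, I only need to show $\limsup B_k \subseteq \liminf B_k$, the reverse inclusion being automatic. Given $x \in \limsup B_k$ and any neighborhood $V$ of $x$, I would choose a basic $U_n$ with $x \in U_n \subseteq V$. Since $U_n$ is itself a neighborhood of $x$, it meets infinitely many $B_k$, and by the diagonal property $U_n$ then meets $B_k$ for \emph{all} $k \geq n$. Hence $V$ meets cofinitely many $B_k$, so $x \in \liminf B_k$. There is no serious obstacle in this argument; the only subtle points to record are that the $A_i$ need not be closed (the Kuratowski notions are insensitive to closure, as noted in \Cref{subsection:Kuratowskiconvergence}) and that empty limits are permitted throughout, so no nondegeneracy hypothesis on the $A_i$ is needed.
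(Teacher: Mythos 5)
Your argument is correct. The key diagonal step is sound: for $k \geq n$ the term $B_k = A_k^{(k)}$ belongs to the $n$-th subsequence, so its intersection status with $U_n$ is the fixed one, and your verification that $\limsup B_k \subseteq \liminf B_k$ (via choosing a basic $U_n$ with $x \in U_n \subseteq V$) is exactly right; the reverse inclusion is automatic, and empty sets and non-closed sets cause no trouble since the Kuratowski notions ignore closures and empty limits are allowed.

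Note, however, that the paper does not prove \Cref{lem:subseqhauscon} at all --- it simply cites Hocking--Young, Theorem~2-102 --- so there is no in-paper argument to match against. Your proof is a legitimate self-contained substitute, and it is worth observing that it never actually uses compactness: second countability of $X$ (which compact metrizable guarantees) is all that the basis-plus-diagonal argument needs, once one permits $\lim_K = \emptyset$. This makes it slightly more general and more elementary than the other natural route in this setting, namely the Blaschke selection theorem (compactness of the hyperspace of nonempty closed subsets in the Hausdorff metric) combined with the equivalence of Hausdorff and Kuratowski convergence (\Cref{lemma:compactequivKuratowskiHausdorff}); that route genuinely needs compactness and requires passing to closures and treating empty or non-closed sets separately, whereas yours does not.
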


\subsubsection{Hausdorff convergence}\label{subsection:Hausdorffmetric}

    Let $(X, d)$ be a metric space. Given subsets $A, B \subset X$, we define the \emph{Hausdorff distance}
\[ 
d_{H}(A, B) = \inf \{\epsilon > 0 \mid A \subset N_\epsilon(B), B \subset N_\epsilon(A) \}, 
\]
where $N_\epsilon(C)$ denotes the $\epsilon$-neighborhood of a subset $C \subset X$.

This number is not always finite. However, if $X$ is compact, then $d_H$ defines a complete metric on the space of its compact subsets. Whether or not $X$ is compact, we write $\lim_H A_n=A$ to mean that $A$ is closed and $\lim_{n\to \infty} d_H(A_n, A)=0$.

We will later use the following basic facts about Hausdorff and Kuratowski convergence.

\begin{lemma}\label{lemma:compactequivKuratowskiHausdorff}
    In compact metric spaces, Hausdorff and Kuratowski convergence are equivalent. That is, if $X$ is a compact metric space, and $(A_i)$ is a sequence of compact subsets, then $\lim_K A_n=A$ if and only if $\lim_H A_n=A$.
\end{lemma}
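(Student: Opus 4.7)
The plan is to prove both implications directly, using the sequential characterizations of $\liminf$ and $\limsup$ recalled in the text, and invoking compactness of $X$ (hence of all closed subsets) in the direction where it is essential.

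For the Hausdorff-implies-Kuratowski direction, I would proceed by the usual sandwich $\limsup A_n \subset A \subset \liminf A_n$. If $p \in \limsup A_n$, then $p = \lim p_{n_k}$ for some $p_{n_k} \in A_{n_k}$; since eventually $A_{n_k} \subset N_\epsilon(A)$ for any fixed $\epsilon>0$, we get $d(p, A) \le \epsilon$, and since $A$ is closed we conclude $p \in A$. Conversely, if $p \in A$, then for each $k$ one has $p \in N_{1/k}(A_n)$ for all sufficiently large $n$, so we can choose $p_n \in A_n$ with $d(p_n, p)<1/k$ for $n$ in a nested sequence of tails; a straightforward diagonal choice produces a sequence $p_n \in A_n$ with $p_n \to p$, showing $p \in \liminf A_n$. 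Since $\liminf \subset \limsup$ always holds, all three sets coincide, so $\lim_K A_n = A$.

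For the Kuratowski-implies-Hausdorff direction, suppose by contradiction that $\lim_K A_n = A$ but $d_H(A_n, A) \not\to 0$. Then, after passing to a subsequence, there exists $\epsilon>0$ so that one of two things happens for every $n$: either (i) there is some $p_n \in A_n$ with $d(p_n, A) \ge \epsilon$, or (ii) there is some $q_n \in A$ with $d(q_n, A_n) \ge \epsilon$. In case (i), compactness of $X$ yields a further subsequence with $p_n \to p$; then $p \in \limsup A_n = A$, contradicting $d(p_n, A) \ge \epsilon$. In case (ii), compactness of $A$ (which is closed in the compact space $X$, hence compact) yields a further subsequence with $q_n \to q \in A = \liminf A_n$, so there exist $q_n' \in A_n$ with $q_n' \to q$; but then $d(q_n, q_n') \le d(q_n,q)+d(q,q_n') \to 0$, contradicting $d(q_n, q_n') \ge d(q_n, A_n) \ge \epsilon$.

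The only mild subtlety, and the step I would be most careful about, is handling the empty-set case in the Hausdorff-convergence definition (where $d_H(A,B)$ must be interpreted as $0$ if both are empty and $\infty$ otherwise) and confirming that the Kuratowski limit $A$ is automatically closed, hence compact in $X$, which is what makes case (ii) of the contradiction argument work. Apart from this bookkeeping, the argument is essentially the standard one and requires no further machinery.
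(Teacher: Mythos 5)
Your argument is correct: the sandwich $\limsup A_n\subset A\subset\liminf A_n$ for the Hausdorff-to-Kuratowski direction and the compactness/subsequence contradiction for the converse are exactly the standard proof, and your flagged subtleties (the empty-set convention for $d_H$, closedness hence compactness of the Kuratowski limit, and the implicit $\epsilon/2$ adjustment when splitting $d_H(A_n,A)\ge\epsilon$ into the two failure cases) are the only points needing care. The paper itself offers no proof of this lemma, stating it as a basic fact, so your write-up simply supplies the expected elementary argument.
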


\begin{lemma}\label{lemma:continuoushausdorfflim}
    Uniformly continuous maps respect Hausdorff convergence. That is, if $f\colon X\to Y$ is a uniformly continuous map between metric spaces, and $\lim_H A_i=A$, then $\lim_H f(A_i)=f(A)$.
\end{lemma}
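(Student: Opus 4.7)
The plan is to unwrap the definition: we must show that $f(A)$ is closed and that $d_H(f(A_i), f(A)) \to 0$. The real content is the second part; closedness of $f(A)$ will either follow from compactness in the typical applications, or must be argued/assumed separately. I would flag this subtlety but focus on the metric estimate, since that is the substantive claim.

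The main step is a straightforward $\epsilon$--$\delta$ transfer. Fix $\epsilon > 0$ and use uniform continuity of $f$ to pick $\delta > 0$ so that $d_X(x,x') < \delta$ implies $d_Y(f(x),f(x')) < \epsilon$. Since $\lim_H A_i = A$, for all sufficiently large $i$ we have $d_H(A_i, A) < \delta$, meaning $A_i \subset N_\delta(A)$ and $A \subset N_\delta(A_i)$. I then push this forward under $f$: any point $y = f(x)$ with $x \in A_i$ admits some $x' \in A$ with $d_X(x,x') < \delta$, whence $d_Y(f(x), f(x')) < \epsilon$, so $f(A_i) \subset N_\epsilon(f(A))$. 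The reverse inclusion $f(A) \subset N_\epsilon(f(A_i))$ is symmetric. Thus $d_H(f(A_i), f(A)) \leq \epsilon$ for $i$ large, completing the bound.

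The only mild obstacle is the closedness requirement in the definition of $\lim_H$. If $A$ is not compact, $f(A)$ need not be closed under a merely uniformly continuous $f$, so strictly speaking one may want to either (a) assume the sets involved are compact (which is the setting in which the lemma is applied later in the paper), or (b) replace $f(A)$ by its closure $\overline{f(A)}$ in the conclusion. Either convention makes the statement valid; the metric calculation above shows the Hausdorff distance goes to zero regardless. I would note this in a short parenthetical remark and otherwise present the $\epsilon$--$\delta$ argument above as the proof.
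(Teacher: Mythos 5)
Your $\epsilon$--$\delta$ transfer is correct and is the standard argument; the paper states this lemma as a basic fact without proof, so there is no authorial proof to diverge from. Your caveat about closedness is also well taken: under the paper's convention that a Hausdorff limit must be closed, the conclusion should strictly be read with $\overline{f(A)}$ in place of $f(A)$ unless the sets are compact, and in the paper's actual uses (e.g.\ applying $R$ restricted to the compact ball $\overline{N}_\epsilon(x)$ in the proof of \Cref{lemma:compatiblemetrics}, or to compact leaf segments in \Cref{lemma:ParallelDistortion}) compactness makes $f(A)$ closed, so your argument suffices as stated.
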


\subsection{Quotients and (upper semicontinuous) decompositions}\label{sec:Decompositions}

Let us recall some material on quotients and decompositions of topological spaces.

A \emph{partition} of a space $X$ is a collection of pairwise disjoint, nonempty subsets whose union is $X$.
Given a partition $\mathcal{P}$ we denote by $\mathcal{P}(x)$ the unique element of $\mathcal{P}$ that contains $x \in X$. This may be thought of as a function
\[ \mathcal{P}(\cdot)\colon X \to \mathcal{P}, \]
called the \emph{quotient function}. Given a subset $\cA$ of $\mathcal{P}$, we let $|\cA|$ denote the corresponding subset of $X$, i.e.
\[ |\cA| = \bigcup_{A \in \cA} A = \mathcal{P}^{-1}(A). \]

A subset $A \subset X$ is said to be \emph{$\mathcal{P}$-saturated} or \emph{saturated by $\mathcal{P}$} if it is a union of elements of $\mathcal{P}$. Equivalently, $A \subset X$ is saturated if and only if $A = |\mathcal{P}(A)|$.

The set $\mathcal{P}$ has a natural topology, the \emph{quotient topology}, with which $\mathcal{P}$ is called the \emph{quotient space}. This is defined by declaring $\cA \subset \mathcal{P}$ to be open if and only if $|\cA|$ is open. With this topology the quotient function $\mathcal{P}(\cdot)$ is a map, i.e. it is continuous. It has the universal property that any map $f\colon X\to Y$ that is constant on partition elements can be factored as $f=g\circ \mathcal P$ for a unique map $g$.

A partition of $X$ is equivalent to an equivalence relation on $X$. It is common to use the notation $\rquotient{X}{\cP}$ to refer to $\cP$ with the quotient topology, emphasizing the idea of taking $X$ and collapsing each element of $\cP$ to a point. 

A partition $\cD$ of a space $X$ whose elements are closed is called a \emph{decomposition}. This means that points in the corresponding quotient space are closed, i.e. the quotient space is $T^1$. The quotient topology associated to a decomposition is also called the \emph{decomposition topology}, and $\cD$ with this topology is called the \emph{decomposition space}.

In general, a surjection $f: X \to Y$ is said to be a \emph{quotient map} if for all $A \subset Y$, $A$ is open if and only if $f^{-1}(A)$ is open. In this situation the point preimages of $f$ partition $X$ and $Y$ is homeomorphic to the quotient of $X$ by this partition.
We have the following basic fact (see e.g. \cite{Munkres}): 

\begin{fact}\label{lem:quotientfact}
    Let $X,Y,Z$ be topological spaces. If $f\colon X\to Y$ and $g\colon X\to Z$ are quotient maps, and $h\colon Y\to Z$ is a bijection (not a priori continuous) such that $h\circ f=g$, then $h$ is a homeomorphism.    
\end{fact}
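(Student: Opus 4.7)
The plan is to exploit the defining universal property of quotient maps twice, once for $h$ and once for $h^{-1}$. Since $h$ is already assumed to be a bijection, the only thing to verify is that both $h$ and $h^{-1}$ are continuous, after which the conclusion is immediate.

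For continuity of $h$, I would take an arbitrary open set $U \subset Z$ and aim to show $h^{-1}(U) \subset Y$ is open. Since $f$ is a quotient map, $h^{-1}(U)$ is open in $Y$ if and only if $f^{-1}\bigl(h^{-1}(U)\bigr)$ is open in $X$. But $f^{-1}\bigl(h^{-1}(U)\bigr) = (h \circ f)^{-1}(U) = g^{-1}(U)$, which is open because $g$, being a quotient map, is in particular continuous. Hence $h$ is continuous.

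For continuity of $h^{-1}$, I would take an open set $V \subset Y$ and show $h(V) \subset Z$ is open. Since $g$ is a quotient map, $h(V)$ is open in $Z$ if and only if $g^{-1}\bigl(h(V)\bigr)$ is open in $X$. Using $g = h \circ f$ and the fact that $h$ is a bijection (so $f^{-1}(h^{-1}(h(V))) = f^{-1}(V)$), we compute $g^{-1}\bigl(h(V)\bigr) = f^{-1}\bigl(h^{-1}(h(V))\bigr) = f^{-1}(V)$, which is open because $f$ is continuous. Hence $h^{-1}$ is continuous.

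There is no real obstacle here; the statement is essentially a bookkeeping exercise with the definition of a quotient map, and the only subtle point is remembering to invoke bijectivity of $h$ in the second half to collapse $h^{-1} \circ h$ on the set $V$. The entire argument is a two-line diagram chase once one writes $h^{-1}(U) = f\bigl(g^{-1}(U)\bigr)$ and $h(V) = g\bigl(f^{-1}(V)\bigr)$ (valid because the fibers of $f$ and $g$ coincide, as $h$ is a bijection intertwining them).
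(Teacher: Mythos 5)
Your proof is correct and is exactly the standard universal-property argument; the paper does not prove this fact itself but cites it to Munkres, where the same diagram chase (continuity of $h$ from $f$ being a quotient map and $g$ continuous, continuity of $h^{-1}$ from $g$ being a quotient map, $f$ continuous, and injectivity of $h$ giving $h^{-1}(h(V))=V$) is the intended proof. No gaps.
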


An equivalent definition of quotient map is the following: a surjection is a quotient map if and only if it maps saturated open sets to open sets, if and only if it maps saturated closed sets to closed sets. Hence any continuous surjection that is either open or closed is a quotient map. While a quotient map need neither be open nor closed, the closed case will be of central importance to us.

\begin{definition}
	A decomposition $\cD$ of a topological space $X$ is \emph{upper semicontinuous} if the quotient map $X\to \cD$ is closed.
\end{definition}

An equivalent formulation of upper semicontinuity is: for every open set $U\subset X$ that contains a decomposition element $K$, there is an open subset $V\subset U$ that contains $K$ such that every decomposition element that intersects $V$ is contained in $U$. This is in fact the more commonly given definition (e.g. \cite[\S 3-6]{HockingYoung}); the equivalence of the two conditions is a standard exercise.

\begin{example}\label{example:PointPreimages}
    A basic example of an upper semicontinuous decomposition to have in mind is the set of point preimages of a continuous map
    \[ f\colon X \to Y \]
    from a compact space $X$ to a Hausdorff space $Y$. That is,
    \[ \cD := \{f^{-1}(y) \mid y\in Y\}-\{\emptyset\}. \]
    If $f$ is onto, then $Y$ is homeomorphic to the decomposition space $\cD$ by \Cref{lem:quotientfact}.
\end{example}

In compact metric spaces, upper semicontinuity has a few convenient reformulations:

\begin{lemma}\label{theorem:USCConditions}
	Let $\cD$ be a decomposition of a compact metric space $X$. The following are equivalent.
	\begin{enumerate}[label=(\arabic*)]
		\item $\cD$ is upper semicontinuous.
		
		\item For any sequence of decomposition elements $A_i$ for which $\liminf A_i\ne\emptyset$, $\limsup A_i$ is contained in a decomposition element.
		
		\item \label{USClimitcontainment}The Hausdorff limit of any sequence of elements in $\cD$ is contained in an element of $\cD$.

        \item As a decomposition space, $\cD$ is Hausdorff.
	\end{enumerate}
\end{lemma}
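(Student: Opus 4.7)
The plan is to establish the four conditions equivalent via the pairwise equivalences (1)$\Leftrightarrow$(4), (2)$\Leftrightarrow$(3), and (1)$\Leftrightarrow$(2). The equivalence (1)$\Leftrightarrow$(4) is almost pure point-set topology. For (4)$\Rightarrow$(1): a Hausdorff quotient of the compact space $X$ is compact Hausdorff, and any continuous map from a compact space to a Hausdorff space sends closed sets to compact (hence closed) sets. For (1)$\Rightarrow$(4): given distinct classes $[K_1]\ne[K_2]$, the elements $K_1,K_2$ are disjoint and closed, so by normality of the compact metric space $X$ they sit in disjoint open sets $U_1,U_2$. Then the sets $W_i:=\cD\setminus\pi(X\setminus U_i)$ are open in $\cD$ (since $\pi$ is closed by (1)), and unwinding definitions gives $W_i=\{[K]:K\subset U_i\}$; this contains $[K_i]$, and $W_1\cap W_2$ would consist of classes of elements contained in $U_1\cap U_2=\emptyset$, so $W_1,W_2$ separate $[K_1],[K_2]$.

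The equivalence (2)$\Leftrightarrow$(3) is essentially a translation via \Cref{lemma:compactequivKuratowskiHausdorff}. The direction (2)$\Rightarrow$(3) is immediate: if $\lim_H A_i=A$ with $A$ nonempty, then also $\lim_K A_i=A$, so $\liminf A_i=A\ne\emptyset$ and (2) places $A=\limsup A_i$ in a decomposition element. For (3)$\Rightarrow$(2), fix $p\in\liminf A_i$ witnessed by $p_i\in A_i$ with $p_i\to p$, and consider an arbitrary $q\in\limsup A_i$ witnessed by $q_{j_k}\in A_{j_k}$ with $q_{j_k}\to q$. Using \Cref{lem:subseqhauscon}, pass to a Kuratowski-convergent subsequence of $(A_{j_k})$ with limit $A$; then $p,q\in A$, the convergence is also Hausdorff by \Cref{lemma:compactequivKuratowskiHausdorff}, so (3) yields $A\subset K$ for some $K\in\cD$. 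Since $p\in K$ and decomposition elements are disjoint, $K=\cD(p)$, and hence $q\in\cD(p)$, proving $\limsup A_i\subset\cD(p)$.

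Finally, (1)$\Leftrightarrow$(2) uses the open-neighborhood reformulation of upper semicontinuity quoted after the definition. For (1)$\Rightarrow$(2): with $p,q,K=\cD(p)$ as above, suppose $q\notin K$; since $K$ is compact and $X$ metric, $\epsilon:=d(q,K)>0$, and $U:=X\setminus\overline{B}_{\epsilon/2}(q)$ is an open neighborhood of $K$. Applying upper semicontinuity yields an open $V\subset U$ with $K\subset V$ such that every decomposition element meeting $V$ is contained in $U$; but then $p_i\to p\in V$ forces $A_i\subset U$ eventually, contradicting $q_{j_k}\in A_{j_k}$ with $q_{j_k}\to q$. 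For (2)$\Rightarrow$(1): if upper semicontinuity fails at some $K\in\cD$ and open $U\supset K$, pick elements $A_n\in\cD$ meeting $N_{1/n}(K)$ but not contained in $U$, with witnesses $x_n\in A_n\cap N_{1/n}(K)$ and $y_n\in A_n\setminus U$; by compactness pass to a subsequence with $x_n\to x\in K$ and $y_n\to y\in X\setminus U$. Then $x\in\liminf A_n$ and $y\in\limsup A_n$, so (2) places both in a single element $K'$; since $x\in K\cap K'$ forces $K=K'$, we get $y\in K\subset U$, a contradiction.

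The only places requiring care are the neighborhood choice in (1)$\Rightarrow$(2) — one must separate $K$ from a whole ball around $q$ rather than from $q$ alone, since $q$ need not lie in any $A_i$ — and the coordinated subsequence extraction in (3)$\Rightarrow$(2), which must simultaneously track a liminf witness and a chosen limsup point. Everything else is a straightforward bookkeeping exercise with the tools quoted in \S 2.
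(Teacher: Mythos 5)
Your proof is correct, but it takes a different route from the paper: the paper disposes of the equivalences (1)$\Leftrightarrow$(2)$\Leftrightarrow$(3) and of (1)$\Rightarrow$(4) by citing Kuratowski's \emph{Topology} (Ch.~IV, \S 43), and only writes out the short compactness argument for (4)$\Rightarrow$(1) --- which is exactly the argument you give for that implication (closed subsets of $X$ are compact, their images in a Hausdorff quotient are compact hence closed). Everything else you prove from scratch: (1)$\Rightarrow$(4) via normality and the closedness of $\pi$, (2)$\Leftrightarrow$(3) via the equivalence of Kuratowski and Hausdorff convergence together with the subsequence-extraction lemma, and (1)$\Leftrightarrow$(2) via the open-neighborhood reformulation of upper semicontinuity that the paper quotes after the definition. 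Your arguments are sound, including the two points you flag yourself: in (1)$\Rightarrow$(2) you correctly excise a closed ball around $q$ rather than just $q$ (needed because the witnesses $q_{j_k}$ only approximate $q$), and in (3)$\Rightarrow$(2) you correctly track the liminf witness $p$ and the chosen $q\in\limsup A_i$ through the same Kuratowski-convergent subsequence, using that $p$ lies in the liminf of every subsequence. Two tiny remarks, neither a gap: in (2)$\Rightarrow$(3) the case of an empty Hausdorff limit cannot occur for nonempty elements (and is vacuous anyway), and in (2)$\Rightarrow$(1) the element $K'$ produced by (2) is applied to the chosen subsequence of $(A_n)$, which is fine since (2) is quantified over all sequences. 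What the paper's approach buys is brevity and the observation that (4)$\Rightarrow$(1) needs no metrizability; what yours buys is a self-contained proof that makes visible exactly where compactness, metrizability, and the neighborhood formulation of upper semicontinuity enter.
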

\begin{proof}
    The equivalence of (1), (2), and (3) is \cite[Ch. IV, \S43, Th. IV.2]{Kuratowskiv2}. The implication (1)$\Rightarrow$(4) is \cite[Ch. IV, \S43, Th. III.1]{Kuratowskiv2}.

    Finally, suppose $C\subset X$ is closed, and hence is compact. The image $\cD(C)$ of $C$ under the quotient map is compact, so if $\cD$ is Hausdorff then $\cD(C)$ is closed. This shows (4)$\Rightarrow$(1), regardless of whether $X$ is a metric space.
\end{proof}

\subsubsection{Restriction and intersection}

\begin{definition}\label{def:restriction}
	Let $\cD$ be a decomposition of a space $X$. Given a subspace $Y \subset X$, we define $\cD \cap Y := \{A \cap Y \mid A \in \cD\}$.
	
	If some decomposition element $A \subset \cD$ does not intersect $Y$, then $\cD$ will contain $\emptyset$ and hence not technically be a decomposition. Thus we define $\cD \capdot Y := (\cD \cap Y) - \{\emptyset\}$, and call this the \emph{restriction} of $\cD$ to $Y$.
\end{definition}

On the other hand, let $\cD_Y=\{A\in \cD\mid A\cap Y\ne \emptyset\}$, and endow $\cD_Y$ with the  topology it inherits as a subspace of $\cD$. Note that there is a natural bijection $h\colon \cD_Y\to \cD \capdot Y.$

\begin{lemma}\label{lemma:restrictionUSC}
	Let $\cD$ be an upper semicontinuous decomposition of a space $X$, and let $Y \subset X$ be a closed subset. Then the bijection $h\colon \cD_Y\to\cD \capdot Y$ is a homeomorphism, and $\cD\capdot Y$ is upper semicontinuous. 
    %\mED{Is this true without the assumption that $Y$ is closed?}
\end{lemma}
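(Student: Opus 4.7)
The plan is to realize both $\cD_Y$ and $\cD\capdot Y$ as quotient spaces of $Y$ under appropriate maps, and then apply \Cref{lem:quotientfact} to the bijection $h$. Let $q\colon X\to \cD$ denote the quotient map. Since every $y\in Y$ lies in the class $q(y)$, which in turn meets $Y$, the map $q$ restricts to a surjection $q|_Y\colon Y\to \cD_Y$. Let $q_Y\colon Y\to \cD\capdot Y$ be the quotient map of the decomposition $\cD\capdot Y$ of $Y$; this is a quotient map by definition, and $h\circ q|_Y=q_Y$ tautologically.

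The key step will be to show that $q|_Y$ is a closed map, and hence a quotient map onto $\cD_Y$ in its subspace topology from $\cD$. Given a closed subset $C\subset Y$, the hypothesis that $Y$ is closed in $X$ makes $C$ closed in $X$; then $q(C)$ is closed in $\cD$ by the upper semicontinuity of $\cD$. Since $q|_Y(C)=q(C)$ as subsets of $\cD$, and this image lies in $\cD_Y$, it is closed in the subspace topology on $\cD_Y$, so $q|_Y$ is closed. With $q|_Y$ and $q_Y$ both quotient maps and $h$ a bijection intertwining them, \Cref{lem:quotientfact} gives that $h$ is a homeomorphism. Upper semicontinuity of $\cD\capdot Y$ then follows since $q_Y=h\circ q|_Y$ is a composition of a closed map and a homeomorphism, hence closed.

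The argument has no real obstacle; its substance is the single observation that closed subsets of a closed subspace $Y\subset X$ are themselves closed in $X$, which is precisely what allows the closedness of $q$ to be transported to the closedness of $q|_Y$. Without the hypothesis that $Y$ is closed, the claim would generally fail, and once this is noted the rest is formal bookkeeping with \Cref{lem:quotientfact}.
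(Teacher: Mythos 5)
Your proof is correct and follows essentially the same route as the paper: both arguments show $Y\to \cD_Y$ is closed (you via ``closed in closed is closed,'' the paper via composing the closed inclusion $Y\hookrightarrow X$ with the closed quotient map $X\to\cD$), then invoke \Cref{lem:quotientfact} to see $h$ is a homeomorphism and deduce closedness of $Y\to\cD\capdot Y$. No gaps.
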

\begin{proof}
	 The inclusion $Y\hookrightarrow X$ is a closed map, as is $X\to \cD$ by upper semicontinuity. Composing the two maps and restricting to the range, we get a closed map $Y\to \cD_Y$, which is in particular a quotient map. We have a commutative triangle
     \begin{center}
     \begin{tikzcd}
    Y \arrow{rd}{}\arrow{d}& \\
    \cD_Y\arrow{r}{h}&\cD\capdot Y,
    \end{tikzcd}
    \end{center}
so $h$ is a homeomorphism by \Cref{lem:quotientfact}. Since $Y\to \cD_Y$ is closed, $Y\to \cD\capdot Y$ is closed. Hence $\cD\capdot Y$ is upper semicontinuous.
\end{proof}

The converse of the upper semicontinuity statement is not true:

\begin{remark}
    Using the notation of \Cref{lemma:restrictionUSC},
    %If $\cD$ is a decomposition of $\bbD$, then \Cref{lemma:restrictionUSC} ensures that upper semicontinuity of $\bSp$ implies upper semicontinuity of $\partial \bSp$. However, 
    it is possible that $\cD\capdot Y$ is upper semicontinuous while $\cD$ is not. For example, consider the decomposition $\cD$ of the square $X = [-1, 1] \times [-1, 1] \subset \bbR^2$ consisting of three kinds of elements:
	\begin{enumerate}
		\item the vertical full intervals $R_x := \{x\} \times [-1, 1]$, for $x \in [-1, 1]$, on the open right half;
		\item the horizontal half-intervals $L_y := [-1,0] \times \{y\}$, for $y \in (-1, 1)$, on the closed left half; and
		\item the union $L_\star := [-1, 0] \times \{-1, 1\}$ of the two remaining horizontal half-intervals.
	\end{enumerate}
    
	This is not upper semicontinuous, since the Hausdorff limit of $R_{1/i}$ as $i \to \infty$ is $\{0\} \times [-1, 1]$, which is not contained in a decomposition element. However, it is straightforward to see that the restriction $\Lambda:=\cD\capdot Y$ of $\cD$ to the boundary $Y$ of the square is upper semicontinuous. In particular, the Hausdorff limit of $R_{1/i} \cap Y$ as $i \to \infty$ is $\{0\} \times \{-1,1\}$, which is a subset of $L_\star \cap Y$. As decomposition spaces, then, $\Lambda$ is Hausdorff but $\cD$ is not.
\end{remark}

\begin{definition}
	Given two decompositions $\cD$ and $\cD'$ of a space $X$, we define the \emph{intersection} decomposition
	\[ 
    \cD \capdot \cD' := \{ K \cap K' \mid K \in \cD,  K' \in \cD' \} - \{\emptyset\}. \qedhere
    \]
\end{definition}

\begin{lemma}\label{lemma:intersectionUSC}
	If $\cD$ and $\cD'$ are upper semicontinuous decompositions of a compact metric space $X$, then $\cD \capdot \cD'$ is upper semicontinuous.
\end{lemma}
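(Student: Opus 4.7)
The plan is to verify condition (3) of \Cref{theorem:USCConditions}, namely that the Hausdorff limit of any sequence of elements of $\cD \capdot \cD'$ is contained in an element of $\cD \capdot \cD'$. So suppose $L_i = K_i \cap K'_i$ is a sequence in $\cD \capdot \cD'$ with $K_i \in \cD$ and $K'_i \in \cD'$, and suppose $L_i \to L$ in the Hausdorff metric. Note first that $L$ is nonempty: each $L_i$ is nonempty, and a Hausdorff limit of a sequence of nonempty subsets of a compact metric space cannot be empty (else $N_\epsilon(L)$ would be empty for small $\epsilon$, while the $L_i$ are not).

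The next step is to extract limits of the enveloping decomposition elements. Since $X$ is compact and metric, \Cref{lem:subseqhauscon} and \Cref{lemma:compactequivKuratowskiHausdorff} let us pass to a subsequence along which both $K_i \to K_\infty$ and $K'_i \to K'_\infty$ in the Hausdorff metric, for some closed subsets $K_\infty, K'_\infty \subset X$. Along this subsequence $L_i$ still converges to $L$.

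Now I claim $L \subset K_\infty \cap K'_\infty$. Indeed, any $x \in L$ is a limit of points $x_i \in L_i = K_i \cap K'_i$. Since $x_i \in K_i$ and $K_i \to K_\infty$ (Hausdorff, equivalently Kuratowski in this setting), we have $x \in K_\infty$; similarly $x \in K'_\infty$. Applying condition (3) of \Cref{theorem:USCConditions} to the upper semicontinuous decompositions $\cD$ and $\cD'$, there exist $K \in \cD$ with $K_\infty \subset K$ and $K' \in \cD'$ with $K'_\infty \subset K'$. Thus $L \subset K \cap K'$. Because $L$ is nonempty, $K \cap K'$ is nonempty and hence is an element of $\cD \capdot \cD'$, which completes the verification of condition (3).

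There is no substantial obstacle here; the only mild subtleties are remembering that Hausdorff limits of nonempty compacta in a compact space are nonempty (so that $K \cap K'$ genuinely lies in $\cD \capdot \cD'$), and passing to a single subsequence along which both $K_i$ and $K'_i$ converge before taking pointwise limits.
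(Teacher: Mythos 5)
Your proof is correct and takes essentially the same approach as the paper: both control the limit of $L_i = K_i \cap K'_i$ by the limits of the enveloping elements $K_i \in \cD$, $K'_i \in \cD'$ and then invoke the upper semicontinuity criteria of \Cref{theorem:USCConditions}. The only cosmetic difference is that the paper verifies condition (2) directly with $\liminf/\limsup$ (no subsequence extraction, since the element containing $\limsup K_i$ is forced to be the one met by $\liminf L_i$), whereas you verify condition (3) after passing to Hausdorff-convergent subsequences of $K_i$ and $K'_i$.
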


\begin{proof}
	Let $L_i$ be an arbitrary sequence of elements of $\cD \capdot \cD'$ such that $\liminf L_i$ intersects some element $L_\infty \in \cD \capdot \cD'$ nontrivially. By Lemma~\ref{theorem:USCConditions}, it suffices to show that $\limsup L_i \subset L_\infty$.
	
	For each $i = 1, 2, \dots, \infty$, let $K_i$ and $K'_i$ be the elements of $\cD$ and $\cD'$ for which $L_i = K_i \cap K'_i$. Then $\liminf K_i$ and $\liminf K'_i$ contain $\liminf L_i$, so they intersect $K_\infty$ and $K'_\infty$ nontrivially. By upper semicontinuity of $\cD$ and $\cD'$, we have $\limsup K_i \subset K_\infty$ and $\limsup K'_\infty \subset K'_\infty$. Hence $\limsup L_i \subset L_\infty$ as desired.	
\end{proof}

\subsubsection{Monotonization}\label{sec:monotonization}
A decomposition is called \emph{monotone} if each of its elements is connected.

\begin{definition}
	The \emph{monotonization} of a decomposition $\cD$ of a space $X$ is the decomposition $\Mon(\cD)$ whose elements are connected components of elements of $\cD$.
\end{definition}

\begin{lemma}\label{lemma:monotonizationUSC}
	If $\cD$ is an upper semicontinuous decomposition of a compact metric space $X$, 
    then $Mon(\cD)$ is upper semicontinuous.
\end{lemma}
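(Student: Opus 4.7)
The plan is to verify the Hausdorff-limit characterization of upper semicontinuity given by Lemma~\ref{theorem:USCConditions}(2): we show that for any sequence $(L_i)$ of elements of $\Mon(\cD)$ with $\liminf L_i \ne \emptyset$, the set $\limsup L_i$ is contained in a single element of $\Mon(\cD)$.

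By definition, each $L_i$ is a connected component of some $K_i \in \cD$. Since $L_i \subset K_i$, we have $\liminf K_i \supset \liminf L_i \ne \emptyset$. Upper semicontinuity of $\cD$, via Lemma~\ref{theorem:USCConditions}(2), then gives an element $K_\infty \in \cD$ with $\limsup K_i \subset K_\infty$; in particular $\limsup L_i \subset K_\infty$.

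The key additional ingredient is that $\limsup L_i$ is connected: each $L_i$ is connected, $\liminf L_i \ne \emptyset$, and $X$ is compact Hausdorff, so Lemma~\ref{lem:connectedlimit} applies. A connected subset of $K_\infty$ must lie in a single connected component of $K_\infty$, and that component is by definition an element of $\Mon(\cD)$. This completes the verification of Lemma~\ref{theorem:USCConditions}(2) for $\Mon(\cD)$, and hence shows $\Mon(\cD)$ is upper semicontinuous.

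There is no real obstacle here; the proof is a direct application of the two lemmas already established (the upper semicontinuity criterion and the connectedness of $\limsup$ of connected sets). The only minor point worth noting is that the argument uses compactness and metrizability only insofar as they are needed for Lemma~\ref{theorem:USCConditions} and Lemma~\ref{lem:connectedlimit}; no further hypotheses on $\cD$ beyond upper semicontinuity are used.
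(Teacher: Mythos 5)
Your proof is correct and follows essentially the same route as the paper: both reduce to the sequence criterion in \Cref{theorem:USCConditions} (you use the $\liminf$/$\limsup$ form, the paper the equivalent Hausdorff-limit form), then use upper semicontinuity of $\cD$ to trap the limit in some $K_\infty \in \cD$ and \Cref{lem:connectedlimit} to conclude the limit is connected, hence lies in a single component of $K_\infty$, i.e.\ an element of $\Mon(\cD)$.
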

\begin{proof}
 	Let $A'_i$ be a Hausdorff convergent sequence of elements of $\Mon(\cD)$. Then $\lim A'_i$ is contained in an element $A$ of $\cD$ by \Cref{theorem:USCConditions}. It is also connected by \Cref{lem:connectedlimit}, so it is contained in a component $A'$ of $A$, i.e. an element of $\Mon(\cD)$. Hence $\Mon(\cD)$ is upper semicontinuous by \Cref{theorem:USCConditions}.
\end{proof}

\subsubsection{Collapsing a decomposition}

In some nice cases, the quotient of a space by a decomposition is homeomorphic to the space itself.

Using a topological characterization of a compact interval, one can show:

\begin{theorem}\label{theorem:MooreForArcs}
	Let $\cD$ be a nontrivial decomposition of a compact interval $I$ into closed sub-intervals. Then $\rquotient{I}{\cD}$ is a compact interval.
\end{theorem}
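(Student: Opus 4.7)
The plan is to apply a standard topological characterization of the closed interval: a compact connected metric space (with more than one point) is homeomorphic to $[0,1]$ if and only if it has exactly two non-cut points. So the task reduces to verifying these properties of $I/\cD$.

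First I would check that any decomposition of $I$ into closed sub-intervals is automatically upper semicontinuous. If $(K_i)$ is a sequence of decomposition elements Hausdorff-converging to $[a,b]$ with $a<b$, then for any $x \in (a,b)$ we have $x \in K_i$ for all large $i$, so every such $K_i$ coincides with the element $K_x$ containing $x$. Hence $[a,b] \subset K_x$, and $\cD$ is upper semicontinuous by \Cref{theorem:USCConditions}. Consequently $I/\cD$ is compact Hausdorff, connected as a continuous image of $I$, and metrizable by the standard fact that upper semicontinuous decompositions of compact metric spaces yield metrizable quotients. Nontriviality guarantees at least two points in the quotient.

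Next I would identify the cut and non-cut points. The element $K_0 = [0,c]$ containing $0$ has $c<1$ by nontriviality, so the complement $(c,1]$ is connected; moreover it is saturated, since a decomposition element meeting $(c,1]$ cannot contain $c \in K_0$. Its image in $I/\cD$ is therefore an open connected set equal to $(I/\cD)\setminus\{[0]\}$, so $[0]$ is a non-cut point; symmetrically $[1]$ is a non-cut point. For any class $[x]$ distinct from $[0]$ and $[1]$, the containing element $K_x = [a,b]$ satisfies $0 < a \le b < 1$. Both $[0,a)$ and $(b,1]$ are saturated: a decomposition element meeting $[0,a)$ cannot contain $a$ (as $a \in K_x$), hence is contained in $[0,a)$, and likewise on the right. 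Their images in $I/\cD$ are thus disjoint nonempty open sets whose union is $(I/\cD)\setminus\{[x]\}$, so $[x]$ is a cut point.

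With exactly two non-cut points established, the topological characterization yields the result. The principal subtlety is the saturation argument in the cut-point step; this is where the hypothesis that the decomposition elements are \emph{intervals}, rather than merely closed sets, is essential. Everything else — upper semicontinuity, Hausdorffness and metrizability of the quotient, and the non-cut property of $[0]$ and $[1]$ — follows directly once this observation is in place.
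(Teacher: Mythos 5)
Your proof is correct and takes essentially the same route the paper intends: the paper only remarks that the statement follows ``using a topological characterization of a compact interval,'' and you supply exactly such an argument (the two-non-cut-point characterization of the arc), checking upper semicontinuity, Hausdorffness and metrizability of the quotient, and the cut-point structure. The only cosmetic omission is the degenerate case in the upper semicontinuity step where the Hausdorff limit of decomposition elements is a single point, which is trivially contained in some element.
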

Moving up a dimension, one has the following theorem of Moore.
\begin{theorem}[\cite{Moore}]\label{theorem:Moore}
	Let $\cD$ be a nontrivial upper semicontinuous decomposition of a topological plane $P$ such that each decomposition element is compact, connected, and nonseparating. Then $\rquotient{X}{\cD}$ is homeomorphic to the plane.
\end{theorem}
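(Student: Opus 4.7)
The plan is to verify that the quotient $\rquotient{P}{\cD}$ satisfies a topological characterization of the plane, such as: a connected, simply connected, non-compact $2$-manifold (without boundary) is homeomorphic to $\bbR^2$. Hence I would reduce the statement to showing $\rquotient{P}{\cD}$ is (i) a Hausdorff, second countable, locally compact space, (ii) locally Euclidean of dimension $2$, (iii) connected and simply connected, and (iv) non-compact.

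The point-set properties in (i) are inherited from $P$ via the quotient map $\pi\colon P \to \rquotient{P}{\cD}$. Hausdorffness follows from upper semicontinuity by \Cref{theorem:USCConditions}; second countability follows since $\pi$ is a closed surjection from a second countable space; and local compactness similarly descends through the closed quotient. For (iii), connectedness is immediate since $P$ is connected, non-compactness follows because $P$ is non-compact and the decomposition elements are compact (so the quotient is not compact either), and simple connectedness can be established by lifting loops: since each decomposition element is a non-separating plane continuum (hence cell-like), loops in $\rquotient{P}{\cD}$ can be pushed into the complement of any finite union of decomposition elements and then contracted in $P$.

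The heart of the argument is (ii): showing that each point $\pi(K)$ in $\rquotient{P}{\cD}$ has a Euclidean neighborhood. The key geometric input is that each $K \in \cD$ is \emph{cellular} in $P$, meaning it admits a neighborhood basis of open topological discs whose closures are closed discs containing $K$ in their interior. This uses the hypothesis that $K$ is a compact, connected, non-separating subset of the plane $P$: applying a Schoenflies-type result to a small disc neighborhood, one can find arbitrarily small closed disc neighborhoods $D \supset K$ whose boundary circles $\partial D$ miss $K$ and lie in the unbounded complementary component. Now, given such a disc $D$, upper semicontinuity of $\cD$ (in the form of \Cref{theorem:USCConditions}) lets one shrink $D$ further to a disc $D'$ such that every decomposition element meeting $D'$ lies inside $D$; then the image $\pi(D')$ is an open set in $\rquotient{P}{\cD}$, and the restricted decomposition $\cD \capdot D$ (upper semicontinuous by \Cref{lemma:restrictionUSC}) collapses $D$ to a disc by an iterative application of \Cref{theorem:MooreForArcs} on successive arc cross-sections — or, more directly, by a one-dimensional version of the same Schoenflies argument combined with the characterization of the disc as a Peano continuum with a specified boundary circle.

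The main obstacle is precisely this local step — verifying that the quotient of a disc by an upper semicontinuous decomposition into cellular sets is again a disc. Moore's original approach handles this by constructing a sequence of successively finer ``brick partitions'' of $P$ adapted to $\cD$, each a locally finite decomposition into closed topological discs meeting in arcs, and taking a direct limit; this yields a homeomorphism $P \to \rquotient{P}{\cD}$ directly, bypassing the characterization of the plane. This combinatorial/iterative construction is delicate because one must ensure the brick partitions refine $\cD$ uniformly while staying locally finite, which is where the hypotheses (compactness, non-separation, upper semicontinuity) are all simultaneously used.
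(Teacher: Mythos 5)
There is no proof in the paper to compare against: \Cref{theorem:Moore} is quoted as a classical result with a citation to \cite{Moore}, and the paper only ever \emph{uses} it (e.g.\ to deduce \Cref{corollary:Mooredisc}). So the only question is whether your outline constitutes a proof on its own, and it does not. The decisive step is exactly the one you label ``the heart of the argument'': showing that near each point $\pi(K)$ the quotient is locally planar, which you reduce to the claim that the quotient of a closed disc by an upper semicontinuous decomposition into compact, connected, non-separating (cellular) sets is again a disc. That claim is not a lemma available in advance --- it is, in substance, the theorem being proved (indeed the paper derives the disc statement, \Cref{corollary:Mooredisc}, \emph{from} Moore's theorem, not the other way around). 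Your sketch of how to establish it --- ``an iterative application of \Cref{theorem:MooreForArcs} on successive arc cross-sections'' or ``a one-dimensional version of the same Schoenflies argument'' --- is not an argument; collapsing a two-parameter family of continua cannot be done one arc at a time without a genuine shrinking/approximation scheme (Moore's brick partitions, or in modern language the Bing shrinking criterion showing the quotient map is a near-homeomorphism). Your final paragraph acknowledges this and simply points back to Moore's original construction, so the core of the theorem remains unproved in the proposal.

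Two smaller points. First, $\pi(D')$ need not be open: $D'$ is not $\cD$-saturated, so you should instead pass to the saturated open set consisting of all elements contained in the interior of $D'$ (this \emph{is} open precisely because $\pi$ is a closed map, i.e.\ by upper semicontinuity), and take its image. Second, the simple-connectedness argument by ``pushing loops off finitely many decomposition elements'' is too loose as stated --- a loop in the quotient may meet infinitely many nondegenerate elements --- though this step is repairable (monotone closed quotients of the plane are simply connected, e.g.\ via a Vietoris--Begle type argument), and in any case it is not where the difficulty of the theorem lies.
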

\begin{corollary}\label{corollary:Mooredisc}
	Let $\cD$ be a nontrivial upper semicontinuous decomposition of a closed disc $\bD=D\sqcup S$ such that each decomposition element is connected and nonseparating. Then $\rquotient{\bD}{\cD}$ is homeomorphic to a disc, and $S$ maps to the boundary of this disc under the quotient mapping. 
\end{corollary}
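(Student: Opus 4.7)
The plan is to extend $\cD$ to a decomposition of the ambient plane and apply Moore's theorem~\ref{theorem:Moore}.

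Embed $\bD$ in $\bbR^2$ as the closed unit disc, and define an extended decomposition of $\bbR^2$ by
\[
\cD' = \cD \cup \{\{x\} : x \in \bbR^2 \setminus \bD\}.
\]
I would first check that $\cD'$ is upper semicontinuous, which follows from the observation that the $\cD'$-saturation of any closed $C \subseteq \bbR^2$ equals $C \cup \mathrm{sat}_\cD(C \cap \bD)$, a union of two closed sets (the second closed by upper semicontinuity of $\cD$ applied to the closed set $C \cap \bD \subseteq \bD$). Each element of $\cD'$ is compact and connected (trivially for singletons, by hypothesis for original elements), and the nonseparating hypothesis on elements of $\cD$, interpreted in the ambient plane, gives that each element of $\cD'$ is nonseparating in $\bbR^2$. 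Moore's theorem then shows that the quotient map $q\colon \bbR^2 \to \bbR^2/\cD'$ takes values in a topological plane.

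The restriction $q|_\bD \colon \bD \to q(\bD)$ is a continuous surjection of a compact space onto a Hausdorff subspace, hence a closed quotient map whose point preimages are exactly the elements of $\cD$. By Fact~\ref{lem:quotientfact} this induces a homeomorphism $\bD/\cD \cong q(\bD)$. Since $\bbR^2 \setminus \bD$ is a $\cD'$-saturated open set, $q(\bbR^2 \setminus \bD)$ is the open, connected, unbounded complement of $q(\bD)$ in $\bbR^2/\cD' \cong \bbR^2$, so $q(\bD)$ is compact and connected with topological boundary contained in $q(S)$.

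The main remaining task---and the principal obstacle---is to identify $q(S)$ as a Jordan curve bounding $q(\bD)$. The restriction $q|_S \colon S \to q(S)$ is a quotient map with point preimages in $\cD \capdot S$. I would argue that each $K \cap S$ with $K \in \cD$ is a connected proper closed subset of the circle $S$, using that $K$ is connected and nonseparating. The quotient of $S^1$ by a nontrivial upper semicontinuous decomposition into such subsets is homeomorphic to $S^1$ (the $1$-dimensional analog of Moore's theorem, deducible from Theorem~\ref{theorem:MooreForArcs} by cutting the circle at a point of some decomposition element), so $q(S)$ embeds as a Jordan curve in $\bbR^2/\cD' \cong \bbR^2$. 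The Jordan--Schoenflies theorem then identifies $q(\bD)$ as the closed topological disc bounded by $q(S)$, with $S$ mapping to the boundary as claimed.
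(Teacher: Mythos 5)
Your proposal follows the paper's route exactly: embed $\bD$ in $\bbR^2$, extend $\cD$ by exterior singletons, apply Moore's theorem (\Cref{theorem:Moore}), and finish with Schoenflies. You supply several details the paper compresses into two sentences (upper semicontinuity of $\cD'$, the identification $\rquotient{\bD}{\cD}\cong q(\bD)$ via \Cref{lem:quotientfact}, the analysis of the complement, and the one-dimensional Moore argument for $q(S)$), and those parts are correct.

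The one step I would push back on is the claim that each nonempty $K\cap S$ is a \emph{connected, proper} closed subset of $S$ ``using that $K$ is connected and nonseparating.'' You have to decide which ``nonseparating'' you mean, and neither reading alone delivers everything you use. If ``nonseparating'' is read in the ambient plane --- which is the reading you invoked one paragraph earlier to apply Moore's theorem --- then connectedness of $K\cap S$ can fail: a chord of $\bD$ (an arc meeting $S$ only in its two endpoints) does not separate $\bbR^2$, yet $K\cap S$ is two points, $q(S)$ is not a simple closed curve, and in fact the quotient is a wedge of two discs, so no argument can rescue this case. If instead ``nonseparating'' is read in $\bD$, then connectedness of $K\cap S$ does follow (this is exactly the unlabeled lemma preceding \Cref{corollary:SeparatingBasedOnEnds}: each complementary region of a closed, connected, unbounded set meets $S$ in a single complementary interval of $K\cap S$), but properness fails and, worse, the Moore step breaks: $K=S$ does not separate $\bD$ but does separate $\bbR^2$, and there the quotient of $\bD$ is the one-point compactification of $D$, a sphere. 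So the argument genuinely needs elements that are nonseparating both in $\bD$ and in the plane (equivalently, nonseparating in the plane with $K\cap S$ connected and proper). To be fair, the paper's own proof silently assumes the same thing when it asserts that the unit circle ``maps to a simple closed curve,'' and in the only place the corollary is used (\Cref{construction:emuuextension}) the elements are convex and meet $S$ in at most one point, so both conditions hold; but as written your justification of the circle step does not follow from the hypothesis as you interpreted it, and you should state the needed strengthening explicitly.
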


\begin{notation}
Here and throughout, ``a disc $\bD=D\sqcup S$" means that $\bD$ is a closed $2$-dimensional disc with interior $D$ and boundary circle $S$.
\end{notation}

\begin{proof}[Proof of \Cref{corollary:Mooredisc}]
	Simply think of $\bD$ as the unit disc in $\bbR^2$, and take the decomposition $\cD'$ consisting of elements of $\cD$, together with singleton elements for the points in $\bbR - \bD$. The unit circle maps to a simple closed curve in the decomposition space, which bounds a closed disc by the Schoenflies theorem.
\end{proof}

These results are false in higher dimensions. In particular, there are upper semicontinuous decompositions of $\bbR^3$ whose decomposition space is not a manifold (see \cite{Bing_Dogbone}).

\subsection{Topology of the disc and circle}

Let $\bD = D \sqcup S$ be a disc. 

We say a subset of $\bD$ is \emph{unbounded} if it intersects the boundary circle$S$.

When an orientation on $S$ is fixed, we let $(a, b)$, $[a, b]$, $[a, b)$, and $(a, b]$ 
denote the positively oriented open, closed, and half-open intervals between distinct points $a, b \in S$. We can extend this to $a = b$ by taking $(a, a) = S^1 - \{a\}$, and $[a,a] = [a, a) = (a, a] = S^1$.
%\scom{Do we use half-open or closed intervals?}\mcom{I don't think so}

Given a subset $A \subset \bD$, the connected components of $\bD - A$ will be called \emph{complementary regions} of $A$.

Given a subset $\lambda \subset S$, the connected components of $S - \lambda$ will be called \emph{complementary intervals} of $\lambda$. They are indeed intervals of the form $(a, b)$, $[a, b]$, $[a, b)$, or $(a, b]$ for $a, b \in S$ (or $S$ when $\lambda = \emptyset$.

Given a subset $A \subset \bD$, we will use the notation 
\[
\del A:=A\cap S.
\]

\subsubsection{Linking and separation in the circle and disc}\label{subsection:linking}
\begin{definition}
    Two unordered pairs of points $\{a, a'\}$ and $\{b, b'\}$ in a circle $S$ are said to be \emph{linked} if they are linked as $S^0$'s in $S^1$. That is if $\{a, a'\}$ separates $b$ from $b'$, or equivalently if $\{b, b'\}$ separates $a$ from $a'$. Otherwise they are \emph{unlinked}.

    Two subsets $A, B \subset S$ are \emph{linked} if there are pairs $\{a, a'\} \subset A$ and $\{b, b'\} \subset B$ that are linked. Otherwise they are \emph{unlinked}.
\end{definition}

For disjoint closed subsets one can quantify linking as follows:

\begin{lemma}\label{lemma:finitelinking}
    Let $A, B \subset S$ be disjoint, closed, and nonempty. Fix an orientation on $S$. Then there is the same, finite number $n \geq 1$ of open intervals of each of the following types:
    \begin{enumerate}[label=(\roman*)]
        \item complementary intervals of $A$ that intersect $B$ nontrivially,
        \item complementary intervals of $B$ that intersect $A$ nontrivially,
        \item complementary intervals of $A \cup B$ of the form $(a, b)$ for $a \in A$ and $b \in B$, and
        \item complementary intervals of $A \cup B$ of the form $(b, a)$ for $b \in B$ and $a \in A$.
    \end{enumerate}

    Disjoint closed subsets $A, B \subset S$ will be said to be \emph{$n$-linked} where $n$ is this number.
\end{lemma}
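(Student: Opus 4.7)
The plan is to construct explicit bijections between the four families of intervals, reducing everything to showing that the number of $ab$-type intervals is finite and at least $1$. Throughout I will exploit the fact that $A$ and $B$ are closed and disjoint, which lets me take minima and maxima of intersections of $A$ or $B$ with compact arcs.

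For the bijections, I will match (i) with (iii) as follows. Given a complementary interval $I = (a_1, a_2)$ of $A$ meeting $B$, the set $I \cap B$ equals $\overline{I} \cap B$ (since $a_1, a_2 \in A$ are not in $B$), hence is compact and nonempty. Let $b^-$ be its minimum in the orientation induced from $S$; then $(a_1, b^-)$ is an $ab$-type complementary interval of $A \cup B$. The inverse sends an $ab$-type interval $(a,b)$ to the complementary interval of $A$ containing $b$, which necessarily has $a$ as its left endpoint. Replacing $\min$ with $\max$ gives a bijection (i)$\leftrightarrow$(iv), and swapping the roles of $A$ and $B$ gives bijections (ii)$\leftrightarrow$(iii) and (ii)$\leftrightarrow$(iv). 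So all four counts coincide.

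For finiteness, I will use that the complementary intervals of $A \cup B$ are pairwise disjoint open arcs in $S$ whose lengths are summable. An infinite family of $ab$-type intervals $(a_i, b_i)$ would therefore have lengths tending to $0$; after passing to a subsequence, $a_i \to x$ and $b_i \to x$ for some $x \in S$, and closedness of $A$ and $B$ would force $x \in A \cap B = \emptyset$, a contradiction.

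For positivity, I will fix any $a_0 \in A$ and walk positively along $S$ starting from $a_0$. Since $B$ is compact and disjoint from $\{a_0\}$, the first point $b^*$ of $B$ encountered exists; then $a^* = \max(A \cap [a_0, b^*])$ exists and satisfies $a^* < b^*$, so $(a^*, b^*)$ is an $ab$-type complementary interval of $A \cup B$. There is no substantial obstacle here: compactness supplies all extremal points, and the only slightly delicate step is verifying that the bijections among (i)--(iv) are well-defined and mutually inverse, which is just an unpacking of the definitions.
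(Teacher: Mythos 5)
Your proof is correct and follows essentially the same route as the paper: the $\min$/$\max$ constructions you use to match types (i)--(iv) are exactly the paper's observation that each type (i) interval has an initial segment of type (iii) and a terminal segment of type (iv), and your finiteness argument (shrinking intervals forcing a point of $A \cap B$) and nonemptiness argument are the same compactness/disjointness considerations. No gaps to report.
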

\begin{proof}
    Since $A$ and $B$ are nonempty, there is at least one interval of type (i).
    
    There can only be finitely many intervals of type (i). Indeed, if there were infinitely many distinct such intervals $(a_i, a'_i)$ then their diameters must tend to $0$ so a subsequence would converge to a single point $s$. Since the endpoints are in $A$, and each contains a point in $B$, this would mean $s \in A \cap B$, contradicting the assumption that $A$ and $B$ are disjoint.

    Each type (i) interval has an initial segment that is a type (iii) interval and a terminal segment that is a type (iv) interval. This determines bijections between type (i), (iii) and (iv) intervals. Similarly, each type (ii) interval has an initial segment that is a type (iv) interval and a terminal segment that is a type (iii) interval.
\end{proof}

Note that disjoint closed subsets are unlinked iff they are $1$-linked and linked iff they are $n$-linked for $n \geq 2$.

Linking in the circle can force intersection in the disc:

\begin{lemma}\label{lemma:iflinkthenintersect}
    Let $A, B$ be closed, connected subsets of a disc $\bD = D \sqcup S$. If $\del A$ and $\del B$ are linked in $S$ then $A$ and $B$ intersect.

    In particular, if $\del A$ and $\del B$ are disjoint and linked then $A$ and $B$ intersect in the interior $D$.
\end{lemma}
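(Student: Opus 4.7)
The plan is to argue by contradiction. Suppose $A \cap B = \emptyset$. Since $A$ and $B$ are disjoint closed subsets of the compact space $\bD$, we have $\epsilon := d(A,B) > 0$ in any chosen metric on $\bD$. Pick points $a_1, a_2 \in \del A$ and $b_1, b_2 \in \del B$ with $\{a_1,a_2\}$ separating $\{b_1,b_2\}$ in $S$, as guaranteed by linking. Let $U$ be the connected component of the open $\epsilon/3$-neighborhood of $A$ containing $A$, and define $V$ analogously for $B$. These are disjoint, connected, open subsets of the $2$-manifold-with-boundary $\bD$ with $A \subset U$ and $B \subset V$, and moreover $\overline U \cap \overline V = \emptyset$. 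Since connected open subsets of a manifold are path-connected, $U$ and $V$ are path-connected.

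The next step is to produce, inside $U$ and $V$, simple arcs joining the two relevant boundary points. Because $a_1, a_2 \in U \cap S$ and $U$ is a path-connected open subset of $\bD$, standard arguments yield a simple arc $\gamma_U \subset \overline U$ from $a_1$ to $a_2$ whose interior lies in $D$; the interior can be arranged to miss $S$ by using that $U$ is open and contains interior points of $\bD$ arbitrarily close to $a_1$ and $a_2$. Construct $\gamma_V \subset \overline V$ from $b_1$ to $b_2$ in the same way. Because $\overline U \cap \overline V = \emptyset$, the arcs $\gamma_U$ and $\gamma_V$ are disjoint.

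The key step is a Jordan-arc argument. Since $\gamma_U$ is a simple arc in $\bD$ meeting $S$ exactly at its endpoints $a_1, a_2$, the union of $\gamma_U$ with either arc of $S \setminus \{a_1,a_2\}$ is a simple closed curve in $\bD$. By the Schoenflies theorem, $\gamma_U$ separates $\bD$ into two complementary regions, and the two arcs of $S \setminus \{a_1,a_2\}$ lie in the closures of different regions. The linking hypothesis places $b_1$ and $b_2$ on different arcs of $S \setminus \{a_1,a_2\}$, hence in the closures of different complementary regions of $\gamma_U$ in $\bD$. Therefore any connected subset of $\bD$ containing both $b_1$ and $b_2$ must meet $\gamma_U$; applying this to the connected set $\gamma_V$ gives $\gamma_U \cap \gamma_V \neq \emptyset$, contradicting $\overline U \cap \overline V = \emptyset$. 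This contradiction shows $A \cap B \neq \emptyset$. For the ``in particular'' clause, when $\del A \cap \del B = \emptyset$ no intersection point of $A$ and $B$ can lie on $S$, so any such point lies in $D$.

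The main obstacle is the construction and analysis of the arcs $\gamma_U$ and $\gamma_V$: one must know that a connected open subset of $\bD$ with prescribed boundary points admits a simple arc between them meeting $S$ only at its endpoints. This is a standard surface-topology fact, but it is the one place where the $2$-dimensional structure of $\bD$ is essentially used, and it makes the Schoenflies-based separation conclusion available.
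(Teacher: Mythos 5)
Your proof is correct. You argue by contradiction: thicken the disjoint sets $A$ and $B$ to disjoint open connected neighborhoods $U$, $V$, draw a crosscut $\gamma_U$ through the linked pair $a_1,a_2\in\del A$ and a crosscut $\gamma_V$ through $b_1,b_2\in\del B$, and invoke the Jordan--Schoenflies separation of $\bD$ by a crosscut to force $\gamma_U\cap\gamma_V\neq\emptyset$, a contradiction. (One small wording point: you want $b_1,b_2$ to lie in \emph{different components} of $\bD-\gamma_U$, not merely in their closures; this is immediate since $b_1,b_2\notin\gamma_U$, so the argument goes through.) The paper takes a related but structurally different route: it first proves an auxiliary lemma that for a closed, connected, unbounded $A\subset\bD$, the trace on $S$ of each complementary region of $A$ is a single complementary interval of $\del A$ (proved by noting that a path in a complementary region joining two distinct complementary intervals would separate $A$), and then deduces the linking statement by contraposition---if $A\cap B=\emptyset$ then $B$ lies in one complementary region of $A$, so $\del B$ sits in one complementary interval of $\del A$ and the boundaries are unlinked. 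Both arguments ultimately rest on the same crosscut-separation fact; the difference is that your version is a self-contained direct contradiction requiring the metric thickening and explicit arc constructions, while the paper's version isolates a structural statement about complementary regions that it reuses later (e.g.\ for \Cref{corollary:SeparatingBasedOnEnds} and \Cref{lemma:RegionBetween}), and avoids having to construct simple arcs with endpoints on $S$ by hand.
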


This follows immediately from:
\begin{lemma}
    Let $A$ be a closed, connected, unbounded subset of a disc $\bD = D \sqcup S$. Then for each complementary region $U$ of $A$, $\partial U$ is either $\emptyset$ or a complementary interval of $\del A$.
\end{lemma}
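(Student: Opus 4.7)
The plan is to show that $\del U = U \cap S$, being open in $S$ and contained in $S - \del A$, is a single complementary interval of $\del A$ whenever it is nonempty. The argument splits into two steps: (a) show that $\del U$ meets each complementary interval $I$ of $\del A$ either not at all or entirely, and (b) show that $\del U$ meets at most one such $I$.

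For (a), the key input is the standard fact that for a component $U$ of the open set $\bD - A$, the topological boundary $\partial U = \bar U - U$ lies in $A$: if $x \in \bar U \setminus A$, then $x$ lies in some component $U'$ of $\bD - A$, which is open and meets $U$, so $U' = U$ and $x \in U$. Now let $I$ be a complementary interval of $\del A$ and suppose $\del U \cap I$ is a nonempty proper subset of $I$. Since $\del U \cap I$ is open in the connected arc $I$, it has a frontier point $c$ lying in $I$. Then $c$ is a limit of points in $\del U \subset U$, so $c \in \bar U$; but $c \notin \del U$ forces $c \notin U$, hence $c \in \partial U \subset A$. This contradicts $c \in I \subset S - \del A$, so $\del U \cap I$ is either $\emptyset$ or all of $I$.

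For (b), suppose for contradiction that $\del U$ contains two distinct complementary intervals $I_1, I_2$. Pick $p_i \in I_i \subset \del U \subset U$. Since $U$ is open and connected in $\bD$, hence path-connected, I may choose a simple arc $\gamma \subset U$ from $p_1$ to $p_2$ whose interior lies in the open disc $D$. By the Jordan--Schoenflies theorem, $\bD - \gamma$ has exactly two connected components, whose closures are topological half-discs meeting along $\gamma$, with boundaries-on-$S$ equal to the two closed arcs $\alpha_1, \alpha_2$ of $S$ from $p_1$ to $p_2$. Because $A \cap \gamma \subset A \cap U = \emptyset$ and $A$ is connected, $A$ lies entirely in one of these components. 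On the other hand, each $\alpha_i$ must leave $I_1$ through an endpoint of $I_1$ and enter $I_2$ through an endpoint of $I_2$; these endpoints are in $\del A$ and are distinct from $p_1, p_2$ (since $p_j$ is an interior point of its own $I_j$). Hence $\del A$ meets both $\alpha_1 - \{p_1,p_2\}$ and $\alpha_2 - \{p_1,p_2\}$, placing points of $\del A \subset A$ in both components of $\bD - \gamma$, a contradiction.

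The main technical subtlety lies in step (b): one must arrange $\gamma$ to be a simple arc with interior in the open disc $D$, not just any path in $U$, so that the Jordan--Schoenflies theorem applies and $\bD - \gamma$ splits cleanly into two half-disc regions. This arrangement is available because $U$ is an open connected subset of the disc, hence path-connected and locally connected at its boundary points on $S$. Combining (a) and (b) yields the claim.
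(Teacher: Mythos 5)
Your proof is correct and follows essentially the same route as the paper's: show that $\partial U = U \cap S$ is a union of complementary intervals of $\partial A$, then rule out two such intervals by running an arc through $U$ between them and using that it separates the endpoints of one interval, hence would disconnect $A$. The only difference is that you spell out the details the paper leaves implicit (the frontier argument for step (a), and replacing "a path" by a simple arc with interior in $D$ so Jordan--Schoenflies applies in step (b)), which is a faithful elaboration rather than a new approach.
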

\begin{proof}
    Let $U$ be a complementary region of $A$ with $\partial U \neq \emptyset$. Then $\partial U$ must be a union of complementary intervals of $\partial A$. If $U$ contains two distinct complementary intervals $I, J$ of $\partial A$, then it contains a path from $I$ to $J$ since $\bbD$ is locally path connected. This path separates the endpoints of $I$, hence separates $A$, contradicting that $A$ is connected. Hence $U$ must be a single complementary interval of $\del A$. (This is the same argument as \cite[Lem. 6.2]{Frankel_closing}.)
\end{proof}

This also implies:

\begin{corollary}\label{corollary:SeparatingBasedOnEnds}
    Let $A, B, C$ be closed, connected, unbounded, and pairwise disjoint subset of a disc $\bD = D \sqcup S$. Then $B$ separates $A$ from $C$ in $\bD$ if and only if $\partial B$ separates $\partial A$ from $\partial C$ in $S$.
\end{corollary}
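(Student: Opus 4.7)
The plan is to derive both directions almost directly from the preceding lemma, which says that for a closed, connected, unbounded subset of $\bD$, each complementary region meets $S$ either in the empty set or in a single complementary interval of the end set. The role of the unboundedness hypotheses on $A$ and $C$ is precisely to guarantee that the ``empty'' alternative does not occur in the relevant situations.

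For the forward direction, assume $B$ separates $A$ from $C$ in $\bD$. Since $A$ and $C$ are connected and disjoint from the closed set $B$, each lies in a single complementary region of $B$; call these regions $U_A$ and $U_C$, and note that $U_A \neq U_C$ by the separation hypothesis. Because $A$ is unbounded and contained in $U_A$, we have $\emptyset \neq \del A \subset U_A \cap S$, so the preceding lemma (applied to $B$) identifies $U_A \cap S$ with a complementary interval $I_A$ of $\del B$; likewise $\del C \subset U_C \cap S = I_C$. Since $U_A$ and $U_C$ are disjoint, $I_A \neq I_C$, so $\del A$ and $\del C$ lie in different components of $S - \del B$, and thus $\del B$ separates $\del A$ from $\del C$.

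For the reverse direction, I would argue the contrapositive. Suppose $B$ does not separate $A$ from $C$, so $A \cup C$ lies in a single complementary region $U$ of $B$. As before, $A$ unbounded forces $U \cap S \neq \emptyset$, so the preceding lemma identifies $U \cap S$ with a single complementary interval $I$ of $\del B$. Then $\del A \cup \del C \subset I$, which means $\del A$ and $\del C$ lie in a common component of $S - \del B$, so $\del B$ does not separate them.

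I do not expect any genuine obstacle here; the content has been done in the preceding lemma, and the only subtlety is bookkeeping around the possibility $U \cap S = \emptyset$, which is ruled out each time by having an unbounded set ($A$) inside $U$. Everything else is immediate from the fact that distinct complementary regions of $B$ are disjoint open subsets of $\bD$ and hence have disjoint intersections with $S$.
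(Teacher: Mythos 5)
Your proof is correct and matches the paper's intent: the paper states this corollary without proof as an immediate consequence of the preceding lemma, and your argument is exactly that deduction, using connectedness to place $A$ and $C$ in single complementary regions of $B$ and unboundedness to rule out the $U \cap S = \emptyset$ alternative. No issues.
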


The following is \cite[Lemma 6.9]{Frankel_closing}, but repackaged to be more general.

\begin{lemma}\label{lemma:RegionBetween}
    Let $A, B$ be closed, connected, and disjoint subsets of a disc $\bD = D \sqcup S$. Then there is a unique complementary region $U$ of $A \cup B$ that accumulates on both $A$ and $B$, called the \emph{region between $A$ and $B$}. If $A$ and $B$ are unbounded, then $\partial U = (a, b) \sqcup (b', a')$ for $a, a' \in A$ and $b, b' \in B$.
\end{lemma}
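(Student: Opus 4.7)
The proof breaks into three parts: existence of the ``between'' region, its uniqueness, and the description of $\del U$ in the unbounded case.

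For existence I would use a path argument. Since $\bD$ is path-connected, choose $\gamma\colon[0,1]\to\bD$ with $\gamma(0)\in A$ and $\gamma(1)\in B$. Setting $t_0=\sup\{t:\gamma(t)\in A\}$ and $t_1=\inf\{t\ge t_0:\gamma(t)\in B\}$, closedness of $A$ and $B$ gives $\gamma(t_0)\in A$, $\gamma(t_1)\in B$, and $\gamma((t_0,t_1))\subset\bD\setminus(A\cup B)$. This open subarc is connected, hence lies in a single complementary region $U$, whose closure meets both $A$ and $B$ at $\gamma(t_0)$ and $\gamma(t_1)$.

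For uniqueness, suppose $V\ne U$ is a second such region. Since $B$ is connected and disjoint from $A$, it lies in a unique complementary region $W_A$ of $A$, so accumulation on $B$ forces $U,V\subset W_A$. The plan is to derive a contradiction from simple connectivity of $\bD$: using local path-connectivity of $\bD$ and that $U,V$ are open, build approximating arcs inside $U$ and inside $V$ whose endpoints limit to a point of $A$ and a point of $B$, and patch them near $A$ and $B$ into a simple closed curve $\sigma\subset\bD$ whose non-$(A\cup B)$ portion lies in $U\cup V$. The Jordan curve theorem gives a bounded disc $\Delta$, which must split $A$ or $B$ into a proper nonempty clopen piece, contradicting their connectedness.

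For the structure of $\del U$ when $A,B$ are unbounded, $\del A$ and $\del B$ are disjoint nonempty closed subsets of $S$. By the contrapositive of \Cref{lemma:iflinkthenintersect} they are unlinked, so \Cref{lemma:finitelinking} makes them $1$-linked: exactly one complementary interval of $\del A\cup \del B$ has the form $(a,b)$ with $a\in\del A,b\in\del B$, exactly one has the form $(b',a')$ with $b'\in\del B,a'\in\del A$, and all others have both endpoints in $\del A$ or both in $\del B$. A same-type interval is itself a complementary interval of $\del A$ or $\del B$, and by the lemma preceding \Cref{corollary:SeparatingBasedOnEnds} it equals $\del W$ for a complementary region $W$ of $A$ or of $B$ which, being distinct from $W_A$ or $W_B$, is disjoint from $B$ or from $A$; such a $W$ does not accumulate on both, so $\del U$ contains no same-type interval. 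On the other hand, each interior point of the $(a,b)$- or $(b',a')$-interval lies in a complementary region of $A\cup B$ that accumulates on both endpoints and therefore on both $A$ and $B$; uniqueness identifies this region as $U$, giving $\del U=(a,b)\sqcup(b',a')$.

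The hard part will be the Jordan-curve construction in the uniqueness argument. Since $A$ and $B$ are only assumed connected (not arc-connected), one cannot simply path-connect chosen points within them; and $U,V$ need not have accessible boundary. The approximating arcs inside $U$ and $V$ must instead be built with diameters controlled so that the concatenation limits to points of $A$ and $B$ at its ``endpoints'' and so that, together with short linking segments near $A$ and $B$, the resulting closed curve can be made simple before the Jordan curve theorem is applied.
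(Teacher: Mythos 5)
Your existence argument and your computation of $\del U$ are essentially sound (the latter leans on uniqueness, which is fine given the statement's structure). The genuine gap is the uniqueness step: what you give is a plan, not a proof, and the plan stalls on exactly the obstructions you name. To close the curve you must join the arc in $U$ to the arc in $V$ ``near $A$'' and ``near $B$''; since $A$ and $B$ need not be arcwise connected and the frontiers of $U$ and $V$ need not contain accessible points at the relevant places (nor be locally connected), there is no evident way to do this while keeping $\sigma$ simple and controlling what it meets. Worse, even granted such a curve, the concluding step does not follow: if $\sigma$ meets $A$, then the Jordan disc separates $A-\sigma$, but that does not contradict connectedness of $A$, since the two pieces may be joined through $A\cap\sigma$; and if you insist $\sigma$ is disjoint from $A\cup B$, then the requirement that its non-$(A\cup B)$ portion lie in $U\cup V$ forces $U$ and $V$ to share a frontier point, which is precisely what is not established. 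In effect you are attempting to re-derive a Phragmen--Brouwer/unicoherence-type statement for the disc by hand from the Jordan curve theorem; that is the hard classical content here, not a patchable detail.

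The paper sidesteps all of this. Let $U(A,B)$ be the complementary region of $A$ containing $B$ and $U(B,A)$ the complementary region of $B$ containing $A$, and set $U=U(A,B)\cap U(B,A)$. Any complementary region of $A\cup B$ accumulating on both $A$ and $B$ is contained in $U$ (it is connected, disjoint from $A$, and its closure meets $B$, so it lies in $U(A,B)$; symmetrically for $U(B,A)$), so both existence and uniqueness reduce to showing $U$ is connected. This is exactly the Phragmen--Brouwer property of $\bD$ (the union of two disjoint nonseparating closed sets is nonseparating, \cite[\S II.4]{Wilder}) applied to $U(A,B)^c\cup U(B,A)^c$. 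It also makes the boundary description immediate: $U\cap S=(a,a')\cap(b,b')=(a,b)\sqcup(b',a')$, using that $\{a,a'\}$ and $\{b,b'\}$ are unlinked by \Cref{lemma:iflinkthenintersect}. If you want to retain your framework, replace the Jordan-curve step with this unicoherence input; as written, the uniqueness claim is unproven.
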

 
\begin{proof}
    Let $U(A,B)$ be the complementary region of $A$ containing $B$, and let $U(B,A)$ be the complementary region of $B$ containing $A$. Let
    \[U=U(A,B)\cap U(B,A).\]
    Any complementary region of $A\cup B$ that accumulates on both $A$ and $B$ must be contained in $U$. Hence for the first claim it suffices to show that $U$ is connected.
    
    Using superscript $c$ to denote complement, we claim that $U(A,B)^c\cup U(B,A)^c$ is nonseparating. This holds because $\bD$ has the Phragmen-Brouwer property (i.e. the union of any two disjoint, nonseparating sets is nonseparating, see \cite[\S II.4]{Wilder}). Since $U=\bD-(U(A,B)^c\cup U(B,A)^c)$, $U$ is connected. (in fact, since $U$ is open and $\bbD$ is locally path connected, $U$ is path connected).

    Now assume $A$ and $B$ are unbounded. We have that $U(A,B) \cap S=(a, a')$ and $U(B,A)\cap S=(b,b')$ for $a,a'\in A$ and $b,b'\in S$ where $\{a,a'\}$ and $\{b,b'\}$ are unlinked by \Cref{lemma:iflinkthenintersect}. Hence 
    \[
    U\cap S= U(A,B)\cap U(B,A)\cap S=(a,b)\sqcup (b', a'). \qedhere
    \] 
\end{proof}

\part{Decompositions of the circle and disc}\label{part:circledisc}

A closed collection of pairwise disjoint geodesics in the hyperbolic plane, called a \emph{geodesic lamination}, corresponds naturally to a closed collection of pairwise unlinked $2$-point subsets of the circle at infinity. This correspondence goes in both directions, earning the latter the title of \emph{lamination} of the circle. This simple idea appears throughout $2$- and $3$-dimensional geometry, topology, and dynamics, where $\pi_1$-invariant laminations of a circle are used to produce laminations of manifolds.

With additional conditions, laminations of a circle may also be used to represent certain singular foliations of the plane, obtained by ``blowing down'' complementary regions of the corresponding geodesic laminations. They can be used, for example, to construct stable and unstable foliations for the pseudo-Anosov representative of an aperiodic irreducible homotopy class of homeomorphisms of a surface.% {\ED REF}.

In this self-contained first part of the paper we will build a theory of ``especial pairs.'' These are pairs of decompositions of the circle designed to more generally and directly represent transverse pairs of singular foliations of the plane. We will apply this in \Cref{part:flowstraightening} to transform the positive and negative leaf decompositions of a quasigeodesic flow into a transverse pair of singular foliations.

The reader who is less familiar with quasigeodesic and pseudo-Anosov flows may wish to read \Cref{sec:flowbackground} now as motivation for \Cref{part:circledisc}, but this is not logically necessary.

\section{Definitions and results}\label{section:part1definitions}

\subsection{Definitions}
The fundamental objects of \Cref{part:circledisc} are ``especial pairs'' on the circle, which are designed to represent the data at infinity of a transverse pair of essential singular foliations of the plane.

\begin{definition}\label{definition:spS}
	A decomposition $\Lambda$ of a circle $S$ is \emph{special} if it is upper semicontinuous and has the following two properties.
	\begin{enumerate}[label=(\arabic*)]
		\item\label{it:spSUnlinked} \emph{unlinking:} distinct $\lambda, \lambda' \in \Lambda$ are unlinked, and 
		
		\item\label{it:spSNesting} \emph{nesting}: given any $\lambda \in \Lambda$, and a disjoint compact interval $I \subset S$, there is a decomposition element $\lambda' \in \Lambda$ that separates $I$ from $\lambda$.
	\end{enumerate}
	
	A pair of decompositions $\Lambda^\pm$ of $S$ has \emph{efficient intersection} if $\lambda^+ \cap \lambda^-$ is at most one point for any $\lambda^+ \in \Lambda^+$ and $\lambda^- \in \Lambda^-$.

    An \emph{especial pair} is pair of special decompositions of a circle having efficient intersection.
\end{definition}

\begin{remark}
Fixing an orientation on $S^1$, each complementary component of an element $\lambda \in \Lambda$ can be written as an oriented interval $(a, b)$ for $a, b \in \Lambda$. We may have $a = b$, which occurs exactly when $\lambda = \{a\}$ is a single point.

The nesting condition \ref{it:spSNesting} is equivalent to the following:
\begin{enumerate}[label=(\arabic*$'$), start=2]
    \item\label{it:spSNestingSequences} Given any $\lambda \in \Lambda$, and any complementary interval $(a, b)$ of $\lambda$, there is a sequence of elements $\lambda_i \in \Lambda$ contained in $(a, b)$ that converge to the ends of this interval. 
\end{enumerate}
By ``the $\lambda_i$ converge to the ends of this interval'' we mean that the $\lambda_i$ converge to the endpoints of the natural two-point compactification of $(a, b)$ obtained by adding a least upper bound and greatest lower bound. 

When $a \neq b$ this is equivalent to $\lim \lambda_i = \{a, b\}$. When $a=b$ it says that $\lim \lambda_i=a$ and that the limiting is from both sides of $a$. %\scom{Figure instead of this?}
\end{remark}

Given an especial pair on a circle, we will construct an essentially canonical ``emuu pair'' of decompositions of the disc. Recall that when we refer to a disc $\bD=D\sqcup S$ we mean that $\bD$ is a closed disc with interior $D$ and boundary $S$.

\begin{definition} \label{def:muuemuu}
    A decomposition $\bSp$ of a disc $\bD = D \sqcup S$ is \emph{unbounded} if element $\bK \in \bSp$ intersects the boundary circle $S$ nontrivially, and \emph{monotone} if each of its elements is connected.

    A pair of decompositions $\bSp^\pm$ of $\bD$ is said to have \emph{efficient intersection} if $\bK^+ \cap \bK^-$ is at most one point for any $\bK^+ \in \bSp^+$ and $\bK^- \in \bSp^-$.

    We will often deal with decompositions of a disc that are monotone, upper semicontinuous, and unbounded, which we abbreviate \emph{muu}. A pair of muu decomositions with efficient intersection will be called an \emph{emuu pair}.
\end{definition}

Recall that we use the notation $\partial A := A \cap S$ for each $A \subset \bD = D \sqcup S$. Given an unbounded decomposition $\bSp$ of $\bD$, we define
\[ \partial \bSp := \{ \partial \bK \mid \bK \in \bSp \}. \]
This is a decomposition of the boundary circle $S$, where $\bK \mapsto \partial \bK$ is a bijection $\bSp \to \partial \bSp$. It is the same as the restriction decomposition $\partial \bSp = \bSp \cap S = \bSp \capdot S$ in \Cref{def:restriction}, where the second equality comes from the assumption that $\bSp$ is unbounded.

\subsection{Results of \Cref{part:circledisc}}

The primary goal of \Cref{part:circledisc} is to prove:

\begin{theorem}\label{theorem:effExt}
	Let $\Lambda^\pm$ be an especial pair on a circle $S$. Then there are decompositions $\bcF^\pm$ of a disc $\bD = D \sqcup S$ with the following properties:
	
	\begin{enumerate}[label=(\arabic*)]
        \item $\bcF^\pm$ is an emuu pair with $\partial \bcF^\pm = \Lambda^\pm$, and is the unique such pair up to a homeomorphism of $\bD$ that fixes $S$.

        \item Every group action $\Gamma \acts S$ that preserves $\Lambda^+$ and $\Lambda^-$ extends uniquely to an action $\Gamma \acts \bD$ that preserves $\bcF^+$ and $\bcF^-$.
        
		\item The decompositions $\bcF^\pm$ determine a transverse pair of essential singular foliations of $D$ given by
        \begin{align*}
            \cF^\pm &:= \Mon(\bcF \capdot D) \\
                    &= \{ \text{components of } \bK \cap D \neq\emptyset\mid \bK \in \bcF^\pm\}.
        \end{align*}
	\end{enumerate} 
\end{theorem}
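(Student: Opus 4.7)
The plan is to prove the theorem in four pieces corresponding to its assertions: existence of $\bcF^\pm$, uniqueness up to boundary-fixing homeomorphism of $\bD$, equivariance, and the singular-foliation property. Conditions (1) and (2) are the substantive content; (3) will be a quick consequence once the pair has been built and analyzed using the machinery of \Cref{sec:Decompositions}.

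For existence, I would work in a concrete model (say the round closed disc) and build the pair $\bcF^\pm$ by associating to each $\lambda\in\Lambda^\pm$ a closed, connected, unbounded \emph{hull} $\bK_\lambda\subset\bD$ with $\bK_\lambda\cap S=\lambda$, arranged so that the hulls within each family $\{\bK_{\lambda^+}\}$ and $\{\bK_{\lambda^-}\}$ are pairwise disjoint (enforced by unlinking through \Cref{corollary:SeparatingBasedOnEnds}) and each linked pair $(\lambda^+,\lambda^-)$ determines a single transverse interior point (forced by the efficient intersection of $\Lambda^\pm$ on $S$ together with \Cref{lemma:iflinkthenintersect}). The naive candidate---hyperbolic convex hulls---must be modified so that singleton $\lambda$'s carry enough interior mass to make the family cover $\bD$ and so that linked pairs share exactly one interior point. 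My preferred route is quotient-based: start with a combinatorial model disc carrying two transverse product foliations that realize the correct linking graph on $S$, then pass to the quotient by an explicit upper semicontinuous decomposition that records the identifications demanded by $\Lambda^\pm$. Moore's theorem (\Cref{theorem:Moore}, \Cref{corollary:Mooredisc}) guarantees the quotient is a disc provided the collapsed sets are compact, connected, and nonseparating, and upper semicontinuity of $\bcF^\pm$ follows from \Cref{lemma:intersectionUSC} and \Cref{theorem:USCConditions} using the nesting condition \ref{it:spSNestingSequences} (which ensures Kuratowski limits of hulls are contained in hulls).

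For uniqueness, given two emuu pairs $\bcF^\pm$ and $\bcF'^{\pm}$ with $\partial\bcF^\pm=\partial\bcF'^{\pm}=\Lambda^\pm$, I would construct a boundary-fixing homeomorphism $h\colon\bD\to\bD$ by sending each $\bK_\lambda$ to $\bK'_\lambda$. Each $\bK_\lambda$ is connected, unbounded, and nonseparating, hence ``disc-like'' after collapse, so a piecewise Schoenflies argument, applied in the regions carved out by finitely-linked hull pairs and inductively refined using nesting, glues these matchings coherently. Condition (2) on group actions then follows formally: given $\Gamma\acts S$ preserving $\Lambda^\pm$, extend each $\gamma$ arbitrarily to $\tilde\gamma\colon\bD\to\bD$ via Alexander's lemma; since $\tilde\gamma_*\bcF^\pm$ is an emuu pair with $\partial=\Lambda^\pm$, uniqueness furnishes a boundary-fixing correction $h_\gamma$ making $\gamma_\bD:=h_\gamma\tilde\gamma$ preserve $\bcF^\pm$. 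For the asserted uniqueness of the extension, one sharpens the uniqueness statement to: any boundary-fixing self-homeomorphism of $\bD$ preserving $\bcF^\pm$ is the identity---a rigidity fact following because such a map fixes the set of hull crossings and acts as the identity on boundary arcs of each complementary region.

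Assertion (3) is then essentially bookkeeping: $\bcF^\pm\capdot D$ is upper semicontinuous by \Cref{lemma:restrictionUSC}, so its monotonization $\cF^\pm=\Mon(\bcF^\pm\capdot D)$ is upper semicontinuous by \Cref{lemma:monotonizationUSC}; transversality is immediate from efficient intersection; essentiality (every leaf accumulates on $S$) comes from unboundedness plus the nesting condition; and the singular set is discrete because singularities correspond to interior points lying on multi-point hulls. The hard part, and the reason Part 1 of the paper devotes several sections to this, is the construction in (1): simultaneously enforcing unboundedness, monotonicity, upper semicontinuity, efficient intersection, \emph{and} the prescribed boundary behavior requires a careful interleaving of the two families of hulls, and it is here that the full strength of the especial-pair axioms---unlinking, nesting, and efficient intersection on $S$---is used.
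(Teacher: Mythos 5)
Your high-level plan for existence (hulls plus a Moore-type quotient) is in the right family of ideas, but the construction is missing exactly at the decisive point. The paper's proof takes \emph{both} Euclidean hull decompositions $\bcH^\pm=\Hull(\Lambda^\pm)$ of the round disc --- note that your worry about singleton elements is unfounded: nesting already forces $|\Hull(\Lambda^\pm)|=\bbD$, so each hull family is a muu decomposition on its own (\Cref{proposition:HullDecomposition}); the only defect is that the two families intersect inefficiently in the interior. It then forms the intersection decomposition $\bcH^\cap$ of all nonempty sets $H^+\cap H^-$, whose elements are compact, convex, nonseparating (efficient intersection of $\Lambda^\pm$ on $S$ means each such set meets the boundary in at most one point) and upper semicontinuous by \Cref{lemma:intersectionUSC}, and collapses $\bcH^\cap$ via \Cref{corollary:Mooredisc}; the images of $\bcH^\pm$ under the collapse are then verified to be an emuu pair restricting to $\Lambda^\pm$ (\Cref{construction:emuuextension}, \Cref{proposition:effExtExistence}). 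Your substitute --- ``a combinatorial model disc carrying two transverse product foliations that realize the correct linking graph on $S$,'' then an unspecified quotient --- is circular: producing a bifoliated disc realizing arbitrary especial boundary data is precisely the theorem, and you never say which decomposition is collapsed nor why its elements are compact, connected and nonseparating.

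The uniqueness step has the more serious gap. Matching $\bK_\lambda\mapsto\bK'_\lambda$ defines a map of decomposition elements, not of points; to promote it you must know that every $p\in D$ is the unique point of $\bcF^+(p)\cap\bcF^-(p)$ \emph{and} exactly which pairs $(\lambda^+,\lambda^-)$ are realized by interior points, with continuity. The paper's mechanism is the double boundary map $\delta(p)=(\partial\bcF^+(p),\partial\bcF^-(p))$: injectivity is efficient intersection, continuity comes from upper semicontinuity (\Cref{cor:emuudoubleboundaryhomeo}), and identifying the image with the especial disc $\bZ$ needs the nontrivial fact that the boundary sets through any interior point intersect or link (\Cref{lemma:linkingandintersecting}, which uses \Cref{Wilderfact}), together with \Cref{lemma:iflinkthenintersect} for surjectivity; uniqueness and the action statement (2) then fall out formally (\Cref{theorem:emuuUniqueness}, \Cref{corollary:EfficientExtensionsAction}), as you anticipate. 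Your ``piecewise Schoenflies argument in the regions carved out by finitely-linked hull pairs, inductively refined using nesting'' does not supply this: an abstract emuu pair need not consist of hulls, there may be infinitely many such regions accumulating in complicated ways, and nothing in the sketch yields well-definedness or continuity of the glued map. Finally, item (3) is not bookkeeping: transversality in the sense of bifoliated $2n$-gon charts does not follow ``immediately from efficient intersection''; the paper constructs charts sector by sector using nesting, segments within elements (\Cref{lemma:SprigSegments}), separation intervals (\Cref{lemma:separationintervals}, \Cref{theorem:MooreForArcs}), and another quotient argument (\Cref{prop:emuutofoliations}), and this takes a full section.
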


Item (1) is \Cref{proposition:effExtExistence} and \Cref{theorem:emuuUniqueness}. Item (2) is \Cref{corollary:EfficientExtensionsAction}. Item (3) follows from \Cref{prop:emuutofoliations}, which says that in fact any emuu pair on $\bD$ induces a pair of transverse essential singular foliations in this way.

\begin{remark}
   See \Cref{sec:singularfoliations} for our definition of an essential singular foliation of the plane. It is more general than some in that it allows leaves to have multiple singularities.
\end{remark}

To conclude \Cref{part:circledisc}, in \Cref{sec:Straightening} we introduce the concept of ``straightening maps."
Starting with a pair $\bSp^\pm$ of muu decompositions of a disc $\bD$ whose elements intersect in a nice way (a so-called ``proper" muu pair), 
we identify a certain closed subset $\bD_\lk$ of $\bD$, and build a surjective ``straightening map"
\[\bs\colon\bD_\lk\to \bD\]
that carries $\bSp^\pm$ to an emuu pair $\bcF^\pm$ on $\bD$. We prove in particular that if $\Gamma$ is a group acting on $\bD$ preserving $\bSp^\pm$, then
\begin{itemize}
    \item $\bD_\lk$ is $\Gamma$-invariant,
    \item there is a corresponding action of $\Gamma$ on $\bD$ preserving $\bcF^\pm$, and
    \item the straightening map $\bs$ intertwines the actions.
\end{itemize}

In \Cref{part:flowstraightening}, a straightening map will be used to build a pseudo-Anosov flow from a quasigeodesic one. There, the proper muu pair is the positive and negative sprig decompositions of the compactified flowspace (see \Cref{sec:flowbackground} for definitions).

\section{From especial pairs to emuu pairs}\label{section:StraighteningFlowspace}

\subsection{Special and muu decompositions}\label{subsec:SpecialAndMUU}
We begin by showing that any especial decomposition of a circle extends to a muu decomposition of a disc, and any muu decomposition of a disc restricts to a special decomposition of the boundary circle.

Here, we will say that an unbounded decomposition $\bcD$ of a disc $\bD$ \emph{extends} or \emph{restricts to} a decomposition $\Lambda$ of $\partial \bD$ if $\partial \bcD = \Lambda$. 

\subsubsection{Extending special decompositions}
Given a decomposition $\Lambda$ of a circle $S$, identify $S$ with the boundary of the Euclidean unit disc $\bD = D \sqcup S$ and let $\Hull(\cdot)$ denote the Euclidean convex hull operation. We define
\[ \Hull(\Lambda) := \{ \Hull(\lambda) \mid \lambda \in \Lambda  \}.\]

\begin{proposition}\label{proposition:HullDecomposition}
    If $\Lambda$ is a special decomposition of $S$ then $\Hull(\Lambda)$ is a muu decomposition of $\bD$ that extends $\Lambda$.
\end{proposition}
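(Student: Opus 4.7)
My plan is to verify the four defining properties of a muu decomposition for $\Hull(\Lambda)$ --- monotone, upper semicontinuous, unbounded, and covering $\bD$ --- together with the extension property $\partial \Hull(\Lambda) = \Lambda$. Monotonicity is immediate since convex hulls are connected. The identity $\Hull(\lambda) \cap S = \lambda$ follows from strict convexity of $\bD$: any $s \in S \setminus \lambda$ is separated from the compact set $\lambda$ by a Euclidean chord, so $s \notin \Hull(\lambda)$. This gives extension and unboundedness simultaneously. Pairwise disjointness of the hulls uses unlinking: distinct $\lambda, \lambda' \in \Lambda$ are disjoint closed subsets of $S$, and $\lambda'$ lies in a single complementary interval $(a, b)$ of $\lambda$; since $\lambda'$ is compact and disjoint from $\lambda$ (which contains the endpoints of the interval when $a \neq b$), $\lambda'$ is bounded away from $\{a, b\}$, so a chord interior to $(a, b)$ strictly separates $\Hull(\lambda)$ from $\Hull(\lambda')$.

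For upper semicontinuity I would verify condition (3) of \Cref{theorem:USCConditions}: if $\Hull(\lambda_n) \to K$ in Hausdorff distance, pass to a subsequence so that $\lambda_n \to \lambda_\infty$ in $S$; continuity of the convex-hull operation under Hausdorff convergence forces $K = \Hull(\lambda_\infty)$, and USC of $\Lambda$ yields $\lambda_\infty \subset \lambda$ for some $\lambda \in \Lambda$, whence $K \subset \Hull(\lambda)$. The same argument also shows that $U := \bigcup_{\lambda \in \Lambda} \Hull(\lambda)$ is closed in $\bD$.

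The main obstacle --- and the only place the nesting hypothesis enters --- is proving $U = \bD$. I would argue by contradiction: suppose $x \in \bD \setminus U$, and take any chord $[p, q]$ through $x$ with $p \in \lambda_p$ and $q \in \lambda_q$. Convexity rules out $\lambda_p = \lambda_q$ (else $x \in \Hull(\lambda_p)$). Since $U$ is closed, $x$ lies in a maximal open sub-segment $(y, z)$ of $[p, q]$ disjoint from $U$, with $y \in \Hull(\lambda_y)$ and $z \in \Hull(\lambda_z)$; again convexity forces $\lambda_y \neq \lambda_z$. By unlinking, $\lambda_z$ lies in a complementary interval $(a, b)$ of $\lambda_y$, and nesting applied to $\lambda_y$ and a compact sub-interval $I \subset (a, b)$ containing $\lambda_z$ produces $\lambda' \in \Lambda$ that separates $\lambda_z$ from $\lambda_y$ in $S$. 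By \Cref{corollary:SeparatingBasedOnEnds}, $\Hull(\lambda')$ then separates $\Hull(\lambda_y)$ from $\Hull(\lambda_z)$ in $\bD$, so the sub-chord from $y$ to $z$ meets $\Hull(\lambda')$ in a non-empty closed segment. But $y$ and $z$ are not in $\Hull(\lambda')$ (pairwise disjointness of hulls), so this segment lies inside $(y, z) \subset \bD \setminus U$ while also lying in $\Hull(\lambda') \subset U$ --- a contradiction.
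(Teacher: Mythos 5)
Your proof is correct, and while most of it runs parallel to the paper's, the crucial step---showing the hulls fill $\bD$---takes a genuinely different route. The routine parts (connectedness of hulls, $\Hull(\lambda)\cap S=\lambda$, disjointness of hulls of unlinked sets, and upper semicontinuity via Hausdorff continuity of the hull operation) are exactly what the paper packages as \Cref{lemma:Hulls}, \Cref{lemma:HullHausdorffDistances}, and \Cref{lemma:SequencesOfHulls}, and your USC argument via condition (3) of \Cref{theorem:USCConditions} is the same as the paper's. For $|\Hull(\Lambda)|=\bD$, however, the paper takes the largest metric ball $B$ centered at a putative missing point, finds a hull $\Hull(\lambda)$ touching $\overline B$, and uses the nesting property in the form of a sequence $\lambda_i$ in the relevant complementary interval $(a,b)$ of $\lambda$ converging to its ends, so that $\Hull(\lambda_i)$ eventually cuts through $B$; this is short but leans on a bit of implicit Euclidean tangency geometry. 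You instead first record that the union $U$ of hulls is closed, take a chord through the missing point, isolate the maximal open subsegment $(y,z)$ of the chord missing $U$, and apply nesting once (to $\lambda_y$ and a compact interval around $\lambda_z$) together with \Cref{corollary:SeparatingBasedOnEnds} to produce a hull separating $\Hull(\lambda_y)$ from $\Hull(\lambda_z)$ in $\bD$, which must then meet $(y,z)$---a contradiction. Your version trades the metric/tangency reasoning for the purely topological separation corollary (already available before this proposition, so there is no circularity) and makes the role of nesting more transparent; the paper's version is marginally shorter given its convergence-of-hulls lemmas and does not need the separation corollary. Two trivial points you leave implicit but which are harmless: a missing point must lie in the interior since $\Lambda$ covers $S$, and in the USC step the subsequential Hausdorff limit $\lambda_\infty$ is nonempty because the $\lambda_n$ are nonempty closed subsets of the compact circle.
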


The following is fundamental:

\begin{theorem}[Carath\'eodory's Theorem]
    In $\bbR^n$, each point $p \in \Hull(X)$ is contained in $\Hull(\{x_0, x_1, \cdots, x_n\})$ for $(n+1)$ points $x_0, \cdots, x_n \in X$. 
    
    Equivalently, if $\Delta^n$ is the standard $n$-simplex in $\bbR^{n+1}$, the natural map $X^{n+1}\times \Delta^{n}\to \Hull(X)$ taking $((x_0, \cdots, x_{n}),(t_0, \cdots, t_n)) \mapsto \sum{t_i x_i}$ is onto.
\end{theorem}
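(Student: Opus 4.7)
The plan is to prove Carathéodory's theorem by an inductive reduction on the number of points used in a convex combination. By definition, any $p \in \Hull(X)$ can be written as $p = \sum_{i=0}^k t_i x_i$ where $x_i \in X$, $t_i \geq 0$, and $\sum t_i = 1$, for some finite $k$. If $k \leq n$ then the first form of the statement is immediate (pad with repeated points to get exactly $n+1$). So the work is to show that whenever $k > n$, one can re-express $p$ as a convex combination of only $k$ of these points, i.e.\ with one weight forced to zero; iterating drops the count down to $n+1$.

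The key step uses linear (affine) dependence. When $k \geq n+1$, the $k$ vectors $x_1 - x_0, \dots, x_k - x_0$ in $\bbR^n$ are linearly dependent, so there exist scalars $\lambda_1, \dots, \lambda_k$, not all zero, with $\sum_{i=1}^k \lambda_i (x_i - x_0) = 0$. Setting $\lambda_0 := -\sum_{i=1}^k \lambda_i$ yields $\sum_{i=0}^k \lambda_i x_i = 0$ and $\sum_{i=0}^k \lambda_i = 0$. Since the $\lambda_i$ are not all zero and sum to zero, at least one $\lambda_i$ is strictly positive. For any real $\alpha$, the coefficients $t_i - \alpha \lambda_i$ still sum to $1$ and still give $\sum (t_i - \alpha \lambda_i) x_i = p$. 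Choosing
\[
\alpha \;=\; \min\bigl\{\, t_i/\lambda_i \;\big|\; \lambda_i > 0 \,\bigr\}
\]
ensures that all $t_i - \alpha \lambda_i \geq 0$ and that equality holds for at least one index. Dropping that index expresses $p$ as a convex combination of $k$ of the original $x_i$, completing the inductive step.

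For the equivalent parametric formulation, I would simply note that the map $X^{n+1} \times \Delta^n \to \bbR^n$ defined by $((x_0,\dots,x_n),(t_0,\dots,t_n)) \mapsto \sum t_i x_i$ lands in $\Hull(X)$ by definition of the convex hull (any convex combination of finitely many points of $X$ lies in $\Hull(X)$), and the surjectivity onto $\Hull(X)$ is exactly the content of the first statement.

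I do not expect any serious obstacle: this is the standard proof of Carathéodory's theorem, and the only subtlety is the sign choice guaranteeing that some $\lambda_i > 0$ (otherwise negate all $\lambda_i$), together with verifying that the new coefficients remain nonnegative, which is immediate from the definition of $\alpha$.
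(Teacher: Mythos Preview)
Your argument is correct and is the standard proof of Carath\'eodory's theorem. Note, however, that the paper does not actually supply a proof of this statement: it is quoted as a classical result and used as a tool (notably in \Cref{lemma:HullHausdorffDistances}), so there is nothing to compare against. Your write-up would serve perfectly well as a self-contained justification if one were desired.
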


We will also use the following  observations.

\begin{lemma}\label{lemma:Hulls}
Let $S^1$ be the unit circle in $\bbR^2$, and $A\subset S^1$.
    \begin{enumerate}[label=(\arabic*)]
        \item\label{it:HullClosedIffClosed} If $A$ is closed (resp. open) in $S^1$ then $\Hull(A)$ is closed (resp. open).
        \item\label{it:HullIntersectS1} $\Hull(A) \cap S^1 = A$.
        \item\label{it:HullDisjThenIntIffLink} If $A, B \subset S^1$ are disjoint, then $\Hull(A)\cap \Hull(B)\ne \emptyset$ if and only if $A$ and $B$ are linked.
    \end{enumerate}
\end{lemma}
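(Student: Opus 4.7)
The plan is to prove the three items in the order (2), (1), (3), since (2) feeds the other two. For (2), the inclusion $A \subseteq \Hull(A) \cap S^1$ is immediate; for the reverse, if $p = \sum t_i x_i \in S^1$ is a convex combination of points $x_i \in A \subseteq S^1$, the equality case $1 = |p| = |\sum t_i x_i| \leq \sum t_i |x_i| = 1$ of the triangle inequality forces every $x_i$ with $t_i > 0$ to equal $p$.

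For (1), the closed case is Carath\'eodory: $\Hull(A)$ is the continuous image of the compact set $A^3 \times \Delta^2$, hence compact and closed. In the open case the key step is to establish the identity
\[
\Hull(A) \;=\; A \;\cup\; \mathrm{int}_{\bbR^2}\bigl(\Hull(\overline A)\bigr),
\]
from which openness in $\bD$ is routine: interior points have Euclidean ball neighborhoods, and a point $p \in A$ has an open arc $(a,b) \subseteq A$ through it whose corresponding lens provides a half-disk neighborhood of $p$ in $\bD$ lying in $(a,b) \cup \mathrm{int}_{\bbR^2}(\Hull([a,b])) \subseteq \Hull(A)$. The $\supseteq$ direction of the identity is standard convex geometry: as long as $A$ is nonempty, $\Hull(A)$ contains a lens and so has nonempty interior in $\bbR^2$; for any convex set $C$ with nonempty interior one has $\mathrm{int}(\overline C) = \mathrm{int}(C)$, so with $C = \Hull(A)$ (whose closure is $\Hull(\overline A)$) we get $\mathrm{int}_{\bbR^2}(\Hull(\overline A)) \subseteq \Hull(A)$.

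The $\subseteq$ direction of the identity is the main technical step. Given $p \in \Hull(A) \cap D$, if $p$ were on the Euclidean boundary of $\Hull(\overline A)$ it would lie strictly inside a boundary chord $cd$ arising from a maximal gap $(c,d)$ in $\overline A$. Openness of $A$ forces $c, d \notin A$, since any open arc neighborhood of $c$ or $d$ in $A$ would have to meet $(c,d)$, contradicting $(c,d) \cap \overline A = \emptyset$. Hence $A \subseteq (d,c)$, the complementary open arc, and a linear-functional argument on the half-plane bounded by chord $cd$ shows that $\Hull((d,c))$ omits the interior of the chord, contradicting $p \in \Hull(A) \subseteq \Hull((d,c))$.

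For (3), ($\Leftarrow$) is the standard fact that two chords of the disc cross in its interior iff their endpoints link on $S^1$. For ($\Rightarrow$) I will prove the contrapositive and reduce to finite sets via Carath\'eodory: a point of $\Hull(A) \cap \Hull(B)$ lies in $\Hull(A_0) \cap \Hull(B_0)$ for finite $A_0 \subseteq A$, $B_0 \subseteq B$, which inherit disjointness and unlinking. When $|A_0|, |B_0| \geq 2$, unlinking yields consecutive $b_i, b_{i+1} \in B_0$ with $A_0 \subseteq (b_i, b_{i+1})$, and the chord $b_i b_{i+1}$ separates the two polygonal hulls by a half-plane argument analogous to the one above; the degenerate cases $|A_0| \leq 1$ or $|B_0| \leq 1$ follow directly from (2). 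The main obstacle across the lemma is the $\subseteq$ direction of the identity in (1), which requires the boundary-chord analysis; everything else reduces to triangle-inequality or chord-crossing geometry.
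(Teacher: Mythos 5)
Your proof is correct. There is nothing in the paper to compare it against: the authors state this lemma as a list of ``observations'' and give no proof, the only hint being the remark that the closed case follows from Carath\'eodory's Theorem (convex hulls of compact sets are compact). Your closed case is exactly that remark, and the rest of your argument — the equality case of the triangle inequality for (2), the identity $\Hull(A)=A\cup \interior_{\bbR^2}\bigl(\Hull(\overline A)\bigr)$ together with the gap-chord analysis for the open case of (1), and the Carath\'eodory reduction to finite sets plus the half-plane separation across a gap chord for (3) — is a sound filling-in of the standard convex/circle geometry the authors are implicitly invoking. One point your write-up gets right and is worth stating explicitly if these details were to be included: ``open'' in item (1) must be read as open in the closed disc $\bD$, not in $\bbR^2$ (this is how the paper uses it in \Cref{lemma:SequencesOfHulls}, where hulls of open arcs serve as a neighborhood basis for boundary points of $\bD$); with the $\bbR^2$ reading the statement would be false for any nonempty $A$.
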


\begin{remark}
Regarding \ref{it:HullClosedIffClosed}, it is not true in general that the convex hull of a closed set is closed. However, the convex hull of a compact set is compact by Carath\'eodory's Theorem.
\end{remark}

These observations imply, in particular, that the elements of $\Hull(\Lambda)$ are closed, and that they are disjoint when $\Lambda$ has the unlinking property. To see that $\Hull(\Lambda)$ fills $\bD$ and is upper semicontinuous we will need the following lemmas.

\begin{lemma}\label{lemma:HullHausdorffDistances}
            Let $A, B \subset \bbR^n$. Then the Hausdorff distance $d_H$ satisfies 
            \[d_H(\Hull(A), \Hull(B) ) \leq d_H(A, B).\]
\end{lemma}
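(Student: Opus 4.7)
The plan is to reduce the inequality to the single statement: for every $\epsilon > 0$, if $A \subset N_\epsilon(B)$ then $\Hull(A) \subset N_\epsilon(\Hull(B))$. By the symmetric statement (swapping $A$ and $B$) and the definition of $d_H$, this will immediately give $d_H(\Hull(A), \Hull(B)) \leq d_H(A, B)$: indeed, any $\epsilon$ witnessing $A \subset N_\epsilon(B)$ and $B \subset N_\epsilon(A)$ also witnesses $\Hull(A) \subset N_\epsilon(\Hull(B))$ and $\Hull(B) \subset N_\epsilon(\Hull(A))$, so the infimum defining $d_H(\Hull(A), \Hull(B))$ is at most the infimum defining $d_H(A, B)$.

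The key step is the claim itself, and the main (easy) observation behind it is that $N_\epsilon(\Hull(B))$ is convex. One way to see this is to write $N_\epsilon(C) = C + B_\epsilon(0)$ as a Minkowski sum, and note that the Minkowski sum of two convex sets is convex (which follows directly from the definition of convexity). Alternatively, for $p, q \in N_\epsilon(\Hull(B))$ with nearest-point approximants $p', q' \in \Hull(B)$, and any $t \in [0,1]$, the point $tp' + (1-t)q'$ lies in $\Hull(B)$ by convexity and is within distance $\epsilon$ of $tp + (1-t)q$ by the triangle inequality.

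Given the claim, the argument concludes as follows. Assume $A \subset N_\epsilon(B)$. Since $B \subset \Hull(B)$, we have $A \subset N_\epsilon(B) \subset N_\epsilon(\Hull(B))$. As $N_\epsilon(\Hull(B))$ is convex and contains $A$, it contains the convex hull of $A$, i.e.\ $\Hull(A) \subset N_\epsilon(\Hull(B))$, as desired.

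I do not anticipate any real obstacle — the lemma is essentially a one-line consequence of the convexity of metric neighborhoods of convex sets in $\mathbb{R}^n$. The only subtlety worth flagging is the edge case where $d_H(A, B) = \infty$ (e.g.\ when one of the sets is unbounded), in which case the inequality is vacuous; and the case $A$ or $B$ empty, where by the convention $\Hull(\emptyset) = \emptyset$ the statement also holds trivially.
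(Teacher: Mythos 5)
Your proof is correct, but it follows a different route from the paper's. The paper argues pointwise: given $\alpha \in \Hull(A)$, it invokes Carath\'eodory's Theorem to write $\alpha = \sum t_i a_i$ as a convex combination of finitely many points $a_i \in A$, replaces each $a_i$ by a point $b_i \in B$ with $d(a_i,b_i) < \lambda\, d_H(A,B)$, and observes that $\beta = \sum t_i b_i \in \Hull(B)$ satisfies $d(\alpha,\beta) < \lambda\, d_H(A,B)$; letting $\lambda \to 1$ and using symmetry finishes the argument. You instead prove the set-level statement that $A \subset N_\epsilon(B)$ implies $\Hull(A) \subset N_\epsilon(\Hull(B))$, which rests only on the convexity of $N_\epsilon(\Hull(B))$ (as a Minkowski sum of convex sets) together with the minimality of the convex hull among convex supersets of $A$. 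Your argument is, if anything, more elementary: it bypasses Carath\'eodory entirely, so it does not use finite-dimensionality and works verbatim in any normed vector space; the paper's route leans on Carath\'eodory mostly because that theorem is already stated and used nearby (e.g.\ for compactness of hulls of compact sets), and a version using arbitrary finite convex combinations would do just as well. One small wording point: ``nearest-point approximants'' is slightly misleading, since $\Hull(B)$ need not be closed and a nearest point need not exist; but your argument only needs \emph{some} $p', q' \in \Hull(B)$ within distance $\epsilon$ of $p, q$, which the definition of $N_\epsilon$ supplies, so nothing is lost. Your handling of the infinite and empty cases matches the conventions implicit in the paper's definition of $d_H$.
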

For the reader's convenience we include a nice proof of this due to A. Blumenthal on Mathematics StackExchange. 

\begin{proof}
    
    Let $\alpha\in \Hull(A)$, so by Carath\'eodory's Theorem $\alpha=\sum_{i=0}^{n+1} t_i a_i$ for points $a_0,\dots, a_{n+1}\in A$ and constants $t_1,\dots, t_{n+1}\ge0$ with $\sum_i t_i=1$. Fix $\lambda>1$. Then there are $b_0, \cdots, b_{n+1}\in B$ such that $d(a_i,b_i)<\lambda d_H(A,B)$ for all $i$. Letting $\beta=\sum_{i=0}^{n+1}t_ib_i$, we see $d(\alpha,\beta)<\lambda d_H(A,B)$. Now observe that the choice of $\lambda$ was arbitrary and the roles of $A$ and $B$ are symmetric.
\end{proof}

\begin{lemma}\label{lemma:SequencesOfHulls}
	Let $A_1, A_2, \dots$ be closed subsets of $S=\del \bD$.
	\begin{enumerate}
		\item $A_1, A_2,\dots$ converges if and only if $\Hull(A_1), \Hull(A_2), \dots$ converges.
		\item If these converge then $\lim \Hull(A_i) = \Hull(\lim A_i)$.
	\end{enumerate}
\end{lemma}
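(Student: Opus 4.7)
The plan is to lean on two tools already in hand: the convex-hull operation is $1$-Lipschitz with respect to Hausdorff distance (Lemma~\ref{lemma:HullHausdorffDistances}), and $\Hull(A)\cap S = A$ (Lemma~\ref{lemma:Hulls}\ref{it:HullIntersectS1}). I work throughout with Hausdorff convergence in the compact metric space $\bD$; by Lemma~\ref{lemma:compactequivKuratowskiHausdorff} this agrees with Kuratowski convergence, which lets me freely quote compactness of the space of closed subsets (Lemma~\ref{lem:subseqhauscon}).

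The forward implication of (1), together with the formula in (2), falls out immediately from the $1$-Lipschitz property. If $A_i \to A$ in Hausdorff, then $A \subset S$ is closed, $\Hull(A)$ is closed by Lemma~\ref{lemma:Hulls}\ref{it:HullClosedIffClosed}, and $d_H(\Hull(A_i),\Hull(A)) \le d_H(A_i,A) \to 0$, so $\Hull(A_i) \to \Hull(A) = \Hull(\lim A_i)$.

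For the reverse implication of (1), I use a subsequence argument. Suppose $\Hull(A_i) \to K$. By Lemma~\ref{lem:subseqhauscon} every subsequence of $(A_i)$ has a further Hausdorff-convergent subsequence $A_{i_k} \to A$ for some closed $A \subset S$. The forward direction just proved then gives $\Hull(A_{i_k}) \to \Hull(A)$, and since also $\Hull(A_{i_k}) \to K$, we conclude $K = \Hull(A)$; intersecting with $S$ and applying Lemma~\ref{lemma:Hulls}\ref{it:HullIntersectS1} yields $A = K \cap S$. Thus every convergent subsequence of $(A_i)$ has the same limit $K\cap S$, and the standard ``if the full sequence failed to converge, some subsequence would stay bounded away from $K\cap S$'' argument promotes this to convergence of $(A_i)$ itself. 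Together with the forward direction, this also finishes (2).

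The proof is essentially immediate from the existing lemmas, and the only mild obstacle is in the reverse direction of (1): one would like to just ``intersect both sides with $S$,'' but intersection with a closed set is only upper semicontinuous under Hausdorff limits, so it can drop points in the limit. I sidestep this entirely by extracting a subsequential limit $A$ via compactness and identifying it through the equation $\Hull(A)=K$, rather than trying to control $\Hull(A_i)\cap S$ directly.
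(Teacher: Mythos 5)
Your proposal is correct. The forward implication and the formula $\lim \Hull(A_i) = \Hull(\lim A_i)$ are handled exactly as in the paper, via the $1$-Lipschitz estimate of \Cref{lemma:HullHausdorffDistances} together with the Hausdorff--Kuratowski equivalence. Where you diverge is the reverse implication of (1). The paper argues directly at the level of Kuratowski limits: intervals in $S$ containing a point $s$ and their hulls form neighborhood bases for $s$ in $S$ and in $\bD$ respectively, and $A$ meets an interval $I$ iff $\Hull(A)$ meets $\Hull(I)$ (\Cref{lemma:Hulls}~\ref{it:HullIntersectS1} and \ref{it:HullDisjThenIntIffLink}); hence $\limsup A_i$ and $\liminf A_i$ are exactly the boundary traces of $\limsup \Hull(A_i)$ and $\liminf \Hull(A_i)$, so convergence of the hulls forces convergence of the $A_i$. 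You instead run a soft compactness argument: extract subsequential limits $A$ of $(A_i)$ via \Cref{lem:subseqhauscon}, identify each through $\Hull(A) = K$ and $A = \Hull(A)\cap S = K\cap S$ using uniqueness of Hausdorff limits, and upgrade to full convergence by the usual subsequence trick. Your route avoids the linking criterion \Cref{lemma:Hulls}~\ref{it:HullDisjThenIntIffLink} and the limsup/liminf bookkeeping, at the cost of invoking sequential compactness of the hyperspace; the paper's route is more hands-on and yields the sharper pointwise statement that the boundary accumulation sets correspond exactly, which is in the spirit of how the lemma is used later. Your cautionary remark about upper semicontinuity of intersection is apt, and your workaround genuinely closes that gap. (Both your argument and the paper's quietly ignore the degenerate case of empty $A_i$, which never arises in the applications.)
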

\begin{proof}
	Note that for a point $s \in S$, the open intervals in $S$ containing $s$ form a neighborhood base of $s$ in $S$, while the hulls of such intervals form a neighborhood base for $s$ in $\bD$. Moreover, a set $A \subset S$ intersects an interval $I \subset S$ if and only if $\Hull(A)$ intersects $\Hull(I)$ by \Cref{lemma:Hulls} (\ref{it:HullIntersectS1} \& \ref{it:HullDisjThenIntIffLink}). Consequently, a sequence of subsets $A_1, A_2, \dots \subset S$ respectively limits or accumulates on a point $s \in S$ if and only if the corresponding sequence of hulls $\Hull(A_1), \Hull(A_2), \dots \subset \bD$ limits or accumulates on $s$. It follows that $\Hull(A_i)$ has a Kuratowski limit if and only if $\lim A_i$ does. 

	Recall that in a compact metric space, Kuratowski convergence is equivalent to convergence with respect to the Hausdorff metric (\Cref{lemma:compactequivKuratowskiHausdorff}). If $A_i$ has a limit $\lim A_i$, then $d_H(A_i, \lim A_i) \to 0$. Then $d_H(\Hull(A_i), \Hull(\lim A_i)) \to 0$ by \Cref{lemma:HullHausdorffDistances}, and hence $\lim \Hull(A_i) = \Hull(\lim A_i)$.
\end{proof}

\begin{proof}[Proof of \Cref{proposition:HullDecomposition}]
	Fix a special decomposition $\Lambda$ of $S^1$. The elements of $\Hull(\Lambda)$ are closed and pairwise disjoint by \Cref{lemma:Hulls} \ref{it:HullClosedIffClosed} \& \ref{it:HullDisjThenIntIffLink}, so they form a decomposition of some subset $|\Hull(\Lambda)| \subset \bbD$. This decomposition is unbounded by construction, it is monotone because convex sets are connected, and $\partial\Hull(\Lambda) = \Lambda$ by \Cref{lemma:Hulls} \ref{it:HullIntersectS1}. It remains to show that this is upper semicontinuous and $|\Hull(\Lambda)| = \bbD$.

	\textbf{Claim:}	$|\Hull(\Lambda)|$ is closed. 

	Let $p$ be a point in the closure of $ |\Hull(\Lambda)|$. Then we can find a sequence of elements $\Hull(\lambda_i) \in \Hull(\Lambda)$ that accumulate on $p$. After passing to a subsequence we can assume that the $\Hull(\lambda_i)$ converge and $p \in \lim \Hull(\lambda_i)$. Then the $\lambda_i$ converge and $\lim \Hull(\lambda_i) = \Hull(\lim \lambda_i)$ by \Cref{lemma:SequencesOfHulls}. But $\Lambda$ is upper semicontinuous, so $\lim \lambda_i$ is contained in some $\lambda \in \Lambda$. Thus $p \in \Hull(\lambda) \in \Hull(\Lambda)$, so $p \in |\Hull(\Lambda)|$.

	\textbf{Claim:}	$|\Hull(\Lambda)| = \bbD$.

        Since the elements of $\Lambda$ cover $S$, it is clear that $S \in |\Hull(\Lambda)|$.	Suppose that some point $p \in \mr\bbD$ is not contained in $|\Hull(\Lambda)|$. Let $B$ be the largest metric open ball centered at $p$ contained in the complement of $|\Hull(\Lambda)|$. Then some element $\Hull(\lambda) \in \Hull(\Lambda)$ must intersect $\overline{B}$.
	
	Let $U$ be the component of $\bbD - \Hull(\lambda)$ that contains $B$. Then $U \cap S$ is an open interval $(a, b)$ with $a, b \in \lambda$ (note that we may have $a = b$). By the nesting property (\Cref{definition:spS}~\ref{it:spSNesting} and \ref{it:spSNestingSequences}) we can find elements $\lambda_i \in \Lambda$ that lie in $(a, b)$ and converge to the endpoints of this interval. Then $\Hull(\lambda_i)$ must eventually intersect $B$, contradicting the fact that $B$ is disjoint from $|\Hull(\Lambda)|$. Thus $|\Hull(\Lambda)| = \bbD$ as desired.

    \textbf{Claim:}	$\Hull(\Lambda)$ is upper semicontinuous.

	Let $\Hull(\lambda_i)$ be a convergent sequence of elements of $\Hull(\Lambda)$. Then the $\lambda_i$ converge and $\lim \Hull(\lambda_i) = \Hull(\lim \lambda_i)$ by \Cref{lemma:SequencesOfHulls}. But $\Lambda$ is upper semicontinuous, so $\lim \lambda_i \subset \lambda$ for some $\lambda \in \Lambda$, so $\lim \Hull(\lambda_i)$ is contained in $\Hull(\lambda) \in \Hull(\Lambda)$. By \Cref{theorem:USCConditions}, $\Hull(\Lambda)$ is upper semicontinuous.
\end{proof}

\subsubsection{Restricting muu decompositions}
As a counterpart to the extension result \Cref{proposition:HullDecomposition}, let us show that any muu decomposition of a disc restricts to a special decomposition of its boundary circle.

\begin{proposition}\label{prop:muuRestrictsToSpecial}
    If $\bSp$ is a muu decomposition of a disc $\bD = D \sqcup S$ then $\partial \bSp$ is a special decomposition of the boundary circle $S$.
\end{proposition}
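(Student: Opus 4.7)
The plan is to check the four defining conditions of a \emph{special} decomposition for $\partial\bSp$: that it is a decomposition of $S$, upper semicontinuity, unlinking, and nesting.

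The first three are short. That $\partial\bSp$ is an honest decomposition of $S$—nonempty, pairwise disjoint, closed, and covering—is immediate from the unbounded partition structure of $\bSp$ together with the fact that $\bK$ closed in $\bD$ implies $\bK\cap S$ closed in $S$. Upper semicontinuity follows from the identification $\partial\bSp=\bSp\capdot S$ (which uses unboundedness) together with \Cref{lemma:restrictionUSC} applied to $Y=S$. For unlinking, if $\partial\bK$ and $\partial\bK'$ were linked in $S$ for distinct $\bK,\bK'\in\bSp$, then since $\bK$ and $\bK'$ are closed, connected, unbounded subsets of $\bD$, \Cref{lemma:iflinkthenintersect} would force $\bK\cap\bK'\ne\emptyset$, contradicting disjointness of partition elements.

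The main content is nesting. Fix $\bK\in\bSp$, a complementary interval $(a,b)$ of $\partial\bK$ in $S$, and a disjoint compact sub-interval $I=[p,q]\subset(a,b)$; let $U$ be the unique complementary region of $\bK$ in $\bD$ with $U\cap S=(a,b)$. Monotonicity of $\bSp$ guarantees every $\bK'\ne\bK$ meeting $U$ lies entirely in $U$. I argue by contradiction: assume no $\bK'\in\bSp$ has $\partial\bK'$ meeting both $(a,p)$ and $(q,b)$. Using upper semicontinuity I pick an open neighborhood $W$ of $\bK$ with $W\cap I=\emptyset$, and an inner neighborhood $W'\subseteq W$ of $\bK$ (taken as a small $\epsilon$-neighborhood, so that $W'\cap U$ is connected) such that every element of $\bSp$ meeting $W'$ lies in $W$. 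Then every $\bK'\subset U$ meeting $W'$ satisfies $\partial\bK'\subset(a,p)\cup(q,b)$, and by hypothesis lies entirely in one of these two arcs.

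To close the argument I pass to the quotient $\bSp$ and consider
\[\hat L=\{[\bK'']\in\bSp\mid\bK''\cap[q,b]=\emptyset\},\qquad \hat R=\{[\bK'']\in\bSp\mid\bK''\cap[a,p]=\emptyset\},\]
both open in $\bSp$ because the quotient map $\bD\to\bSp$ is closed (by upper semicontinuity), so the saturations of the closed arcs $[a,p]$ and $[q,b]$ are closed in $\bSp$. Every element $\bK'\subset U$ meeting $W'$ lies in $\hat L\cup\hat R$, and no such element lies in $\hat L\cap\hat R$: that would force $\partial\bK'\subset(p,q)\subset I$, contradicting $\bK'\subset W$ together with the unboundedness of $\bSp$. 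I then choose a continuous path $\alpha\colon[0,1]\to W'\cap U$ with $\alpha(0)\in(a,p)\cap W'$ and $\alpha(1)\in(q,b)\cap W'$; both endpoints are available because $W'$ is a neighborhood of $\{a,b\}\subset\bK$, and the path exists by connectedness of $W'\cap U$. The image in $\bSp$ is then connected with $[\bSp(\alpha(0))]\in\hat L$ and $[\bSp(\alpha(1))]\in\hat R$, contradicting that these are disjoint open sets. The main obstacle I expect is the connectedness of $W'\cap U$; I plan to obtain it from the classical fact that the frontier of a complementary region of a continuum in $\bD$ is itself a continuum (here, one joining $a$ and $b$ through $\bK$), whose small one-sided thickening in the locally path-connected space $\bD$ is connected.
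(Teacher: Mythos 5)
Your first paragraph (decomposition structure, upper semicontinuity via \Cref{lemma:restrictionUSC}, unlinking via \Cref{lemma:iflinkthenintersect}) matches the paper. The gap is in the nesting argument, at exactly the step you flag: the connectedness of $W'\cap U$. The fact you appeal to is not a classical fact. What is classical is that the frontier of $U$ is a continuum, and that a full $\epsilon$-neighborhood $N_\epsilon(\bK)$ of a connected set is connected; but what you need is connectedness of the \emph{one-sided} set $N_\epsilon(\bK)\cap U$, and this does not follow from local path-connectedness of $\bD$ or from connectedness of the two-sided thickening. Elements of a muu decomposition need not be locally connected, and for such $\bK$ the one-sided collar can a priori break into several pieces (think of comb- or oscillation-type frontiers, where travelling ``along'' the frontier inside $U$ is obstructed); ruling this out requires a genuine argument (for instance, showing that $U$ is one-ended and that no component of $N_\epsilon(\bK)\cap U$ can have compact closure in $U$, or some prime-end/unicoherence input). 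This is precisely the delicate plane-topology point on which the whole proposition turns, and it is what the paper outsources: its proof of nesting saturates $I$ by $\bSp$ and invokes \Cref{cor:SeparatingSaturatedSets}, which rests on \Cref{lem:FaceOfSaturatedContinuum}, i.e.\ the nontrivial Corollary~6.7 of \cite{Frankel_closing}. So your route is genuinely different in structure (a connectedness argument in the decomposition space rather than a separation argument via saturated sets), but as written it asserts, rather than proves, the key topological input.

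There is also a smaller logical slip worth fixing. Your contradiction hypothesis is global (``no $\bK'\in\bSp$ has $\partial\bK'$ meeting both $(a,p)$ and $(q,b)$''), so refuting it only produces \emph{some} element whose ends meet both arcs; such an element could intersect $I$, and then it does not separate $I$ from $\partial\bK$, which is what the nesting condition (\Cref{definition:spS}) requires. The repair is easy and is already implicit in your setup: localize the hypothesis to elements meeting $W'\cap U$. Every element entering your argument lies in $W$, hence is disjoint from $I$, so refuting the localized hypothesis yields an element disjoint from $I$, contained in $U$, with ends in both $(a,p)$ and $(q,b)$; such an element does separate $I$ from $\partial\bK$ in $S$. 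With that rewording, and with an honest proof of the connectedness of $W'\cap U$ (or of the weaker statement that some path in $U$ from $(a,p)$ to $(q,b)$ meets only elements disjoint from $I$), your argument would close; without it, the proof is incomplete at its central step.
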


We will use the following basic properties of muu decompositions.

\begin{lemma}\label{lem:FaceOfSaturatedContinuum}
    Let $\bSp$ be a muu decomposition of a disc $\bD$. Let $A \subset \bD$ be closed, connected, and $\bSp$-saturated. Then for each complementary component $U$ of $A$, $\overline{U} \cap A$ is contained in a single element of $\bSp$.
\end{lemma}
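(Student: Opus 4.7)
The plan is to analyze any $p \in \overline U \cap A$ by approaching $p$ from inside $U$, use $\bSp$-saturation of $U$ to keep the approaching elements of $\bSp$ inside $U$, and then exploit upper semicontinuity together with unboundedness of elements to force any limiting element to touch $S$ at one of the two endpoints of the circular interval $U \cap S$. I would first check that $U$ is $\bSp$-saturated: any $K \in \bSp$ meeting $U$ has $K \cap A = \emptyset$, for otherwise $K \subset A$ by saturation of $A$, contradicting $K \cap U \ne \emptyset$; thus $K \subset \bD \setminus A$, and connectedness of $K$ forces $K \subset U$. Consequently $\overline U \cap A = \overline U \setminus U$ is the topological frontier of $U$ in $\bD$.

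Next, I would reduce to the case $U \cap S \ne \emptyset$: if $U \cap S$ were empty, picking any $u \in U$ the element $K$ containing $u$ would lie in $U$ by saturation yet satisfy $K \cap S \subset U \cap S = \emptyset$, contradicting unboundedness. So $U \cap S \ne \emptyset$, and since $A$ is closed, connected, and unbounded (each $\bSp$-element it contains is unbounded), the lemma stated just after \Cref{lemma:iflinkthenintersect} gives $U \cap S = (a, b)$ for a complementary interval of $\partial A$. If $a = b$ then $\partial A = \{a\}$ and every element of $\bSp$ in $A$ meets $S$ only at $a$, so disjointness of $\bSp$-elements forces $A$ itself to be a single element of $\bSp$ and the claim is trivial. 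So assume $a \ne b$, and let $K_a, K_b$ denote the $\bSp$-elements containing $a$ and $b$.

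The main step is to show that for every $p \in \overline U \cap A$ the element $K_p$ containing $p$ is either $K_a$ or $K_b$. Pick $u_n \in U$ with $u_n \to p$, and let $K_{u_n}$ be the element containing $u_n$; then $K_{u_n} \subset U$ by saturation, and by unboundedness we may choose $s_n \in K_{u_n} \cap S \subset (a, b)$. Passing to subsequences via \Cref{lem:subseqhauscon}, $K_{u_n}$ converges in the Kuratowski/Hausdorff sense to some $K_\infty$, and $s_n \to s_\infty \in [a, b]$. Since $\liminf K_{u_n} \ni p$ is nonempty, \Cref{theorem:USCConditions} forces $K_\infty \subset K_p \subset A$. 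Also $s_\infty \in \liminf K_{u_n} \subset K_\infty \subset K_p$ gives $s_\infty \in K_p \cap S \subset \partial A$; since $(a, b)$ is a complementary interval of $\partial A$, we have $\partial A \cap [a, b] = \{a, b\}$ and hence $s_\infty \in \{a, b\}$, so $K_p = K_a$ or $K_p = K_b$.

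Finally, I would argue that $\overline U \cap A$ is connected: $\overline U$ is a continuum, and $\bD \setminus U$ is connected as the union of $A$ with the closures of the other complementary components of $A$, each a continuum meeting the connected set $A$, so unicoherence of the disc (a consequence of the Phragmen-Brouwer property already invoked in the proof of \Cref{lemma:RegionBetween}) implies $\overline U \cap (\bD \setminus U) = \overline U \cap A$ is connected. The partition $\overline U \cap A = (K_a \cap \overline U) \sqcup (K_b \cap \overline U)$ presents it as a union of two closed sets, each nonempty (containing $a$, $b$ respectively); if $K_a \ne K_b$ these are disjoint, a proper disconnection contradicting connectedness. Hence $K_a = K_b$, and $\overline U \cap A$ is contained in this single element of $\bSp$. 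The main obstacle I anticipate is the connectedness of $\overline U \cap A$, which relies on the somewhat non-elementary unicoherence of $\bD$; the rest of the argument is a transparent combination of upper semicontinuity, $\bSp$-saturation, and unboundedness of elements.
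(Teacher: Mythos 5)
Your proof is correct. Note, though, that the paper does not prove this lemma internally at all: it simply asserts that the proof of Corollary~6.7 of \cite{Frankel_closing} carries over verbatim (that corollary is stated there for decompositions whose elements have totally disconnected boundary, but the proof never uses that hypothesis). So what you have produced is a self-contained replacement for an outsourced citation, and it hangs together: saturation of $A$ plus monotonicity gives saturation of $U$, whence unboundedness forces $U\cap S\neq\emptyset$ and the unnamed lemma after \Cref{lemma:iflinkthenintersect} identifies $U\cap S$ with a complementary interval $(a,b)$ of $\partial A$; your limit argument (Kuratowski-convergent subsequences via \Cref{lem:subseqhauscon}, then \Cref{theorem:USCConditions}) correctly pins the boundary point $s_\infty$ of the limiting element into $\partial A\cap[a,b]=\{a,b\}$, so every point of $\overline{U}\cap A$ lies in $\bSp(a)\cup\bSp(b)$; and the degenerate case $a=b$ is handled cleanly. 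The one genuinely non-elementary input is your final step, where you need connectedness of the frontier $\overline{U}\cap A$ and invoke unicoherence of $\bD$ (equivalent, for connected locally connected metric spaces, to the Phragmen--Brouwer property already cited from Wilder in \Cref{lemma:RegionBetween}); that is a legitimate classical fact, and with it the two-closed-sets disconnection argument forces $\bSp(a)=\bSp(b)$. So your route buys self-containedness at the cost of one extra classical plane-topology ingredient, whereas the paper buys brevity by leaning on Frankel's earlier work.
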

\begin{proof}
    The proof of \cite[Corollary 6.7]{Frankel_closing} works without modification. Although that article considers muu decompositions with the additional property that $\partial \bK$ is totally disconnected for each decomposition element $\bK$, the proof does not use this property.
\end{proof}

\begin{corollary}\label{cor:SeparatingSaturatedSets}
    Let $\bSp$ be a muu decomposition of a disc $\bD$. If $A, B \subset \bD$ are disjoint, closed, connected, $\bSp$-saturated sets then some $\bK \in \bSp$ separates $A$ from $B$.
\end{corollary}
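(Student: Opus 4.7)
The plan is to produce the separating element by a direct application of \Cref{lem:FaceOfSaturatedContinuum}. I let $V$ be the connected component of $\bD\setminus A$ containing $B$, and apply the lemma to $A$ to obtain an element $\bK\in\bSp$ with $\overline{V}\cap A\subset\bK$. Since $A$ is $\bSp$-saturated and $\bK$ meets $A$, this forces $\bK\subset A$. My claim is that this $\bK$ does the job: $B$ and $A\setminus\bK$ will sit in distinct connected components of $\bD\setminus\bK$, so that every path in $\bD$ from $A$ to $B$ must cross $\bK$.

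The crux is to identify $V$ with the connected component $W$ of $\bD\setminus\bK$ containing $B$. The inclusion $V\subset W$ is immediate, since $\bK\subset A$ gives $V\subset\bD\setminus A\subset\bD\setminus\bK$, while $V$ is connected and contains $B$. For the reverse inclusion I plan to show $W\cap A=\emptyset$ via a path-chasing argument: given $a\in W\cap A$, local path-connectivity of $\bD$ produces a path $\gamma\colon[0,1]\to W$ from $a$ to some $b\in B$, and I set $t^*=\sup\{t:\gamma(t)\in A\}$. Then $t^*<1$ (as $b\notin A$), and $\gamma\bigl((t^*,1]\bigr)$ is a connected subset of $\bD\setminus A$ meeting $B$, hence contained in $V$. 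This forces $\gamma(t^*)\in\overline V\cap A\subset\bK$; but $\gamma\subset W\subset\bD\setminus\bK$, a contradiction. Therefore $W\cap A=\emptyset$, and combined with $W\ni B$ and connectedness of $W$ this gives $W\subset V$.

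Once $V=W$ is established, the conclusion is immediate: $B\subset V$ lies in a single component of $\bD\setminus\bK$, while $A\setminus\bK\subset\bD\setminus V$ (since $V\cap A=\emptyset$) and so lies in the union of the remaining components. The main conceptual hurdle is recognizing that $\bK$, despite being a subset of $A$ rather than disjoint from $A$, does separate $A$ from $B$ in the intended sense; once this is seen, \Cref{lem:FaceOfSaturatedContinuum} hands us the separator directly, and the identification $V=W$ is a routine last-time-in-$A$ argument along a path.
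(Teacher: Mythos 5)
Your argument is internally correct as far as it goes, but it proves a weaker statement than the corollary and than what the paper needs. Applying \Cref{lem:FaceOfSaturatedContinuum} directly to $A$ forces $\bK\subset A$, and an element contained in $A$ does not ``separate $A$ from $B$'' in the sense this paper uses: throughout (see the separation intervals of \Cref{definition:separationinterval} and \Cref{corollary:SeparatingBasedOnEnds}), ``$\bK$ separates $A$ from $B$'' means $A$ and $B$ lie in distinct components of $\bD-\bK$, which presupposes $\bK$ is disjoint from both sets. Your reinterpretation (``every path from $A$ to $B$ crosses $\bK$'', or ``$B$ and $A\setminus\bK$ lie in different components'') is not the intended sense, and the downstream application breaks: in the proof of \Cref{prop:muuRestrictsToSpecial} the output of \Cref{cor:SeparatingSaturatedSets} is fed into \Cref{corollary:SeparatingBasedOnEnds}, which requires the three sets to be \emph{pairwise disjoint}; if the separating element lies inside the saturation $A$ of the interval $I$, then its ends meet $I$ and cannot separate $I$ from $\partial\bK$ in $S$, so the nesting property would not follow. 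A simple picture showing your element is the wrong one: decompose the disc by vertical chords and let $A$, $B$ be the left and right saturated ``halves'' with a gap between them; your construction returns the rightmost chord of $A$, whereas the corollary asserts (correctly) that some chord strictly between $A$ and $B$ separates them.

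The missing idea is to manufacture a saturated set \emph{disjoint from} $A$ and $B$ to which the face lemma can be applied. The paper takes the region $U$ between $A$ and $B$ (\Cref{lemma:RegionBetween}), whose trace on $S$ is $(a,b)\sqcup(b',a')$ with $a,a'\in A$, $b,b'\in B$, chooses an arc $\gamma$ crossing $U$ from one of these intervals to the other, and lets $C$ be the $\bSp$-saturation of $\gamma$. Because $A$ and $B$ are saturated, $C$ is disjoint from both; it is closed (upper semicontinuity), connected, saturated, and separates $A$ from $B$. Applying \Cref{lem:FaceOfSaturatedContinuum} to $C$ and the complementary region $V$ of $C$ containing $A$ gives $\overline{V}\cap C\subset\bK$ for a single $\bK\in\bSp$; this $\bK$ lies in $C$, hence is disjoint from $A\cup B$, and still separates $A$ from $B$. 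Your ``last time in $A$'' path argument is fine as a tool, but the face lemma must be applied to the intermediate saturated set $C$, not to $A$ itself.
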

\begin{proof}
    Let $U$ be the region between $A$ and $B$ (see \Cref{lemma:RegionBetween}). Then $\partial U = (a, b) \sqcup (b', a')$ with $a, a' \in A$ and $b, b' \in B$. Let $\gamma$ be an arc from a point in $(a, b)$ to a point in $(b', a')$ and let $C$ be the $\bSp$-saturation of $\gamma$. Then $C$ separates $A$ from $B$. Let $V$ be the complementary region of $C$ that contains $A$. Then $\overline{V} \cap C$ separates $A$ from $B$ and is contained in an element $\bK \in \bSp$ by \Cref{lem:FaceOfSaturatedContinuum}. See \Cref{fig:SeparatingSprigs}. 
    \end{proof}

    \begin{figure}[h]
		\centering
		\begin{overpic}[scale=0.8]{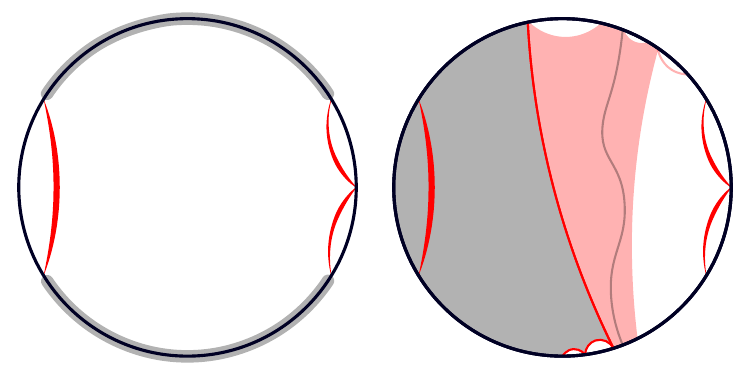}
			\tiny
			
			\put(9, 24){\textcolor{red}{$B$}}
			\put(43, 24){\textcolor{red}{$A$}}
			
			\put(45, 37){$a$}
            \put(3.5, 37){$b$}
            \put(3.5, 11){$b'$}
            \put(45, 11){$a'$}

            \put(64, 19){$V$}
            \put(71, 24){\textcolor{red}{$\bK$}}
            \put(76, 28){\textcolor{red}{$C$}}
            \put(81, 24){$\gamma$}
		\end{overpic}
		\caption{Separating saturated sets (\Cref{cor:SeparatingSaturatedSets}).} \label{fig:SeparatingSprigs}
    \end{figure}

\begin{proof}[Proof of \Cref{prop:muuRestrictsToSpecial}]
    Fix a muu decomposition $\bSp$ of $\bD$. Then $\del\bSp$ is upper semicontinuous by \Cref{lemma:restrictionUSC}.
    
    Distinct elements of $\partial \bSp$ are of the form $\partial \bK, \partial \bL$ for distinct elements $\bK, \bL \in \bSp$. Such $\bK$ and $\bL$ are disjoint, closed, and connected, so $\partial \bK$ and $\partial \bL$ are unlinked by \Cref{lemma:iflinkthenintersect}. Thus $\partial \bSp$ has the unlinking property (\Cref{definition:spS} \ref{it:spSUnlinked}).

    To see that $\partial\bSp$ satisfies the nesting property (\Cref{definition:spS} \ref{it:spSNesting}), let $I \subset S$ be a compact interval disjoint from an element $\partial \bK \in \partial \bSp$. Let $A$ be the $\bSp$-saturation of $I$. Since $\bSp$ is monotone and upper semicontinuous it follows that $A$ is connected and closed. Then \Cref{cor:SeparatingSaturatedSets} provides an element $\bL \in \bSp$ that separates $\bK$ from $A$, and hence $\partial \bL$ separates $\partial \bK$ from $I$ as desired (see \Cref{corollary:SeparatingBasedOnEnds}).
\end{proof}

\subsection{Separation intervals}

As an application of what we have proved so far, let us show that the decomposition space of a muu decomposition contains an interval between any two elements. This will be useful when we show that emuu decompositions restrict to singular foliations in \Cref{sec:emuupairsgivefoliations}. First, an observation:

\begin{lemma}\label{lemma:circlediscrestrictionhomeo}
    Let $\bSp$ be a muu decomposition of a disc $\bD = D \sqcup S$. Then
    \begin{align*}
        \partial_{\bSp}\colon \bSp &\to \partial \bSp\\
        \partial_{\bSp}(\bK) &= \partial \bK
    \end{align*}
    is a homeomorphism between the decomposition spaces.
\end{lemma}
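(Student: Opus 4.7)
The plan is to recognize this lemma as essentially an immediate consequence of \Cref{lemma:restrictionUSC} applied with $X = \bD$, $Y = S$, and $\cD = \bSp$. The decomposition $\bSp$ is upper semicontinuous (that is the ``u'' in ``muu''), and $S$ is closed in $\bD$, so the hypotheses of \Cref{lemma:restrictionUSC} are satisfied.

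The crux is to match the objects. By \Cref{def:restriction}, $\bSp_S = \{\bK \in \bSp : \bK \cap S \neq \emptyset\}$, and $\bSp \capdot S = \{\bK \cap S : \bK \in \bSp\} - \{\emptyset\}$. Here I would use the unboundedness hypothesis (the second ``u'' in ``muu''): every element of $\bSp$ intersects $S$ nontrivially, so $\bSp_S = \bSp$ as sets (and thus trivially as subspaces of $\bSp$), and no empty set needs to be removed in $\bSp \capdot S$, giving $\bSp \capdot S = \{\partial \bK : \bK \in \bSp\} = \partial \bSp$.

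Next I would check that the map $\partial_{\bSp}$ is a well-defined bijection onto $\partial \bSp$. Surjectivity is immediate from the definition of $\partial \bSp$. For injectivity, given distinct $\bK, \bL \in \bSp$, they are disjoint closed subsets of $\bD$, so $\partial \bK$ and $\partial \bL$ are disjoint subsets of $S$; both are nonempty by unboundedness, so they are distinct. Under these identifications, $\partial_{\bSp}$ coincides with the natural bijection $h : \bSp_S \to \bSp \capdot S$ from \Cref{lemma:restrictionUSC}, which that lemma asserts is a homeomorphism. There is no real obstacle here --- the lemma is pure bookkeeping packaging a special case of \Cref{lemma:restrictionUSC} for convenient later reference.
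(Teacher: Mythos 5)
Your proof is correct and is exactly the paper's argument: the paper's own proof is the one-line observation that every element of $\bSp$ meets $S$ (unboundedness), so \Cref{lemma:restrictionUSC} applies with $Y = S$; your write-up just spells out the bookkeeping (that $\bSp_S = \bSp$, that $\bSp \capdot S = \partial\bSp$, and that $\partial_{\bSp}$ is the bijection $h$), all of which is fine.
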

\begin{proof}
    Every element of $\bSp$ intersects $S$, so we may simply apply \Cref{lemma:restrictionUSC}.
\end{proof}

In particular, this means that any two muu decompositions of a disc $\bD = D \sqcup S$ that restrict to the same special decomposition of $S$ have the same decomposition space. Since any special decomposition can be filled in with hulls, one can therefore treat any muu decomposition as a hull decomposition, which makes some arguments simpler and more geometric. In particular:

\begin{definition}\label{definition:separationinterval}
    Let $\bSp$ be a muu decomposition of a disc $\bD = D \sqcup S$. Given distinct $\bK_0, \bK_1 \in \bSp$, we define the corresponding \emph{open separation interval}
    \[ (\bK_0, \bK_1) = \{\bK \in \bSp \mid \bK \text{ separates } \bK_0 \text{ from }\bK_1 \}  \] 
    and \emph{closed separation interval}
    \[ [\bK_0, \bK_1] = \{\bK_0\} \cup (\bK_0, \bK_1) \cup \{\bK_1\}. \] 

    We define separation intervals for special decompositions of a circle $S$ in exactly the same way.
\end{definition}

\begin{lemma}\label{lemma:separationintervals}
    Let $\bSp$ be a muu decomposition of a disc $\bD$. Then every closed separation interval $[\bK_0, \bK_1]$ is homeomorphic to a compact interval as a subspace of the decomposition space, with endpoints corresponding to $\bK_0$ and $\bK_1$.
\end{lemma}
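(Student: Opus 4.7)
The plan is to reduce to a hull decomposition and then parameterize the separation interval by a Euclidean chord.

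By \Cref{lemma:circlediscrestrictionhomeo}, the decomposition space of $\bSp$ is naturally homeomorphic to that of $\partial \bSp$ via $\bK \mapsto \partial \bK$. Applying \Cref{proposition:HullDecomposition} to $\partial \bSp$ yields a second muu decomposition $\Hull(\partial \bSp)$ of $\bD$ with the same restriction to $S$, so \Cref{lemma:circlediscrestrictionhomeo} applied again gives a homeomorphism $\bSp \to \Hull(\partial \bSp)$ via $\bK \mapsto \Hull(\partial \bK)$. By \Cref{corollary:SeparatingBasedOnEnds}, this identification carries the closed separation interval $[\bK_0, \bK_1]$ to $[\Hull(\partial \bK_0), \Hull(\partial \bK_1)]$. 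So it suffices to prove the lemma for the hull decomposition $\Hull(\Lambda)$ of an arbitrary special decomposition $\Lambda$ of $S$.

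For distinct $\lambda_0, \lambda_1 \in \Lambda$, the unlinking property implies they are $1$-linked in the sense of \Cref{lemma:finitelinking}, so I can pick points $a \in \lambda_0$ and $b \in \lambda_1$ that are consecutive endpoints of one of the two ``mixed'' complementary intervals of $\lambda_0 \cup \lambda_1$. Let $\gamma = [a, b] \subset \bD$ be the Euclidean chord, which equals $\Hull(\{a, b\})$. Since each $\Hull(\lambda)$ is convex, $\gamma \cap \Hull(\lambda)$ is always a closed (possibly empty) sub-interval of $\gamma$. Using that $\Hull(\lambda_0)$ lies in the cap cut off by the chord between the extreme points of $\lambda_0$ (and similarly for $\lambda_1$), one checks $\gamma \cap \Hull(\lambda_0) = \{a\}$ and $\gamma \cap \Hull(\lambda_1) = \{b\}$. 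For other $\lambda$, \Cref{lemma:Hulls}\ref{it:HullDisjThenIntIffLink} says $\gamma \cap \Hull(\lambda) \neq \emptyset$ iff $\{a, b\}$ and $\lambda$ are linked in $S$; combined with $\lambda$ being unlinked with $\lambda_0$ and $\lambda_1$, a direct linking argument shows this is equivalent to $\lambda$ separating $\lambda_0$ from $\lambda_1$ on $S$, i.e., $\Hull(\lambda) \in [\Hull(\lambda_0), \Hull(\lambda_1)]$.

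Thus $\Hull(\Lambda) \capdot \gamma$ is a nontrivial decomposition of the compact interval $\gamma$ into closed sub-intervals, in natural bijection with $[\Hull(\lambda_0), \Hull(\lambda_1)]$. By \Cref{theorem:MooreForArcs}, its decomposition space is a compact interval, with endpoints corresponding to $\{a\}$ and $\{b\}$. The natural bijection onto $[\Hull(\lambda_0), \Hull(\lambda_1)]$ is continuous since it factors the continuous restriction $\gamma \hookrightarrow \bD \to \Hull(\Lambda)$; and as its source is compact and target is Hausdorff (by \Cref{theorem:USCConditions}(4) applied to $\Hull(\Lambda)$), it is a homeomorphism. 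I expect the main obstacle to be the linking argument in the previous paragraph: one must carefully use unlinking of $\lambda$ with $\lambda_0$ and $\lambda_1$ to rule out the a priori possibility that $\lambda$ is linked with $\{a, b\}$ only because $\lambda$ meets a gap of $\lambda_0$ or $\lambda_1$ near $a$ or $b$, rather than meeting the opposite mixed interval.
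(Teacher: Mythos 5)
Your proof is correct and takes essentially the same route as the paper's: reduce to the hull decomposition $\Hull(\partial\bSp)$ via \Cref{lemma:circlediscrestrictionhomeo} and \Cref{corollary:SeparatingBasedOnEnds} (note you implicitly also need \Cref{prop:muuRestrictsToSpecial} to know $\partial\bSp$ is special before hulling), intersect with a straight chord, and apply \Cref{theorem:MooreForArcs} plus upper semicontinuity to identify the separation interval with the quotient of the chord. The only difference is in the middle verification --- the paper uses an arbitrary chord between points of the two hulls and a connectivity/unlinking argument, whereas you anchor the chord at the endpoints of a mixed complementary interval and use the linking criterion of \Cref{lemma:Hulls}; your flagged worry is not an issue, since if $\lambda$ fails to separate $\lambda_0$ from $\lambda_1$ then (by unlinkedness) both lie in a single complementary interval of $\lambda$, so one arc between $a$ and $b$ misses $\lambda$ and linking is impossible.
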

\begin{proof}
    Given a muu decomposition $\bSp$ of $\bD = D \sqcup S$, let $\Lambda = \partial \bSp$ be the corresponding special decomposition of $S$. The bijection $\bSp \to \Lambda$ defined by $\bK \mapsto \partial \bK$ is a homeomorpism by \Cref{lemma:circlediscrestrictionhomeo}. By \Cref{corollary:SeparatingBasedOnEnds} this homeomorphism takes each separation interval $[\bK_0, \bK_1]$ in $\bSp$ to the separation interval $[ \partial\bK_0, \partial \bK_1]$ in $\Lambda$. It therefore suffices to prove the lemma for separation intervals in $\Lambda$ or for any other muu decomposition that restricts to $\Lambda$. We will prove it for the hull decomposition $\bcH := \Hull(\Lambda)$. 

    Fix distinct $\bK_0, \bK_1 \in \bcH$ and choose a straight line segment $\gamma = \Hull(\{p_0, p_1\})$ from a point $p_0 \in \bK_0$ to a point $p_1 \in \bK_1$.

    Let $\bcH_\gamma$ be the set of elements of $\bcH$ that intersect $\gamma$, endowed with the topology as a subset of the decomposition space $\bcH$. Let $\cI = \{ \bK \cap \gamma \mid \bK \in \bcH_\gamma \}$, endowed with the topology as a decomposition of $\gamma$. By \Cref{lemma:restrictionUSC} the bijection $h: \bcH_\gamma \to \cI$ defined by $h(\bK) = \bK \cap \gamma$ is a homeomorphism. Hence it suffices to show that $\bcH_\gamma = [K_0, K_1]$ and that $\cI$ is homeomorphic to an interval, with endpoints corresponding to $\gamma\cap \bK_0$ and $\gamma \cap \bK_1$.
    
    Since $H_0 \cup \gamma \cup H_1$ is connected, any element of $\bcH$ that separates $\bK_0$ from $\bK_1$ must intersect $\gamma$, and $\bK_0$ and $\bK_1$ intersect $\gamma$ by construction. Thus $[\bK_0, \bK_1] \subset \bcH_\gamma$. On the other hand, if $\bK \in \bcH$ does not separate $\bK_0$ from $\bK_1$ then $\partial \bK$ does not separate $\partial \bK_0$ from $\partial \bK_1$ (\Cref{corollary:SeparatingBasedOnEnds}), hence $\partial \bK$ is unlinked from $\partial \bK_0 \cup \partial \bK_1$. Then $\Hull(\partial \bK) = \bK$ is disjoint from $\Hull(\partial \bK_0 \cup \partial \bK_1)$ (\Cref{lemma:Hulls} \ref{it:HullDisjThenIntIffLink}), and therefore also from $\gamma$, which is a subset of $\Hull(\partial \bK_0\cup \partial \bK_1)$. Thus $\bcH_\gamma = [\bK_0, \bK_1]$ as desired.

    The elements of $\cI$ are intersections of convex compact sets with the convex compact set $\gamma$, and are therefore convex compact. That is, they are compact subintervals of $\gamma$. By \Cref{theorem:MooreForArcs}, the decomposition space $\cI$ is a compact interval. The endpoints of this interval are clearly $\gamma\cap\bK_0$ and $\gamma\cap \bK_1$ (they are the only nonseparating elements).
\end{proof}

\subsection{Especial and emuu pairs}
Now that we can move back and forth between muu decompositions of a disc and special decompositions of a circle, we would like to do the same with emuu pairs and especial pairs. 

One direction is easy. 

\begin{corollary}\label{cor:emuuRestrictsToEspecial}
    If $\bSp^\pm$ is an emuu pair on a disc $\bD = D \sqcup S$ then $\partial \bSp^\pm$ is an especial pair on the boundary circle $S$.
\end{corollary}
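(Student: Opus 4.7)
The plan is to combine the already-established \Cref{prop:muuRestrictsToSpecial} with a trivial verification of the efficient intersection condition on the boundary. Since $\bSp^+$ and $\bSp^-$ are each muu, \Cref{prop:muuRestrictsToSpecial} immediately gives that $\partial \bSp^+$ and $\partial \bSp^-$ are each special decompositions of $S$. So all that needs checking is that the pair $\partial \bSp^\pm$ has efficient intersection in the sense of \Cref{definition:spS}.

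For this, fix arbitrary $\lambda^+ \in \partial \bSp^+$ and $\lambda^- \in \partial \bSp^-$. By definition of $\partial \bSp^\pm$, there exist $\bK^\pm \in \bSp^\pm$ with $\lambda^\pm = \partial \bK^\pm = \bK^\pm \cap S$. Then
\[
\lambda^+ \cap \lambda^- = (\bK^+ \cap S) \cap (\bK^- \cap S) = (\bK^+ \cap \bK^-) \cap S \subset \bK^+ \cap \bK^-.
\]
By the efficient intersection property of the emuu pair $\bSp^\pm$, the set $\bK^+ \cap \bK^-$ contains at most one point, and hence so does $\lambda^+ \cap \lambda^-$. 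This verifies efficient intersection for $\partial \bSp^\pm$.

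There is no real obstacle here: the corollary is essentially a bookkeeping statement that packages \Cref{prop:muuRestrictsToSpecial} together with the tautological observation that taking intersections with $S$ can only shrink intersections. The substantive content was already handled in proving that muu restricts to special; efficient intersection passes to the boundary automatically because $\partial$ is just a restriction.
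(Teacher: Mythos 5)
Your proof is correct and is essentially identical to the paper's: both cite \Cref{prop:muuRestrictsToSpecial} for specialness of each $\partial \bSp^\pm$ and observe that efficient intersection of $\bSp^\pm$ passes to the boundary since $\partial\bK^+ \cap \partial\bK^- \subset \bK^+ \cap \bK^-$. You have merely spelled out the one-line boundary verification that the paper leaves implicit.
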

\begin{proof}
    Each of $\partial \bSp^+$ and $\partial \bSp^-$ are special by \Cref{prop:muuRestrictsToSpecial}, and the efficient intersection property for $\bSp^\pm$ implies the efficient intersection property for $\partial \bSp^\pm$.
\end{proof}

It remains to show that any especial pair on a circle can be extended to an emuu pair on a disc. We give the construction below, followed by a proposition verifying that it satisfies the required properties.

\begin{construction}[Extending an especial pair to an emuu pair]\label{construction:emuuextension}
    Let $\Lambda^\pm$ be an especial pair on a circle $S$.
    Identify $S$ with the boundary of the Euclidean unit disc $\bD$, and let
    \[ \bcH^\pm := \Hull(\Lambda^\pm) \]
    be the hull decompositions constructed above. Consider the collection
    \[ \bcH^\cap := \{ K^+ \cap K^- \mid K^\pm \in \bcH^\pm, K^+ \cap K^- \neq \emptyset\} \]
    of all nonempty pairwise intersections of elements of $\bcH^+$ and $\bcH^-$.

    Observe:
    \begin{itemize}	
        \item $\bcH^\cap$ is a monotone decomposition of $\bD$ into convex compact subsets: The elements are intersections of convex compact sets and are therefore convex (hence connected) and compact. They fill $\bD$ since the elements of $\bcH^+$ and $\bcH^-$ fill $\bD$. 
        \item The elements of $\bcH^\cap$ are nonseparating: Since $\Lambda^\pm$ intersect efficiently, each element of $\bcH^\cap$ intersects the boundary circle in at most one point. Convex compact subsets of the disc that intersect the boundary in at most one point are nonseparating. 
        \item $\bcH^\cap$ is upper semicontinuous by \Cref{lemma:intersectionUSC}.
    \end{itemize}
    
	Thus, Moore's Theorem (\Cref{corollary:Mooredisc}) implies that 
    \[ \bQ := \rquotient{\bD}{\bcH^\cap} \]
	is homeomorphic to a closed disc. Letting
    \[ \tau: \bD \to \bQ \] 
    denote the quotient map, \Cref{corollary:Mooredisc} also gives that $\tau(S)$ is the boundary of $\bQ$.
    Since $\Lambda^+$ and $\Lambda^-$ intersect efficiently, $\tau$ restricts to a injection (necessarily closed) $S\to \bQ$, which is in particular a homeomorphism onto its image. %Moreover, $\tau(S)$ is the boundary of $\bQ$. {\ED INDEED...}.
    We will use $\tau$ to identify $S$ with $\partial \bQ$, and write $\bQ = Q \sqcup S$.

	We claim that the images of elements of $\bcH^+$ and $\bcH^-$ under $\tau$,
	\[ \bcF^\pm = \{ \tau(K) \mid K \in \bcH^\pm \} \]
    form an emuu pair on $\bQ$. This is verified in \Cref{proposition:effExtExistence}.
\end{construction}

\begin{proposition}\label{proposition:effExtExistence}
	Let $\Lambda^\pm$ be an especial pair on a circle $S$. Then the pair $\bcF^\pm$ constructed in \Cref{construction:emuuextension} form an emuu pair $\bcF^\pm$ on the disc $\bQ = Q \sqcup S$ such that $\partial \bcF^\pm = \Lambda^\pm$.
\end{proposition}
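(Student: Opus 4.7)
The strategy is to verify the five defining properties of an emuu pair (decomposition, monotone, unbounded, upper semicontinuous, efficient intersection) for $\bcF^\pm$ on $\bQ = Q \sqcup S$, then read off $\partial \bcF^\pm = \Lambda^\pm$. The pivotal structural observation, used repeatedly, is that \emph{each element $\bK \in \bcH^+$ is $\bcH^\cap$-saturated}: every $x \in \bK$ lies in $\bK \cap \bL$ for some $\bL \in \bcH^-$, and any such intersection is contained in $\bK$ because distinct elements of $\bcH^+$ are disjoint. The symmetric statement holds for $\bcH^-$. A second key ingredient is that the quotient map $\tau$ is \emph{closed} (Moore, applied to the USC decomposition $\bcH^\cap$), and that $\tau|_S$ is injective because efficient intersection of $\Lambda^\pm$ forces each element of $\bcH^\cap$ to meet $S$ in at most one point.

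First I would show $\bcF^+$ is a decomposition of $\bQ$: it covers $\bQ$ since $\bcH^+$ covers $\bD$; its elements are pairwise disjoint because $\bcH^+$-saturation by $\bcH^\cap$ means distinct $\bK_1, \bK_2 \in \bcH^+$ have disjoint $\tau$-images (no element of $\bcH^\cap$ straddles them); each $\tau(\bK)$ is closed because $\bK$ is closed and $\bcH^\cap$-saturated and $\tau$ is a closed map. Monotonicity is immediate from continuity of $\tau$. For unboundedness, note $\tau(\bK) \cap S = \tau(\bK \cap S) = \tau(\partial \bK)$ using injectivity of $\tau|_S$; this is nonempty since $\partial \bK \in \Lambda^+$ is nonempty, and it simultaneously identifies $\partial \bcF^+ = \Lambda^+$ after using $\tau|_S$ to identify $S$ with $\partial \bQ$.

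Next, upper semicontinuity is proved through a factorization. Let $\pi^+ \colon \bD \to \bcH^+$ be the quotient map for $\bcH^+$; since each element of $\bcH^\cap$ lies in a single element of $\bcH^+$, $\pi^+$ factors through $\tau$ as $\pi^+ = \bar\pi^+ \circ \tau$, and under the natural bijection $\bcH^+ \leftrightarrow \bcF^+$ the map $\bar\pi^+$ is the quotient map for $\bcF^+$. To show $\bar\pi^+$ is closed, given closed $C \subset \bQ$, the $\bcF^+$-saturation of $C$ equals $\tau\!\bigl(\pi^{+,-1}(\pi^+(\tau^{-1}(C)))\bigr)$, which is closed because $\tau^{-1}(C)$ is closed, $\pi^+$ is closed (USC of $\bcH^+$, \Cref{proposition:HullDecomposition}), and $\tau$ is closed. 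Efficient intersection is then the cleanest step: if $y \in \tau(\bK^+) \cap \tau(\bK^-)$ then $\tau^{-1}(y) \in \bcH^\cap$ meets both $\bK^+$ and $\bK^-$, but an element of $\bcH^\cap$ is contained in a single member of each of $\bcH^+$ and $\bcH^-$, forcing $\tau^{-1}(y) = \bK^+ \cap \bK^-$; so $\tau(\bK^+) \cap \tau(\bK^-)$ is empty or a single point.

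The main obstacle I anticipate is making the upper semicontinuity argument precise — in particular, being careful about which decomposition lives on which space and identifying the quotient map for $\bcF^+$ with $\bar\pi^+$ so that closedness of the composite $\tau \circ \pi^{+,-1} \circ \tau^{-1}$ on closed sets can be chained from the two closedness inputs. Everything else is short bookkeeping once the saturation observation is in hand.
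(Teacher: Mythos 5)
Your proof is correct and follows essentially the same route as the paper: it verifies compactness/connectedness, disjointness, filling, efficient intersection via the fibers of $\tau$, and the boundary identification exactly as in the paper's proof, and your upper semicontinuity argument (chaining closedness of $\tau^{-1}$, the hull quotient map $\pi^+$, and $\tau$ to show saturations of closed sets are closed) is the same mechanism the paper packages via the commutative squares and \Cref{lem:quotientfact}. The only cosmetic slip is attributing closedness of $\tau$ to Moore's theorem — it comes directly from upper semicontinuity of $\bcH^\cap$ (Moore only gives that the quotient is a disc) — but this does not affect the argument.
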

\begin{proof}
    %Indeed, 
    First, note that each element of $\bcF^\pm$ is the continuous image of a compact connected set and is therefore compact and connected. Distinct elements of $\bcF^+$ or $\bcF^-$ are disjoint since they are images of disjoint $\bcH^\cap$-saturated sets, and $\bcF^+$ and $\bcF^-$ fill $\bQ$ because $\bcH^+$ and $\bcH^-$ fill $\bD$. 
    Efficient intersection of $\bcF^\pm$ holds by construction.
    Since $\tau$ identifies $S$ with the boundary of $\bQ$ it follows that $\bcF^\pm$ are unbounded and $\partial \bcF^\pm = \Lambda^\pm$. 
	
	It remains to show that $\bcF^\pm$ are upper semicontinuous. 
    %By \Cref{prop:USCopenclosed}, it suffices to show that the quotient maps $\bQ\to \bcF^\pm$ are closed. 
    We have commutative squares
    \begin{center}
    \begin{tikzcd}
        \bD \arrow{r}{\tau}\arrow{d}{} & \bQ\arrow{d}{}\\
        \bcH^\pm\arrow{r}{}&\bcF^\pm
    \end{tikzcd}
    \end{center}
    where the vertical arrows are the quotient maps and the bottom maps are the corresponding bijections, which are homeomorphisms by \Cref{lem:quotientfact}. 
    Surjectivity of $\tau$ and closedness of $\bD\to \bcH^\pm$ now imply that $\bQ\to\bcF^\pm$ are closed maps.
\end{proof}

    \Cref{fig:collapsingexamples} illustrates some examples of emuu pairs that may come from \Cref{construction:emuuextension}.
    Observe that while the quotient map $\tau: \bbD \to \bQ$ takes $S^1$ homeomorphically to the boundary of $\bQ$, the preimage $\tau^{-1}(\partial \bQ)$ can be larger than $S^1$. This can happen if an element of $\Lambda^\pm$ contains an interval, or if some element of $\Lambda^+$ both intersects and links with some element of $\Lambda^-$. See \Cref{fig:collapsingexamples}(c), (d), and (e).

    Note that the decompositions in the figure appear to be foliations on the interior of the disc, and they are. We will show in  \Cref{sec:emuupairsgivefoliations} that every emuu pair in the disc restricts to a transverse pair of singular foliations of the interior. The singular leaves of these foliations may have more than one singular point, as illustrated in \Cref{fig:collapsingexamples}(b). Leaves of the same or different sign may meet at infinity as illustrated in \Cref{fig:collapsingexamples}(c). There may also be ``infinite product regions" (unbounded strips with product bifoliations) as in \Cref{fig:collapsingexamples}(d), which come from elements of $\Lambda^+$ or $\Lambda^-$ that contain intervals. Such an infinite product region might contain leaves limiting on nonseparated leaves as in \Cref{fig:collapsingexamples}(e).

\begin{figure}
    \centering
   \includegraphics{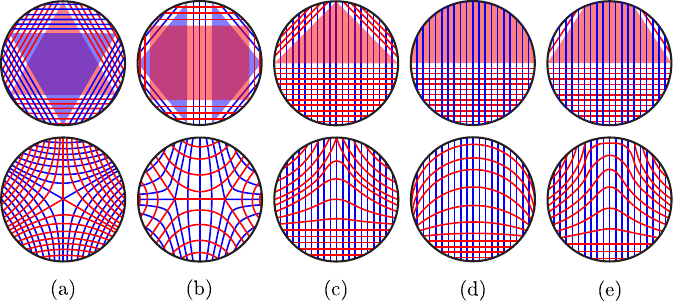}

    \caption{Top row: some decompositions of the disc by convex hulls of decompositions of the circle. Bottom row: the results of collapsing the intersections of the hull decompositions.}
    \label{fig:collapsingexamples}
\end{figure}

\section{Comparing emuu extensions}\label{sec:ComparingEmuu}
We we will now build some tools to compare different emuu decompositions of the disc that extend the same especial decomposition of a circle. We will see that an emuu pair $\bcF^\pm$ extending a given especial pair $\Lambda^\pm$ is unique (\Cref{theorem:emuuUniqueness}) up to a homeomorphism of the ambient disc that fixes its boundary, and any group action on the circle that preserves $\Lambda^\pm$ extends uniquely to an action on the disc that preserves $\bcF^\pm$ (\Cref{corollary:EfficientExtensionsAction}).

The idea is simple: consider emuu pairs $\bcF^\pm_1, \bcF^\pm_2$ living on discs $\bD_1 = D_1 \sqcup S$, $\bD_2 = D_2 \sqcup S$, that restrict to the same especial pair $\partial\bcF^\pm_1 = \partial\bcF^\pm_2$ on $S$. Construct a function $\bD_1 \to \bD_2$ by sending each $p \in \bD_1$ to a point $q \in \bD_2$ such that $\partial\bcF^+_1(p) = \partial\bcF^+_2(q)$ and $\partial\bcF^-_1(p) = \partial\bcF^-_2(q)$, and show that this is a well-defined homeomorphism. 

We will take a more indirect approach, and show that for an especial pair $\Lambda^\pm$ there is a natural topological disc $\bZ$ that lives in the product of the decomposition spaces $\Lambda^+ \times \Lambda^-$ called the ``especial disc.'' The way this disc lies in $\Lambda^+ \times \Lambda^-$ endows it naturally with a pair of decompositions $\bcZ^\pm$.

We will show that any muu pair $\bSp^\pm$ on $\bD$ that extends $\Lambda^\pm$ determines a natural map $\delta_{\bSp^\pm}\colon \bD \to \Lambda^+ \times \Lambda^-$ called the ``double boundary map.'' In the special case of an emuu pair $\bcF^\pm$, the double boundary map $\delta_{\bcF^\pm}$ is a homeomorphism onto $\bZ$ that identifies $\bcF^\pm$ with $\bcZ^\pm$.

The double boundary map will appear again in \Cref{sec:Straightening} where we use it to compare a more general (non-e) muu pair that extends a given especial pair to an emuu pair.

\subsection{The double boundary map and especial disc}\label{sec:doubleboundaryespecialdisc}

\begin{definition}\label{definition:doubleboundarymap}
    Given a unbounded pair $\bSp^\pm$ on a disc $\bD$, we define the corresponding \emph{double boundary function}
    \begin{align*}
        \delta\colon \bD &\to \partial \bSp^+ \times \partial \bSp^-\\
       p &\mapsto (\partial \bSp^+(p), \partial \bSp^-(p)).\qedhere
    \end{align*}
\end{definition}

\begin{lemma}\label{lemma:doubleboundarycontinuous}\label{cor:emuudoubleboundaryhomeo}
    If $\bSp^\pm$ is a muu pair then the corresponding double boundary function $\delta$ is continuous.

    If $\bSp^\pm$ is an emuu pair, then $\delta$ is a homeomorphism onto its image.
\end{lemma}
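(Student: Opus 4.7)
The plan is to prove continuity and the homeomorphism claim separately, with both relying on \Cref{lemma:circlediscrestrictionhomeo}.

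For continuity, I would show each coordinate of $\delta$ is continuous. For each sign, the map $p \mapsto \partial\bSp^\pm(p)$ factors as the composition
\[ \bD \xrightarrow{\bSp^\pm(\cdot)} \bSp^\pm \xrightarrow{\partial_{\bSp^\pm}} \partial \bSp^\pm, \]
where the first arrow is the quotient map (continuous by definition of quotient topology) and the second is the homeomorphism of \Cref{lemma:circlediscrestrictionhomeo}. Hence both coordinate projections of $\delta$ are continuous, so $\delta$ itself is continuous.

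For the emuu case, it suffices to show $\delta$ is injective, since $\bD$ is compact and the codomain is Hausdorff. To check the latter: by \Cref{prop:muuRestrictsToSpecial}, $\partial\bSp^\pm$ are special decompositions of $S$, in particular upper semicontinuous, hence Hausdorff as decomposition spaces by \Cref{theorem:USCConditions}, and the product of two Hausdorff spaces is Hausdorff. For injectivity, suppose $\delta(p) = \delta(q)$. Then $\partial\bSp^+(p) = \partial\bSp^+(q)$, and because $\bK \mapsto \partial\bK$ is a bijection $\bSp^+ \to \partial\bSp^+$, it follows that $p$ and $q$ lie in the same element $\bK^+ \in \bSp^+$. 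Similarly they lie in a common element $\bK^- \in \bSp^-$. Thus $p, q \in \bK^+ \cap \bK^-$, which is at most a single point by efficient intersection, so $p = q$.

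A continuous injection from a compact space to a Hausdorff space is a closed map, hence a homeomorphism onto its image, completing the proof. There is no real obstacle here: the argument is a direct unpacking of the definitions together with the two prior lemmas, and the emuu hypothesis is used exactly where it needs to be (for injectivity via efficient intersection).
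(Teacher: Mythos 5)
Your proof is correct and follows essentially the same route as the paper: continuity via factoring each coordinate through the quotient map and the homeomorphism $\bSp^\pm \to \partial\bSp^\pm$ (\Cref{lemma:restrictionUSC}/\Cref{lemma:circlediscrestrictionhomeo}), and the emuu case via injectivity from efficient intersection, Hausdorffness of $\partial\bSp^+\times\partial\bSp^-$ from upper semicontinuity, and the compact-to-Hausdorff argument. The only difference is cosmetic: you spell out the injectivity step slightly more explicitly than the paper does.
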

\begin{proof}
    Given a muu pair $\bSp^\pm$, let
    \begin{align*}
        \bSp^\pm(\cdot)\colon \bD &\to \bSp^\pm\\
        p &\mapsto\bSp^\pm(p)
    \end{align*}
    be the quotient maps, which are continuous. Let
    \begin{align*}
        \partial^\pm(\cdot)\colon \bSp^\pm&\to \partial\bSp^\pm\\
        \bK &\mapsto\partial\bK
    \end{align*}
    be the natural bijections from $\bSp^+$ to $\partial \bSp^+$ and from $\bSp^-$ to $\partial \bSp^-$, which are homeomorphisms by \Cref{lemma:restrictionUSC}. Then $\delta$ is continuous because $\delta = (\partial^+ \circ \bSp^+) \times (\partial^- \circ \bSp^-)$. (This uses only unboundedness and upper semicontinuity).

    If $\bSp^\pm$ is emuu, then the efficient intersection property implies $\delta$ is injective. The image of $\delta$ is Hausdorff because it lives in $\partial \bSp^+ \times \partial \bSp^-$, which is Hausdorff by upper semicontinuity (\Cref{theorem:USCConditions}). Any continuous bijection from a compact space to a Hausdorff space is a homeomorphism, so $\delta$ is a homeomorphism onto its image.
\end{proof}

Fix an especial pair $\Lambda^\pm$ on a circle $S$, and let $\bcF^\pm$ be the emuu pair on $\bQ = Q \sqcup S$ from \Cref{construction:emuuextension}. By \Cref{cor:emuudoubleboundaryhomeo}, the corresponding double boundary map can be thought of as a homeomorphism 
\[ \delta\colon \bQ \to \bZ \]
where $\bZ := \delta(\bQ) \subset \Lambda^+ \times \Lambda^-$. Let us set the notation $Z := \delta(Q)$ and $\partial Z:= \delta(S)$, and note that $\bZ = Z \sqcup\partial Z$. 

In the construction of $\bQ$ (\Cref{construction:emuuextension}), points in $S = \partial Q$ correspond exactly to pairs of elements of $\Lambda^+$ and $\Lambda^-$ that intersect nontrivially. Hence $\partial Z$ can be characterized directly as
\[ \partial Z = \{(\lambda^+, \lambda^-) \mid \lambda^+ \text{ and }\lambda^- \text{ intersect}\}.\]
The points in $Q$ arise precisely from pairs of elements of $\Lambda^+$ and $\Lambda^-$ that are disjoint and linked, so
\[ Z = \{(\lambda^+, \lambda^-) \mid \lambda^+ \text{ and }\lambda^- \text{ are disjoint and linked}\},\]
and hence
\[ \bZ = Z \sqcup \partial Z = \{(\lambda^+, \lambda^-) \mid \lambda^+ \text{ and }\lambda^- \text{ intersect or link}\}. \]
The homeomorphism $\delta: \bQ \to \mathbf{Z}$ takes the decompositions $\bcF^\pm$ to decompositions 
\[ \bcZ^\pm :=\delta(\bcF^\pm) = \{ \delta(\bK) \mid \bK \in \bcF^\pm \}, \]
which may be also be characterized directly. Let $\pi_+\colon \bZ \to \Lambda^+$ and $\pi_-\colon \bZ \to \Lambda^-$ be the compositions of inclusion $\bZ \hookrightarrow \Lambda^+ \times \Lambda^+$ with the projections of $\Lambda^+ \times \Lambda^-$ onto its factors. Then
\begin{align*}
    \bcZ^+ = \delta(\bcF^+) &= \{ (\pi_+)^{-1}(\lambda) \mid \lambda \in \Lambda^+ \}\\
    \bcZ^- = \delta(\bcF^-) &= \{ (\pi_-)^{-1}(\lambda) \mid \lambda \in \Lambda^- \}.
\end{align*}
We record these characterizations, which depend only on $\Lambda^\pm$, as definitions:
\begin{definition}\label{def:especialdecomps}
    Let $\Lambda^\pm$ be an especial pair on $S^1$. We define the following subsets of $\Lambda^+ \times \Lambda^-$:
    \begin{align*}
        Z &:= \{(\lambda^+, \lambda^-) \mid \lambda^+ \text{ and }\lambda^- \text{ are disjoint and linked}\}\\
        \partial Z &:= \{(\lambda^+, \lambda^-) \mid \lambda^+ \text{ and }\lambda^- \text{ intersect}\}\\
        \bZ &:= \{(\lambda^+, \lambda^-) \mid \lambda^+ \text{ and }\lambda^- \text{ intersect or link}\}= Z \cup \partial Z
    \end{align*}
    and call $\bZ$ the \emph{especial disc} associated to $\Lambda^\pm$.

    We define the following decompositions of the especial disc:
    \begin{align*}
        \bcZ^+ &:= \{ (\pi_+)^{-1}(\lambda) \mid \lambda \in \Lambda^+ \}\\
        \bcZ^- &:= \{ (\pi_-)^{-1}(\lambda) \mid \lambda \in \Lambda^- \}.\qedhere
    \end{align*}
\end{definition}

The preceding discussion yields:
\begin{proposition}
    Let $\Lambda^\pm$ be an especial pair on $S^1$. Then:
    \begin{itemize}
        \item The corresponding especial disc $\bZ \subset \Lambda^+ \times \Lambda^-$ is a topological disc with interior $Z$ and boundary $\partial Z$.

        \item $\bcZ^\pm$ are an emuu pair on $\bZ$.

        \item The map $S^1 \to \partial Z$ defined by $s \mapsto (\Lambda^+(s), \Lambda^-(s))$ is a homeomorphism that identifies $\Lambda^\pm$ with $\partial \bcZ^\pm$.
    \end{itemize}
\end{proposition}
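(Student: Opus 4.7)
The three claims essentially package what has been developed in \Cref{section:StraighteningFlowspace} and \Cref{sec:ComparingEmuu}, so the strategy is to transfer the emuu structure from a concrete model disc to the abstract subset $\bZ \subset \Lambda^+ \times \Lambda^-$ via the double boundary map. First I would invoke \Cref{construction:emuuextension} and \Cref{proposition:effExtExistence} to produce an emuu pair $\bcF^\pm$ on some disc $\bQ = Q \sqcup S$ with $\partial \bcF^\pm = \Lambda^\pm$. By \Cref{cor:emuudoubleboundaryhomeo}, the corresponding double boundary map
\[ \delta \colon \bQ \to \Lambda^+ \times \Lambda^-, \quad \delta(q) = (\partial \bcF^+(q), \partial \bcF^-(q)) \]
is a homeomorphism onto its image. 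The proposition then reduces to verifying that $\delta(\bQ) = \bZ$ with $\delta(S) = \partial Z$ and $\delta(Q) = Z$, and that $\delta$ carries $\bcF^\pm$ to $\bcZ^\pm$.

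The main computation is the image identification, for which I would unwind \Cref{construction:emuuextension}. Each $q \in \bQ$ is $\tau(p)$ for some $p \in \bD$, and $p$ lies in unique hulls $\Hull(\lambda^+) \in \bcH^+$, $\Hull(\lambda^-) \in \bcH^-$; tracing through the definitions, this gives $\delta(q) = (\lambda^+, \lambda^-)$. When $q \in S$, I may take $p$ to be the unique point of $\tau^{-1}(q) \cap S$; then $p \in \Hull(\lambda^+) \cap \Hull(\lambda^-) \cap S = \lambda^+ \cap \lambda^-$ by \Cref{lemma:Hulls} (\ref{it:HullIntersectS1}), so $\lambda^\pm$ intersect and $\delta(q) \in \partial Z$. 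When $q \in Q$, the hulls still meet at $p$ but $\lambda^+ \cap \lambda^- = \emptyset$, so by \Cref{lemma:Hulls} (\ref{it:HullDisjThenIntIffLink}) the sets $\lambda^\pm$ are linked, giving $\delta(q) \in Z$. Conversely, any $(\lambda^+, \lambda^-) \in \bZ$ makes $\Hull(\lambda^+) \cap \Hull(\lambda^-)$ nonempty (again by \Cref{lemma:Hulls} (\ref{it:HullDisjThenIntIffLink}) in the linked case), providing a preimage in $\bQ$, and the $\partial Z$ versus $Z$ dichotomy matches the above case distinction.

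It remains to identify the decompositions. For $q \in \bQ$ with $\delta(q) = (\lambda^+, \lambda^-)$, the element $\bcF^+(q) = \tau(\Hull(\lambda^+))$ maps under $\delta$ to the set of all pairs in $\bZ$ with first coordinate $\lambda^+$, which is exactly $\pi_+^{-1}(\lambda^+) \in \bcZ^+$; the analogous statement holds for the minus sign. Thus $\delta$ intertwines $\bcF^\pm$ with $\bcZ^\pm$, and items (1) and (2) follow. For item (3), the map $s \mapsto (\Lambda^+(s), \Lambda^-(s))$ is precisely the restriction of $\delta$ along the inclusion $S \hookrightarrow \bQ$, hence a homeomorphism $S \to \partial Z$; and since $\delta$ already identifies $\bcF^\pm$ with $\bcZ^\pm$, this restriction identifies $\Lambda^\pm = \partial \bcF^\pm$ with $\partial \bcZ^\pm$.

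Nothing here is genuinely hard once the emuu model $\bQ$ is in hand; the only delicate piece of bookkeeping is the interior-versus-boundary dichotomy in the second paragraph, which rests entirely on the geometric dictionary ``$\lambda^\pm$ meet or link on $S$ if and only if their hulls meet in $\bD$'' encoded in \Cref{lemma:Hulls}.
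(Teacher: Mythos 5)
Your proposal is correct and follows essentially the same route as the paper: the proposition is derived there from the discussion in \Cref{sec:doubleboundaryespecialdisc}, which likewise takes the emuu pair of \Cref{construction:emuuextension} on $\bQ$, uses \Cref{cor:emuudoubleboundaryhomeo} to see that the double boundary map is a homeomorphism onto its image, and identifies $\delta(S)$, $\delta(Q)$, and $\delta(\bcF^\pm)$ with $\partial Z$, $Z$, and $\bcZ^\pm$ by unwinding the construction exactly as you do (your use of \Cref{lemma:Hulls} just makes explicit the ``intersect or link'' dichotomy the paper asserts). No changes needed.
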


\subsection{Uniqueness and actions}\label{subsec:uniqueness}
The following result is tantamount to uniqueness of emuu pairs extending a given especial pair.

\begin{proposition}\label{lemma:doubleboundaryimage}
    Let $\bcF^\pm$ be an emuu pair on a disc $\bD=D \sqcup S$ and let $\Lambda^\pm := \partial \bcF^\pm$ be the corresponding especial pair on $S$. Then the double boundary map
    \[ \delta\colon \bD \to \Lambda^+ \times \Lambda^-\]
    is a homeomorphism onto its image, which is the especial disc $\bZ = Z \sqcup \partial Z$. This takes $D$ and $S$ homeomorphically to $Z$ and $\partial Z$, respectively. Moreover, $\delta(\bcF^+) = \bcZ^+$ and $\delta(\bcF^-) = \bcZ^-$.
\end{proposition}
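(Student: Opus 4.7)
The plan is to combine the previously-established homeomorphism-onto-image result with a comparison to the canonical emuu pair, using invariance of domain.

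By \Cref{cor:emuudoubleboundaryhomeo}, $\delta$ is a homeomorphism onto its image $W := \delta(\bD)$. The restriction $\delta|_S$ is a bijection onto $\partial Z$: every $s \in S$ lies in $\Lambda^+(s) \cap \Lambda^-(s)$, placing $\delta(s) \in \partial Z$, and conversely every $(\lambda^+, \lambda^-) \in \partial Z$ shares some $s \in S$ with $\delta(s) = (\lambda^+, \lambda^-)$. For the inclusion $\bZ \subseteq W$: given $(\lambda^+, \lambda^-) \in \bZ$, the elements $\bK^\pm \in \bcF^\pm$ with $\partial \bK^\pm = \lambda^\pm$ are closed, connected, and unbounded, and either share an $S$-point or have linked boundaries, so \Cref{lemma:iflinkthenintersect} provides an intersection point; this is unique by efficient intersection and maps to $(\lambda^+, \lambda^-)$ under $\delta$.

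The reverse inclusion $W \subseteq \bZ$ is the heart of the argument. I would compare $\bcF^\pm$ to the canonical emuu pair $\bcF_0^\pm$ on the disc $\bQ$ built in \Cref{construction:emuuextension}. Its double boundary map $\delta_0 \colon \bQ \to \bZ$ is a homeomorphism (as established in the paragraphs following \Cref{cor:emuudoubleboundaryhomeo}), and $\delta_0|_S$ coincides with $\delta|_S$ since both send $s$ to $(\Lambda^+(s), \Lambda^-(s))$. Since $\bZ = \delta_0(\bQ) \subseteq W$, the composition
\[ \phi := \delta^{-1} \circ \delta_0 \colon \bQ \to \bD \]
is a well-defined continuous injection fixing $S$ pointwise.

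The remaining task is to show $\phi$ is surjective, hence a homeomorphism, via invariance of domain. On interiors, $\phi|_Q \colon Q \to D$ is a continuous injection between open subsets of $\bbR^2$, so $\phi(Q)$ is open in $D$. Because $\phi(\bQ)$ is compact in $\bD$ and $\phi(\bQ) = \phi(Q) \sqcup S$, the set $\phi(Q) = \phi(\bQ) \setminus S$ is also closed in $D$. Connectedness of $D$ and nonemptiness of $\phi(Q)$ then force $\phi(Q) = D$, so $\phi$ is a continuous bijection from a compact space to a Hausdorff space, and hence a homeomorphism. The identity $\delta \circ \phi = \delta_0$ then yields $\delta(\bD) = \bZ$, from which $\delta(D) = Z$, $\delta(S) = \partial Z$, and $\delta(\bcF^\pm) = \bcZ^\pm$ all follow directly: for $\bK \in \bcF^+$ with $\lambda := \partial \bK$, surjectivity of $\delta$ onto $\bZ$ gives $\delta(\bK) = \pi_+^{-1}(\lambda)$, matching $\bcZ^+$, and similarly for $\bcZ^-$.

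The principal obstacle I anticipate is precisely this comparison step; in particular, a direct proof that $\partial \bcF^+(p)$ and $\partial \bcF^-(p)$ actually link (rather than being merely disjoint) for an arbitrary interior $p \in D$ seems difficult without passing to the canonical model, which packages this topological content through invariance of domain.
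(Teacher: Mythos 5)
Your proof is correct, but it takes a genuinely different route from the paper's for the key inclusion $\delta(D)\subseteq Z$. The paper proves directly that for every interior point $p$ the boundary data $\partial\bcF^+(p)$ and $\partial\bcF^-(p)$ are linked: this is \Cref{lemma:linkingandintersecting}, whose proof uses the nesting property to trap the connected intersection $\bcF^+(p)\cap\bcF^-(p)$ behind two disjoint separating elements and then invokes the Phragmen--Brouwer-type \Cref{Wilderfact} on $S^2$. You avoid that lemma entirely: you take the canonical emuu pair from \Cref{construction:emuuextension}, whose double boundary map $\delta_0\colon\bQ\to\bZ$ the paper has already identified as a homeomorphism, form $\phi=\delta^{-1}\circ\delta_0$ (well defined by your inclusion $\bZ\subseteq\delta(\bD)$), and get surjectivity of $\phi$, hence $\delta(\bD)=\bZ$, from invariance of domain together with compactness and connectedness of $D$. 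Both arguments are sound; the paper's lemma is more general (any muu pair with connected intersection) and keeps the proposition self-contained, while your route leans on the canonical model (hulls plus Moore's theorem) and invariance of domain, and it recovers the interior linking statement as a corollary rather than an input --- which is exactly the difficulty you flagged. One small point to make explicit: writing $\phi(\bQ)=\phi(Q)\sqcup S$ and treating $\phi|_Q$ as a map into $D$ uses $\phi(Q)\cap S=\emptyset$; this holds because $\delta(\phi(q))=\delta_0(q)\in Z$ for $q\in Q$, while $\delta(S)\subseteq\partial Z$ and $Z\cap\partial Z=\emptyset$ (a pair cannot both intersect and be disjoint), but it deserves a line. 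Your closing derivation of $\delta(\bcF^\pm)=\bcZ^\pm$ from surjectivity (any point of $\pi_+^{-1}(\lambda)$ is $\delta(p)$ with $\partial\bcF^+(p)=\lambda$, forcing $p\in\bK$) is fine and parallels the paper's final paragraph.
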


Note that the discussion in \Cref{sec:doubleboundaryespecialdisc} shows that \Cref{lemma:doubleboundaryimage} holds for the specific emuu pair from \Cref{construction:emuuextension}; the point of the proposition is that it applies to \emph{any} emuu pair $\bcF^\pm$.
To prove the proposition we will need the following lemma.

\begin{lemma}\label{lemma:linkingandintersecting}
    Let $\bSp^\pm$ be a muu pair on $\bD=D\sqcup S$ with \emph{connected intersection}, i.e. such that $\bK^+ \cap \bK^-$ is empty or connected for all $\bK^\pm \in \bSp^\pm$. Then for every pair $\bK^\pm \in \bSp^\pm$ that intersect, the sets $\del \bK^+, \del\bK^-$ either intersect or are linked.
\end{lemma}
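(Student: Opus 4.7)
The plan is to prove the contrapositive by contradiction. Suppose $\del\bK^+ \cap \del\bK^- = \emptyset$ and $\del\bK^\pm$ are unlinked in $S$, while $C := \bK^+ \cap \bK^-$ is nonempty (hence connected by hypothesis, so that $\bK^+ \cup \bK^-$ is a connected closed unbounded subset of $\bD$). The first step extracts a separator via nesting. Unlinking puts $\del\bK^-$ in a unique complementary interval $I^+$ of $\del\bK^+$, and applying the nesting property \ref{it:spSNesting} of the special decomposition $\del\bSp^+$ (which is special by \Cref{prop:muuRestrictsToSpecial}) to $\del\bK^+$ and a compact subinterval $I \subset I^+$ containing $\del\bK^-$ produces some $\bL \in \bSp^+$ with $\bL \neq \bK^+$ (so $\bL \cap \bK^+ = \emptyset$) whose boundary $\del\bL$ separates $\del\bK^+$ from $\del\bK^-$ in $S$.

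The second step shows $\bL \cap \bK^- \neq \emptyset$, hence connected. Were $\bL$ disjoint from $\bK^+ \cup \bK^-$, the connected closed unbounded set $\bK^+ \cup \bK^-$ would lie in a single complementary region of $\bL$ in $\bD$ (because complementary regions of $\bL$ have $S$-boundaries equal to complementary intervals of $\del\bL$), and then $\del\bK^+ \cup \del\bK^-$ would lie in a single complementary interval of $\del\bL$, contradicting separation. Since $\bL \cap \bK^+ = \emptyset$, we get $\bL \cap \bK^- \neq \emptyset$, and the connected intersection hypothesis makes this intersection connected.

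The third step is to derive the contradiction. Using the sequential form of nesting \ref{it:spSNestingSequences}, one produces a sequence $\bL_i \in \bSp^+$, each disjoint from $\bK^+$ and separating $\del\bK^+$ from $\del\bK^-$, whose boundaries $\del\bL_i$ Kuratowski-converge to $\del I^+ \subset \del\bK^+$. Upper semicontinuity of $\bSp^+$ forces the Hausdorff limit of the $\bL_i$ to lie inside $\bK^+$. Each $\bL_i \cap \bK^-$ is nonempty, connected, pairwise disjoint, and disjoint from $C$. Combining with \Cref{lemma:RegionBetween} applied to the disjoint pair $\bK^+, \bL_i$, the intersections $\bL_i \cap \bK^-$ form a nested family of connected separators in $\bK^-$ between $C$ and $\del\bK^-$ accumulating on $C$. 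A symmetric argument in $\bSp^-$ produces $\bM_j \in \bSp^-$ with analogous properties on the $\bK^-$ side, and connected intersection applies to every pair $(\bL_i, \bM_j)$.

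The main obstacle is making this final step rigorous: one must use the two accumulating families and the connected intersection hypothesis (applied both to $(\bL_i, \bK^-)$, $(\bK^+, \bM_j)$, and $(\bL_i, \bM_j)$) to obtain a disconnected intersection or a separation-type contradiction, leveraging \Cref{corollary:SeparatingBasedOnEnds} to translate the nested configuration in $\bD$ into an incompatible statement about linking at infinity. The intuition is that a nonempty connected interior intersection with unlinked endpoints encodes a tangential meeting of $\bK^+$ and $\bK^-$, which is destabilized by nearby decomposition elements produced by nesting; the nearby elements are forced either to miss $\bK^-$ (contradicting Step 2) or to cross it in multiple components (contradicting connected intersection).
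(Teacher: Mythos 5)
Your Steps 1 and 2 are fine (nesting gives $\bL \in \bSp^+$ disjoint from $\bK^+$ with $\del\bL$ separating $\del\bK^+$ from $\del\bK^-$, and $\bL$ must then meet $\bK^-$), but the proof is not complete: Step 3 never actually produces a contradiction, and you say as much yourself. The closing dichotomy --- that the nearby elements $\bL_i$ are ``forced either to miss $\bK^-$ or to cross it in multiple components'' --- is unjustified and, as a local statement, false: each $\bL_i$ can perfectly well intersect $\bK^-$ in a single connected arc-like set, and nothing in the accumulating families of separators $\bL_i\in\bSp^+$, $\bM_j\in\bSp^-$, or in the applications of connected intersection to the pairs $(\bL_i,\bK^-)$, $(\bK^+,\bM_j)$, $(\bL_i,\bM_j)$ that you list, is by itself incompatible with the hypotheses. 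The configuration you are trying to rule out (a ``tangential'' meeting of $\bK^+$ and $\bK^-$ in $D$ with unlinked ends) cannot be destabilized by such pointwise considerations; one needs a global separation argument, and that is the missing idea.

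The paper's proof supplies exactly this. It chooses a \emph{single} $\bL^+\in\bSp^+$ separating $\bK^+$ from $\del\bK^-$ and then a \emph{single} $\bL^-\in\bSp^-$ separating $\bK^-$ from $\del\bL^+$ (hence from $\bK^+$), so that the boundary intervals $\del U^+$ and $\del U^-$ of the complementary regions containing $\bK^+$ and $\bK^-$ are disjoint. Viewing $\bD$ as the upper hemisphere of $S^2$, neither $\bL^+$ nor $\bL^-$ separates $A=\bK^+\cap\bK^-$ from the south pole, but their union does (any path avoiding $\bL^+$ must exit through $\del U^+$, any path avoiding $\bL^-$ through $\del U^-$, and these are disjoint). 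By the Phragmen--Brouwer-type statement \Cref{Wilderfact}, this forces $\bL^+\cap\bL^-$ to be disconnected --- and that is where the connected-intersection hypothesis is contradicted: it is applied to the auxiliary pair $(\bL^+,\bL^-)$, not to intersections with $\bK^\pm$ as in your sketch. If you want to salvage your approach, this is the maneuver to import; as written, your argument stops short of any contradiction.
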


The proof of this will use the following topological property of the 2-sphere:

  \begin{fact}[name={\cite[Thm II.5.29]{Wilder}}]\label{Wilderfact}
        If $x$ and $y$ are points in $S^2$ which are not separated by either of the closed sets $K$ and $L$, and $K \cap L$ is connected, then $x$ and $y$ are not separated by $K \cup L$.
    \end{fact}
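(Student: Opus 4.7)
My plan is to argue by contraposition. Specifically, I assume that $\partial\bK^+$ and $\partial\bK^-$ are disjoint and unlinked on $S$ and aim to deduce that $\bK^+\cap\bK^-=\emptyset$; the connected-intersection hypothesis then ensures that if $\bK^+\cap\bK^-$ were nonempty, it would be a single connected set in $D$, which is the input Wilder needs. Concretely, I embed $\bD$ into $S^2$ by gluing a second closed disk $\bD^c$ along $S$, and use unlinkedness to choose an embedded arc $\gamma\subset\bD^c$ whose endpoints $p,q\in S\setminus(\partial\bK^+\cup\partial\bK^-)$ separate $\partial\bK^+$ from $\partial\bK^-$ on $S$; the arc $\gamma$ then partitions $\bD^c$ into closed half-disks $D_+$ and $D_-$ whose $S$-arcs contain $\partial\bK^+$ and $\partial\bK^-$ respectively.

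I then form the closed, connected subsets of $S^2$ given by
\[K:=\bK^+\cup D_+, \qquad L:=\bK^-\cup D_-,\]
and compute
\[K\cap L=(\bK^+\cap\bK^-)\cup\gamma,\]
since the cross-terms $\bK^+\cap D_-$ and $D_+\cap\bK^-$ live in $S$ but the relevant $S$-arcs are disjoint, and $D_+\cap D_-=\gamma$. Under the contradictory hypothesis $\bK^+\cap\bK^-\neq\emptyset$, this intersection has exactly two components: one in $D$ (since $\partial\bK^+\cap\partial\bK^-=\emptyset$ forces $\bK^+\cap\bK^-$ to miss $S$) and the arc $\gamma\subset\bD^c$. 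The component structure of $\bD\setminus\bK^\pm$ is controlled by the auxiliary lemma appearing in the proof of \Cref{lemma:iflinkthenintersect}: the complementary regions of $\bK^\pm$ in $\bD$ that touch $S$ are in bijection with the complementary intervals of $\partial\bK^\pm$.

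To finish, I plan to locate points $x,y\in S^2\setminus(K\cup L)=D\setminus(\bK^+\cup\bK^-)$ sitting in distinct components of this open set but in a common component of $S^2\setminus K$ and a common component of $S^2\setminus L$, so that neither $K$ nor $L$ separates $x$ from $y$ while $K\cup L$ does. Wilder's fact, applied to a version of $(K,L)$ engineered so that the two components of $K\cap L$ are tied into a single connected set --- for instance by re-routing $\gamma$ into $\bD$ through the connected set $\bK^+\cap\bK^-$, or by passing to the quotient of $S^2$ which collapses the separating arcs of $S$ between $\partial\bK^+$ and $\partial\bK^-$ --- will then yield a contradiction. The main obstacle I anticipate is exactly this engineering step: in the naive setup $K\cap L$ is disconnected and Wilder does not apply, so the connectedness of $\bK^+\cap\bK^-$ together with structural features of the muu decompositions (\Cref{lem:FaceOfSaturatedContinuum} and \Cref{cor:SeparatingSaturatedSets}) must be leveraged to produce a modification whose intersection is the single connected set $\bK^+\cap\bK^-$ while retaining the separation data of $(x,y)$ required to contradict Wilder.
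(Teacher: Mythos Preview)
You have confused which statement you are meant to prove. The target is Wilder's Fact itself: a classical separation result in $S^2$ which the paper simply \emph{cites} from \cite[Thm II.5.29]{Wilder} and does not prove. Your proposal is instead an attempt to prove \Cref{lemma:linkingandintersecting}, the lemma immediately following the Fact, which \emph{applies} Wilder to the muu pair $\bSp^\pm$. The objects $\bK^\pm$, $\partial\bK^\pm$, the disc $\bD$, and the connected-intersection hypothesis that you invoke all belong to that lemma, not to the Fact.

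Even as an approach to \Cref{lemma:linkingandintersecting}, your argument does not go through. You correctly observe that your construction gives $K\cap L=(\bK^+\cap\bK^-)\cup\gamma$ with two components, so Wilder's Fact cannot be applied directly, and you then gesture at an ``engineering step'' to repair this without carrying it out. The paper avoids this difficulty entirely by taking a different tack: rather than building sets out of $\bK^\pm$ themselves, it uses the nesting property of $\partial\bSp^\pm$ to find \emph{new} elements $\bL^+\in\bSp^+$ and $\bL^-\in\bSp^-$ that separate $\bK^+$ from $\partial\bK^-$ and $\bK^-$ from $\partial\bL^+$ respectively. Then $\bL^+\cup\bL^-$ separates $A=\bK^+\cap\bK^-$ from the south pole in $S^2$ while neither $\bL^+$ nor $\bL^-$ alone does, and Wilder's Fact forces $\bL^+\cap\bL^-$ to be disconnected, contradicting the connected-intersection hypothesis. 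The key point you are missing is that Wilder is applied to auxiliary separating elements, not to $\bK^\pm$.
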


\begin{proof}[Proof of \Cref{lemma:linkingandintersecting}]
 
    Suppose there are $\bK^\pm \in \bSp^\pm$ that intersect, with $\del \bK^+, \del \bK^-$ disjoint and unlinked.

    Let $A := \bK^+ \cap \bK^-$. This is compact, it is connected by the assumption of connected intersection, and it is contained in $D$ because $\del A \subset\del \bK^+ \cap \del \bK^- = \emptyset$.

    Let $(a^+, b^+)$ be the complementary interval of $\del\bK^+$ that contains $\del\bK^-$. Since $\partial \bSp^+$ is special (\Cref{prop:muuRestrictsToSpecial}), we can use the nesting property (\Cref{definition:spS} \ref{it:spSNestingSequences}) to find $\bL^+ \in \bSp^+$ that separates $\bK^+$ from $\partial \bK^-$. Similarly we can find $\bL^- \in \bSp^-$ that separates $\bK^-$ from $\partial \bL^+$ (and hence also from $\bK^+$). See \Cref{fig:linkingandintersecting}.

    \begin{figure}
        \centering
        \begin{overpic}{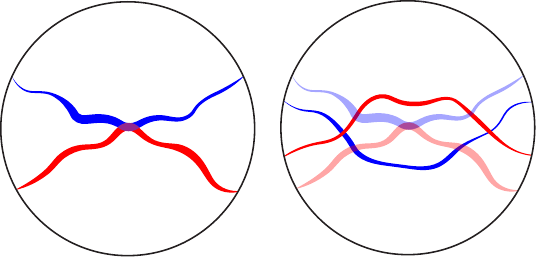}
        \small
        \put(7, 10){$\bK^+$}
        \put(38, 32){$\bK^-$}
        \put(70, 32){$\bL^+$}
        \put(79, 12){$\bL^+$}
        \end{overpic}
        \caption{From the proof of \Cref{lemma:linkingandintersecting}.}
        \label{fig:linkingandintersecting}
    \end{figure}

    Let $U^+$ be the complementary region of $\bL^+$ that contains $\bK^+$ and let $U^-$ be the complementary region of $\bL^-$ that contains $\bK^-$. By construction, $\partial U^+$ and $\partial U^-$ are disjoint intervals. 

    If we identify $\bD$ with the upper hemisphere in $S^2$, then neither $\bL^+$ nor $\bL^-$ separate $A$ from the south pole $s$, but $\bL^+ \cup \bL^-$ does. To see this, note that any path from $A$ to $s$ that avoids $\bL^+$ must have first intersection with the equator $S^1$ contained in $\partial U^+$, while any path from $A$ to $s$ that avoids $\bL^-$ must have first intersection with the equator $S^1$ contained in $\partial U^-$. Since these are disjoint, there is no path from $A$ to $s$ that avoids $\bL^+ \cup \bL^-$. By \Cref{Wilderfact} this implies that $\bL^+ \cap \bL^-$  is disconnected, contradicting the connected intersection assumption.
\end{proof}

\begin{proof}[Proof of \Cref{lemma:doubleboundaryimage}]
\Cref{cor:emuudoubleboundaryhomeo} implies that $\delta$ is a homeomorphism onto its image, so it remains to study its image.

We first show that $\delta(S) = \partial Z$. If $s \in  S$, then $\delta(s) = (\partial\bcF^+(s), \partial \bcF^-(s))$. Since $\partial\bcF^+(s)$ and $\partial\bcF^-(s)$ intersect at $s$, $\delta(s)\in \del Z$; hence $\delta(S) \subset \partial Z$. On the other hand, if $(\partial \bK^+, \partial \bK^-) \in \partial Z$, then $(\partial \bK^+, \partial \bK^-) = \delta(s)$ for $s$ in $\partial \bK^+ \cap \partial \bK^-$, which is nonempty by the definition of $\bZ$ (in fact $\partial \bK^+ \cap \partial \bK^- = \{s\}$ by efficient intersection). Thus $\partial Z \subset \delta(S^1)$.

Next we show that $\delta(D) = Z$. Let $(\del \bK^+,\del \bK^-) \in Z$. By the definition of $Z$,  $\bK^+$ and $\bK^-$ are disjoint and linked. By \Cref{lemma:iflinkthenintersect}, they must intersect at some point $p \in D$. Then $(\del \bK^+,\del \bK^-) = \delta(p)$, and it follows that $Z \subset \delta (D)$. On the other hand, let $x \in D$. Then the elements $\del\bcF^\pm(x)$ are disjoint by efficient intersection, and they are linked by \Cref{lemma:linkingandintersecting}, so $\delta(D)\subset Z$.

Finally we show that $\delta(\bcF^\pm)=\bcZ^\pm$. For $\bK\in \bcF^+$, we claim that $\delta(\bK)=\pi_+^{-1}(\del \bK)$. Note that
\[
\delta(\bK)=\{(\del \bK, \del \bcF^-(p))\mid p\in \bK\}.
\]
It follows that $\delta(\bK)\subset\pi_+^{-1}(\del \bK)$. On the other hand, if $(\del \bK, \del \bL)\in \pi_+^{-1}(\del \bK)$, then $\bL$ must intersect $\bK$ nontrivially in a point $p$ by \Cref{lemma:iflinkthenintersect}, since $\del\bK$ and $\del \bK$ intersect or link. Then $(\del \bK, \del \bL)=\delta(p)$, so $\pi_+^{-1}(\del \bK)\subset \delta(\bK)$. This shows that $\delta(\bcF^+)=\bcZ^+$, and the argument for $\bcZ^-$ is similar. 
\end{proof}

\begin{theorem}\label{theorem:emuuUniqueness}
    Let $\bcF_i^\pm$ be emuu pairs on discs $\bD_i = D_i \sqcup S_i$, for $i = 1, 2$. If there is a homeomorphism $f: S_1 \to S_2$ taking $\partial \bcF_1^\pm$ to $\partial \bcF_2^\pm$, then there is a unique homeomorphism $F: \bD_1 \to \bD_2$ extending $f$ that takes $\bcF_1^\pm$ to $\bcF_2^\pm$.
\end{theorem}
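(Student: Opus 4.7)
The plan is to leverage \Cref{lemma:doubleboundaryimage}, which tells us that each emuu pair is completely determined, up to canonical homeomorphism, by the especial pair on the boundary. More precisely, the double boundary maps give homeomorphisms
\[
\delta_i \colon \bD_i \to \bZ_i \subset \Lambda_i^+ \times \Lambda_i^-
\]
(where $\Lambda_i^\pm := \partial \bcF_i^\pm$) that identify $D_i$ with $Z_i$, $S_i$ with $\partial Z_i$, and $\bcF_i^\pm$ with $\bcZ_i^\pm$. So it suffices to construct a homeomorphism $\phi \colon \bZ_1 \to \bZ_2$ that sends $\bcZ_1^\pm$ to $\bcZ_2^\pm$ and agrees with $f$ after making the identifications $S_i \cong \partial Z_i$, and then set $F := \delta_2^{-1} \circ \phi \circ \delta_1$.

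For the construction of $\phi$, first observe that the homeomorphism $f \colon S_1 \to S_2$ carries $\Lambda_1^\pm$ to $\Lambda_2^\pm$ as decompositions, so $f$ descends to bijections on decomposition spaces $f_\ast^\pm \colon \Lambda_1^\pm \to \Lambda_2^\pm$. These are homeomorphisms since the quotient maps $S_i \to \Lambda_i^\pm$ are closed (by upper semicontinuity). The product map $f_\ast^+ \times f_\ast^- \colon \Lambda_1^+ \times \Lambda_1^- \to \Lambda_2^+ \times \Lambda_2^-$ is then a homeomorphism, and because $f$ is a homeomorphism of circles it preserves both ``intersect'' and ``link'' among pairs of decomposition elements. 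Hence $f_\ast^+ \times f_\ast^-$ restricts to a homeomorphism $\phi \colon \bZ_1 \to \bZ_2$ carrying $Z_1$ to $Z_2$ and $\partial Z_1$ to $\partial Z_2$. Moreover, by the characterizations $\bcZ_i^+ = \{ \pi_+^{-1}(\lambda) : \lambda \in \Lambda_i^+ \}$ (and analogously for $-$) in \Cref{def:especialdecomps}, $\phi$ takes $\bcZ_1^\pm$ to $\bcZ_2^\pm$. Setting $F := \delta_2^{-1} \circ \phi \circ \delta_1$ yields a homeomorphism $\bD_1 \to \bD_2$ that takes $\bcF_1^\pm$ to $\bcF_2^\pm$, and an easy unraveling of the definitions shows $F|_{S_1} = f$ (a point $s \in S_1$ maps under $\delta_1$ to $(\Lambda_1^+(s), \Lambda_1^-(s))$, then under $\phi$ to $(\Lambda_2^+(f(s)), \Lambda_2^-(f(s)))$, which is $\delta_2(f(s))$).

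For uniqueness, suppose $F \colon \bD_1 \to \bD_2$ is any homeomorphism extending $f$ and taking $\bcF_1^\pm$ to $\bcF_2^\pm$. For $p \in \bD_1$, let $\bK^\pm := \bcF_1^\pm(p)$. Then $F(\bK^\pm)$ is an element of $\bcF_2^\pm$ whose boundary in $S_2$ is $f(\partial \bK^\pm)$; since the boundary map $\partial^\pm \colon \bcF_2^\pm \to \partial \bcF_2^\pm = \Lambda_2^\pm$ is a bijection (\Cref{lemma:circlediscrestrictionhomeo}), this specifies $F(\bK^\pm)$ uniquely from the boundary data. Hence $F(p)$ lies in the intersection $F(\bK^+) \cap F(\bK^-)$, which is a single point by the efficient intersection property for $\bcF_2^\pm$. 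Thus $F$ is determined pointwise by $f$ and the decompositions, proving uniqueness.

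The conceptual obstacle, already overcome in \Cref{lemma:doubleboundaryimage} and \Cref{lemma:linkingandintersecting}, is the passage from an abstract emuu pair to the combinatorial especial disc: once the double boundary map is known to be a homeomorphism onto $\bZ$, this uniqueness result is a soft consequence. The only subtlety worth double-checking in writing is that $\phi$ really lands inside $\bZ_2$ (not just in the product), which is where preservation of linking by the circle homeomorphism $f$ is used.
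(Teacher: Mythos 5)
Your proposal is correct and follows essentially the same route as the paper: both pass through the especial disc via the double boundary maps of \Cref{lemma:doubleboundaryimage}, define the middle homeomorphism $\bZ_1 \to \bZ_2$ induced by $f$ on $\Lambda_1^\pm \times \Lambda_1^-$, set $F = \delta_2^{-1}\circ\phi\circ\delta_1$, and prove uniqueness by noting that any extension must send $p$ to the unique intersection point of $\bcF_2^+(f(\partial\bcF_1^+(p)))$ and $\bcF_2^-(f(\partial\bcF_1^-(p)))$. Your write-up is in fact slightly more explicit than the paper's about why the induced map on especial discs is a homeomorphism landing in $\bZ_2$ (preservation of intersecting and linking), which the paper leaves implicit.
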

\begin{proof}    
    For $i=1,2$, let $\Lambda_i^\pm:=\del\bcF_i^\pm$, $\bZ_i = Z_i \cup \partial Z_i$ be the especial disc associated to $\Lambda_i^\pm$, and let
    $\delta_i\colon \bD_i\to \bZ_i$
    be the double boundary map associated to $\bcF_i^\pm$. 
    
    If $f\colon S_1\to S_2$ is a homeomorphism taking $\Lambda_1^\pm$ to $\Lambda_2^\pm$, it induces a homeomorphism 
    \begin{align*}
        G\colon \bZ_1 &\to \bZ_2\\
        (\lambda^+, \lambda^-)&\mapsto (f(\lambda^+), f(\lambda^-)).
    \end{align*}
     Then $F=\delta_2^{-1}\circ G\circ \delta_1$ is a homeomorphism $\bD_1\to \bD_2$ that extends $f$ and takes $\bcF_1^\pm$ to $\bcF_2^\pm$.

    To see that $F$ is the unique extension of $f$ sending $\bcF_1^\pm$ to $\bcF_2^\pm$, note that any such extension must send $p_1\in \bD_1$ to the unique intersection point of the decomposition elements $\bcF_2^+(f(\del\bcF_1^+(p)))$ and $\bcF_2^-(f(\del\bcF_1^-(p)))$, which is $F(p_1)$.
\end{proof}	

This immediately implies that emuu pairs that extend the same especial pair on the circle may be uniquely identified (taking $f = \mathrm{id}_S$). 
\begin{corollary}
    Let $\bcF_i^\pm$ be emuu pairs on discs $\bD = D_i \sqcup S$, for $i = 1, 2$, with the same boundary circle. Then there is a unique homeomorphism $F: \bD_1 \to \bD_2$ that takes $\bcF_1^\pm$ to $\bcF_2^\pm$ and restricts to the identity on $S$.
\end{corollary}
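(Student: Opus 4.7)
The plan is to deduce this as a direct application of \Cref{theorem:emuuUniqueness} with the specific choice $f = \mathrm{id}_S$. By hypothesis, both $\bcF_1^\pm$ and $\bcF_2^\pm$ are emuu pairs on discs sharing the \emph{same} boundary circle $S$, and they restrict to the same especial pair on $S$, that is $\partial \bcF_1^\pm = \partial \bcF_2^\pm$. Consequently, the identity map $\mathrm{id}_S: S \to S$ is a homeomorphism taking $\partial \bcF_1^\pm$ to $\partial \bcF_2^\pm$, so the hypotheses of \Cref{theorem:emuuUniqueness} are satisfied.

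Applying \Cref{theorem:emuuUniqueness}, we obtain a unique homeomorphism $F: \bD_1 \to \bD_2$ that extends $\mathrm{id}_S$ (hence restricts to the identity on $S$) and that takes $\bcF_1^\pm$ to $\bcF_2^\pm$. There is essentially no work beyond citing the theorem; no delicate step is needed, since uniqueness and existence both come packaged in the referenced statement. In particular, tracing through the construction from the proof of \Cref{theorem:emuuUniqueness}, $F$ is explicitly given by $\delta_2^{-1} \circ G \circ \delta_1$, where $\delta_i$ is the double boundary map of $\bcF_i^\pm$ and $G: \bZ_1 \to \bZ_2$ is the homeomorphism of especial discs induced by $\mathrm{id}_S$. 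Since $\Lambda_1^\pm = \Lambda_2^\pm$, the two especial discs $\bZ_1$ and $\bZ_2$ are literally the same subset of $\Lambda^+ \times \Lambda^-$ and $G$ is the identity on $\bZ_1 = \bZ_2$, so $F = \delta_2^{-1} \circ \delta_1$.

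There is no real obstacle here; the corollary is stated precisely to package the $f = \mathrm{id}_S$ case of the theorem for easy reference later in the paper.
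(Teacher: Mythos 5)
Your proof is correct and matches the paper's own treatment: the paper derives this corollary immediately from \Cref{theorem:emuuUniqueness} by taking $f = \mathrm{id}_S$, exactly as you do (with the implicit hypothesis, as you note, that the two pairs restrict to the same especial pair $\partial\bcF_1^\pm = \partial\bcF_2^\pm$ on $S$). Your extra remark identifying $F = \delta_2^{-1}\circ\delta_1$ is a harmless unwinding of the theorem's proof and is consistent with it.
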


It also has an immediate application to group actions:
\begin{corollary}\label{corollary:EfficientExtensionsAction}
    Let $\bcF^\pm$ be an emuu pair on a disc $\bD = D \sqcup S$. Then any group action $\Gamma \acts S$ that preserves $\partial \bcF^\pm$ extends uniquely to an action $\Gamma \acts \bbD$ that preserves $\bcF^\pm$. 
\end{corollary}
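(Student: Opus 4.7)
The plan is to derive this as a direct application of the uniqueness statement in \Cref{theorem:emuuUniqueness}, taking $\bD_1 = \bD_2 = \bD$ and $\bcF_1^\pm = \bcF_2^\pm = \bcF^\pm$. For each $\gamma \in \Gamma$, the hypothesis says that $\gamma\colon S \to S$ is a homeomorphism sending $\partial\bcF^\pm$ to $\partial\bcF^\pm$. \Cref{theorem:emuuUniqueness} then provides a unique homeomorphism $\widetilde{\gamma}\colon \bD\to \bD$ extending $\gamma$ and taking $\bcF^\pm$ to $\bcF^\pm$. I would define the extended action by $\gamma\cdot p := \widetilde{\gamma}(p)$ and then verify the group-action axioms and uniqueness.

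For the group-action axioms, first consider the identity: the identity map $\mathrm{id}_{\bD}$ is a homeomorphism of $\bD$ restricting to $\mathrm{id}_S$ on the boundary and sending $\bcF^\pm$ to $\bcF^\pm$. By the uniqueness clause of \Cref{theorem:emuuUniqueness}, $\widetilde{\mathrm{id}}_S = \mathrm{id}_{\bD}$. Next, for any $\gamma_1, \gamma_2 \in \Gamma$, both $\widetilde{\gamma_1 \gamma_2}$ and $\widetilde{\gamma_1}\circ \widetilde{\gamma_2}$ are homeomorphisms $\bD \to \bD$ that extend $\gamma_1\gamma_2\colon S\to S$ and that carry $\bcF^\pm$ to $\bcF^\pm$; the latter because the composition of two homeomorphisms each preserving $\bcF^\pm$ preserves $\bcF^\pm$. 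Again by the uniqueness in \Cref{theorem:emuuUniqueness}, $\widetilde{\gamma_1\gamma_2} = \widetilde{\gamma_1}\circ \widetilde{\gamma_2}$. This also shows each $\widetilde{\gamma}$ is a homeomorphism with inverse $\widetilde{\gamma^{-1}}$, so $\Gamma\acts \bD$ is a well-defined action by homeomorphisms preserving $\bcF^\pm$.

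For uniqueness of the extended action, suppose $\Gamma \acts \bD$ is any action extending $\Gamma\acts S$ and preserving $\bcF^\pm$. Then for each $\gamma \in \Gamma$, the homeomorphism $p\mapsto \gamma\cdot p$ is a homeomorphism of $\bD$ extending $\gamma|_S$ and carrying $\bcF^\pm$ to $\bcF^\pm$. By the uniqueness clause of \Cref{theorem:emuuUniqueness}, this homeomorphism equals $\widetilde{\gamma}$, so the action agrees with the one just constructed.

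There is essentially no obstacle here; every step reduces to invoking \Cref{theorem:emuuUniqueness}, so the proof is very short. The only thing to be careful about is to confirm that compositions and inverses of homeomorphisms preserving $\bcF^\pm$ still preserve $\bcF^\pm$, which is immediate from the set-theoretic definition of ``takes $\bcF^\pm$ to $\bcF^\pm$'' applied to homeomorphisms.
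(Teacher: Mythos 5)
Your proof is correct and matches the paper's approach: the corollary is stated there as an immediate consequence of \Cref{theorem:emuuUniqueness}, with the action axioms and uniqueness following exactly as you argue from the uniqueness clause applied to each $\gamma$, $\gamma_1\gamma_2$, and the identity. (The paper also sketches an alternative proof via the especial disc $\bZ$, but your route is the primary one.)
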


Alternatively, \Cref{corollary:EfficientExtensionsAction} follows from the observation that any action $\Gamma \acts S$ that preserves an especial pair $\Lambda^\pm$ induces an action on the decomposition spaces $\Lambda^+$ and $\Lambda^-$, and hence a product action on $\Lambda^+ \times \Lambda^-$. Since $\bZ$ is defined by intersection and linking properties, which are preserved by homeomorphisms, it is obviously preserved by $\Gamma \acts \Lambda^+ \times \Lambda^-$, as are the decompositions $\bcZ^\pm$.

\section{Emuu pairs and singular foliations}\label{sec:emuupairsgivefoliations}
To finish the proof of \Cref{theorem:effExt}, we will prove that every emuu pair on a closed disc restricts to a transverse singular foliations of the disc's interior (\Cref{prop:emuutofoliations}).

\subsection{Singular foliations}\label{sec:singularfoliations}

For each $n \ge 2$, consider the \emph{standard foliated $2n$-gon} and \emph{standard bifoliated $2n$-gon} as depicted (for $n=3$) in \Cref{figure:StandardFoliatedPolygons}. We think of the foliations on these spaces as decompositions.

A \emph{singular foliation} of a surface $\Sigma$ can be thought of as a partition $\cF$ of $\Sigma$ such that each point has a neighborhood $U$ identified with a standard foliated $2n$-gon ($n\ge 2$) in such a way that $\Mon(\cF \capdot U)$ is identified with the standard foliation. Given a leaf $L \in \cF$, the components of $L \cap U$ are called the \emph{plaques} of $L$ in $U$.

A \emph{pair of transverse singular foliations} of a surface is defined similarly using standard bifoliated $2n$-gons.

\begin{figure}[ht]
	\centering
	\includegraphics[scale=0.8]{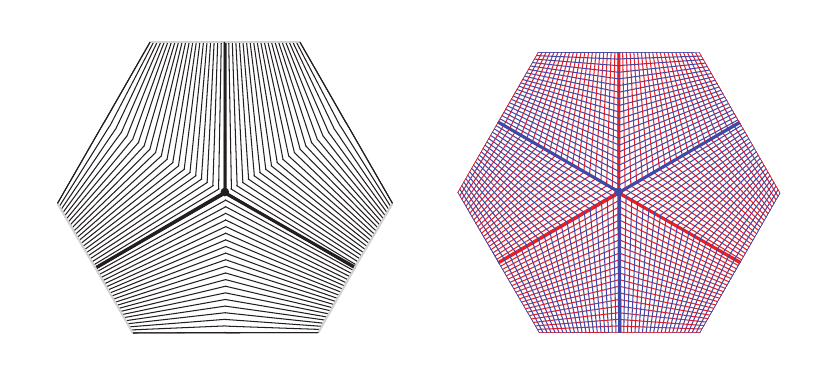}
	\caption{The standard foliated and bifoliated $6$-gon.} \label{figure:StandardFoliatedPolygons}
\end{figure}

\begin{remark}
    Our definition allows for leaves with multiple singular points (or equivalently ``saddle connections"). Cf. the ``bifoliated planes'' in \cite[Def. 2.1]{BarthelmeFrankelMann}, which allow only one singular point per leaf.
\end{remark}

\Cref{proposition:effExtExistence}, \Cref{theorem:emuuUniqueness}, and \Cref{corollary:EfficientExtensionsAction} together prove most of \Cref{theorem:effExt}. To complete the theorem, we will prove that an emuu pair in $\bbD=D\sqcup S$ induces a pair of transverse essential singular foliations in $D$. Note that the decompositions $\bcF^\pm\capdot D$ may not be monotone, but we can monotonize them by taking components as in \Cref{sec:monotonization}.

\begin{proposition}\label{prop:emuutofoliations}
    Let $\bcF^\pm$ be an emuu pair on $\bD=D\sqcup S$. Then $\Mon(\bcF^\pm \capdot D)$ is a transverse pair of singular foliations of $D$.
\end{proposition}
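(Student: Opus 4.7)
By the uniqueness theorem (\Cref{theorem:emuuUniqueness}), we may assume $\bcF^\pm$ is the specific emuu pair produced by \Cref{construction:emuuextension}, obtained from hull decompositions $\bcH^\pm = \Hull(\Lambda^\pm)$ of the Euclidean unit disc $\bbD$ via the quotient $\tau\colon \bbD \to \bD$ collapsing elements of the intersection decomposition $\bcH^\cap$. Working in this model makes every leaf convex, so local topology around any interior point is controlled by convex geometry.

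Fix $p \in D$ and let $K^\pm \in \bcH^\pm$ be the convex compact sets with $\tau(K^\pm) = \bcF^\pm(p)$; set $C := K^+ \cap K^-$, an element of $\bcH^\cap$ with $\tau(C) = \{p\}$. I would first establish a \emph{prong count}: there are exactly $2n$ complementary regions of $K^+ \cup K^-$ in $\bbD$ whose closure meets $C$, for some integer $n \geq 2$, arranged cyclically around $C$ so that consecutive regions alternately abut $K^+$ and $K^-$ along $C$. Finiteness holds because each prong region contains a nontrivial complementary interval of $\partial K^+ \cup \partial K^-$ in $S^1$; we have $n \geq 2$ because $p$ is interior and so each of $K^+, K^-$ bounds complementary regions on both sides of $C$; and the alternation follows from the intersection/linking dichotomy in the spirit of \Cref{lemma:linkingandintersecting}, since two consecutive same-sign arcs would force a disallowed intersection with the other sign.

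Next, in each prong region, I would apply the nesting property of special decompositions (\Cref{definition:spS}\ref{it:spSNesting}) to select a hull in $\bcH^+$ or $\bcH^-$ (according to which type bounds the prong region) lying close to the corresponding side of $K^\pm$. Together with $K^\pm$, these $2n$ chosen hulls enclose a closed polygonal region $W \subset \bbD$ containing $C$ in its interior, with boundary consisting of $2n$ arcs alternating between $\bcH^+$- and $\bcH^-$-leaves. Its image $U := \tau(W)$ is a closed disc neighborhood of $p$. By \Cref{lemma:separationintervals}, within each of the $n$ strips between $K^+$ and one of the chosen $\bcH^+$-neighbors, the elements of $\bcH^+$ meeting $W$ form a compact interval in the decomposition space, and similarly for $\bcH^-$. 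Under $\tau$, these $n$ strips of each sign glue together at $p$, producing the standard bifoliated $2n$-gon decomposition on $U$. Transversality is built in via efficient intersection (\Cref{def:muuemuu}), and monotonization recovers the plaques through $p$.

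The main obstacle I anticipate is the gluing step: one must verify that $\tau$ identifies the boundaries of the $n$ positive strips (and separately the $n$ negative strips) in precisely the pattern of the standard model, despite two complications. First, elements of $\Lambda^\pm$ may contain nontrivial arcs of $S^1$, producing ``infinite product'' substructures inside strips as illustrated in \Cref{fig:collapsingexamples}(d); these must collapse harmlessly under $\tau$ into the model. Second, intersection elements of $\bcH^\cap$ that lie in $W$ but outside $C$ are also collapsed by $\tau$, producing identifications whose effect on the bifoliated structure of $U$ must be tracked (cf.\ \Cref{fig:collapsingexamples}(c,e)). Both difficulties are ultimately handled using upper semicontinuity of $\bcH^\cap$ (\Cref{lemma:intersectionUSC}) together with the rigid convex geometry of the hulls.
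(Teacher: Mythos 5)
Your outline follows the paper's own route almost exactly: reduce to the hull model via \Cref{theorem:emuuUniqueness} and \Cref{construction:emuuextension}, read off the local combinatorics at $p$ from the finite linking of $\Lambda^+(p)$ and $\Lambda^-(p)$ (your ``prong regions'' are the paper's sectors from \Cref{lemma:finitelinking}), use nesting to pick nearby hulls, and use separation intervals (\Cref{lemma:separationintervals}) to get interval structure on the families of leaves crossing each strip. The count $2n$ with $n\ge 2$ and the alternation are fine (though each corner region in fact abuts both $K^+$ and $K^-$; the alternation is in the cyclic type of the sector, exactly as in \Cref{lemma:finitelinking}).

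The step you flag as ``the main obstacle'' is, however, precisely where the content lies, and your proposed resolution (``upper semicontinuity of $\bcH^\cap$ together with the rigid convex geometry'') is not yet an argument. The paper closes it in two moves that you should adopt. First, the nearby hulls cannot be chosen independently in each sector: in a $\pm$-sector one first chooses $\mu^+$ separating $\lambda^+$ from $\lambda^-\cap(s_1,s_3)$, and then $\mu^-$ is required to separate $\lambda^-$ from \emph{both} $\lambda^+\cap(s_0,s_2)$ and $\mu^+\cap(s_0,s_2)$; this extra condition is what makes the region $R=|[K^+,I^+]|\cap|[K^-,I^-]|$ a compact, $\bcH^\cap$-saturated set contained in $D$ (so no boundary or ``segment leaving the interior'' issues arise), and makes the four hulls genuinely cobound a quadrilateral. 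Second, rather than gluing strips by hand and tracking how $\tau$ identifies collapsed $\bcH^\cap$-elements and interval leaves, one defines the chart in one stroke: the product of the two quotient maps restricted to $R$ is a continuous closed map $f\colon R\to[K^+,I^+]\times[K^-,I^-]$, and the induced bijection $g\colon\tau(R)\to[K^+,I^+]\times[K^-,I^-]$, $x\mapsto(\bcH^+(x),\bcH^-(x))$, is a homeomorphism by \Cref{lem:quotientfact}; by construction it carries the restrictions of $\cF^\pm$ to the horizontal and vertical foliations of the product of intervals. Both of the complications you anticipate (interval elements of $\Lambda^\pm$ producing product substructure, and $\bcH^\cap$-elements in $W$ away from $C$ being collapsed) are absorbed automatically by this double-quotient description, since such collapses happen inside single fibers of $f$. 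With these two points supplied, your sector-by-sector charts combine over the finitely many sectors exactly as in the paper to give the standard bifoliated $2n$-gon at $p$.
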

The remainder of this section will work toward proving \Cref{prop:emuutofoliations}.

\subsection{The proof of \Cref{prop:emuutofoliations}}

Fix $\Lambda^\pm$ and $\bcF^\pm$ as in \Cref{prop:emuutofoliations}, and let $\cF^\pm := \Mon(\bcF^\pm \capdot D)$.

Now, construct the decompositions $\bcH^\pm$ and $\bcH^\cap$ of $\bD$ as in the proof of \Cref{construction:emuuextension}. By the uniqueness of emuu pairs (\Cref{theorem:emuuUniqueness}), we may identify %$\bQ$ with the quotient $\rquotient{\bbD}{\bcH^\cap}$, and 
the decompositions $\bcF^\pm$ with the images of $\bcH^\pm$ under the quotient map $\tau: \bD \to \bD/\bcH^\cap \cong \bD$.

For convenience, let us set some notation.
\begin{notation}
	Recall that the elements of $\bcF^\pm$ through a point $p \in \bD$ are denoted by $\bcF^\pm(p)$. We will abuse notation and denote the corresponding elements of $\Lambda^\pm$ and $\bcH^\pm$ by
	\begin{align*}
	\Lambda^\pm(p) 	&:= \del\bcF^\pm(p) \text{ and }\\
	\bcH^\pm(p) 	&:= \Hull(\Lambda^\pm(p)),
	\end{align*}
	and the corresponding element of $\bcH^\cap$ by
	\[ \bcH^\cap(p) := \bcH^+(p) \cap \bcH^-(p).\qedhere \] 
\end{notation}

\subsubsection{Segments in sprigs} We now show that the elements of $\bcF^+$ and $\bcF^-$ are uniquely arc-connected.

\begin{lemma}\label{lemma:SprigSegments}
	Let $\bK$ be an element of $\bcF^+$ (or $\bcF^-$), and let $p, q \in \bK$. Then there is a subset $\bK[p, q] \subset \bK$ containing $p$ and $q$ such that:
	\begin{enumerate}[label=(\arabic*)]
		\item\label{it:SprigSegments1} $\bK[p, q]$ is homeomorphic to an arc with endpoints $p$ and $q$,
		\item\label{it:SprigSegments2} $r \in \bK[p, q] - \{p, q\}$ if and only if $r \in \bK$ and $\Lambda^-(r)$ separates $\Lambda^-(p)$ from $\Lambda^-(q)$ in $S$ (respectively, $\Lambda^+(r)$ separates $\Lambda^+(p)$ from $\Lambda^+(q)$), and
        \item $\bK[p,q]$ is the unique minimal connected subset of $\bK$ with this property.
	\end{enumerate}
\end{lemma}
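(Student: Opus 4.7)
The plan is to reduce the problem to the one-dimensional separation structure of the transverse decomposition $\bcF^-$. Fix $\bK \in \bcF^+$ (the case $\bK \in \bcF^-$ is symmetric). First I would exploit efficient intersection to see that $r \mapsto \bcF^-(r)$ defines an \emph{injection} $\pi\colon \bK \to \bcF^-$ into the decomposition space. This map is continuous, being the composition of the inclusion $\bK \hookrightarrow \bD$ with the quotient map $\bD \to \bcF^-$. Since $\bK$ is compact and $\bcF^-$ is Hausdorff by upper semicontinuity (\Cref{theorem:USCConditions}), $\pi$ is a homeomorphism onto its image. Write $\bL_p := \bcF^-(p)$ and $\bL_q := \bcF^-(q)$; these are distinct whenever $p \neq q$, again by efficient intersection.

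Next I would define $\bK[p,q] := \pi^{-1}([\bL_p, \bL_q])$, where $[\bL_p, \bL_q]$ denotes the closed separation interval in $\bcF^-$ (\Cref{definition:separationinterval}). The crux is to verify $[\bL_p, \bL_q] \subseteq \pi(\bK)$: for $\bL \in (\bL_p, \bL_q)$, the element $\bL$ separates $\bL_p$ from $\bL_q$ in $\bD$, so $\bL_p$ and $\bL_q$ lie in distinct components of $\bD - \bL$. Since $\bK$ is connected and meets $\bL_p$ at $p$ and $\bL_q$ at $q$, it must cross $\bL$, and by efficient intersection this crossing occurs at a unique point $r \in \bK$ with $\pi(r) = \bL$. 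Consequently $\pi$ restricts to a homeomorphism from $\bK[p,q]$ onto $[\bL_p, \bL_q]$, which by \Cref{lemma:separationintervals} is a compact interval with endpoints $\bL_p, \bL_q$; this yields (1). For (2) I would translate the separation condition on $\bcF^-$-elements in $\bD$ to a separation condition on the corresponding $\Lambda^-$-elements in $S$ via the homeomorphism $\bcF^- \cong \Lambda^-$ of \Cref{lemma:circlediscrestrictionhomeo} together with \Cref{corollary:SeparatingBasedOnEnds}.

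For minimality in (3), let $C \subset \bK$ be any connected subset containing $p$ and $q$. If some $r \in \bK[p,q]$ were missing from $C$, then $r \neq p, q$, so $\bL := \pi(r) \in (\bL_p, \bL_q)$ would separate $\bL_p$ from $\bL_q$ in $\bD$, placing $p$ and $q$ in distinct components of $\bD - \bL$. Efficient intersection gives $\bK \cap \bL = \{r\}$, so $C \cap \bL = \emptyset$, contradicting the connectedness of $C$. Hence $\bK[p,q] \subseteq C$; since $\bK[p,q]$ is itself a connected subset (in fact an arc) containing $p$ and $q$, it is the unique minimal such subset.

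The principal technical point I foresee is the surjectivity step $[\bL_p, \bL_q] \subseteq \pi(\bK)$, which combines connectedness of the $\bcF^+$-leaf $\bK$ with the separation structure of $\bcF^-$ in $\bD$. Everything else is formal consequences of compactness, upper semicontinuity, and efficient intersection, together with the dictionary between the three perspectives (subsets of $\bD$, elements of $\bcF^-$, elements of $\Lambda^-$) supplied by \Cref{lemma:circlediscrestrictionhomeo} and \Cref{corollary:SeparatingBasedOnEnds}.
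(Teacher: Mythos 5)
Your proof is correct, and it follows the same underlying idea as the paper's — identify $\bK[p,q]$ with a closed separation interval in the transverse decomposition and invoke \Cref{lemma:separationintervals} — but the execution is genuinely different in where the identification is made. The paper works through the hull picture: it uses the identification of $\bcF^\pm$ with the images of $\bcH^\pm$ under the collapsing map $\tau$ (justified by \Cref{theorem:emuuUniqueness}), sets $A=\bcH^-(p)$, $B=\bcH^-(q)$, $Y=\tau^{-1}(\bK)\cap|[A,B]|$, asserts that every element of $[A,B]$ meets $Y$, and defines $\bK[p,q]=\tau(Y)$, getting the arc structure from \Cref{lemma:restrictionUSC} applied to $Y$ together with \Cref{lemma:separationintervals}. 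You instead stay entirely inside the emuu disc: efficient intersection makes $r\mapsto\bcF^-(r)$ a continuous injection $\bK\to\bcF^-$, hence (compact to Hausdorff, via \Cref{theorem:USCConditions}) a homeomorphism onto its image, and your connectedness-plus-separation argument shows the image contains all of $[\bL_p,\bL_q]$, with efficient intersection giving the uniqueness of each crossing point. This buys two things: your argument needs no appeal to the hulls or to the uniqueness theorem, so it applies verbatim to an arbitrary emuu pair; and it makes explicit the step the paper elides (that every element of the separation interval is actually hit), as well as the minimality argument for item (3), which the paper dispatches with ``follows from (2).'' The paper's route, by contrast, gets the interval-restriction homeomorphism for free from \Cref{lemma:restrictionUSC} rather than from an injectivity-plus-compactness argument. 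Your translation of the separation condition to the circle via \Cref{lemma:circlediscrestrictionhomeo} and \Cref{corollary:SeparatingBasedOnEnds} is the same as what the paper's construction encodes. The only cosmetic omission is the degenerate case $p=q$, which both you and the paper leave implicit.
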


We will view $\bK[p, q]$ as an \emph{oriented} arc, and call it the \emph{$\bK$-segment from $p$ to $q$}.

\begin{proof}
    We assume $\bK\in \bcF^+$; the other case is symmetric.
    Let $H^+=\tau^{-1}(\bK)$ ($=\bcH^+(p)=\bcH^+(q)$), and let $A=\bcH^-(p)$, $B=\bcH^-(q)$. Let $[A,B]$ be the separation interval from $A$ to $B$ in $\bcH^-$ (see \Cref{definition:separationinterval}).
    Let $Y=H^+\cap|[A,B]|$, and note that every $C\in [A,B]$ intersects $Y$. Let 
    \[
    \bK[p,q]:=[A,B]\capdot Y=\tau(Y).
    \]
    Since $[A,B]$ is an upper semicontinuous decomposition of $|[A,B]|$, \Cref{lemma:restrictionUSC} tells us that $[A,B]$ is homeomorphic to $[A,B]\capdot Y$ via a homeomorphism carrying $A$ to $p$ and $B$ to $q$. Since $[A,B]$ is homeomorphic to a compact interval with endpoints $A$ and $B$ by \Cref{lemma:separationintervals}, this establishes (1).
    Property (2) holds by construction, and property (3) follows from (2).
\end{proof}

\begin{remark}
	If $p, q$ are contained in a single leaf $K \in \cF^+$ within $\bK \in \bcF^+$, then $\bK[p, q] \subset K$. This follows from the fact that $K$ is connected and $\bK[p, q]$ is minimal among connected subsets containing $p$ and $q$. In this case, we will write $K[p, q] = \bK[p, q]$ and call this the \emph{$K$-segment from $p$ to $q$}. In general, $\bK[p, q]$ may intersect the boundary circle. Both these cases are illustrated in \Cref{fig:leafsegments}.
\end{remark}

\begin{figure}
    \centering
    \begin{overpic}{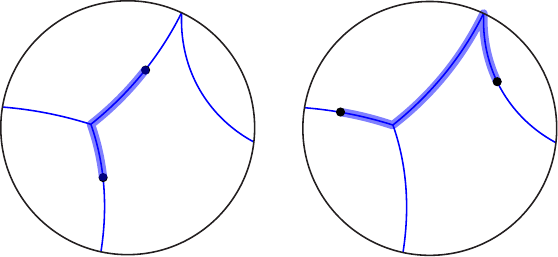}
    \small
    \put (20,13){$p$}
    \put (27,31){$q$}
    \put (61,28){$p$}
    \put (91, 30){$q$}
    \end{overpic}
    \caption{A $\bK$-segment $\bK[p,q]$ (highlighted) is contained in a single element $K\in\cF^\pm$ if $p,q\in K$. Otherwise $\bK[p,q]$ will intersect $S^1$.}
    \label{fig:leafsegments}
\end{figure}

\subsubsection{Sectors and charts}
To show that $\cF^\pm$ form a transverse pair of singular foliations, we will show that each point $p \in D $ is contained in a standard bifoliated $2n$-gon.

Fix a point $p \in D$, and let $\lambda^\pm = \Lambda^\pm(p)$, $H^\pm = \bcH^\pm(p)$, and $\bK^\pm = \bcF^\pm(p)$. By efficient intersection, $\lambda^\pm$ are disjoint and hence $n$-linked for some finite $n$ (cf. \Cref{lemma:finitelinking}), hence there are $n$ complementary intervals of $\lambda^+ \cup \lambda^-$ of the form $(s^+, s^-)$ for $s^+ \in \lambda^+$ and $s^- \in \lambda^-$ and $n$ of the form $(s^-, s^+)$ for $s^- \in \lambda^-$ and $s^+ \in \lambda^+$. We will call these intervals \emph{$\pm$-} and \emph{$\mp$-sectors}, respectively. See \Cref{figure:Sectors}.

\begin{figure}[ht]
	\begin{overpic}{Sectors}
		\put(10, 90){$\bD$}
		
		\tiny
		\put(26, 35){$ H^+ \cap H^- = \bcH^\cap(p)$}
		\put(35, 65){$H^-$}
		\put(76, 38){$H^+$}
		
		\put(87, 22){$\mp$-sector}
		\put(80, 85){$\pm$-sector}

	\end{overpic}
	\caption{Sectors.} \label{figure:Sectors}
\end{figure}

%\subsubsection{Charts on sectors}
Consider a single $\pm$-sector $I = (s_1, s_2)$, and label points $s_0 \in \Lambda^-(p)$ and $s_3 \in \Lambda^+(p)$ so that $(s_0, s_2)$ is the complementary interval of $\Lambda^-(p)$ that contains $s_1$ and $(s_1, s_3)$ is the complementary interval of $\Lambda^+(p)$ that contains $s_2$. See \Cref{figure:FoliationCharts}. 

\begin{figure}[ht]
	\begin{overpic}[scale=.95]{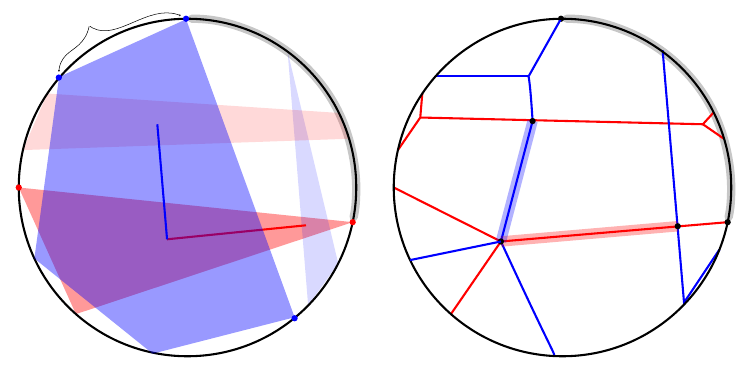}
    		\put(41, 45){$\bD$}
		\put(55, 45){$\bD/\bcH^\cap$}

		\tiny
		\put(39, 6){\textcolor{blue}{$s_0$}}
		\put(47.5, 20){\textcolor{red}{$s_1$}}
		\put(24.5, 48.5){\textcolor{blue}{$s_2$}}
		\put(0, 25){\textcolor{red}{$s_3$}}
		
		\put(68, 16){$p$}
		\put(91, 18){$p^+$}
		\put(68.5, 35){$p^-$}
		
		\put(34, 32.5){\textcolor{red}{$I^+$}}
		\put(40, 25){\textcolor{blue}{$I^-$}}
		
		\put(22, 26){\textcolor{blue}{$c^-$}}
		\put(28, 19.5){\textcolor{red}{$c^+$}}
		
		\put(73, 42){\textcolor{blue}{$\bK^-$}}
		\put(60, 22){\textcolor{red}{$\bK^+$}}
		
		\put(70, 26){\textcolor{blue}{$\bK^-[p, p^-]$}}
		\put(76, 20.5){\textcolor{red}{$\bK^+[p, p^+]$}}
		
		\put(85.5, 37){\textcolor{blue}{$\bL^-$}}
		\put(60, 32){\textcolor{red}{$\bL^+$}}
		
		\put(5, 47){$\lambda^- \cap (s_1, s_3)$}
		  
	\end{overpic}
	\caption{Building charts on sectors.} \label{figure:FoliationCharts}
\end{figure}

Using the nesting property (\Cref{definition:spS}~\ref{it:spSNesting}), we can find an element $\mu^+ \in \Lambda^+$ that separates $\lambda^+$ from $\lambda^- \cap (s_1, s_3)$. The corresponding hull $I^+ \in \bcH^+$ intersects $H^-$ in a convex polygon contained in $D$, which corresponds to a point $p^- \in \bK^- \cap \bL^+$, where $\bL^+$ is the element of $\bcF^+$ corresponding to $I^+$. Note that the segment $\bK^-[p, p^-]$ is contained entirely in $Q$---this is because $\mu^+$ was chosen so that no point in $\lambda^-$ lies between $\lambda^+$ and $\mu^+$. In \Cref{figure:FoliationCharts}, $c^-$ is a line segment corresponding to $\bK^-[p, p^-]$.

Choose $\mu^-$ similarly, except that we require it to separate $\lambda^-$ from both $\lambda^+ \cap (s_0, s_2)$ \emph{and} $\mu^+ \cap (s_0, s_2)$. Let $I^-$, $L^-$, and $p^+$ be defined similarly. The additional requirement for $\mu^-$ ensures that $H^+$, $I^+$, $H^-$, and $I^-$ cobound a quadrilateral in $D$. 

Let $R=|[K^+, I^+]|\cap|[K^-, I^-]|$, which is a compact $\bcH^\cap$-saturated set. Further, $R$ is contained in $D$ by our careful choice of $\mu^+$ and $\mu^-$. 

Taking a product of the quotient maps for $\bcH^+$ and $\bcH^-$ and restricting to $R$, we have a continuous closed map
\[
f\colon R\to [K^+,I^+]\times[K^-,I^-].
\]
Note that there is a bijection 
\[
g\colon\tau(R)\to [K^+,I^+]\times[K^-,I^-]
\]
carrying $x\in \tau(R)$ to the pair $(\bcH^+(x),\bcH^-(x))$, and that $g\circ \tau=f$. By \Cref{lem:quotientfact}, $g$ is a homeomorphism. By construction $g$ carries $\cF^+\cap \tau(R)$ and $\cF^-\cap \tau(R)$ to the foliations of $[K^+,I^+]\times[K^-,I^-]$ by horizontal and vertical lines.

One can do this for each of the finitely many sectors at $p$ and combine the results to construct a single chart to a standard bifoliated $2n$-gon.
This completes the proof of \Cref{prop:emuutofoliations}, and thus that of  \Cref{theorem:effExt}.

\section{Straightening maps}\label{sec:Straightening}

We now have a bijective correspondence between especial pairs in the circle and emuu pairs in the disc up to homeomorphism, where the latter determine transverse pairs of singular foliations of the interior. 

\subsection{Intersection properties}\label{subsec:IntersectionProperties}

Consider the following property that may apply to a pair $\bSp^\pm$ of muu decompositions of a disk $\bD=D\sqcup S$, governing the way they interact with the boundary circle:
\begin{enumerate}[label=($\cap_0$)]
    \item \label{it:boundaryefficient} For each $s \in S$, $\partial \bSp^+(s) \cap \partial\bSp^-(s) = \{s\}$.
\end{enumerate}
Note that \ref{it:boundaryefficient} allows elements $\bK^+ \in \bSp^+$ and $\bK^- \in \bSp^-$ to intersect in both $D$ and $S$, simply requiring that their intersection in $S$ is at most one point.
Note also that \ref{it:boundaryefficient} suffices to ensure that $\partial \bSp^\pm$ is an especial pair:

\begin{corollary}\label{cor:whenEmuuRestrictsToEspecial}
    If $\bSp^\pm$ is an muu pair on $\bbD$ with property \ref{it:boundaryefficient} then $\partial \bSp^\pm$ is an especial pair on $S^1$.
\end{corollary}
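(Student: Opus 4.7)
The proof plan is to verify the two ingredients of an especial pair separately, using the already-established machinery.

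First, I would apply \Cref{prop:muuRestrictsToSpecial} to each of $\bSp^+$ and $\bSp^-$ individually. Since each is a muu decomposition of $\bbD$, that proposition immediately gives that $\partial\bSp^+$ and $\partial\bSp^-$ are each special decompositions of $S$. This step uses only the muu hypotheses (no interaction between $\bSp^+$ and $\bSp^-$) and requires no additional argument.

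Second, I would verify that the pair $\partial\bSp^\pm$ has efficient intersection. Fix $\lambda^+\in\partial\bSp^+$ and $\lambda^-\in\partial\bSp^-$, say $\lambda^\pm=\partial\bK^\pm$ for $\bK^\pm\in\bSp^\pm$, and suppose $\lambda^+\cap\lambda^-$ is nonempty. Pick $s\in\lambda^+\cap\lambda^-\subset S$. Since $s\in\partial\bK^+$ and decomposition elements are disjoint, $\bK^+$ is the unique element of $\bSp^+$ through $s$, i.e. $\bK^+=\bSp^+(s)$, so $\lambda^+=\partial\bSp^+(s)$. Similarly $\lambda^-=\partial\bSp^-(s)$. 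Applying hypothesis \ref{it:boundaryefficient} then gives
\[
\lambda^+\cap\lambda^- \;=\; \partial\bSp^+(s)\cap\partial\bSp^-(s) \;=\; \{s\},
\]
a single point. Thus any two elements of $\partial\bSp^+$ and $\partial\bSp^-$ meet in at most one point, which is the efficient intersection property.

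Combining the two steps, $\partial\bSp^\pm$ is an especial pair per \Cref{definition:spS}. There is no real obstacle here: the content of the corollary is that the seemingly weak boundary hypothesis \ref{it:boundaryefficient} — which only restricts intersections of $\partial\bK^+$ and $\partial\bK^-$ when they already share a common boundary point — is exactly strong enough to upgrade ``muu pair'' to ``especial pair,'' because any witness $s$ to an intersection of $\lambda^+$ and $\lambda^-$ automatically pins down both elements as $\bSp^\pm(s)$.
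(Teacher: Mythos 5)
Your proposal is correct and matches the paper's argument: the paper likewise derives specialness of each $\partial\bSp^\pm$ from \Cref{prop:muuRestrictsToSpecial} and observes that property \ref{it:boundaryefficient} is equivalent to efficient intersection of $\partial\bSp^\pm$, which is exactly the equivalence you spell out via the point $s$ pinning down $\bSp^\pm(s)$.
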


This is immediate corollary of \Cref{prop:muuRestrictsToSpecial}, since \ref{it:boundaryefficient} is equivalent to $\partial \bSp^\pm$ having the efficient intersection property (cf \Cref{cor:emuuRestrictsToEspecial}). 
Although trivial, this observation is important in our main application, in which we will want to turn a pair of decompositions $\bcL^\pm$ of the plane, associated to a quasigeodesic flow, into a transverse pair of singular foliations. It follows from \cite{Frankel_closing} that these decompositions naturally determine a muu pair $\bSp^\pm$ with property \ref{it:boundaryefficient}. Then \Cref{cor:whenEmuuRestrictsToEspecial} shows that $\partial \bSp^\pm$ is especial, and \Cref{proposition:effExtExistence} provides an emuu pair $\bcF^\pm$ with $\partial \bSp^\pm = \partial \bcF^\pm$ which restricts to a transverse pair of singular foliations of the plane by  \Cref{prop:emuutofoliations}.

In fact, the decompositions we are concerned with have a stronger intersection property:

\begin{definition}\label{def:propermuu}
    A muu pair $\bSp^\pm$ in a disc $\bD=D\sqcup S$ is called \emph{proper} if it satisfies the following intersection property:
    \begin{enumerate}
    \item[($\cap_1$)] For each $s \in S$, $\bSp^+(s) \cap \bSp^-(s) = \{s\}$. 
\end{enumerate}
    Equivalently: for any $\bK^+\in\bSp^+$ and $\bK^-\in \bSp^-$, the intersection $\bK^+\cap \bK^-$ is either $\emptyset$, a subset of $D$, or a single point in $S$.
\end{definition}

Properness will be important in \Cref{subsection:StraighteningMaps}, where we build ``straightening maps'' that relate $\bSp^\pm \capdot D$ and $\bcF^\pm \capdot D$.

\subsection{Straightening maps}\label{subsection:StraighteningMaps}
Consider a pair of muu decomposition $\bSp^\pm$ of a disc $\bD = D \sqcup S$ with the intersection property \ref{it:boundaryefficient}. Then $\Lambda^\pm :=\partial \bSp^\pm$ is an especial pair (\Cref{cor:whenEmuuRestrictsToEspecial}), so one can construct an emuu pair $\bcF^\pm$ on a disc $\bQ= Q \sqcup S$ with $\partial \bcF^\pm = \Lambda^\pm :=\partial \bSp^\pm$.

\begin{notation}
    Given a pair of muu decompositions $\bSp^\pm$ of a disc $\bD = D \sqcup S$, set
    \begin{align*}
        D_\mathrm{I} &:=  \{ p \in D \mid \partial \bSp^+(p) \text{ and } \partial \bSp^-(p) \text{ intersect} \} \\
        D_\mathrm{L} &:=  \{ p \in D \mid \partial \bSp^+(p) \text{ and } \partial \bSp^-(p) \text{ are disjoint and linked} \}.\qedhere
    \end{align*}
\end{notation}

\begin{theorem}\label{bigstraighteningtheorem}
    Let $\bSp^\pm$ be a muu pair on a disc $\bD = D \sqcup S$ with intersection property \ref{it:boundaryefficient}, and let $\bcF^\pm$ be an emuu pair on a disc $\bQ = Q \sqcup S$ with $\partial \bSp^\pm = \partial \bcF^\pm$.
    
    Then $D_\mathrm{I} \sqcup D_\mathrm{L}$ is a closed, nontrivial subset of $D$. For each $p \in D_\mathrm{I} \sqcup D_\mathrm{L}$ there is a unique $s(p) \in \bQ$ such that $\partial\bSp^\pm(p) = \partial \bcF^\pm(s(p))$, where $s(p) \in S \Leftrightarrow p \in D_\mathrm{I}$ and $s(p) \in Q \Leftrightarrow p \in D_\mathrm{L}$.
    
    Moreover, the assignment $p \mapsto s(p)$ defines a continuous map
    \[ s\colon D_\mathrm{I} \sqcup D_\mathrm{L} \to \bQ \]
    which restricts to a surjection
    \[ s|_{D_\mathrm{L}}\colon D_\mathrm{L} \twoheadrightarrow Q \]
    and extends continuously to a  surjection
    \[ \bs\colon S \sqcup D_\mathrm{I} \sqcup D_\lk \twoheadrightarrow \bQ \]
    by setting $\bs|_S = \mathrm{id}_S$.
\end{theorem}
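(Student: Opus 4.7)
The strategy is to realize $s$ as the composition of the two double boundary maps developed in \Cref{sec:doubleboundaryespecialdisc}. By \Cref{lemma:doubleboundarycontinuous} the muu pair $\bSp^\pm$ gives a continuous map $\delta_{\bSp}\colon \bD \to \Lambda^+\times\Lambda^-$, where $\Lambda^\pm = \partial\bSp^\pm$; and by \Cref{lemma:doubleboundaryimage} the emuu pair $\bcF^\pm$ gives a homeomorphism $\delta_{\bcF}\colon \bQ \to \bZ$ onto the especial disc $\bZ\subset\Lambda^+\times\Lambda^-$, taking $Q$ to $Z$ and $S$ to $\partial Z$. Directly from the definitions of $D_\mathrm{I}$, $D_\mathrm{L}$, $Z$, and $\partial Z$, a point $p \in D$ lies in $D_\mathrm{I}$ (resp.\ $D_\mathrm{L}$) if and only if $\delta_{\bSp}(p)\in\partial Z$ (resp.\ $Z$); hence
\[ D_\mathrm{I} \sqcup D_\mathrm{L} = \delta_{\bSp}^{-1}(\bZ) \cap D. \]
Because $\bZ$ is the image of the compact disc $\bQ$ inside the Hausdorff space $\Lambda^+\times\Lambda^-$ (Hausdorff by upper semicontinuity of $\Lambda^\pm$, per \Cref{theorem:USCConditions}), $\bZ$ is closed, and so $D_\mathrm{I}\sqcup D_\mathrm{L}$ is closed in $D$. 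I then set
\[ s := \delta_{\bcF}^{-1}\circ \delta_{\bSp}|_{D_\mathrm{I}\sqcup D_\mathrm{L}}\colon D_\mathrm{I}\sqcup D_\mathrm{L}\to\bQ, \]
which is continuous, satisfies $\partial\bSp^\pm(p)=\partial\bcF^\pm(s(p))$ by construction, and carries $D_\mathrm{I}$ into $S$ and $D_\mathrm{L}$ into $Q$. Uniqueness of $s(p)$ follows from injectivity of $\delta_{\bcF}$, i.e.\ from efficient intersection of $\bcF^\pm$.

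For surjectivity of $s|_{D_\mathrm{L}}\colon D_\mathrm{L}\to Q$ and nontriviality of $D_\mathrm{I}\sqcup D_\mathrm{L}$, I would fix $q\in Q$ and set $\lambda^\pm:=\partial\bcF^\pm(q)$. These are disjoint (by efficient intersection, since $q\notin S$) and linked (by \Cref{lemma:linkingandintersecting} applied to $\bcF^\pm$, whose pairwise intersections are at most one point and hence trivially connected). Let $\bK^\pm\in\bSp^\pm$ be the unique elements with $\partial\bK^\pm=\lambda^\pm$. Since $\bK^\pm$ are closed and connected with disjoint, linked boundaries, \Cref{lemma:iflinkthenintersect} supplies $p\in \bK^+\cap \bK^-\cap D$. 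By construction $p\in D_\mathrm{L}$ and $s(p)=q$, and since $Q\neq\emptyset$ (being the interior of a disc) we also obtain $D_\mathrm{L}\neq\emptyset$.

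Finally, I extend to $\bs\colon S\sqcup D_\mathrm{I}\sqcup D_\mathrm{L}\to\bQ$ by declaring $\bs|_S=\mathrm{id}_S$. Surjectivity is immediate from $\bs(S)=S$ and $\bs(D_\mathrm{L})=Q$, so the only item to verify is continuity at points of $S$. For a sequence $p_n\in D_\mathrm{I}\sqcup D_\mathrm{L}$ with $p_n\to p\in S$, continuity of $\delta_{\bSp}$ yields $\delta_{\bSp}(p_n)\to (\Lambda^+(p),\Lambda^-(p))$; because the emuu pair $\bcF^\pm$ also enjoys property \ref{it:boundaryefficient} (an immediate consequence of efficient intersection), we have $\delta_{\bcF}(p)=(\Lambda^+(p),\Lambda^-(p))$, so $s(p_n)=\delta_{\bcF}^{-1}(\delta_{\bSp}(p_n))\to p$ as required. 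I expect the only real subtlety in the proof is this final boundary compatibility; everything else is essentially bookkeeping that cashes in on the double boundary / especial disc machinery developed in \Cref{sec:ComparingEmuu}.
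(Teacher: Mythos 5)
Your proposal is correct and follows essentially the same route as the paper: identify $\bQ$ with the especial disc $\bZ$ via the double boundary homeomorphism of the emuu pair (\Cref{lemma:doubleboundaryimage}), and realize $s$ and $\bs$ as (compositions with) restrictions of the double boundary map of $\bSp^\pm$, with surjectivity onto $Q$ coming from \Cref{lemma:iflinkthenintersect}. Your sequential check of continuity at $S$ is just a more explicit version of the paper's observation that $\bs$ agrees with $\delta$ on all of $S \sqcup D_\mathrm{I} \sqcup D_\mathrm{L}$.
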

\begin{proof}
    Let $\bcZ^\pm$ be the emuu pair associated to the especial disc $\bZ = Z \sqcup \partial Z$ (see \Cref{def:especialdecomps}). Since any emuu pair $\bcF^\pm$ on a disc $\bQ = Q \sqcup S$ can be uniquely identified with $\bcZ^\pm$ (\Cref{lemma:doubleboundaryimage}) it suffices to prove the theorem with $\bZ$ and $\bcZ^\pm$ replacing $\bQ$ and $\bcF^\pm$.

    Let
    \[ \delta: \bD \to \Lambda^+ \times \Lambda^- \]
    be the double boundary map (\Cref{definition:doubleboundarymap}). From the definition of $\delta$ and $\bZ = Z \sqcup \partial Z$ it follows immediately that
    \begin{align*}
        \delta^{-1}(\partial Z) &= \{ p \in \bD \mid \partial \bSp^+(p) \text{ and } \partial \bSp^-(p) \text{ intersect} \} = S \sqcup D_\mathrm{I},\\
        \delta^{-1}(Z) &= \{ p \in \bD \mid \partial \bSp^+(p) \text{ and } \partial \bSp^-(p) \text{ are disjoint and linked}\} = D_\mathrm{L},\\
        \delta^{-1}(\bZ) &= \delta^{-1}(\partial Z) \sqcup \delta^{-1}(Z) = S \sqcup D_\mathrm{I} \sqcup D_\mathrm{DL}.
    \end{align*}

    Note that $s$ as defined is simply the restriction of $\delta$ to $D_\mathrm{I} \sqcup D_\mathrm{L}$. Since on the boundary circle $S$, $\delta$ is simply the canonical identification $S \simeq \partial Z$, it follows that $\bs$ is the restriction of $\delta$ to $S \sqcup D_\mathrm{I} \sqcup D_\mathrm{L}$. Hence $\bs$ and $s$ are continuous.

    Since $\bZ$ is closed and $\delta$ is continuous, $\delta^{-1}(\bZ) = S \sqcup D_\mathrm{I} \sqcup D_\mathrm{L}$ is closed in $\bD$, hence $D_\mathrm{I} \sqcup D_\mathrm{L} = (S \sqcup D_\mathrm{I} \sqcup D_\mathrm{L}) \cap D$ is closed in $D$.

    To see that $s|_{D_\mathrm{L}}$ is a surjection it suffices to show that $\delta(D_\mathrm{L}) = Z$. Let $z \in Z$. Then by the definition of $Z$, $z = (\lambda^+, \lambda^-)$ for $\lambda^\pm \in \Lambda^\pm$ disjoint and linked. Since $\Lambda^\pm = \partial \bSp^\pm$, we have $\lambda^\pm = \partial \bK^\pm$ for $\bK^\pm \in \bSp^\pm$. The $\bSp^\pm$ are monotone decompositions, so $\bK^\pm$ are closed, connected subsets with $\partial \bK^\pm$ linked, and must therefore intersect by \Cref{lemma:iflinkthenintersect}. Since $\partial \bK^\pm = \bK^\pm \cap S$ are disjoint, they must intersect at some point $p \in Q$. Then $s(p) = \delta(p) = (\lambda^+, \lambda^-) = z$. Thus $\delta(D_\mathrm{L}) = Z$. 
    It follows that $D_\mathrm{L}$ is nontrivial and $\bs$ is surjective.
\end{proof}

A simple example of straightening is shown in \Cref{fig:straightening}.

Note that $s$ can take points in the interior to the boundary; equivalently, one may have $D_\mathrm{I} \neq \emptyset$. For example, this happens to the hull decompositions in \Cref{fig:collapsingexamples}(c)--(e). Here is another example: take an emuu pair $\bcF^\pm$ in a disc $\bD=D\sqcup S$ and containing elements $\bK^\pm\in \bcF^\pm$ that meet $D$ and intersect in a single point in $S$. Now perturb $\bcF^-$ by an isotopy that fixes $S$ so that $\bK^\pm$ intersect in $D$.

For our applications of this theory in \Cref{part:flowstraightening}, it would be undesirable for points in the interior to map to the boundary under straightening. However, this does not happen when $\bSp^\pm$ is a \emph{proper} muu pair, where we have the following simpler version of the preceding theorem.

\begin{corollary}\label{prop:properstraightening} 
    Let $\bSp^\pm$ be a proper muu pair (\Cref{def:propermuu}) on a disc $\bD = D \sqcup S$, and let $\bcF^\pm$ be an emuu pair on a disc $\bQ = Q \sqcup S$ with $\partial \bSp^\pm = \partial \bcF^\pm$.

    Then $D_\mathrm{L}$ is a closed, nontrivial subset of $D$. For each $p \in D_\mathrm{L}$ there is a unique $s(p) \in Q$ such that $\partial\bSp^\pm(p) = \partial \bcF^\pm(s(p))$, and the assignment $p \mapsto s(p)$ defines a continuous {closed} surjection
    \[ s\colon D_\mathrm{L} \twoheadrightarrow Q, \]
    which extends to a continuous {closed} surjection
    \[ \bs\colon D_\lk \sqcup S \twoheadrightarrow \bQ \]
    by setting $\bs|_S = \mathrm{id}_S$.
\end{corollary}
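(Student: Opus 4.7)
The plan is to derive this corollary from \Cref{bigstraighteningtheorem} in two moves: first show that the properness hypothesis forces $D_\mathrm{I}=\emptyset$, so that the statement of \Cref{bigstraighteningtheorem} collapses to the cleaner form here, and then upgrade the continuity conclusions to closedness using compactness.

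For the first move, suppose for contradiction that some $p\in D_\mathrm{I}$ exists, so there is a point $s\in \partial\bSp^+(p)\cap\partial\bSp^-(p)\subset S$. Set $\bK^\pm=\bSp^\pm(p)$, so $\bK^+\cap\bK^-$ contains both $p\in D$ and $s\in S$. But properness (\ref{def:propermuu}) says $\bK^+\cap\bK^-$ is either empty, entirely contained in $D$, or a single point in $S$; none of these three possibilities is compatible with containing both $p\in D$ and a point of $S$. Hence $D_\mathrm{I}=\emptyset$. Now \Cref{bigstraighteningtheorem} directly yields that $D_\lk$ is a closed, nontrivial subset of $D$, the uniqueness statement for $s(p)$, and that $s\colon D_\lk\to Q$ and its extension $\bs\colon D_\lk\sqcup S\to\bQ$ (with $\bs|_S=\mathrm{id}_S$) are continuous surjections.

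It remains to show that $\bs$ and $s$ are closed. In the proof of \Cref{bigstraighteningtheorem}, the set $S\sqcup D_\mathrm{I}\sqcup D_\lk$ is identified with $\delta^{-1}(\bZ)$, where $\delta\colon\bD\to\Lambda^+\times\Lambda^-$ is the (continuous) double boundary map and $\bZ$ is closed. With $D_\mathrm{I}=\emptyset$, this says $D_\lk\sqcup S$ is a closed subset of the compact disc $\bD$, hence compact. Since $\bQ$ is Hausdorff, the continuous surjection $\bs\colon D_\lk\sqcup S\to\bQ$ is automatically closed. To deduce that $s=\bs|_{D_\lk}$ is closed as a map to $Q$, let $C\subset D_\lk$ be closed in $D_\lk$. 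Because $S$ is closed in $\bD$, $D_\lk$ is open in $D_\lk\sqcup S$, so the complement of $C$ in $D_\lk$ is open in $D_\lk\sqcup S$; equivalently, $S\cup C$ is closed in $D_\lk\sqcup S$. Applying the already established closedness of $\bs$, $\bs(S\cup C)=S\cup s(C)$ is closed in $\bQ$. Since $Q$ is open in $\bQ$, intersecting with $Q$ gives that $s(C)=(S\cup s(C))\cap Q$ is closed in $Q$, completing the proof.

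I do not anticipate a serious obstacle; the only subtle point is the last step, where one must be careful that $D_\lk$ itself is not compact, so closedness of $s$ cannot be deduced instantly from compact-to-Hausdorff, but only after passing through the compact extension $D_\lk\sqcup S$ and exploiting that $D_\lk$ and $Q$ are, respectively, open in that compact source and in the Hausdorff target.
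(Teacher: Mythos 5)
Your proposal is correct and follows essentially the same route as the paper: deduce $D_\mathrm{I}=\emptyset$ from properness, invoke \Cref{bigstraighteningtheorem}, and get closedness from compactness of $D_\lk\sqcup S$ together with $\bQ$ Hausdorff. Your bookkeeping for closedness of $s$ (via $S\cup C$ closed in $D_\lk\sqcup S$) is only a cosmetic variant of the paper's identity $s(C\cap D_\lk)=\bs(C)\cap Q$ for $C$ closed in $\bD$.
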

\begin{proof}
    Properness implies that $\partial \bSp^+(p)$ and $\partial \bSp^-(p)$ are disjoint for each $p \in D$, so we have $D_\mathrm{I} = \emptyset$. Hence \Cref{bigstraighteningtheorem} implies that $s$ and $\bs$ are continuous surjections $D_\lk\to Q$ and $D_\lk\sqcup S\to \bQ$ respectively.

    It remains only to check $\bs$ and $s$ are closed.
    By \Cref{bigstraighteningtheorem}, $S\sqcup D_\lk$ is closed in $\bD$, hence compact. Since $\bQ$ is Hausdorff, $\bs$ is closed. If $C$ is a closed subset of $\bD$, then $s(C\cap D_\lk)=\bs(C)\cap Q$, a closed subset of $Q$. Hence $s$ is closed.
\end{proof}

\begin{remark}
    When $\bSp^\pm$ is a proper muu pair on a disc $\bD = D \sqcup S$, one can define $D_\lk$ by
    \[ D_\mathrm{L} = \{ p \in D \mid \partial \bSp^+(p) \text{ and } \partial \bSp^-(p) \text{ are linked} \}.\qedhere\]
\end{remark}

\begin{figure}
    \centering
    \includegraphics[height=1.5in]{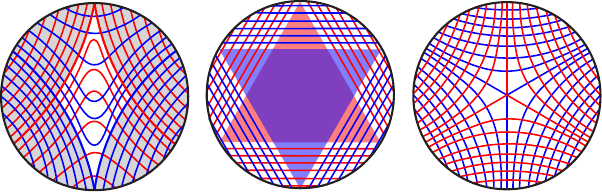}
    \caption{Left: a (proper) muu pair in the disc, with the associated linked subset shaded. Center: the corresponding hull decompositions. Right: the  emuu pair associated to the restrictions to $S^1$ of the previous two pairs.
    In this example, the straightening map collapses the boundary of the unshaded diamond-shaped region in the leftmost disk by identifying points in the red (blue) portion of the boundary that lie in the same blue (red) decomposition element. The result is the emuu pair on the right.}
    \label{fig:straightening}
\end{figure}

Straightening maps respect group actions:

\begin{lemma}
Let $\bSp^\pm$ be a proper muu pair on a disc $\bD = D \sqcup S$, and let $\bcF^\pm$ be an emuu pair on a disc $\bQ = Q \sqcup S$ with $\partial \bSp^\pm = \partial \bcF^\pm$. Then $\bs$ and $s$ are equivariant with respect to any two group actions $\Gamma \acts \bD$, $\Gamma \acts \bQ$ that agree on $S$.
\end{lemma}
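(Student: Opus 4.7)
The plan is to exploit the uniqueness characterization of $\bs$ provided by \Cref{prop:properstraightening}: for each $p \in D_\lk \sqcup S$, $\bs(p)$ is the \emph{unique} point $q \in \bQ$ satisfying
\[ \partial \bSp^+(p) = \partial \bcF^+(q) \quad\text{and}\quad \partial \bSp^-(p) = \partial \bcF^-(q). \]
Given $\gamma \in \Gamma$ and $p \in D_\lk \sqcup S$, I will show that $\gamma \cdot \bs(p)$ also satisfies the defining equations with $p$ replaced by $\gamma \cdot p$, which by uniqueness forces $\bs(\gamma \cdot p) = \gamma \cdot \bs(p)$.

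First I would record that, for the statement to be nontrivial, the actions in question must preserve the relevant decompositions: $\Gamma \acts \bD$ preserves $\bSp^\pm$ and $\Gamma \acts \bQ$ preserves $\bcF^\pm$. (The latter is in fact the unique extension of the common $\Gamma$-action on $S$ from \Cref{corollary:EfficientExtensionsAction}.) Since both restrict to the same action on $S$ and preserve $\Lambda^\pm := \partial \bSp^\pm = \partial \bcF^\pm$, this common action preserves linking relations between elements of $\Lambda^+$ and $\Lambda^-$, so the subsets $D_\lk \subset D$ and $D_\lk \sqcup S \subset \bD$ are $\Gamma$-invariant. In particular, $\bs(\gamma \cdot p)$ is defined whenever $\bs(p)$ is.

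Next I would compute the double-boundary data of $\gamma \cdot p$. From $\Gamma$-invariance of the decomposition $\bSp^\pm$, the quotient maps $\bD \to \bSp^\pm$ are $\Gamma$-equivariant, so $\bSp^\pm(\gamma \cdot p) = \gamma \cdot \bSp^\pm(p)$, and intersecting with $S$ gives
\[ \partial \bSp^\pm(\gamma \cdot p) = \gamma \cdot \partial \bSp^\pm(p). \]
Applying the defining equation of $\bs$ at $p$ and then $\Gamma$-equivariance of $\bcF^\pm$, we obtain
\[ \gamma \cdot \partial \bSp^\pm(p) = \gamma \cdot \partial \bcF^\pm(\bs(p)) = \partial \bcF^\pm(\gamma \cdot \bs(p)). \]
Chaining these equalities yields $\partial \bSp^\pm(\gamma \cdot p) = \partial \bcF^\pm(\gamma \cdot \bs(p))$, so by the uniqueness clause of \Cref{prop:properstraightening} we conclude $\bs(\gamma \cdot p) = \gamma \cdot \bs(p)$. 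For $p \in S$ the identity is immediate because $\bs|_S = \mathrm{id}_S$ and the two actions agree on $S$. Equivariance of $s = \bs|_{D_\lk}$ follows from equivariance of $\bs$ and $\Gamma$-invariance of $D_\lk$.

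There is no real obstacle here: the argument is a direct unwinding of the definition of $\bs$ combined with the uniqueness statement, and the only subtlety is the implicit requirement that both actions preserve the decompositions --- a requirement one should make explicit in the statement of the lemma.
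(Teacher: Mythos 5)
Your proof is correct and is essentially the paper's argument in different clothing: the paper identifies $\bQ$ with the especial disc and checks that the double boundary map satisfies $\delta(\gamma p)=\gamma\delta(p)$, while you invoke the uniqueness clause of \Cref{prop:properstraightening}, which is exactly the injectivity of that identification, so the core computation $\partial\bSp^\pm(\gamma p)=\gamma\,\partial\bSp^\pm(p)=\partial\bcF^\pm(\gamma\,\bs(p))$ is the same. Your remark that both actions must preserve the respective decompositions is a fair observation — the paper's proof uses this implicitly as well.
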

\begin{proof}
    We simply unwind the definitions.
    Let $\Lambda^\pm=\del\bSp^\pm = \partial \bcF^\pm$, and observe that the actions $\Gamma\acts \bD$ and $\Gamma\acts \bQ$ induce the same action on $\Lambda$.

    As in the proof of \Cref{bigstraighteningtheorem}, we can identify $\bQ$ with the especial disk $\bZ$ associated to $\Lambda^\pm$. Hence it suffices to show that the double boundary map $\delta\colon \bD\to \bZ$ intertwines the action $\Gamma\acts \bD$ with the action $\Gamma\acts \bZ$ given by $g(\lambda^+,\lambda^-)=(g \lambda^+, g \lambda^-)$. We have
    \begin{align*}
    \delta(g p)&=(\del \bSp^+(gp), \del \bSp^-(gp))\\
    &=(g\del \bSp^+(p), g\del \bSp^-(p))\\
    &=g(\del \bSp^+(p), \del \bSp^-(p))\\
    &=g\delta(p). \qedhere
    \end{align*}
\end{proof}

\part{Straightening quasigeodesic flows}\label{part:flowstraightening}

Having established that especial pairs in the circle can be filled in to transverse singular foliations of the disc, and developed the technology of straightening maps, we will now focus on turning quasigeodesic flows into pseudo-Anosov ones. The rest of the paper will be devoted to proving our main theorem, \Cref{theorem_main}.

We will begin with some necessary background on pseudo-Anosov flows, quasigeodesic flows, and their associated universal circles.

\section{Quasigeodesic and pseudo-Anosov flows}\label{sec:flowbackground}
Throughout this article, a \emph{flow} on a manifold $M$ will be considered topologically, as a continuous map 
\[ \Phi^{(\cdot)}(\cdot): \bbR \times M \to M \]
such that 
\begin{itemize}
	\item $\Phi^0(x) = x$ for all $x \in M$, and 
	\item $\Phi^s(\Phi^t(x)) = \Phi_{t + s}(x)$ for all $x \in M$ and $t, s \in \bbR$.
\end{itemize}
For each $t \in \bbR$, the \emph{time-$t$ map} $\Phi^t(\cdot): M \to M$ is a homeomorphism, since it has $\Phi^{-t}(\cdot)$ as an inverse, so we can think of a flow as a continuous action $M \racts \bbR$, writing $x \cdot t = \Phi^t(x)$. 

A flow is \emph{nonsingular} if it has no global fixed points, i.e. for each point $x \in M$ there is a time $t \in \bbR$ at which $x \cdot t \neq x$. 

As with smooth flows, the leaves of a nonsingular continuous flow form a $1$-dimensional foliation so one has a local flowbox around any point, i.e. a neighborhood homeomorphic to $\bbD^2 \times [-1, 1]$ on which the flow is conjugate to the ``vertical flow'' $(x, h) \cdot t = (x, h + t)$. See \cite[\S 20]{Whitney}.

\subsection{Flowlines and flowspaces}\label{subsection:flowspaces}

Any flow $\Phi$ on an closed $3$-manifold $M$ lifts to a flow $\tPhi$ on the universal cover $\tM$. We will call the orbits of the lifted flow \emph{flowlines}. The collection of flowlines
\[ P := \{ x \cdot \bbR \mid x \in \tM \} \]
partitions $\tM$, which we endow with the quotient topology induced by the function
\begin{align*}
    \nu\colon \tM &\to P\\
    x &\mapsto x \cdot \bbR
\end{align*}
that takes each point to the flowline containing it.

Since the action $\pi_1(M) \acts \tM$ by deck transformations takes flowlines to flowlines, it induces an action $\pi_1(M) \acts P$. By the \emph{flowspace} of a flow $\Phi$ we mean this space $P$ together with this action $\pi_1(M) \acts P$. 

We use the symbol $P$ in this paper because our flows of interest will have flowspaces that are topological planes. 

\begin{theorem}
    Let $\Phi$ be a flow on a closed $3$-manifold $M$ with $\tM \simeq \bbR^3$. Assume that the flowspace $P$ is Hausdorff. Then $P \simeq \bbR^2$, there is a homeomorphism $\tM \simeq P \times \bbR$ that conjugates $\tPhi$ to the vertical flow on $P \times \bbR$, and the action $\pi_1(M) \acts P$ is by orientation-preserving homeomorphisms.
\end{theorem}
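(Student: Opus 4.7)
The plan is to bootstrap from the local flow-box structure of $\tPhi$ to a global trivialization of $\tM$ over $P$. Since $\tPhi$ is nonsingular (implicit in the hypothesis that $P$ is Hausdorff and $2$-dimensional), around any $x \in \tM$ there is a flow-box $U \simeq \bbD^2 \times (-1,1)$ on which $\tPhi$ is the vertical flow. Its image $\nu(U)$ is an open set of $P$ homeomorphic to $\bbD^2$, and Hausdorffness of $P$ ensures the charts transition by homeomorphisms, giving $P$ a $2$-manifold structure. The same local data shows that $\nu\colon \tM \to P$ is a locally trivial fiber bundle with fiber $\bbR$.

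Next, I would identify the topology of $P$. The long exact sequence of the fibration $\bbR \to \tM \to P$, combined with $\pi_{\ast}(\tM) = \pi_{\ast}(\bbR^3) = 0$, gives $\pi_1(P) = 0$, so $P$ is a simply-connected surface, hence homeomorphic to $S^2$ or $\bbR^2$. The $S^2$ case is ruled out because an $\bbR$-bundle over $S^2$ is homotopy equivalent to $S^2$ (the fiber is contractible), whereas $\tM$ is contractible. Therefore $P \simeq \bbR^2$. To produce the product structure, note that $P$ is now contractible and paracompact and $\nu$ has local sections (from flow-boxes), so the bundle admits a continuous global section $\sigma\colon P \to \tM$, either by obstruction theory (the fiber $\bbR$ is contractible) or by a partition-of-unity argument gluing local flow-box sections in the $\bbR$-direction. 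Then $(p,t) \mapsto \sigma(p) \cdot t$ is a homeomorphism $P \times \bbR \to \tM$ conjugating the vertical flow to $\tPhi$.

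For the orientation claim: fix an orientation on $\tM$ (available since $M$, being hyperbolic in the target application, is orientable; more generally pass to the orientation double cover if necessary). The flow $\tPhi$ orients each flowline, and an orientation on $\tM$ together with an orientation on the line foliation determines a transverse orientation, i.e.\ an orientation on the quotient $P$. Each deck transformation is a homeomorphism of $\tM$ preserving the orientation of $\tM$ and commuting with $\tPhi$, hence preserving the oriented flowlines; the induced homeomorphism on $P$ therefore preserves the transverse orientation.

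The main obstacle I expect is verifying that $\nu\colon \tM \to P$ is an honest locally trivial fiber bundle. The flow-box construction provides a local product structure around each orbit, but in principle nearby orbits could escape to infinity or become non-separated even in a Hausdorff quotient if one is not careful. The crux is to argue that a flow-box around $x$ is $\nu$-\emph{saturated} in a small $\nu$-neighborhood of $\nu(x)$, using Hausdorffness of $P$ to rule out two nearby orbits being identified in $P$ without coinciding in $\tM$; once this is in hand, the remaining steps are standard bundle theory.
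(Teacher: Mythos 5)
Your architecture (flow boxes give charts on $P$ and exhibit $\nu\colon\tM\to P$ as a locally trivial $\bbR$-bundle; then the homotopy exact sequence forces $P\simeq\bbR^2$; then triviality over a contractible base gives the product and the conjugacy) is sensible, and you correctly single out the bundle claim as the crux — but the argument you sketch for that crux does not work, and this is a genuine gap rather than a routine verification. The danger is not ``two nearby orbits being identified in $P$ without coinciding in $\tM$'' (to be identified in $P$ \emph{means} to be the same orbit); it is that a \emph{single} orbit may return to an arbitrarily small transversal, i.e.\ periodicity or recurrence, and Hausdorffness of $P$ alone does not exclude this. Concretely: take the Hopf vector field on $S^3$, delete a point $p$, and rescale by a positive function vanishing fast enough at $p$ to make the restricted field complete. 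This is a nonsingular flow on $S^3\setminus\{p\}\cong\bbR^3$ whose orbits are the Hopf circles missing $p$ together with one line; its orbit space is $S^2$ with its standard (Hausdorff) topology, yet $\nu$ is not an $\bbR$-bundle and the flow is not conjugate to a vertical flow. Since steps (a)--(c) of your proposal use only ``nonsingular flow on $\bbR^3$ with Hausdorff orbit space,'' they cannot be correct as written: in this example flow boxes exist everywhere and $P$ is even a surface, but the fibration, the long exact sequence, and the exclusion of $S^2$ all fail. Any correct proof must input something beyond Hausdorffness — absence of periodic and recurrent orbits of $\tPhi$ (e.g.\ properly embedded orbits), or an essential use of the cocompact deck action — and the same input is needed again where you define the trivialization by $(p,t)\mapsto\sigma(p)\cdot t$, since that requires the $\bbR$-action on each orbit to be simply transitive.

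This missing ingredient is exactly what the paper's own (very short) proof outsources: it cites Montgomery--Zippin's theorem that a continuous $\bbR$-action on $\bbR^3$ is conjugate to a translation flow if and only if it is \emph{dispersive}, dispersiveness being precisely the no-recurrence condition that your local Hausdorffness argument does not supply (and in the paper's application it is in any case guaranteed because orbits of $\tPhi$ are quasigeodesics, hence properly embedded). So if you want a self-contained proof along your lines, the lemma you must actually prove is: every sufficiently small transversal disc meets each orbit of $\tPhi$ at most once and its flow-saturation is an open product neighborhood; that is the content of the citation, not a consequence of Hausdorffness of $P$ by a local separation argument. (A minor further point: your orientation discussion should just orient $\tM\cong\bbR^3$ directly; the statement that $\pi_1(M)$ acts on $P$ preserving orientation uses that deck transformations preserve the orientation of $\tM$ and of the flowlines, which implicitly assumes $M$ oriented — the same implicit assumption the paper makes — and ``passing to the orientation double cover'' would change the group acting, so it does not repair the non-orientable case.)
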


Flows with planar flowspace are therefore called \emph{product-covered}. The orientation-preserving property of the action on $P$ comes from the fact that the $\pi_1$-action on $\tM$ sends oriented flowlines to oriented flowlines.

This is a special case of \cite[Theorem 3]{MontgomeryZippin} which shows that a continuous action of $\bbR^k$ on $\bbR^n$, $(k \geq n - 2)$ is conjugate to a $k$-parameter translation group if and only if it is ``dispersive,'' and Hausdorff flowspace immediately implies dispersive.

\subsection{Pseudo-Anosov flows}\label{sec:pAflows}
Let $\phi: \Sigma \to \Sigma$ be a pseudo-Anosov homeomorphism of a closed surface, and let $\Phi$ be the corresponding suspension flow. This is the flow on the mapping torus $M_\phi:=\Sigma\times \bbR/(x,h+1)\sim (\phi(x),h)$ that is induced by the ``vertical flow'' $(x, h) \cdot t = (x, h+t)$ on $\Sigma \times \bbR$. A good reference for the basics of pseudo-Anosov suspension flows is \cite{Fried_fiberedfaces}. 

Identify $\Sigma$ with the image of $\Sigma \times \{0\}$ in $M_\phi$. The stable and unstable singular foliations of $\phi$ suspend to $2$-dimensional singular foliations
\begin{align*}
    \cF^s &= \{ k \cdot \bbR \mid k \text{ a stable leaf of } \phi \}\\
    \cF^u &= \{ l \cdot \bbR \mid l \text{ an unstable leaf of } \phi \}
\end{align*}
called the \emph{weak stable} and \emph{weak unstable} foliations of $\Phi$. All of the flowlines in a weak stable leaf are mutually forward asymptotic, while all of the flowlines in a weak unstable leaf are mutually backward asymptotic.

This can be seen directly in the $1$-dimensional \emph{strong stable} and \emph{strong unstable foliation}
\begin{align*}
    \cF^{ss} &= \{ k \cdot t \mid k \text{ a stable leaf of } \phi, t \in \bbR \} \\
    \cF^{uu} &= \{ l \cdot t \mid l \text{ an unstable leaf of } \phi, t \in \bbR \}
\end{align*}
which are $1$-dimensional singular foliations that refine the weak stable and weak unstable singular foliations. The strong stable leaves are uniformly exponentially contracted in forward time while strong unstable leaves are uniformly exponentially contracted in backward time. That is, there is a uniform constant $\lambda > 1$ such that if $x$ and $y$ are points in a single strong stable leaf, then $d_{\mathrm{leaf}}(x \cdot t, y \cdot t) < \lambda^{-t} d_{\mathrm{leaf}}(x, y)$, where $d_{\mathrm{leaf}}$ is the leafwise path metric. The same statement holds for strong unstable leaves with time reversed. Note that by compactness, this does not depend on the choice of metric on $M_\phi$ up to rechoosing $\lambda$.

In general, a {pseudo-Anosov flow} is one that looks locally like a pseudo-Anosov suspension flow. Different authors have taken this to mean slightly different things. Here are three properties generalizing the suspension flow case that one might ask of a flow $\Phi$ on a 3-manifold $M$:

\begin{enumerate}[label=(PA\arabic*)]
        \item \label{it:weakcontraction}(weak contraction/expansion) There is a transverse pair of $2$-dimensional singular foliations $\cF^s$ and $\cF^u$, leafwise preserved by $\Phi$, called the \emph{weak stable} and \emph{weak unstable} foliations. Any two flowlines in the same leaf of $\cF^s$ (resp. $\cF^u$) are forward (backward) asymptotic up to reparametrization. See \Cref{figure:2dfoliations}.
        
        By saying $\cF^{s/u}$ are transverse, we mean that every point in $M$ has a neighborhood which is a \emph{flow box}, i.e. a neighborhood $A\times I$ where $A$ is a standard bifoliated $2n$-gon for $n\ge 2$ (see \Cref{figure:StandardFoliatedPolygons}) such that for every leaf $\lambda$ of one of the two foliations of $A$, $\lambda\times I$ is contained in a leaf of $\cF^s$ or $\cF^u$.

        \item \label{it:strongcontraction}(strong contraction/expansion) There are $1$-dimensional singular foliations $\cF^{ss}$ and $\cF^{uu}$, preserved by $\Phi$, called the \emph{strong stable} and \emph{strong unstable} foliations, that refine the weak stable and weak unstable foliations in the sense that every weak leaf is a union of strong leaves, and each strong leaf intersects every orbit contained in the corresponding weak leaf. 
        Further, there exist constants $ a, C>0, \lambda>1$ such that:
        \begin{itemize}
        \item If $k$ is the lift of a strong stable leaf to the universal cover $\tM$ of $M$, and if $x,y\in k$ satisfy $d(x,y)\le C$, then
        \[
        d(x\cdot t, y\cdot t)\le a\lambda^{-t}d(x,y) \text{ for all } t\ge0.
        \]
        \item If $k$ is the lift of a strong unstable leaf to $\tM$, and if $x,y\in k$ satisfy $d(x,y)\le C$, then
        \[
        d(x\cdot t, y\cdot t)\le a\lambda^{t}d(x,y) \text{ for all } t\le0.
        \]
        \end{itemize}
        Here $d$ is some metric on $\tM$ coming a choice of metric on $M$; since $M$ is compact, the definition does not depend on the metric (up to choosing new constants). In contrast with the suspension flow example, we are not phrasing this condition in terms of a path metric on strong leaves, because we are not assuming the strong foliations are rectifiable.

        \item \label{it:markovpartition}$\Phi$ admits a Markov partition. This is a certain nice decomposition of $M$ into flowboxes that allows one to code the orbits of $\Phi$ by the boxes they traverse. Since we will not need to work directly with Markov partitions, we do not define them.\qedhere
    \end{enumerate}

If we require only (PA1) and (PA3), we obtain the definition of a ``topological pseudo-Anosov flow" in \cite{Mosher} or \cite{AgolTsang}. In those definitions there is an additional requirement that each singular orbit has a neighborhood of a standard form, but Agol--Tsang show that this follows from the weak contraction/expansion of the singular leaves of $\cF^{s/u}$ (\cite[p. 3440]{AgolTsang}). 

Requiring only (PA1) and (PA2) is another natural definition for a topological pseudo-Anosov flow that perhaps more directly generalizes suspension pseudo-Anosov flows. Moreover, we believe that (PA1) and (PA2) should together allow one to construct a Markov partition using the techniques of \cite{Ratner}.

We will use what we believe is the most standard definition:

\begin{definition}\label{definition:pseudoAnosov}
    A flow $\Phi$ on a $3$-manifold $M$ is \emph{pseudo-Anosov} if it satisfies properties (PA1) and (PA3) above.
\end{definition}

We will leave no room for ambiguity in our results: we will prove in \Cref{section:StraightenedFlowsPA} that the flow $\Psi$ in the statement of our main theorem, \Cref{theorem_main}, satisfies (PA1), (PA2), and (PA3).

\begin{figure}[ht]
	\centering
	\includegraphics{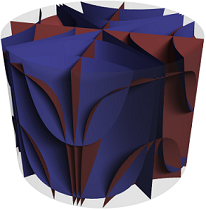}
	\caption{The weak stable (red) and unstable (blue) foliations near a 4-pronged singular orbit, where the flow is upward.} \label{figure:2dfoliations}
\end{figure}

\begin{remark}
    There is also a definition of ``smooth pseudo-Anosov flow." Using results of Brunella \cite{Brunella}, Agol--Tsang adapted an argument of Shannon \cite{Shannon} to prove that for transitive flows (i.e. those possessing a dense orbit), smooth pseudo-Anosov is the same as topologically pseudo-Anosov, up to orbit equivalence by a homeomorphism of the ambient space isotopic to the identity. Note that in a hyperbolic 3-manifold, every pseudo-Anosov flow is transitive; this was proved first in the smooth case in \cite{Mosher_norm1} and more recently in general in \cite{BarthelmeBonattiMann_nontransitive}.
\end{remark}

\subsubsection{Flowspaces, singular foliations, and universal circles}
Fenley and Mosher showed that every pseudo-Anosov flow on a closed $3$-manifold is product-covered \cite[Prop. 4.2]{FenleyMosher}, so it has a planar flowspace $P$.

The $2$-dimensional weak stable and unstable singular foliations on $M$ lift to singular foliations of $\tM$ that are preserved by the deck action $\pi_1(M) \acts \tM$, so they project to a transverse pair of $1$-dimensional singular foliations of $P$ that are preserved by the flowspace action $\pi_1(M) \acts P$.

Fenley \cite{Fenley} showed that there is a natural way to collate the topological ends of leaves of these foliations to produce a \emph{universal circle} $S^1_u$ that compactifies the flowspace to a closed disc $\bP = P\sqcup S^1_u$. The action of $\pi_1(M)$ on $P$ permutes the leaves of the $1$-dimensional singular foliations, and extends to an action $\pi_1(M) \acts \bP$.

\subsection{Quasigeodesic flows}\label{sec:QGbackground}
A flow $\Phi$ on a manifold $M$ is \emph{quasigeodesic} if each orbit lifts to a quasigeodesic in the universal cover $\tM$. That is, for each $x \in \tM$ there are constants $k>0,\epsilon\ge0$ such that
\[ 
\frac{1}{k} \cdot d(x, x \cdot t) - \epsilon \leq |t| \leq k \cdot d(x, x \cdot t) + \epsilon 
\]
for all $t\in\bbR$.

\begin{remark}
    This is the definition used in \cite{FenleyMosher}. For $C^1$ flows this is equivalent to requiring that each flowline be quasigeodesic when parametrized by arclength (\cite[Remark 3.6]{Calegari}). 
\end{remark}

The constants $k,\epsilon$ can clearly be chosen uniformly over each flowline $x \cdot \bbR$. The flow $\Phi$ is \emph{uniformly quasigeodesic} if they can be chosen uniformly for the whole lifted flow.

\begin{theorem}[name={Calegari, \cite[Lemma 3.10]{Calegari}}]
    Every quasigeodesic flow on a closed hyperbolic $3$-manifold is uniformly quasigeodesic.
\end{theorem}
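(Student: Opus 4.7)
The plan is a proof by contradiction combining compactness of $M$ with Morse stability of quasigeodesics in $\tM \simeq \bbH^3$. Assume $\Phi$ is quasigeodesic but not uniformly so; then for each $n$ there exists $(x_n, t_n) \in \tM \times \bbR$ with $|t_n| > n \cdot d(x_n, x_n \cdot t_n) + n$. The reverse direction of the quasigeodesic inequality is automatic: continuity of $\Phi$ together with compactness of $M$ bound $d(\tilde x, \tilde x \cdot t)/(|t|+1)$ uniformly. After applying deck transformations we may assume $x_n \to x_\infty$ in a compact fundamental domain, and then necessarily $|t_n| \to \infty$.

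First I would show that each flowline $\ell_x := x \cdot \bbR$ has well-defined ideal endpoints $e^\pm(x) \in S^2_\infty = \partial \bbH^3$ and that the maps $e^\pm \colon \tM \to S^2_\infty$ are continuous. Existence is immediate from the orbit-by-orbit quasigeodesic hypothesis and the Morse Lemma in $\bbH^3$: each $\ell_x$ lies within bounded Hausdorff distance of a unique bi-infinite geodesic $\bar\ell_x$, whose ideal endpoints we take as $e^\pm(x)$. Continuity is the technical heart: for $y_n \to y$, continuous dependence of the flow makes the $y_n$-orbits track $\ell_y$ on longer and longer time intervals, and the Morse bound for $\ell_y$ then forces the forward and backward visual shadows of the $y_n$-orbits to converge to the corresponding endpoints of $\ell_y$.

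Given continuity, I would exploit $\pi_1(M)$-equivariance and compactness of $M$ to obtain uniformity. The pair $(e^-, e^+)$ descends to a continuous map from $M$ into the quotient of $(S^2_\infty)^2 \setminus \Delta$ by the diagonal $\pi_1(M)$-action; compactness of $M$ yields a uniform positive lower bound on the visual separation of $e^-(x)$ from $e^+(x)$ relative to any lift. Standard hyperbolic geometry then produces a uniform constant $R$ such that every $\ell_x$ lies within Hausdorff distance $R$ of $\bar\ell_x$. Finally, a curve in $\bbH^3$ that stays inside a uniform tubular neighborhood of a geodesic and progresses a definite amount in definite time along that geodesic is automatically uniformly quasigeodesic; on compact $M$ the ``definite time'' property follows from the orbit-by-orbit quasigeodesic hypothesis together with the uniform speed bound. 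This contradicts the existence of $(x_n, t_n)$.

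The main obstacle is the continuity of $e^\pm$: this step converts finite-time continuous dependence of the flow into control at the ideal boundary, and is where hyperbolicity of $M$ enters essentially. Everything after that is a standard compactness-plus-Morse argument.
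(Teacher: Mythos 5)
The paper does not actually prove this statement---it is quoted from Calegari \cite[Lemma 3.10]{Calegari}---so I am judging your argument on its own terms, and it has a genuine gap at the step you yourself call the technical heart: continuity of $e^\pm$. Knowing that the orbit through $y_n$ tracks $\ell_y$ on a time interval $[-T_n,T_n]$ with $T_n\to\infty$ only tells you that $\ell_{y_n}$ passes near a point far out along the geodesic $\bar\ell_y$; it says nothing about where the tail of $\ell_{y_n}$ goes after time $T_n$. The Morse constant that controls $e^+(y_n)$ is the one attached to $\ell_{y_n}$'s own quasigeodesic constants, not to $\ell_y$'s, and a quasigeodesic ray with enormous constants can start deep in the half-space facing $e^+(y)$ and still converge to a completely different ideal point (even to $e^-(y)$). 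Ruling this out requires exactly the uniform control over all orbits that the theorem asserts, so your route is circular in effect: in the literature the implication runs the other way --- uniform quasigeodesity is proved first and continuity of the endpoint maps (the ``only if'' direction of \Cref{proposition:QGFlowCharacterization}, i.e.\ Fenley--Mosher Theorem B and Calegari Lemma 4.3) is deduced afterwards from the uniform Morse constant. Your closing step has a milder version of the same problem: ``progresses a definite amount in definite time'' must hold with constants uniform over all orbits, and the justification offered (orbit-by-orbit quasigeodesity plus the speed bound) again only controls each orbit separately; turning it into a uniform statement needs its own compactness argument, which in turn presupposes the continuity you have not yet established.

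For comparison, the standard proof does not go through the ideal boundary at all. One argues by contradiction directly with orbit segments: if uniformity fails there are flow segments $x_n\cdot[0,t_n]$ with $|t_n|\to\infty$ whose efficiency $d(x_n,x_n\cdot t_n)/|t_n|$ degenerates; after translating by deck transformations into a fixed fundamental domain and passing to subsequences, one uses continuity of the flow (the parametrized quasigeodesic inequalities $|s-t|\le k\,d(x\cdot s,x\cdot t)+\epsilon$ are closed under pointwise limits $x_i\to x$) to produce a limiting flowline of $\tPhi$ that violates the quasigeodesic property or properness, contradicting the orbitwise hypothesis. Compactness of $M$ enters only through this translation-and-limit step, not through a continuous $\pi_1(M)$-invariant function on $M$ built from $e^\pm$. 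Your visual-angle observation (a uniform lower bound on $\angle_x(e^-(x),e^+(x))$ gives a uniform bound on $d(x,\bar\ell_x)$) is a nice ingredient and does appear, in spirit, once continuity is available; but as a proof of uniform quasigeodesity the proposal as written does not close.
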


The simplest examples of quasigeodesic flows come from fibrations.

\begin{example}
	Zeghib showed that any flow on a closed $3$-manifold $M$ (not necessarily hyperbolic) that is transverse to a fibration is quasigeodesic \cite{Zeghib}. The idea is to lift such a flow to the infinite cyclic cover dual to a fiber $\Sigma \subset M$, which may be identified with $\Sigma \times \bbR$ in such a way that the lifts of $\Sigma$ are of the form $\Sigma \times \{i\}$ for $i \in \bbZ$. Quasigeodesity follows from the observation that there are upper and lower bounds on the distance between adjacent lifts, as well as the time it takes for the flow to move points from one lift to the next.	
\end{example}

On the other hand, there are many quasigeodesic flows that are not transverse to fibrations, even virtually (i.e. after passing to a finite cover).

\begin{example}
    Fenley and Mosher showed that any product-covered flow transverse to a taut, finite-depth foliation is quasigeodesic \cite{FenleyMosher}, and there are many such flows that are not suspension flows or even virtually suspension flows. 
    
    Indeed, Gabai showed that any nontrivial second cohomology class on a closed $3$-manifold is represented by a union of depth-zero leaves of a taut, finite-depth foliation \cite{Gabai}. Gabai proved that this foliation is transverse to an ``almost pseudo-Anosov flow," and such flows are product-covered. Mosher began writing this argument in \cite{Mosher}, and Landry-Tsang are currently finishing it (see \cite{LandryTsang}). 
    
    If one starts with a cohomology class that is not in the closure of a fibered cone, then the associated quasigeodesic flow is not a suspension flow or virtually a suspension flow. Indeed, if it were virtually a suspension flow then an embedded surface in the lift that intersects every orbit positively would project to an immersed surface downstairs that intersects every orbit positively. This would imply that the homology directions of the flow lie in a half-space, and the flow would be a suspension by Fried \cite{Fried}.
\end{example}

\subsubsection{Endpoint maps}
Consider a closed hyperbolic $3$-manifold $M$. The universal cover $\tM$ is identified with $\bbH^3$, so it has a natural compactification to a closed $3$-ball $\tM \sqcup S^2_\infty$. Here, the \emph{sphere at infinity} $S^2_\infty$ is identified with the boundary of hyperbolic space in the unit ball model. The deck action of $\pi_1(M)$ on the universal cover is by hyperbolic isometries, so it extends to the sphere at infinity.

A general flow $\Phi$ on $M$ lifts to a flow $\tPhi$ on $\tM$, but the orbits of the lifted flow need not behave well with respect to the sphere at infinity. They may remain in bounded subsets of $\tM$, for example, or accumulate on arbitrary closed subsets of $S^2_\infty$. When $\Phi$ is quasigeodesic, however, the following so-called \emph{Morse Lemma} implies that each lifted orbit has well-defined and distinct endpoints in $S^2_\infty$. See \cite{Gromov}, \cite[Corollary~3.44]{Kapovich}, or \cite[\S III.H]{BridsonHaefliger}.

\begin{morse_lemma}\label{lemma:Morse}
	Every quasigeodesic in $\bbH^3$ lies at a bounded distance from a unique geodesic. Furthermore, there are constants $C(k, \epsilon)$ such that every $(k, \epsilon)$-quasigeodesic in $\bbH^3$ lies in the $C(k, \epsilon)$-neighborhood of its associated geodesic. 
\end{morse_lemma}

\begin{remark}\label{rmk:uniformconstant}
    For a quasigeodesic flow on a closed hyperbolic $3$-manifold, uniform quasigeodesity implies that there is a single constant $C$ such that every lifted orbit lies in the $C$-neighborhood of the associated geodesic in the universal cover.
\end{remark}

In addition, the endpoints of lifted orbits of a quasigeodesic flow vary continuously, and this behavior characterizes the quasigeodesic flows on a closed hyperbolic $3$-manifold: they are exactly the flows that can be studied ``from infinity'' in the following sense.

\begin{proposition}[\hspace{1sp}{\cite[Theorem~B]{FenleyMosher} \& \cite[Lemma~4.3]{Calegari}}]\label{proposition:QGFlowCharacterization}
	Let $\Phi$ be a flow on a closed hyperbolic $3$-manifold $M$, and let $\tPhi$ be the lifted flow on the universal cover $\tM$. Then $\Phi$ is quasigeodesic if and only if
	\begin{enumerate}
		\item each orbit of $\tPhi$ has well-defined and distinct endpoints in $S^2_\infty$, and
		\item the positive and negative endpoints of $x \cdot \bbR$ vary continuously with $x \in \tM$.
	\end{enumerate}
\end{proposition}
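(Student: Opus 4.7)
My plan is to prove the two implications separately. The forward direction uses Calegari's uniform quasigeodesicity theorem (stated just above the proposition) and the Morse Lemma; the reverse direction converts continuity of the endpoint maps into uniform quasigeodesic constants via $\pi_1$-equivariance and compactness.

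\emph{Quasigeodesic implies (1) and (2).} By uniformity, every flowline of $\tPhi$ is a $(k,\epsilon)$-quasigeodesic for fixed $(k,\epsilon)$. The Morse Lemma provides a single $C = C(k,\epsilon)$ such that each $x \cdot \bbR$ lies in the $C$-neighborhood of a unique bi-infinite geodesic $g_x \subset \bbH^3$ with distinct endpoints on $S^2_\infty$, and the quasigeodesic parametrization forces $x\cdot t \to g_x(\pm\infty)$ as $t \to \pm\infty$; call these limits $e_\pm(x)$. For continuity, given $x_n \to x$, continuity of the flow gives $x_n \cdot [-T, T] \to x \cdot [-T, T]$ in the Hausdorff sense for each fixed $T$, and the uniform constant in the Morse Lemma means that the endpoints are pinned down to accuracy tending to zero from these finite segments as $T \to \infty$, yielding $e_\pm(x_n) \to e_\pm(x)$.

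\emph{(1) and (2) imply quasigeodesic.} Let $g_x$ denote the geodesic in $\bbH^3$ with endpoints $e_-(x)$ and $e_+(x)$. Continuity of $e_\pm$ makes $f(x) := d(x, g_x)$ continuous, and $\pi_1(M)$-equivariance of $e_\pm$ (via the isometric extension of the deck action to $S^2_\infty$) gives $g_{\gamma x} = \gamma \cdot g_x$ and hence $\pi_1$-invariance of $f$. Descending to the compact manifold $M$, $f$ is bounded by some $C$, so the entire flowline $x \cdot \bbR$ (which shares the same endpoints as $x$) lies in the $C$-tube around $g_x$. The upper bound $d(x, x \cdot t) \le K(|t| + 1)$ follows from continuity of the flow on compact $M$. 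For the lower bound I parametrize $g_x$ by arc length with $\pi_x(x) = 0$, set $P(x, t) := \pi_x(x \cdot t)$ where $\pi_x$ is nearest-point projection, and aim to prove there is a uniform $T_0$ such that $P(x, t) \ge 1$ for all $t \ge T_0$. Iterating this claim along the orbit (using $g_{x \cdot T_0} = g_x$) gives $P(x, n T_0) \ge n$, and combined with $|P(x, t)| \le d(x, x \cdot t) + 2C$ this yields $|t| \le k \cdot d(x, x \cdot t) + \epsilon$.

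\emph{Main obstacle.} Proving the uniform $T_0$ is the heart of the argument. The $\pi_1$-invariant function $\mu(x) := \inf\{t \ge 0 : P(x, s) \ge 1 \text{ for all } s \ge t\}$ is pointwise finite by hypothesis (1), since $x \cdot t \to e_+(x) \in S^2_\infty$ forces $P(x, t) \to +\infty$; one then wants upper semicontinuity of $\mu$ on the compact quotient $M$. The delicate case is $x_n \to x$ with witness times $s_n \to \infty$ satisfying $P(x_n, s_n) < 1$: then $x_n \cdot s_n$ stays within $2C + 1$ of $x_n$, so after passing to a subsequence converges to some $y \in \tM$, and continuity of $e_\pm$ forces $e_\pm(y) = e_\pm(x)$. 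Ruling out this configuration requires a hyperbolic-geometric argument exploiting the convergence $g_{x_n} \to g_x$ to force $x_n \cdot s_n$ into a horoball neighborhood of $e_+(x)$ once $s_n$ is sufficiently large; this is precisely where the full strength of hypotheses (1) and (2) enters.
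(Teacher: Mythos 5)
First, note that the paper does not prove this proposition at all: it is imported from Fenley--Mosher (Theorem B) and Calegari (Lemma 4.3), so your attempt can only be judged on its own merits. Your forward direction is fine: uniform quasigeodesity (Calegari's Lemma 3.10, quoted just above the proposition) plus the Morse Lemma with its uniform constant gives well-defined, distinct endpoints, and the shadow/visual-size argument you sketch for continuity is standard. The reverse direction also starts correctly: $f(x)=d(x,g_x)$ is continuous and $\pi_1$-invariant, hence bounded on the compact quotient, so every flowline lies in a uniform tube about its geodesic, and the inequality $\tfrac1k d(x,x\cdot t)-\epsilon\le|t|$ is immediate from bounded flow speed.

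The genuine gap is exactly the step you flag and then do not carry out: the uniform progress time $T_0$, i.e.\ the bound $|t|\le k\,d(x,x\cdot t)+\epsilon$, which is the entire content of the nontrivial implication beyond the bounded-tube observation. Two concrete problems with the sketch you offer. First, $P(x_n,s_n)<1$ alone does not place $x_n\cdot s_n$ within $2C+1$ of $x_n$, since the projection could be very negative; you would need to choose $s_n$ with $P(x_n,s_n)\in[0,1)$ (possible by continuity of $P$ and $P\to+\infty$), so even the compactness extraction needs repair. Second, and more seriously, the limiting configuration cannot be dismissed the way you suggest. In the limit you get a point $y$ with $e^\pm(y)=e^\pm(x)$, and that is no contradiction: for quasigeodesic flows distinct flowlines can share both ideal endpoints (this is precisely why the paper works with omnileaves $(E^\pm)^{-1}(z)$, which are in general strictly larger than single flowlines). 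And the proposed ``horoball'' step --- forcing $x_n\cdot s_n$ close to $e_+(x)$ once $s_n$ is large --- is essentially a restatement of the uniform properness you are trying to prove: hypothesis (1) only gives $x\cdot t\to e^+(x)$ pointwise, with no rate that is uniform in $n$, so convergence of $g_{x_n}$ to $g_x$ by itself yields nothing. As written, the proposal stops exactly where the Fenley--Mosher/Calegari argument begins, so the statement is not proved.
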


As a consequence, a quasigeodesic flow $\Phi$ on a closed hyperbolic $3$-manifold $M$ determines a pair of continuous maps
\[ E^\pm: \tM \to S^2_\infty, \]
taking each point $x \in \tM$ to the positive/negative endpoints of the corresponding flowline, i.e. $E^\pm(x) := \lim_{t \to \pm \infty}\tPhi^t(x)$.

\subsubsection{Omnileaves and leaves}
For the remainder of the section we fix a quasigeodesic flow $\Phi$ on a closed hyperbolic $3$-manifold $M$.

Using the endpoint maps $E^\pm$ one can construct objects analogous to the weak stable and unstable leaves of a pseudo-Anosov flow:

\begin{definition}\label{def:WeakOmnileavesLeaves}
    Let $\Phi$ be a quasigeodesic flow on a closed hyperbolic $3$-manifold $M$.
    
    For each ideal point $z \in S^2_\infty$, the set $(E^+)^{-1}(z)$, when nonempty, is called a \emph{weak positive omnileaf}, and each connected component of $(E^+)^{-1}(z) \neq \emptyset$ is called a \emph{weak positive leaf}.
    
    For each ideal point $z \in S^2_\infty$, the set $(E^-)^{-1}(z)$, when nonempty, is called a \emph{weak negative omnileaf rooted at $z$}, and each connected component of $(E^+)^{-1}(z) \neq \emptyset$ is called a \emph{weak negative leaf}.
    
    In other words, the weak positive/negative omnileaves are the maximal flow-invariant subsets of $\tM$ that are forwards/backwards asymptotic to a single point. The weak positive/negative leaves are the maximal \emph{connected} flow-invariant subsets of $\tM$ that are forwards/backwards asymptotic to a single point.

    Given a positive/negative weak leaf or omnileaf $\wt K \subset \tM$, we will call the point $z = E^\pm(\wt K)$ the \emph{root} of $\wt K$. 
    
    The collections of all weak positive and negative omnileaves will be denoted
    \[
    \tOm^\pm:=\{(E^\pm)^{-1}(z) \neq \emptyset \mid z \in S^2_\infty\},
    \]
    and the collections of all positive and negative leaves will be denoted
    \[
    \tcL^\pm:=\{\text{components of } (E^\pm)^{-1}(z) \mid z \in S^2_\infty\}.\qedhere
    \]
\end{definition}

Each weak positive/negative omnileaf is closed, being a continuous preimage of a point in the sphere $S^2_\infty$. Each point is contained in some weak positive omnileaf and some weak negative omnileaf, so $\tOm^+$ and $\tOm^-$ are decompositions of $\tM$. By construction, the weak omnileaf decompositions $\tOm^\pm$ are preserved by both the flow $\tPhi$ (which fixes each weak omnileaf) and the deck action $\pi_1(M) \acts \tM$ (whose elements may permute weak omnileaves). These observations hold as well for the decompositions into weak leaves $\tcL^\pm$.

It follows that the decompositions $\tOm^\pm$ and $\tcL^\pm$ project to $\Phi$-invariant partitions of the manifold $M$, but we will generally work with them upstairs in $\tM$.

\subsubsection{Flowspaces, omnileaves, and leaves}
Using uniform quasigeodesity, Calegari showed that any quasigeodesic flow $\Phi$ on a closed hyperbolic $3$-manifold $M$ is product covered \cite[Thm. 3.12]{Calegari}, so the flowspace $P$ is a plane. The topology of the weak omnileaves and leaves is easier to understand after projecting to $P$. 

\begin{notation}
    Given a point $p \in P$ we denote the corresponding flowline by 
    \[ \flow{p} := \nu^{-1}(p). \]
    Given a subset $A \subset P$ we denote the corresponding union of flowlines by
    \[ \flow{A} := \nu^{-1}(A). \]
See \Cref{figure:Flowspace}.
\end{notation}

\begin{figure}[ht]
	\centering
    \begin{overpic}{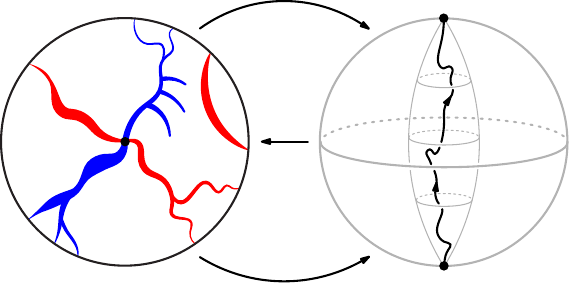}
    \small
        \put (48, 46){$e^+$}
        \put (48, 27){$\nu$}
        \put (48, 2){$e^-$}
        
        \put (19.5,26.5) {$p$}
        \put (73, 31) {$\flow{p}$}
        \put (74, 48) {$e^+(p)$}
        \put (74,-1) {$e^-(p)$}
    \end{overpic}
	\caption{Left: a point $p$ in the flowspace and the positive and negative omnileaves through $p$, which are equal to $(e^\pm)^{-1}(e^\pm(p))$. Note that omnileaves need not be connected. Right: the flowline $\flow{p}$, which stays a bounded distance from a geodesic and limits on the points $e^+(p)$ and $e^-(p)$ in $S^2_\infty$ in the forward and backward direction, respectively.}\label{figure:Flowspace}
\end{figure}

The endpoint maps $E^\pm: \tM \to S^2_\infty$ guaranteed by \Cref{proposition:QGFlowCharacterization} are constant along flowlines, so they descend to continuous endpoint maps 
\[ e^\pm: P \to S^2_\infty \]
that take each point $p$ to the positive/negative endpoint of the corresponding flowline $\flow{p}$. Since quasigeodesics have distinct endpoints, we have $e^+(p) \neq e^-(p)$ for each $p \in P$.

\begin{definition}\label{def:noncompactleaves}
    Let $\Phi$ be a quasigeodesic flow on a closed hyperbolic $3$-manifold $M$.

    The projection to $P$ of a positive or negative weak omnileaf or leaf will be called a positive or negative \emph{$P$-omnileaf} or \emph{$P$-leaf}.
    
    Equivalently: for each ideal point $z \in S^2_\infty$, the set $(e^+)^{-1}(z)$, when nonempty, is called a \emph{positive $P$-omnileaf}, and each component of $(e^+)^{-1}(z)$ is called a \emph{positive $P$-leaf}. The set $(e^-)^{-1}(z)$, when nonempty, is called a \emph{negative $P$-omnileaf}, and each component of $(e^-)^{-1}(z) \neq \emptyset$ is called a \emph{negative $P$-leaf}.

    Equivalently: a positive/negative $P$-omnileaf is a maximal subset $K \subset P$ with $e^\pm(K)$ a single point, while a positive/negative $P$-leaf is a maximal \emph{connected} subset $K \subset P$ with $e^\pm(K)$ a single point.

    Given a positive/negative $P$-leaf or $P$-omnileaf $K \subset P$, we will call the point $z = e^\pm(K)$ the \emph{root} of $K$. 
    
    The collections of all positive and negative $P$-omnileaves will be denoted
    \[
    \Om^\pm:=\{(e^\pm)^{-1}(z) \neq \emptyset \mid z \in S^2_\infty\},
    \]
    and positive and negative $P$-leaves will be denoted
    \[
    \cL^\pm:=\{\text{components of } (e^\pm)^{-1}(z) \mid z \in S^2_\infty\}.
    \]
    We will sometimes drop the ``$P$-" modifier for convenience when there is no risk of confusion.
\end{definition}

As with $\tOm^\pm$ and $\tcL^\pm$, the collections $\Om^\pm$ and $\cL^\pm$ are decompositions---in this case, of $P$. By construction, each of these four decompositions is preserved by the flowspace action $\pi_1(M) \acts P$.

The positive and negative leaf decompositions $\cL^\pm$ coming from a quasigeodesic flow are analogous to the stable and unstable singular foliations coming from a pseudo-Anosov flow, albeit with considerably more complicated topology. However, they share the following properties with a transverse pair of singular foliations. They are \emph{unbounded decompositions} that \emph{intersect compactly}:

\begin{lemma}[name={\cite[Lemma 4.8 \& 5.8]{Calegari}}]\label{lemma:UnboundedIntersectingCompactly}
    Each positive/negative leaf is an unbounded subset of $P$.

    If $K$ is a positive leaf and $L$ is a negative leaf, then $K \cap L$ is compact.
\end{lemma}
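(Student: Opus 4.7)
Both $K$ and $L$ are closed in $P$: they are connected components of the closed sets $(e^+)^{-1}(z^+)$ and $(e^-)^{-1}(z^-)$ in the locally connected plane $P$, where $z^\pm$ denote the roots. Hence $K \cap L$ is closed. For compactness, note that $z^+ \neq z^-$ (flowlines are quasigeodesics with distinct endpoints), so there is a unique bi-infinite geodesic $\gamma \subset \tM \cong \bbH^3$ from $z^-$ to $z^+$. For each $p \in K \cap L$ the flowline $\flow p$ is a quasigeodesic with endpoints $z^-, z^+$; by the Morse Lemma plus uniform quasigeodesicity (see \Cref{rmk:uniformconstant}) there is a single constant $C$ with $\flow p \subset N_C(\gamma)$ for all such $p$. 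Fix $x_0 \in \gamma$ and to each $p \in K\cap L$ assign a point $x_p \in \flow p$ minimizing distance to $x_0$, so $d(x_p, x_0)\le C$. Given a sequence $p_n \in K\cap L$, the points $x_{p_n}$ lie in the compact ball $\overline{B(x_0,C)} \subset \tM$; pass to a convergent subsequence $x_{p_n}\to x_\infty$. Then $p_n = \nu(x_{p_n}) \to \nu(x_\infty) \in K \cap L$ by continuity of the quotient and closedness, so $K\cap L$ is sequentially compact, and hence compact since $P$ is metrizable.

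\textbf{For Part (1) I would argue by contradiction.} Suppose $K \in \cL^+$ is bounded, hence (being closed in $P$) compact; let $z^+ = e^+(K)$. Under a product identification $\tM \cong P \times \bbR$, the preimage $\flow K$ is a closed cylinder $K \times \bbR$, and all its flowlines converge in forward time to the single point $z^+ \in S^2_\infty$. Using that $K$ is a component of $(e^+)^{-1}(z^+)$ in the locally connected space $P$, I would choose an open neighborhood $V \supset K$ with $\overline{V}$ a compact topological disc and $(e^+)^{-1}(z^+) \cap \overline V = K$; then $e^+|_{\partial V}$ is a loop in $S^2_\infty \setminus \{z^+\} \simeq \bbR^2$.

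The forward ends of the cylinder $\flow K$ are trapped, by uniform quasigeodesicity and the Morse Lemma, in an arbitrarily small neighborhood of $z^+$: concretely, a deep enough horoball $B$ at $z^+$ has $\flow K \cap B$ retracting along the flow onto a compact cross-section on the horosphere $\partial B$, whose positive flow ends accumulate only on $\{z^+\}$. Meanwhile the flowlines $\flow q$ for $q \in \partial V$ all have $e^+(q) \neq z^+$, so $\flow{\partial V}$ eventually leaves every horoball based at $z^+$. The idea is to turn this dichotomy into a topological contradiction: the cylinder $\flow{\overline V} \subset \tM$, capped off by $\{z^+\}$, together with $\flow{\partial V}$ escaping from a horoball at $z^+$, produces an obstruction incompatible with the product structure $\tM \cong P\times\bbR$ and the continuity of $e^+$.

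\textbf{The principal obstacle is the potential wildness of the continuum $K$} as a subset of the plane $P$: it need not be locally connected or even path connected, so the step of passing from ``$K$ compact'' to the existence of a neighborhood basis of nice topological discs, and the accompanying degree/linking argument on $\partial V$, has to be carried out without any such regularity assumption. This is where I would expect the bulk of the technical work to go, and where earlier structural results on quasigeodesic flowspaces (cf.\ \cite{Frankel_closing}) will presumably be essential.
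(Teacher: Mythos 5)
Part (2) of your proposal is essentially correct and is the standard argument (it is how the cited source, Calegari, handles compact intersection): flowlines over $K\cap L$ all join the same pair of distinct ideal points, so by uniform quasigeodesity and the Morse Lemma they all lie in a bounded neighborhood of one geodesic $\gamma$, and projecting a bounded piece of each such flowline to $P$ traps $K\cap L$ in the image of a compact set; closedness then gives compactness. One small imprecision: from $\flow{p}\subset N_C(\gamma)$ you cannot directly conclude $d(x_p,x_0)\le C$ --- you need the reverse coarse inclusion, i.e.\ that $\gamma$ lies in a bounded neighborhood of $\flow{p}$. This does follow (the flowline accumulates on both endpoints of $\gamma$ and stays $C$-close to it, so its projection to $\gamma$ is coarsely onto), but it should be said; it only changes the constant.

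Part (1) has a genuine gap: you never actually produce the contradiction. After setting up the dichotomy (flowlines over $K$ trapped near $z^+$ in forward time, flowlines over $\partial V$ eventually leaving every horoball at $z^+$), you say ``the idea is to turn this dichotomy into a topological contradiction'' and appeal to ``an obstruction incompatible with the product structure,'' but no such obstruction is identified --- and it is not clear one exists at this level of generality: a disc $\overline V\subset P$ whose interior maps over $z^+$ under $e^+$ while $\partial V$ misses $z^+$ is, by itself, perfectly consistent with continuity of $e^+$ and with $\tM\simeq P\times\bbR$ (a continuous map of a disc to $S^2_\infty$ can certainly hit a point avoided by its boundary). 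So the heart of the proof of unboundedness is missing, exactly where you flag ``the bulk of the technical work.'' In addition, the preliminary step --- separating the compact component $K$ of the closed set $(e^+)^{-1}(z^+)$ from the rest of that set by a disc $V$ with $(e^+)^{-1}(z^+)\cap\overline V=K$ --- requires a quasicomponent/separation argument for compact components of closed planar sets, which you acknowledge but do not supply. Note that the paper itself does not prove this lemma; it quotes it from Calegari, where unboundedness of leaves is established by a genuine argument that your sketch does not reconstruct. As it stands, Part (1) is an outline of a strategy, not a proof.
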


\subsubsection{Universal circles for quasigeodesic flows}
Calegari showed in \cite{Calegari} that the topological ends of the positive and negative leaves can be collated to construct a \emph{universal circle} $S^1$. In \cite{Frankel_qgflows}, Frankel showed that this universal circle $S^1$ can be used to compactify the flowspace $P$ to a closed disc $\bP = P \sqcup S^1_u$. The flowspace action $\pi_1(M) \acts P$ preserves the decompositions into positive and negative leaves, permuting their ends, so it extends to a faithful, orientation preserving action $\pi_1(M) \acts \bP$ on this \emph{compactified flowspace}.

\begin{notation}
    If $A$ is a subset of $P$ or $\bP$, we will write $\overline{A}$ for the closure of $A$ in $\bP$.
    
    If $A$ is a subset of $\bP$, we will use the notation
    \[ \partial A := A \cap S^1_u. \]
    
    If $K \in \cL^\pm$ is a positive/negative leaf, the points in $\del \overline{K}$ are called \emph{ends} of $K$.
\end{notation}

\begin{remark}
    This usage of the word ``end'' differs slightly from that of \cite{Calegari}, \cite{Frankel_qgflows}, and \cite{Frankel_spherefilling}, where it refers to a Freudenthal end. In fact, $\del \overline{K}$ is the closure of the image of $K$'s Freudenthal ends (see \cite{Frankel_qgflows}, Lemma~7.8).
\end{remark}

The compactified flowspace has the following properties:
\begin{itemize}
	\item For each positive or negative leaf $K \in \cL^\pm$, the set of ends $\del \overline{K}$ is totally disconnected (\cite[Lem. 4.2]{Frankel_closing}).
	
	\item The set
	$ \bigcup_{K \in \cL^+} \del \overline{K} $
	of all \emph{positive ends} is dense in $S^1_u$, as is the set
	$ \bigcup_{L \in \cL^-} \del \overline{L} $
	of all \emph{negative ends}. This is true by the construction of $S^1_u$, which proceeds roughly as follows: define a circular order on the Freudenthal ends of leaves, take the order completion, and take a natural quotient (see \cite{Frankel_qgflows}).
\end{itemize}

In \cite{Frankel_spherefilling}, Frankel showed that the endpoint maps $e^\pm$ extend continuously to $\pi_1$-equivariant maps 
\[ 
\be^\pm: \bP \to S^2_\infty 
\]
on the compactified flowspace. Furthermore, $\be^+$ agrees with $\be^-$ on the boundary circle, where it restricts to a map
\[ \be: S^1_u \to S^2_\infty.\]
The image of $\be$ is all of $S^2_\infty$,
so $\be$ is a $\pi_1(M)$-equivariant sphere-filling curve in $S^2_\infty$. This generalizes the Cannon--Thurston Theorem \cite{CannonThurston}, which produces such curves for suspension flows, and Fenley's construction for pseudo-Anosov flows with no perfect fits \cite{Fenley}.

\begin{lemma}\label{lemma:LeavesSharingEnds}
    Let $K$ and $L$ be two leaves in $P$, of the same or opposite sign. If $\partial \overline{K}$ intersects $\partial \overline{L}$ then they are rooted at the same point.
\end{lemma}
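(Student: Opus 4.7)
The plan is to prove \Cref{lemma:LeavesSharingEnds} by a direct application of the continuous extension $\bar{\be}^{\pm}\colon \bP \to S^2_\infty$ of the endpoint maps and the fact that these extensions agree on $S^1_u$ with the sphere-filling curve $\be$.

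First, I would observe that if $K \in \cL^+$ is a positive leaf rooted at $z_K \in S^2_\infty$, then by definition $e^+(K) = \{z_K\}$. Since $\bar{\be}^+$ is continuous on the compactified flowspace $\bP$ and $S^2_\infty$ is Hausdorff, the preimage $(\bar{\be}^+)^{-1}(z_K)$ is closed in $\bP$ and contains $K$, hence contains $\overline{K}$. In particular $\bar{\be}^+(\partial \overline{K}) = \{z_K\}$. Now on the boundary circle $S^1_u$, $\bar{\be}^+$ agrees with the sphere-filling curve $\be\colon S^1_u \to S^2_\infty$, so for every end $p \in \partial \overline{K}$ we have $\be(p) = z_K$. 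The identical argument, using $\bar{\be}^-$ in place of $\bar{\be}^+$, shows that if $L$ is a negative leaf rooted at $z_L$, then $\be(p) = z_L$ for every end $p \in \partial \overline{L}$.

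Combining these, suppose $p \in \partial \overline{K} \cap \partial \overline{L}$ for leaves $K, L$ with roots $z_K, z_L$ (of either sign). Then $z_K = \be(p) = z_L$, so $K$ and $L$ are rooted at the same point, proving the lemma.

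I do not anticipate any obstacle; the lemma is an immediate consequence of the continuity of $\bar{\be}^\pm$ together with the coincidence $\bar{\be}^+|_{S^1_u} = \bar{\be}^-|_{S^1_u} = \be$ recalled just before the statement. The only mild care needed is to handle the four sign combinations uniformly, which the argument above does by treating positive and negative leaves with $\bar{\be}^+$ and $\bar{\be}^-$ respectively and then using the agreement on $S^1_u$.
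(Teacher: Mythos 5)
Your proof is correct and follows essentially the same route as the paper's: both evaluate the continuous extensions $\be^{\pm}$ at a common end $s \in \partial\overline{K} \cap \partial\overline{L}$ and use the agreement $\be^+|_{S^1_u} = \be^-|_{S^1_u} = \be$ to identify the roots. Your explicit closed-preimage justification that $\be^{\pm}$ is constant on $\overline{K}$ is just a spelled-out version of the paper's step $e^{\pm}(K) = \be^{\pm}(\overline{K}) = \be^{\pm}(s)$.
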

\begin{proof}
    Let $s \in \partial \overline{K} \cap \partial \overline{L}$. If $K$ is a positive leaf then its root is $e^+(K) = \be^+(\overline{K}) = \be^+(s)$, and if it is a negative leaf then its root is $e^-(K) = \be^-(\overline{K}) = \be^-(s)$. Similarly, the root of $L$ is either $e^+(L) = \be^+(s)$ or $e^-(L) = \be^-(s)$ depending on whether it is positive or negative. Either way, $\be^+(s) = \be^-(s)$ because the extended endpoint maps agree on $S^1_u$, so the root of $K$ is the same as the root of $L$. 
\end{proof}

This allows us to add an important additional property to \Cref{lemma:UnboundedIntersectingCompactly}:

\begin{lemma}\label{lemma:LeafEndsDisjoint}
    If $K^+ \in \cL^+$ and $K^- \in \cL^-$ are positive and negative leaves that intersect nontrivially then their ends $\partial \overline{K^+}$ and $\partial \overline{K^-}$ are disjoint.
\end{lemma}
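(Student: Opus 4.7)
The plan is to derive this as an immediate consequence of the preceding \Cref{lemma:LeavesSharingEnds} together with the fact (emphasized in \Cref{sec:QGbackground}) that the positive and negative endpoints of any flowline of a quasigeodesic flow are distinct.

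Concretely, I would argue by contradiction. Suppose that there exists a point $s \in \partial \overline{K^+} \cap \partial \overline{K^-}$. Then \Cref{lemma:LeavesSharingEnds} tells us that $K^+$ and $K^-$ share the same root in $S^2_\infty$, i.e.\ $e^+(K^+) = e^-(K^-)$. On the other hand, pick any $p \in K^+ \cap K^-$, which exists by the hypothesis that $K^+$ and $K^-$ intersect nontrivially. Because $K^+$ is the component of $(e^+)^{-1}(e^+(K^+))$ containing $p$ and $K^-$ is the component of $(e^-)^{-1}(e^-(K^-))$ containing $p$, we have $e^+(p) = e^+(K^+)$ and $e^-(p) = e^-(K^-)$. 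Combining these gives $e^+(p) = e^-(p)$, meaning the forward and backward endpoints of the flowline $\flow{p}$ coincide. This contradicts the distinct endpoint property: \Cref{proposition:QGFlowCharacterization}(1) asserts that each lifted orbit of a quasigeodesic flow has well-defined \emph{and distinct} endpoints in $S^2_\infty$, and this was already recorded for $e^\pm$ after their definitions (``since quasigeodesics have distinct endpoints, we have $e^+(p) \neq e^-(p)$ for each $p \in P$'').

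There isn't really a main obstacle here, since \Cref{lemma:LeavesSharingEnds} already does the work of propagating an intersection at infinity into an equality of roots, and the only other input needed is the distinctness of $e^+(p)$ and $e^-(p)$, which is part of the characterization of quasigeodesic flows. The statement of this lemma is best read as a companion to \Cref{lemma:UnboundedIntersectingCompactly}: together they say that when a positive leaf meets a negative leaf, their intersection is compact in $P$ and does not accumulate on the universal circle. This is exactly the setup needed later to verify the efficient intersection property for the muu pair built from $\cL^\pm$, as alluded to in \Cref{subsec:IntersectionProperties}.
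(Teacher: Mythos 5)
Your argument is correct and is essentially identical to the paper's proof: both invoke \Cref{lemma:LeavesSharingEnds} to conclude $e^+(K^+) = e^-(K^-)$ from a shared end, then evaluate at a point $p \in K^+ \cap K^-$ to get $e^+(p) = e^-(p)$, contradicting the distinct-endpoints property from \Cref{proposition:QGFlowCharacterization}. No gaps.
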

\begin{proof}
    If $K^+ \cap K^- \neq \emptyset$, then the flowline corresponding to a point $p \in K^+ \cap K^-$ has forward and backward endpoints $e^+(p) = e^+(K^+)$ and $e^-(p) = e^-(K^-)$. If $\del\overline{K^+}$ were to intersect $\del\overline{K^-}$, then $e^+(K^+) = e^-(L^-)$ by the preceding lemma, and hence $e^+(p) = e^-(p)$. But this is impossible, since flowlines of quasigeodesic flows have distinct endpoints (\Cref{proposition:QGFlowCharacterization}).
\end{proof}

Although positive and negative leaves that intersect cannot share ends, it is possible for positive and negative leaves that are disjoint to share ends. This is akin to a ``perfect fit'' in a pseudo-Anosov flow (see e.g. \cite{Fenley}).

\subsubsection{Sprigs}
By \Cref{lemma:LeavesSharingEnds}, any two positive leaves that share an end are contained in the same positive omnileaf, and any two negative leaves that share an end are contained in the same negative omnileaf. In this section we will define another useful object called a $P$-sprig that collates leaves that share ends, but is finer than an omnileaf.

\begin{definition}\label{def:PomnileavesPsprigs}
    Let $\Phi$ be a quasigeodesic flow on a closed hyperbolic $3$-manifold $M$.
    
    For each ideal point $z \in S^2_\infty$, the set $(\be^+)^{-1}(z)$ is called a \emph{positive $\bP$-omnileaf}, and each component of $(\be^+)^{-1}(z)$ is called a \emph{positive $\bP$-sprig}. The set $(\be^-)^{-1}(z)$ is called a \emph{negative $\bP$-omnileaf}, and each component of $(\be^-)^{-1}(z)$ is called a \emph{negative $\bP$-sprig}.\

    The collections of all positive and negative $\bP$-omnileaves will be denoted
    \[
    \bOm^\pm:=\{(\be^\pm)^{-1}(z) \mid z \in S^2_\infty\},
    \]
    and the collections of all positive and negative $\bP$-sprigs will be denoted
    \[
    \bSp^\pm:=\{\text{components of } (\be^\pm)^{-1}(z) \mid z \in S^2_\infty\}.\qedhere
    \] 
\end{definition}

Note that $\bOm^\pm$ and $\bSp^\pm$ are decompositions of $\bP$ that are preserved by the action $\pi_1(M) \acts \bP$.

Since $\be^+$ is surjective, $(\be^+)^{-1}(z)$ is always nonempty. Given a positive $\bP$-omnileaf $\bK = (\be^+)^{-1}(z)$, $\bK \cap P$ is a $P$-omnileaf whenever it is nonempty. This is how we will \emph{define} $P$-sprigs:

\begin{definition}
    If $\bK$ is a positive/negative $\bP$-sprig (rooted at $z$) then $\bK \cap P$, when nonempty, is called a \emph{positive/negative $P$-sprig} (rooted at $z$). The collections of all positive and negative $P$-sprigs will be denoted
    \[
    \Sp^\pm:=\{\bK \cap P \neq \emptyset \mid \bK \in \bSp^\pm\}.\qedhere
    \]
\end{definition}

Note that $\Sp^\pm$ are decompositions of $P$ that are preserved by the action $\pi_1(M) \acts P$. They are coarser than $\cL^\pm$ and finer than $\Om^\pm$. In particular, each component of a positive/negative $P$-sprig is a positive/negative leaf, and each positive/negative leaf is contained in some $P$-sprig. In particular, any two positive/negative leaves that share an end are contained in the same $P$-sprig.

\begin{remark}
    As in \Cref{def:noncompactleaves}, we will sometimes drop the modifiers ``$P$-" or ``$\bP$-" from ``leaf/sprig/omnileaf" when there should be no chance of confusion. Note that we have not defined $\bP$-leaves; although there is a natural definition, we will not need to consider these objects.
\end{remark}

Note weak leaves and weak omnileaves are exactly of the form $\flow{K}$ for $K$ a $P$-leaf or $P$-omnileaf. We can use this to define weak sprigs.

\begin{definition}
    For each positive/negative $P$-sprig $K \in \Sp^\pm$, $\flow{K}$ is called a \emph{positive/negative weak sprig}. 
    
    The collections of all positive and negative weak sprigs will be denoted
    \[
    \tSp^\pm=\{\flow{K} \mid K \in \Sp^\pm\}. \qedhere
    \]
\end{definition}

The decompositions defined so far are collected for reference in  \Cref{ALTtable:decompositions}. Each subtable (leaf/sprig/omnileaf) has an additional row of ``strong" decompositions which will be explained in \Cref{section:strongdecompositions}.

\begin{proposition}\label{lemma:SprigEndsDisjoint}\label{prop:sprigsarespidery}
    Let $\Phi$ be a quasigeodesic flow on a closed hyperbolic $3$-manifold $M$. The associated sprig decompositions $\bSp^\pm$ are monotone, unbounded, upper-semicontinuous, and have following intersection property:
    \begin{enumerate}
        \item[($\cap_1$)] For each $s \in S$, $\bSp^+(s) \cap \bSp^-(s) = \{s\}$. 
    \end{enumerate}

\end{proposition}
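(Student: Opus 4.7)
The proposition asserts four properties of $\bSp^\pm$: monotonicity, unboundedness, upper semicontinuity, and the intersection property $(\cap_1)$. The first three follow quickly from earlier results, and my plan is to dispatch them in one paragraph before turning to $(\cap_1)$. Monotonicity is immediate: each $\bP$-sprig is, by construction, a connected component of some fiber $(\be^\pm)^{-1}(z)$. For unboundedness, let $\bK \in \bSp^+$. If $\bK \cap P = \emptyset$ then $\bK \subset S^1_u$ and there is nothing to prove; otherwise, since $\bK$ is a component of $(\be^+)^{-1}(\be^+(\bK))$ in $\bP$ and the components of $(e^+)^{-1}(\be^+(\bK))$ in $P$ are the $P$-leaves rooted there, $\bK \cap P$ is a nonempty union of $P$-leaves. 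Picking any such leaf $K$, it is unbounded in $P$ by \Cref{lemma:UnboundedIntersectingCompactly}, so $\partial\overline{K}$ is a nonempty subset of $S^1_u$, and it lies in $\bK$ since $\bK$ is closed. For upper semicontinuity, the omnileaf decomposition $\bOm^+$ is the set of nonempty point preimages of the continuous map $\be^+ \colon \bP \to S^2_\infty$ from the compact disc $\bP$ to the Hausdorff sphere $S^2_\infty$, hence upper semicontinuous by \Cref{example:PointPreimages}; since $\bP$ is a compact metric space, \Cref{lemma:monotonizationUSC} then gives that $\bSp^+ = \Mon(\bOm^+)$ is also upper semicontinuous. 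The case of $\bSp^-$ is identical.

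Now for the intersection property $(\cap_1)$. Fix $s \in S^1_u$ and set $\bK^\pm := \bSp^\pm(s)$. Since $\be^+$ and $\be^-$ both restrict to the sphere-filling map $\be$ on $S^1_u$, both sprigs are rooted at $z := \be(s)$. Let $t \in \bK^+ \cap \bK^-$; I must show $t = s$. If $t$ were to lie in the interior $P$, then $e^+(t) = \be^+(t) = z = \be^-(t) = e^-(t)$, so the flowline through $t$ would have equal forward and backward endpoints in $S^2_\infty$, contradicting the characterization of quasigeodesic flows (\Cref{proposition:QGFlowCharacterization}). Hence $\bK^+ \cap \bK^- \subset S^1_u$, and the task reduces to ruling out $t \in S^1_u \setminus \{s\}$.

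Suppose for contradiction such a $t$ exists. Then $\bK^+$ and $\bK^-$ are two connected closed subsets of the closed disc $\bP$ meeting $S^1_u$ in (at least) the two distinct points $s, t$, with $\bK^+ \cap P$ and $\bK^- \cap P$ disjoint $P$-sprigs. I would finish by extracting a complementary region $U$ of $\bK^+ \cup \bK^-$ in $\bP$ whose closure meets both $\bK^+$ and $\bK^-$, with $\partial U$ of the form $\gamma^+ \cup \gamma^- \cup I$, where $\gamma^\pm \subset \bK^\pm$ accumulates exactly at $\{s,t\}$ and $I$ is contained in one of the two arcs of $S^1_u \setminus \{s,t\}$. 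Picking $p \in U \cap P$, continuity of $\be^\pm$ together with the fact that $\bK^\pm \cap S^1_u \subset \be^{-1}(z)$ should force $e^\pm(p) = z$, reproducing the forbidden equality of endpoints. The \textbf{main obstacle} is that $\bK^\pm$ are arbitrary connected closed subsets of the disc with no a priori local connectedness or arcwise accessibility, so isolating the region $U$ and running the limiting step for $\be^\pm$ across its non-circle boundary requires real care. A backup plan is to invoke the Moore-type unlinking property of the fibers of the sphere-filling curve $\be\colon S^1_u \to S^2_\infty$ (a consequence of $\be$ descending continuously to the Hausdorff space $S^2_\infty$) together with the total disconnectedness of $\partial\overline{K}$ for each individual $P$-leaf $K$, and then reach a separation contradiction entirely on $S^1_u$ using the Phragmén--Brouwer property of $\bP$.
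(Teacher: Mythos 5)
Your treatment of monotonicity, unboundedness, upper semicontinuity, and the ``interior'' half of $(\cap_1)$ matches the paper: monotone by construction, unbounded via \Cref{lemma:UnboundedIntersectingCompactly}, upper semicontinuous via \Cref{example:PointPreimages} and \Cref{lemma:monotonizationUSC}, and the observation that a common point $t\in P$ of $\bSp^+(s)$ and $\bSp^-(s)$ would force $e^+(t)=\be(s)=e^-(t)$, contradicting \Cref{proposition:QGFlowCharacterization}, is exactly the paper's argument (cf.\ \Cref{lemma:LeafEndsDisjoint}).

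The genuine gap is the remaining half of $(\cap_1)$: ruling out a second common point $t\in S^1_u\setminus\{s\}$, i.e.\ that a positive and a negative sprig meet the universal circle in at most one common point. This is the only nontrivial content of the intersection property, and your proposal does not prove it --- you offer a sketch whose ``main obstacle'' you acknowledge, plus a vague backup plan. Moreover, the mechanism you propose does not work as stated: for a point $p$ in a complementary region $U$ of $\bK^+\cup\bK^-$, nothing about continuity of $\be^\pm$ and the inclusion $\bK^\pm\cap S^1_u\subset\be^{-1}(z)$ forces $e^\pm(p)=z$; the endpoints of a flowline over an interior point not lying on either sprig are unconstrained by that data, so no ``forbidden equality of endpoints'' is reproduced. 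An actual argument must exploit the density of positive and negative ends in the boundary interval of $U$ together with unlinking/separation properties --- and this is precisely what the paper does not reprove: it cites \cite[Lem.~5.6]{Frankel_closing} for the statement that no pair of positive and negative sprigs can contain more than one point of $S^1_u$ in their intersection. So either cite that lemma, or supply a complete argument along those lines; as written, the proof is incomplete at exactly the step that carries the proposition's weight.
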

That is, they form a ``proper muu pair'' as in \Cref{def:muuemuu} and \Cref{def:propermuu}.
\begin{proof}
 The decompositions $\bSp^\pm$ are monotone by construction and upper semicontinuous by \Cref{example:PointPreimages} and \Cref{lemma:monotonizationUSC}. They are unbounded because leaves are unbounded in $P$ (recall that a decomposition of a disc is unbounded if each element intersects the boundary circle).

% Item (2) is \cite[Lem. 4.5]{Frankel_closing}.

We now verify ($\cap_1$). First we show that if two sprigs intersect in $P$, then their ends are disjoint; this is essentially the same argument as \Cref{lemma:LeafEndsDisjoint}. Note that if $\bK^+\in \bSp^+$ and $\bK^-\in \bSp^-$ intersect at a point $s\in S^1_u$, then $\be^+(\bK^+)=\be(s)=\be^-(\bK^-)$. If $\bK^+$ and $\bK^-$ additionally intersect at a point $p\in P$, then $e^+(p) = \be^+(\bK^+) = \be^-(\bK^-) = e^-(p)$, contradicting the fact that flowlines have distinct endpoints.   
The fact that no pair of positive and negative sprigs can contain more than one point of $S^1_u$ in their intersection is \cite[Lem. 5.6]{Frankel_closing}.
\end{proof}

\Cref{prop:sprigsarespidery} is the link between \Cref{part:circledisc} and \Cref{part:flowstraightening} of the paper---it allows us to apply the straightening map technology of \Cref{subsection:StraighteningMaps} to the sprig decompositions, which we do in \Cref{sec:straighteningtheflowspace}.

\begin{table}
\centering
\begin{tabular}{|l|c|}
    \hline
    
    Leaf decompositions & Where\\
    
    \hline
    $\begin{aligned}\cL^\pm &= \{\text{positive/negative leaves}\}\\ &= \{\text{components of } (e^\pm)^{-1}(z) \mid z \in S^2_\infty \} \end{aligned}$ 
    & P \rule{0pt}{5ex}\\
    
    \hline

     $\begin{aligned}\tcL^\pm &= \{\text{weak positive/negative leaves}\}\\ &= \{\flow{K} \mid K \in \cL^\pm \}\\ &= \{\text{components of } \wt K \mid \wt K \in \tOm^\pm \} \end{aligned}$& $\tM$ \rule{0pt}{7ex}\\

    \hline

    $\begin{aligned}\tcL^{\ppmm} &= \{\text{strong positive/negative leaves}\\ &= \{k \cap \flow{K} \neq \emptyset \mid k \in \tOm^\ppmm, K \in \cL^\pm\}\\ &= \{\text{components of } k \mid k \in \tOm\} \end{aligned}$& $\tM$ \rule{0pt}{7ex}\\

    \hline\hline

    Sprig decompositions&Where\\

    \hline

    $\begin{aligned}\Sp^\pm &= \{\text{positive/negative }P\text{-sprigs}\}\\ &= \{ \bK \cap P \mid \bK \in \bSp^\pm \}\end{aligned}$& P \rule{0pt}{5ex}\\

    \hline

    $\begin{aligned}\bSp^\pm &= \{\text{positive/negative }\bP\text{-sprigs}\}\\ &= \{\text{components of } \bK \mid \bK \in \bOm^\pm \}\end{aligned}$&$\bP$ \rule{0pt}{5ex}\\

    \hline

    $\begin{aligned}\tSp^\pm &= \{\text{weak positive/negative sprigs}\}\\ &= \{ \flow{K} \mid K \in \Sp^\pm \}\end{aligned}$& $\tM$ \rule{0pt}{5ex}\\

    \hline

    $\begin{aligned}\tSp^{\ppmm} &= \{\text{strong positive/negative sprigs}\}\\ &= \{k \cap \flow{K} \neq \emptyset\mid k \in \tOm^\ppmm, K \in \Sp^\pm\}\end{aligned}$& $\tM$ \rule{0pt}{5ex}\\[2ex]

    \hline\hline

    Omnileaf decompositions & Where\\

    \hline

    $\begin{aligned}\Om^\pm &= \{\text{positive/negative }P\text{-omnileaves}\}\\ &= \{(e^\pm)^{-1}(z) \neq \emptyset \mid z \in S^2_\infty \}\end{aligned}$& $P$ \rule{0pt}{5ex}\\

    \hline

    $\begin{aligned}\bOm^\pm &= \{\text{positive/negative }\bP\text{-omnileaves}\}\\ &= \{(\be^\pm)^{-1}(z) \neq \emptyset \mid z \in S^2_\infty \}\end{aligned}$& $\bP$ \rule{0pt}{5ex}\\

    \hline

    $\begin{aligned}\tOm^\pm &= \{\text{weak positive/negative omnileaves}\}\\ &= \{\flow{K} \mid K \in \Om^\pm \}\\ &= \{(E^\pm)^{-1}(z) \neq \emptyset \mid z \in S^2_\infty \}\end{aligned}$& $\tM$ \rule{0pt}{7ex}\\

    \hline

    $\begin{aligned}\tOm^{\ppmm} &= \{\text{strong positive/negative omnileaves}\}\\ &= \{R^{-1}(H^\pm) \neq \emptyset \mid H^\pm \text{a s/u horosphere}\}\end{aligned}$& $\tM$ \rule{0pt}{5ex}\\

    \hline

\end{tabular}

\caption{The leaf/sprig/omnileaf decompositions of a quasigeodesic flow. Strong decompositions will be defined in \Cref{section:strongdecompositions}.}
\label{ALTtable:decompositions}
\end{table}

\section{Straightening the flowspace}\label{sec:straighteningtheflowspace}

Fix a quasigeodesic flow $\Phi$ on a closed hyperbolic $3$-manifold $M$. This short section spells out how we apply the straightening maps of \Cref{sec:Straightening} to decompositions of the flowspace of $\Phi$.
Let
\[ \Lambda^\pm := \partial \bSp^\pm \]
be the restriction of the $\bP$-sprig decompositions of $\bP = P \sqcup S^1_u$ to the universal circle $S^1_u = \partial \bP$. 
Since $\bSp^\pm$ is a proper muu pair by \Cref{prop:sprigsarespidery}, \Cref{cor:whenEmuuRestrictsToEspecial} implies that $\Lambda^\pm$ is an especial pair (in fact, this uses only that $\bSp^\pm$ satisfy property \ref{it:boundaryefficient}).

By \Cref{proposition:effExtExistence} there is a disc $\bQ = Q \sqcup S^1_u$, whose boundary is canonically identified with $S^1_u$, supporting an emuu pair of decompositions $\bcF^\pm$ with $\partial \bcF^\pm = \Lambda^\pm = \partial \bSp^\pm$.
By \Cref{corollary:EfficientExtensionsAction}, the universal circle action $\pi_1(M) \acts S^1_u$, which preserves $\Lambda^\pm$, extends uniquely to an action $\pi_1(M) \acts \bQ$ that preserves $\bcF^\pm$.
By \Cref{prop:emuutofoliations}, $\cF^\pm := \Mon(\bcF^\pm \cap Q)$ is a transverse pair of singular foliations of $Q$. The action on $\bQ$ restricts to an action $\pi_1(M) \acts Q$ that preserves $\cF^\pm$. 

Since $\bSp^\pm$ is a proper muu pair,  \Cref{prop:properstraightening} furnishes a \emph{straightening map}
\[ s\colon P_\lk \to Q \]
that is surjective and equivariant, and can be extended to an equivariant surjection
\[ \bs\colon P_\lk \sqcup S^1_u \to \bQ \]
by taking $\bs|_{S^1_u} = \mathrm{id}_{S^1_u}$.
Here, $P_\lk$ is the \emph{linked subset}, which is the closed subset of $P$ defined by
\[
P_\lk := \{p \in P \mid \partial \bSp^+(p) \text{ and } \partial \bSp^-(p) \text{ are linked}\} \subset P.
\]
We define the related \emph{linked subsets}
\[ \tM_\lk := \flow{P_\lk} \subset \tM \]
and
 \[ M_\lk = \pi_{\tM}(\tM_\lk) \subset M \]
where $\pi_{\tM}: \tM \to M$ is the covering map.

Our goal, now, is to construct a new quasigeodesic flow $\Psi$ on $M$ with flowspace $\pi_1(M) \acts Q$ and positive and negative leaf decompositions $\cF^\pm$, which we will show is pseudo-Anosov. We will build this flow in \Cref{section:StraighteningFlow} after developing some useful technology in Sections \ref{section:comparisonmaps}, \ref{section:strongdecompositions}, and \ref{section:productstructures}. In \Cref{section:StraightenedFlowsPA} we will prove the new flow is pseudo-Anosov.

\section{Comparison maps}\label{section:comparisonmaps}
In this section we will show that any quasigeodesic flow on a closed hyperbolic $3$-manifold can be fruitfully compared to the geodesic flow on the unit tangent bundle of its universal cover. The ``comparison maps'' we produce will be used in \Cref{section:strongdecompositions} to construct \emph{strong positive and negative decompositions} analogous to the strong stable and unstable foliations of a pseudo-Anosov flow. These will play an important role in constructing our new ``straightened flow'' in \Cref{section:StraighteningFlow}, and in showing that it is pseudo-Anosov in \Cref{section:StraightenedFlowsPA}.

The unit tangent bundle $T^1 M$ of a Riemanian manifold $M$ comes with a natural flow, the \emph{geodesic flow}, which moves each $v \in T^1 \tM$ at unit speed along the corresponding geodesic. This lifts to the geodesic flow on $T^1 \tM$. When $M$ is hyperbolic the geodesic flow is Anosov.

\begin{definition}\label{def:comparisonMap}
    Let $\Phi$ be a quasigeodesic flow on a closed hyperbolic $3$-manifold $M$. A \emph{comparison map} is a continuous, $\pi_1(M)$-equivariant map
	\[ R\colon \tM \to T^1\tM \]
	that takes each oriented flowline to the oriented geodesic between its endpoints.
	A comparison map is said to be
    \begin{itemize}
        \item \emph{monotone} if it takes each flowline monotonically to the corresponding geodesic, and
        \item \emph{flow-equivariant} if it satisfies
	\[ R(x \cdot t) = R(x) \cdot t \text{ for all } x \in \tM \text{ and } t \in \bbR, \]
	where the flow on the left is $\tPhi$ and the flow on the right is the geodesic flow.\qedhere
    \end{itemize}
\end{definition}

\begin{lemma}\label{lemma:ComparisonBDandUC}
    Let $R$ be a comparison map for a quasigeodesic flow on a closed hyperbolic $3$-manifold $M$. Then $R$ is uniformly continuous and moves points a bounded distance. That is, $d(x, \pi(R(x)))$ is uniformly bounded, where $\pi\colon T^1 \tM \to \tM$ is the bundle projection.
\end{lemma}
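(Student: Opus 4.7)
The plan is to reduce both assertions to the cocompactness of $M$, exploiting that $R$ is $\pi_1(M)$-equivariant, that each deck transformation acts isometrically on $\tM$, and that the induced action on $T^1\tM$ is by homeomorphisms preserving the bundle projection $\pi\colon T^1\tM \to \tM$ (and preserving the lift of any chosen Riemannian metric on $T^1 M$).

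First I would handle the bounded displacement claim. The function $f\colon \tM \to [0,\infty)$ defined by $f(x) = d_{\tM}(x, \pi(R(x)))$ is $\pi_1(M)$-invariant: for any $g \in \pi_1(M)$, equivariance of $R$ gives $R(gx) = gR(x)$, equivariance of the bundle projection gives $\pi(gR(x)) = g\pi(R(x))$, and isometricity of $g$ on $\tM$ gives $d_{\tM}(gx, g\pi(R(x))) = d_{\tM}(x, \pi(R(x)))$. Hence $f$ descends to a continuous function on the compact quotient $M$ and is automatically bounded.

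For uniform continuity I would fix a precompact fundamental domain $U \subset \tM$ for the deck action, and a positive $\delta_0$ smaller than the injectivity radius of $M$. The closed $\delta_0$-neighborhood $N$ of $\overline{U}$ is then compact, so $R|_N$ is uniformly continuous. Given $\epsilon > 0$, let $\delta \in (0, \delta_0)$ be a corresponding modulus of uniform continuity on $N$. For any $x, y \in \tM$ with $d_{\tM}(x,y) < \delta$, choose $g \in \pi_1(M)$ so that $gx \in \overline{U}$; then $gx, gy \in N$ and $d_{\tM}(gx, gy) < \delta$, so the uniform bound on $N$ combined with equivariance of $R$ and isometricity of $g$ on $T^1\tM$ yields
\[ d_{T^1\tM}(R(x), R(y)) = d_{T^1\tM}(R(gx), R(gy)) < \epsilon. \]

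I do not expect any step to present a real obstacle; both claims are standard cocompactness-plus-equivariance arguments, and the only subtlety is being careful to transport estimates on a compact fundamental neighborhood back to all of $\tM$ via the deck action. In particular, the Morse lemma and the uniform quasigeodesity of $\Phi$ play no direct role here --- those properties are needed in order to construct a comparison map $R$ in the first place, but once such an $R$ is given, the two conclusions of the lemma follow purely from its continuity, its equivariance, and the compactness of $M$.
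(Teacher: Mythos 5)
Your proposal is correct and follows essentially the same route as the paper: the bounded-displacement claim is handled identically (equivariance makes $d(x,\pi(R(x)))$ descend to a continuous function on compact $M$), and your fundamental-domain argument is just an explicit spelling-out of the paper's one-line observation that $R$ descends to a map between compact spaces, hence is uniformly continuous. The injectivity-radius restriction on $\delta_0$ is harmless but unnecessary; any fixed $\delta_0>0$ works since you only need compactness of the $\delta_0$-neighborhood of the closed fundamental domain.
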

\begin{proof}
 Since $\pi\circ R$ is equivariant, $d(x, \pi(R(x)))$ descends to a continuous function on $M$. Since $M$ is compact, this function is bounded, so $R$ moves points a bounded distance.

 That $R$ is uniformly continuous follows from the observation that it descends to a map $M\to T^1M$ between compact spaces.
\end{proof}

\subsection{Building a flow-equivariant comparison map}\label{subsection:monotoneComparison}

Fix a quasigeodesic flow $\Phi$ on a closed hyperbolic $3$-manifold $M$. We will construct a comparison map, modify it to arrange for monotonicity, and then reparametrize $\Phi$ so the new comparison map is flow-equivariant. 

Given a flowline $\alpha$ in $\tM$, let $\alpha^g$ denote the oriented geodesic from $E^-(\alpha)$ to $E^+(\alpha)$, and let $\rho_\alpha\colon \alpha \to \alpha^g$ denote the nearest point projection.

Define a map 
\[
G\colon \tM\to \tM
\]
that takes each flowline to the corresponding geodesic by nearest point projection, i.e. $G(x) = \rho_{x\cdot \bbR}(x)$. This is $\pi_1(M)$-equivariant since $\pi_1(M)$ act by isometries; and continuous, since positive and negative endpoints of flowlines vary continuously.

This has a natural lift to the unit tangent bundle, the map
\[
R\colon \tM \to T^1 \tM 
\]
defined by taking $R(x)$ to be the unit tangent vector based at $G(x)$ that is tangent to $(x \cdot \bbR)^g$ and points from $E^-(x)$ to $E^+(x)$. This too is $\pi_1(M)$-equivariant, so it is a comparison map.

Note that $R$ may fail to be monotone because an orbit may ``backtrack" with respect to its corresponding geodesic. However, because the orbits of $\Phi$ are uniform quasigeodesics (\cite[Lem. 3.10]{Calegari}), there is a uniform constant $C>0$ such that for any $x\in \tM$,
\[
R(x\cdot C)>R(x).
\]
where we are orienting the lifted geodesic $R(x \cdot \bbR)$ with the geodesic flow. In other words, the duration of backtracks is uniformly bounded by $C$.

This allows us to perform a standard averaging trick to turn $R$ into a monotone comparison map (compare e.g. \cite[\S 1]{Gromov_geodesic}, \cite[\S 5.3]{LandryMinskyTaylor}). We define
\[
R_\mathrm{av}(x)=\frac{1}{C}\int_0^{C} R(x \cdot t)\,dt
\]
where we are using the Riemann integral and identifying $R(x \cdot \bbR) = R(x) \cdot \bbR$ with $\bbR$ using the geodesic flow. 
This is monotone increasing along each flowline because for any $\epsilon>0$ we have %\scom{Changed this formula, please check}
\begin{align*}
 R_\mathrm{av}( x \cdot\epsilon)-R_\mathrm{av}(x)&=\frac{1}{C}\left(\int_{\epsilon}^{\epsilon + C} R(x \cdot t)\,dt-\int_{0}^{C} R(x \cdot t)\,dt \right) \\
 &=\frac{1}{C}\left(\int_{C}^{\epsilon +C} R(x \cdot t)\,dt-\int_{0}^{\epsilon} R(x \cdot t)\,dt \right) \\
 &=\frac{1}{C}\int_{0}^{\epsilon}(R(x \cdot (t + C))- R(x \cdot t))\,dt>0.   
\end{align*}
This averaging gives us a continuous, $\pi_1(M)$-equivariant map 
\[
R_\mathrm{av}\colon \tM\to T^1 \tM,
\]
which is monotone on flowlines. Continuity follows from the fact that $R_\mathrm{av}$ is continuous on each flowline and that averaging over a fixed interval is a continuous operation on $C(\bbR,\bbR)$ in the compact open topology.

Thus we have:
\begin{proposition}
    Every quasigeodesic flow on a closed hyperbolic $3$-manifold admits a monotone comparison map.
\end{proposition}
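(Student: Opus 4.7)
The plan is to first construct a not-necessarily-monotone comparison map by nearest-point projection, and then use an averaging trick (enabled by uniform quasigeodicity) to make it monotone.

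First I would define a map $R_0 \colon \tM \to T^1 \tM$ as follows. For each $x \in \tM$, let $\alpha = x\cdot\bbR$ be the flowline through $x$, and let $\alpha^g$ denote the oriented geodesic from $E^-(\alpha)$ to $E^+(\alpha)$, which is well-defined and distinct by \Cref{proposition:QGFlowCharacterization}. Let $R_0(x)$ be the unit tangent vector based at the nearest-point projection of $x$ onto $\alpha^g$, pointing in the direction of the orientation of $\alpha^g$. This is $\pi_1(M)$-equivariant because $\pi_1(M)$ acts on $\tM$ by isometries (so nearest-point projection is equivariant) and because the endpoint maps $E^\pm$ are equivariant. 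Continuity follows from continuity of $E^\pm$ and the continuity of nearest-point projection onto geodesics in $\bbH^3$ with endpoints varying continuously on $S^2_\infty$.

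The map $R_0$ is a comparison map but may fail to be monotone along flowlines: the orbit $t \mapsto R_0(x \cdot t)$, viewed along the geodesic $\alpha^g$ parametrized by the geodesic flow, can ``backtrack.'' However, by \Cref{rmk:uniformconstant} each flowline stays in the $C$-neighborhood of its associated geodesic for a single uniform $C$, and uniform quasigeodicity gives a uniform constant $T > 0$ such that $R_0(x\cdot T) > R_0(x)$ along $\alpha^g$ for every $x \in \tM$. That is, backtracks have uniformly bounded duration.

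With this in hand, I would define
\[ R(x) := \frac{1}{T} \int_0^T R_0(x \cdot t)\,dt, \]
where the integral is taken in the natural parametrization of the geodesic $R_0(x\cdot\bbR) = \alpha^g$ by the geodesic flow. This is still continuous (averaging over a fixed compact interval is continuous in the compact-open topology) and $\pi_1(M)$-equivariant (since the $\pi_1(M)$-action commutes with both the flow $\tPhi$ on the input side and the geodesic flow on the output side). For monotonicity, one computes for $\epsilon > 0$ that
\[ R(x \cdot \epsilon) - R(x) = \frac{1}{T}\int_0^\epsilon \bigl(R_0(x\cdot(t+T)) - R_0(x\cdot t)\bigr)\,dt > 0, \]
using the key fact that $R_0(y \cdot T) > R_0(y)$ for all $y$. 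Hence $R$ is monotone along flowlines, and by construction sends each flowline to the corresponding oriented geodesic, so it is the desired monotone comparison map. The main conceptual obstacle is the monotonization step, which crucially requires uniform quasigeodicity to produce the global constant $T$; without uniformity the averaging interval would vary from orbit to orbit and the argument would not give a continuous equivariant output.
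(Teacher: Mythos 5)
Your proposal is correct and follows essentially the same route as the paper: a comparison map defined by nearest-point projection onto the associated geodesic, followed by averaging over a uniform time interval $[0,T]$ (whose existence uses uniform quasigeodesity to bound backtracking) and the same telescoping computation to verify monotonicity. No gaps.
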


We can use $R_\mathrm{av}$ to reparametrize the orbits of $\tPhi$ as follows: given $x \in \tM$ and $t \in \bbR$, let $(\tPhi')^t(x)$ be the unique point in the $\tPhi$-orbit through $x$ that satisfies $R_\mathrm{av}((\tPhi')^t(x)) = \Theta^t(R_\mathrm{av}(x))$. Here $\Theta$ denotes the geodesic flow. This gives a reparametrization of $\Phi$ for which $R_\mathrm{av}$ is flow-equivariant.

\begin{proposition} \label{prop:comparisonmapexists}
    Every quasigeodesic flow on a closed hyperbolic $3$-manifold can be reparametrized so that it admits a flow-equivariant comparison map.
\end{proposition}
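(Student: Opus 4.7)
The plan is to carry out the construction already sketched just before the statement: use the monotone comparison map $R_\mathrm{av}$ to transport the time parameter of the geodesic flow $\Theta$ on $T^1\tM$ back to $\tM$, and verify the result is a well-defined, continuous, $\pi_1(M)$-equivariant flow with the same orbits as $\tPhi$.

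First I would check that for each flowline $\alpha$ of $\tPhi$, the restriction $R_\mathrm{av}|_\alpha\colon \alpha \to \alpha^g$ is a homeomorphism. Continuity and strict monotonicity are already established, so it suffices to show surjectivity. This follows from Lemma~\ref{lemma:ComparisonBDandUC} together with uniform quasigeodicity: since $R_\mathrm{av}$ moves points a uniformly bounded distance and $\alpha$ limits to $E^\pm(\alpha)$, the endpoints of $\alpha^g$, the image $R_\mathrm{av}(\alpha) \subset \alpha^g$ accumulates on both endpoints of $\alpha^g$, and by monotonicity and continuity it must exhaust $\alpha^g$. Consequently the formula $(\tPhi')^t(x) := (R_\mathrm{av}|_{x\cdot \bbR})^{-1}(\Theta^t(R_\mathrm{av}(x)))$ is well-defined, equals $x$ at $t=0$, and satisfies the group law tautologically from that of $\Theta$. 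Equivariance follows from the fact that $\pi_1(M)$ acts on $T^1\tM$ by isometries commuting with $\Theta$ and $R_\mathrm{av}$ is $\pi_1(M)$-equivariant, so
\[ R_\mathrm{av}\bigl(\gamma \cdot (\tPhi')^t(x)\bigr) = \gamma \cdot \Theta^t(R_\mathrm{av}(x)) = \Theta^t(R_\mathrm{av}(\gamma \cdot x)) = R_\mathrm{av}\bigl((\tPhi')^t(\gamma \cdot x)\bigr), \]
and injectivity of $R_\mathrm{av}$ on the orbit through $\gamma \cdot x$ forces $\gamma \cdot (\tPhi')^t(x) = (\tPhi')^t(\gamma \cdot x)$. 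Hence $\tPhi'$ descends to a flow $\Phi'$ on $M$ with the same orbits as $\Phi$, and by construction $R_\mathrm{av}$ is flow-equivariant between $\tPhi'$ and the geodesic flow.

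The main obstacle is verifying that $(t, x) \mapsto (\tPhi')^t(x)$ is \emph{jointly} continuous. I would handle this with a local flowbox argument. Given $x_0 \in \tM$, choose a flowbox $U \cong D \times (-\epsilon, \epsilon)$ for $\tPhi$ near $x_0$ and a flowbox $U' \cong D' \times (-\epsilon', \epsilon')$ for $\Theta$ near $R_\mathrm{av}(x_0)$. In these coordinates $R_\mathrm{av}$ takes the form $(y, s) \mapsto (\phi(y), \tau(y,s))$ with $\phi$ and $\tau$ jointly continuous and $\tau(y,\cdot)$ strictly increasing. A standard one-parameter family argument then shows the inverse $s \mapsto \tau^{-1}(y,s)$ is jointly continuous in $(y,s)$, which yields joint continuity of the reparametrization. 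Once this is in hand, $\Phi'$ is a continuous flow on $M$ for which $R_\mathrm{av}$ is a flow-equivariant (still monotone) comparison map, as required.
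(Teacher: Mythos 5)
Your proposal is correct and follows essentially the same route as the paper: the paper defines $(\tPhi')^t(x)$ as the unique point on the $\tPhi$-orbit with $R_\mathrm{av}((\tPhi')^t(x)) = \Theta^t(R_\mathrm{av}(x))$ and simply asserts this gives the desired reparametrization. You are just supplying the verifications (surjectivity of $R_\mathrm{av}$ on each flowline, equivariance, joint continuity) that the paper leaves implicit, and these check out.
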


In the sequel we will use $R$ to denote a flow-equivariant comparison map.

\section{Strong decompositions} \label{section:strongdecompositions}
For this section let us fix a quasigeodesic flow $\Phi$ with a flow-equivariant comparison map $R\colon \tM \to T^1 \tM \simeq T^1\bbH^3$ as furnished by \Cref{prop:comparisonmapexists}. The geodesic flow on $T^1\tM$ is an Anosov flow. We will pull back its strong stable and unstable foliations to construct ``strong positive and negative decompositions'' for $\tPhi$ that behave like the strong stable and unstable foliations of a pseudo-Anosov flow.

\subsection{Weak omnileaves and the geodesic flow}
Let $\tTheta$ be the geodesic flow on $T^1\tM$. This is an Anosov flow whose weak stable/unstable foliations will be denoted $\tcW^{\su}$ and whose strong stable/unstable foliations will be denoted $\tcW^{\ssuu}$. Let
\[ \epsilon^\pm: T^1\tM \to S^2_\infty \]
be the maps that take each unit tangent vector $v$ to the forwards/backwards endpoint of the unique oriented geodesic to which it is tangent. We will say that a vector $v$ \emph{points towards} $\epsilon^+(v)$ and \emph{away from} $\epsilon^-(v)$.

Each weak stable/unstable leaf of $\tTheta$ consists of all vectors pointing toward/away from a single point. That is,
\[ \tcW^{\su} = \{ (\epsilon^\pm)^{-1}(z) \mid z \in S^2_\infty \}. \]
By definition, the comparison map $R$ takes each flowline to the oriented geodesic between its endpoints; equivalently, $\epsilon^+(R(x)) = E^+(x)$ and $\epsilon^-(R(x)) = E^-(x)$ for all $x \in \tM$. Thus we can think of the weak positive and negative omnileaf decompositions of $\tPhi$ as the pullbacks of the weak stable and unstable decompositions of the geodesic flow $\tTheta$. That is,
\[ \tOm^\pm = \{R^{-1}(W) \mid W \in \tcW^\su \}\]

\subsection{Strong omnileaves}
Given a horosphere $H \subset \bbH^3$ centered at a point $z \in S^2_\infty$ the set of all unit vectors in $T^1 \bbH^3$ that are normal to $H$ and point toward $z$ is the corresponding \emph{stable horosphere}. The set of all unit vectors normal to $H$ that point away from $z$ is the corresponding \emph{unstable horosphere}.

The stable and unstable horospheres are the strong stable and unstable leaves of the geodesic flow $\tTheta$. That is,
\[ \tcW^\ssuu = \{ \text{stable/unstable horospheres in } T^1 \bbH^3 \}. \]

\begin{definition}
    Let $\Phi$ be a quasigeodesic flow on a closed hyperbolic $3$-manifold $M$ and let $R\colon \tM \to T^1 \tM \simeq \bbH^3$ be a flow-equivariant comparison map.
    
    Given a stable/unstable horosphere $H^\pm \in \tcW^\ssuu$, the preimage $R^{-1}(H^\pm) \subset \tM$ is called a \emph{strong positive/negative omnileaf}. If $H^\pm$ is centered at $z \in S^2_\infty$, we say that this strong positive/negative omnileaf is \emph{rooted at $z$}.
    
    The collections of all strong positive and strong negative omnileaves will be denoted by
    \[ \tOm^\ppmm := \{ R^{-1}(H^+) \mid H^\pm \in \tcW^\ssuu \}.\qedhere\]
\end{definition}

Since $R$ is $\pi_1(M)$-equivariant, $\tOm^\ppmm$ are $\pi_1(M)$-invariant decompositions of $\tM$. They are also flow-invariant:

\begin{lemma}
    Let $k \in \tOm^\ppmm$ and $t \in \bbR$. Then $k \cdot t \in \tOm^\ppmm$. 
\end{lemma}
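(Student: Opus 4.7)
The plan is to reduce the claimed flow-invariance of $\tOm^{\ppmm}$ on $\tM$ to the flow-invariance of the strong stable and unstable foliations $\tcW^{\ssuu}$ of the geodesic flow on $T^1\tM$, using the flow-equivariance of the comparison map $R$ as the translation tool between the two sides.

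First, I would verify that $\tcW^{\ssuu}$ is preserved setwise by the geodesic flow $\tTheta$. This is the standard fact that the strong stable/unstable foliations of an Anosov flow are flow-invariant, but it is also immediate from the explicit description just given: a stable horosphere centered at $z \in S^2_\infty$ consists of the unit vectors based on some horosphere $H \subset \bbH^3$ at $z$ pointing toward $z$; flowing for time $t$ moves each such vector along a geodesic ray toward $z$ to produce a vector based on a new horosphere at $z$ (the level set at signed distance $t$ of the original), still pointing toward $z$, hence another stable horosphere. The unstable case is symmetric.

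Given $k = R^{-1}(H^\pm) \in \tOm^{\ppmm}$ for some $H^\pm \in \tcW^{\ssuu}$ and given $t \in \bbR$, I would then show $k \cdot t = R^{-1}(H^\pm \cdot t)$. This is a one-line unwinding of definitions using flow-equivariance: a point $y$ lies in $k \cdot t$ iff $y \cdot (-t) \in k$ iff $R(y \cdot (-t)) \in H^\pm$ iff $R(y) \cdot (-t) \in H^\pm$ iff $R(y) \in H^\pm \cdot t$. By the first step, $H^\pm \cdot t$ is itself an element of $\tcW^{\ssuu}$, so $k \cdot t \in \tOm^{\ppmm}$ as desired.

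No part of this is the ``main obstacle''; the point of constructing $R$ to be flow-equivariant in \Cref{subsection:monotoneComparison} is precisely to make pullback-invariance statements of this form automatic. The only substantive input is the flow-invariance of $\tcW^{\ssuu}$, which is classical.
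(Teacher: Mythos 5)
Your proposal is correct and follows essentially the same route as the paper: both use flow-equivariance of $R$ to identify $k \cdot t$ with $R^{-1}(H^\pm \cdot t)$, together with the (implicitly used in the paper) fact that the geodesic flow carries stable/unstable horospheres to stable/unstable horospheres. Your chain of equivalences is just a compressed version of the paper's two-inclusion argument.
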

\begin{proof}
    Suppose $k \in \tOm^\pp$; the negative case is similar.
    
    Let $H^+$ be the stable horosphere corresponding to $k$. Under the geodesic flow, this flows after time $t$ to another horosphere $H^+ \cdot t$; let $k'$ be the corresponding strong positive omnileaf. We will show that $k \cdot t = k'$.
    
    If $x \in k$, then $R(x) \in H^+$, and since $R$ is flow-equivariant, $R(x \cdot t) = R(x) \cdot t\in H^+ \cdot t$, which implies that $x \cdot t \in k'$. Hence $k \cdot t \subset k'$.
    
    If $x' \in k'$, then $R(x') \in H^+ \cdot t$, so by flow-equivariance $R(x' \cdot (-t)) = R(x') \cdot (-t) \in H^+$, which implies that $x' \cdot (-t) \in k$. Then $x' = x \cdot t$ for $x := x' \cdot (-t) \in k$, and hence $k \cdot t \supset k'$. Together, $k \cdot t = k$ as desired. 
\end{proof}

Note that the decompositions into strong omnileaves depend on the choice of $R$.

A theme of this paper is that facts about horospheres in $T^1 \bbH^3$ can be translated, via the map $R$, to facts about strong omnileaves in $\tM$. Here is one such fact about horospheres that we will use going forward. 

\begin{lemma}\label{lemma:horospherefacts}
    Let $H^-$ be an unstable horosphere in $T^1\bbH^3$ centered at $z^-\in S^2_\infty$. For each natural number $i$, let $H_i^+$ be a positive horosphere centered at $z_i^+\in S^2_\infty$ that intersects $H^-$. 
    Suppose $z^+\in S^2_\infty-\{z^-\}$, and let $H^+$ be the unique stable horosphere centered at $z^+$ intersecting $H^-$.

    Suppose $z_i^+\to z^+$, and that $(a_i^-)$ is a sequence of points in $S^2-z^+$ converging to a point $a^-\in S^2_\infty-\{z^+\}$, such that $a_i^-\ne z_i^+$ for all $i$. 
    Let $x_i$ be the unique vector in $H_i^+$ with $\epsilon^-(x_i)=a_i^-$, and $x$ the unique vector in $H^+$ with $\epsilon^-(x)=a^-$. Then $(x_i)$ converges to $x$.
\end{lemma}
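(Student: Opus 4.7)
The plan is to reduce the statement to the continuous dependence of geodesics, horospheres, and their transverse intersections in $\bbH^3$ on their defining data. The vector $x_i$ is determined by its basepoint $\pi(x_i) \in \bbH^3$ together with its direction (tangent to the oriented geodesic from $a_i^-$ to $z_i^+$), so it suffices to show that both of these converge to the corresponding data for $x$. Throughout, I write $\mathrm{geo}(p, q)$ for the hyperbolic geodesic between two distinct points of $\bbH^3 \cup S^2_\infty$.

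First I would pin down the horospheres $\pi(H_i^+)$ and $\pi(H^+)$ explicitly. Since $H_i^+$ is a stable horosphere at $z_i^+$ intersecting the unstable horosphere $H^-$, the unique vector of $H_i^+ \cap H^-$ is the unit tangent at $q_i := \mathrm{geo}(z^-, z_i^+) \cap \pi(H^-)$ pointing toward $z_i^+$, so $\pi(H_i^+)$ is the unique horosphere at $z_i^+$ through $q_i$. Analogously $\pi(H^+)$ is the horosphere at $z^+$ through $q := \mathrm{geo}(z^-, z^+) \cap \pi(H^-)$. Since $z^+ \neq z^-$ and $z_i^+ \to z^+$, the geodesics $\mathrm{geo}(z^-, z_i^+)$ converge to $\mathrm{geo}(z^-, z^+)$, and since each of these crosses $\pi(H^-)$ orthogonally at a single point, $q_i \to q$. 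It follows that $\pi(H_i^+) \to \pi(H^+)$ as horospheres parameterized by (center, passing point).

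Next, since $a_i^- \to a^- \neq z^+$ and $z_i^+ \to z^+$, the geodesics $\mathrm{geo}(a_i^-, z_i^+)$ converge to $\mathrm{geo}(a^-, z^+)$. The basepoint $\pi(x_i)$ is the unique point at which $\mathrm{geo}(a_i^-, z_i^+)$ crosses $\pi(H_i^+)$, which it does orthogonally because the geodesic ends at the center of the horosphere. Combining the convergence of these geodesics with that of the horospheres from the previous step, continuity of transverse intersection gives $\pi(x_i) \to \pi(x)$. The unit tangent vector at $\pi(x_i)$ along $\mathrm{geo}(a_i^-, z_i^+)$ (oriented from $a_i^-$ to $z_i^+$) depends continuously on its basepoint and the two endpoints, so $x_i \to x$ in $T^1\bbH^3$.

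The main thing to verify carefully is that all of the geometric constructions invoked---the geodesic through a pair of distinct ideal points, the horosphere with a prescribed center passing through a prescribed interior point, and the orthogonal intersection of such a geodesic with such a horosphere---depend continuously on their parameters in the regime where the relevant nondegeneracy conditions hold. These are standard facts that can be checked directly in, say, the Poincar\'e ball model. One mildly subtle case is $a^- = z^-$, which is not excluded by the hypotheses; there one has $\pi(x) = q$, but the argument still goes through because $q \neq z^+$ keeps the limiting geodesic and intersection non-degenerate.
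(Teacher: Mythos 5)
Your proposal is correct. Note that the paper does not actually prove \Cref{lemma:horospherefacts} --- it is stated without proof as a standard fact about horospheres in $T^1\bbH^3$ --- so there is no argument to compare against; your reduction to continuity of geodesics on distinct ideal endpoint pairs, of horospheres on (center, passing point), and of the (orthogonal, hence transverse) crossing point of a geodesic with a horosphere centered at one of its endpoints is exactly the kind of verification the authors are implicitly appealing to, and your handling of the only delicate degeneration ($a^- = z^-$, where $\pi(x)=q$) is right since the only nondegeneracy needed is $a^- \neq z^+$. One point worth making explicit: the geodesic $\mathrm{geo}(z^-, z_i^+)$ and the point $q_i$ are well defined because $z_i^+ \neq z^-$, which is forced by $H_i^+ \cap H^- \neq \emptyset$ (a vector in the intersection would otherwise have equal forward and backward endpoints); this is implicit in your first step but should be said.
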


\subsection{The strong omnileaves inside a weak omnileaf}
Each strong omnileaf $k$ is contained in a corresponding weak omnileaf $\widetilde{K} \subset \tM$, which projects to a corresponding omnileaf $K \subset P$. We say that $k$ ``lies over'' $K$. If $z = E^\pm(k)$, then $\widetilde{K} = (E^\pm)^{-1}(z)$ and $K = \nu(\widetilde{K}) = (e^\pm)^{-1}(z)$.

To help remember the terminology, note that the modifiers ``strong'' and ``weak'' indicate that an omnileaf lives in $\wt M$.

The main goal of this subsection is to show that each weak omnileaf is decomposed by strong omnileaves as a product.

\begin{lemma}\label{lemma:StrongOmnileavesFlowlines}
    Let $k$ be a strong (positive or negative) omnileaf.
    \begin{enumerate}
        \item If $\widetilde{K} \supset k$ is the corresponding weak omnileaf then each flowline $\alpha \subset \widetilde{K}$ intersects $k$ in a point.
        \item The flowspace projection $\nu: \tM \to P$ restricts to a bijection $\nu|_k: k \to K$ with image the corresponding omnileaf $K$.
    \end{enumerate}
\end{lemma}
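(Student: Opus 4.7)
My strategy is to reduce both claims to the standard fact that each horosphere in $T^1\bbH^3$ meets each geodesic in its weak leaf in exactly one point, transferred via the flow-equivariant comparison map $R$ constructed in \Cref{prop:comparisonmapexists}.

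The essential ingredient is the observation that flow-equivariance forces $R$ to restrict to a bijection on each flowline of $\tPhi$. Indeed, if $\alpha = x_0\cdot \bbR$ is such a flowline, then $R(x_0\cdot t) = R(x_0)\cdot t$ for all $t\in \bbR$, so $R|_\alpha$ maps $\alpha$ bijectively onto the $\tTheta$-orbit $R(x_0)\cdot \bbR$ via the obvious identification of time parameters (and it is continuous, hence a homeomorphism). This is the mechanism that will let me transport ``intersects in exactly one point'' statements from the geodesic flow side to $\tPhi$.

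For (1), treat the positive case; the negative case is symmetric. Write $k = R^{-1}(H^+)$ for the corresponding stable horosphere $H^+$, centered at $z = E^+(k)\in S^2_\infty$, so $\widetilde K = (E^+)^{-1}(z)$ and $R(\widetilde K)$ is contained in the weak stable leaf $(\epsilon^+)^{-1}(z)$ of $\tTheta$. For any flowline $\alpha \subset \widetilde K$, the image $R(\alpha)$ is the oriented geodesic from $E^-(\alpha)$ to $z$, and this geodesic meets the stable horosphere $H^+$ in exactly one point by the classical geometry of horospheres in $\bbH^3$. Bijectivity of $R|_\alpha$ then yields
\[
\alpha \cap k \;=\; (R|_\alpha)^{-1}\bigl(R(\alpha)\cap H^+\bigr),
\]
which is a single point.

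For (2), observe that $E^\pm = e^\pm\circ \nu$, so $\widetilde K = \nu^{-1}(K)$. Surjectivity of $\nu|_k\colon k \to K$: given $p \in K$, the flowline $\flow{p}\subset \widetilde K$ meets $k$ in a unique point $x$ by (1), and $\nu(x) = p$. Injectivity: if $x, y \in k$ lie on a common flowline $\alpha$, then both lie in $\alpha\cap k$, which is a single point by (1), hence $x = y$. There is no substantial obstacle in any of this; everything comes down to the bijectivity of $R|_\alpha$ (which is immediate from flow-equivariance) together with elementary horosphere geometry.
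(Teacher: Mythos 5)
Your proof is correct and follows essentially the same route as the paper: identify $R(\alpha)$ with the lifted geodesic ending at $z$, use that such a geodesic meets each stable horosphere centered at $z$ exactly once, and use flow-equivariance to see $R|_\alpha$ is a bijection onto that geodesic, so $\alpha\cap k = (R|_\alpha)^{-1}(R(\alpha)\cap H^+)$ is a single point. The only cosmetic difference is that you spell out the deduction of (2) from (1), whereas the paper simply notes the two statements are equivalent.
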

\begin{proof}
    The two statements are equivalent; we will prove the first for a strong positive omnileaf $k$.

    Let $\alpha \subset \widetilde{K}$ be a flowline. The comparison map takes $\alpha$ to the lifted geodesic from $w = E^-(\alpha)$ to $z = E^+(\alpha) = E^+(k)$. Each lifted geodesic in $T^1\tM$ that ends at $z$ intersects each stable horosphere centered at $z$ in a single point, including the one $H^+$ that defines $k$. Since $R$ is flow-equivariant the restriction $R|_\alpha$ is a homeomorphism onto this oriented geodesic, so there is a unique point $x \in \alpha$ with $R(x) \in H^+$, and this is the unique point of intersection $x = \alpha \cap k$.
\end{proof}

We will upgrade item (2) of \Cref{lemma:StrongOmnileavesFlowlines} in \Cref{lemma:uniqueStrongOmnileafIntersection}. We will use the following:

\begin{lemma}\label{lemma:SingleOmnileafConvergence}
    Let $k \in \tOm^\ppmm$ be a strong positive/negative omnileaf. If $x_1, x_2, \dots$ is a sequence of points in $h$ such that $\nu(x_i)$ converges in $P$, then $(x_i)$ converges in $\tM$. In particular, if $\lim \nu(x_i) = p$ then $\lim x_i = \flow{p}\cap k$.
\end{lemma}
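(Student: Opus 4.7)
The plan is to handle the strong positive case; the negative case is symmetric. I write $k = R^{-1}(H^+)$ for a stable horosphere $H^+\subset T^1\tM$ centered at some $z^+\in S^2_\infty$. The corresponding omnileaf in $P$ is $K = (e^+)^{-1}(z^+)$, which is closed because $e^+$ is continuous. Since each $p_i := \nu(x_i)$ lies in $K$ and $p_i\to p$, we have $p\in K$, so \Cref{lemma:StrongOmnileavesFlowlines} provides a unique intersection point $x := \flow{p}\cap k$. The lemma thus reduces to proving $x_i\to x$.

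Since $\Phi$ is product-covered, I fix a homeomorphism $\tM\cong P\times\bbR$ conjugating $\tPhi$ to the vertical flow, and let $\sigma\colon P\to \tM$ be the associated continuous section $p\mapsto(p,0)$ of $\nu$. Writing $x_i = \sigma(p_i)\cdot t_i$ and $x = \sigma(p)\cdot t$, it suffices to prove $t_i\to t$. Flow-equivariance of $R$ gives $R(x_i) = R(\sigma(p_i))\cdot t_i$, where the right-hand flow is the geodesic flow. Because each orbit $\flow{p_i}$ has forward endpoint $z^+$, the vector $R(\sigma(p_i))$ lies in the weak stable leaf $W := (\epsilon^+)^{-1}(z^+)$, and likewise $R(\sigma(p))\in W$.

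On $W$, each geodesic orbit meets $H^+$ in exactly one point, so there is a well-defined function $\tau\colon W\to \bbR$ sending $v$ to the unique time with $v\cdot\tau(v)\in H^+$. This $\tau$ is continuous because the horospheres centered at $z^+$ foliate $W$ transversely to the geodesic flow (a hyperbolic-geometry fact in the spirit of \Cref{lemma:horospherefacts}), and this is the one step where the explicit horosphere structure enters. Since $x_i\in k$ forces $R(x_i)\in H^+$, we have $t_i = \tau(R(\sigma(p_i)))$, and likewise $t = \tau(R(\sigma(p)))$. Continuity of $R\circ\sigma$ yields $R(\sigma(p_i))\to R(\sigma(p))$, hence $t_i\to t$, and finally $x_i = \sigma(p_i)\cdot t_i\to \sigma(p)\cdot t = x$ by joint continuity of $\tPhi$. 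The main item requiring care is the continuity of $\tau$; the rest is soft topology combined with continuity and flow-equivariance of the comparison map.
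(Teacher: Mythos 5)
Your proof is correct, but it runs along a different track than the paper's. The paper's argument works entirely inside $T^1\tM$: it identifies $R(x_i)$ as the unique vector in the fixed horosphere with prescribed second endpoint, invokes \Cref{lemma:horospherefacts} to get $R(x_i)\to R(x)$, and then pins down $\lim x_i$ by an accumulation-point argument --- any accumulation point must lie in $\flow{p}$ and map to $R(x)$, hence equal $x$, with eventual compactness supplied by the bounded displacement of $R$ (\Cref{lemma:ComparisonBDandUC}). You instead exploit the product-covered structure $\tM\simeq P\times\bbR$ to split convergence into the flowspace coordinate (given by hypothesis) and a time coordinate, and you compute the time coordinate as $\tau\circ R\circ\sigma$, where $\tau$ is the horosphere hitting-time function on the weak stable leaf $W=(\epsilon^+)^{-1}(z^+)$. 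This buys you a cleaner endgame: no accumulation-point or compactness step is needed, and you do not use \Cref{lemma:ComparisonBDandUC} or \Cref{lemma:horospherefacts} at all; you also correctly note (which the paper leaves implicit) that $p$ lies in the omnileaf $K$, so that $\flow{p}\cap k$ makes sense. The price is that your one nontrivial input is the continuity of $\tau$; ``the horospheres foliate $W$ transversely to the geodesic flow'' is a bit loose as stated, but it is easily made airtight by observing that $\tau(v)=b_{z^+}(\pi(v))-c$, where $b_{z^+}$ is a Busemann function at $z^+$, $\pi$ the footpoint projection, and $c$ the level of the horosphere underlying $H^+$; since Busemann functions are continuous, so is $\tau$. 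With that one sentence added, your argument is complete and of comparable rigor to the paper, which likewise states \Cref{lemma:horospherefacts} without proof.
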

\begin{proof}
    Suppose $k\in \tOm^\mm$; the other case is symmetric.
    %\scom{Rewrite to use the horizontal escape lemma}
    Let $x_1, x_2, \dots \in k$ such that $p_i := \nu(x_i)$ converges to $p\in P$, and let $x = \flow{p} \cap k$.
    
    \textbf{Claim:} $R(x_i)$ converges to $R(x)$. 
    
    By definition, $k = R^{-1}(H^-)$ for some unstable horosphere $H^-$ centered at $E^-(h)=e^-(p)$.
    For each $i$, $R(x_i)$ is the unique vector in $H^-$ with $\epsilon^+(R(x_i)) = E^+(x_i) = e^+(p_i)$. Note that $e^+(p_i)$ converges to $e^+(p)$, and $e^+(p) \neq e^-(p) $ because endpoints of flowlines are distinct. Letting $H^+$ be the stable horosphere centered at $E^+(x)=e^+(p)$ that intersects $H^-$,
    we see that $R(x_i)$ converges to the vector $r$ in $H^+$ with $\epsilon^-(r) = e^-(p)$, which is $R(x)$ as claimed. (We've used a special case of \Cref{lemma:horospherefacts} where $H_i^+$ is the stable horosphere centered at $e^+(p_i)$ and $a_i=e^-(p_i)=e^-(p)$ for all $i$).

    \textbf{Claim:} $\lim x_i = x$. 
    
    Since $\lim p_i=p$, the $x_i$ can only accumulate on points in $\flow{p}$. Since in addition $R(x_i)$ converges to $R(x)$, $(x_i)$ can only accumulate on points in $\flow{p}$ that $R$ maps to $R(x)$. Since $x$ is the only such point, it is the only possible accumulation point of $(x_i)$. Finally, since $R$ moves points a bounded distance (\Cref{lemma:ComparisonBDandUC}) and $(R(x_i))$ converges, $(x_i)$ is eventually contained in a compact set. We conclude that $x = \lim x_i$.  
\end{proof}

\begin{proposition}\label{prop:StrongOmnileavesProject}
    Let $k$ be a strong positive/negative omnileaf. Then $\nu|_k$ is a homeomorphism onto its image, the corresponding omnileaf $K\subset P$.
\end{proposition}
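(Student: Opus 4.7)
The plan is simply to combine the two preceding lemmas. \Cref{lemma:StrongOmnileavesFlowlines}(2) already states that $\nu|_k \colon k \to K$ is a bijection, and continuity of $\nu|_k$ is immediate from the continuity of the flowspace projection $\nu \colon \tM \to P$. So the only thing to check is that $(\nu|_k)^{-1} \colon K \to k$ is continuous.

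For this I would argue via sequential continuity, which is legitimate here because $k \subset \tM \cong \bbR^3$ and $K \subset P \cong \bbR^2$ are subspaces of metrizable spaces and are therefore first countable. Given any convergent sequence $p_i \to p$ in $K$, set $x_i := (\nu|_k)^{-1}(p_i) \in k$; then $\nu(x_i) = p_i \to p$, and \Cref{lemma:SingleOmnileafConvergence} gives directly that $x_i \to \flow{p} \cap k = (\nu|_k)^{-1}(p)$. This yields sequential continuity of the inverse and hence the homeomorphism statement.

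There is no real obstacle at this step — the proposition is essentially a formal consequence of the preceding material. The substance is in \Cref{lemma:SingleOmnileafConvergence}, which in turn rests on the horosphere continuity fact \Cref{lemma:horospherefacts}, together with the distinctness of flowline endpoints and the boundedness of $d(x,\pi(R(x)))$ from \Cref{lemma:ComparisonBDandUC}. So the only thing worth double-checking while writing up is that the hypotheses of \Cref{lemma:SingleOmnileafConvergence} are correctly matched (namely that the $x_i$ lie in the \emph{same} strong omnileaf $k$ and that $\nu(x_i)$ converges in $P$), both of which are automatic in the argument above.
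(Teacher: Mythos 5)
Your proposal is correct and follows essentially the same route as the paper: the bijection comes from \Cref{lemma:StrongOmnileavesFlowlines}, and continuity of the inverse section $p \mapsto \flow{p}\cap k$ is exactly what \Cref{lemma:SingleOmnileafConvergence} provides, applied to a convergent sequence in $K$. Your extra remark justifying the sequential argument via metrizability is a fine (if implicit in the paper) point, but otherwise the two proofs coincide.
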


\begin{proof}
    By \Cref{lemma:StrongOmnileavesFlowlines}, $\nu|_k: k \to K$ is a bijection. Its inverse is the section $\sigma: K \to k$ defined by $\sigma(p) = \flow{p} \cap k$. Then \Cref{lemma:SingleOmnileafConvergence} gives that for any sequence of points $p_1, p_2, \cdots \in K$ converging to a point $p$, the points $x_i = \sigma(p_i)$ converge to $\flow{p} \cap k = \sigma(p)$. Hence $\sigma$ is continuous, so $\nu|_k$ is a homeomorphism.
\end{proof}

\begin{lemma}\label{lemma:uniqueStrongOmnileafIntersection}
    Let $K, L\subset P$ be positive and negative omnileaves, respectively,  with $K \cap L \neq \emptyset$. Then each strong positive omnileaf over $K$ intersects a unique strong negative omnileaf over $L$.
\end{lemma}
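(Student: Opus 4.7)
The plan is to translate the statement, via the flow-equivariant comparison map $R$, into the standard fact that a stable horosphere in $T^1\bbH^3$ centered at $z \in S^2_\infty$ meets exactly one unstable horosphere centered at any other given point $w \in S^2_\infty - \{z\}$. Set $z := e^+(K)$ and $w := e^-(L)$; because $K \cap L$ contains a point whose flowline has distinct endpoints, $z \neq w$. A given strong positive omnileaf over $K$ has the form $k = R^{-1}(H^+)$ for a stable horosphere $H^+$ centered at $z$, and the strong negative omnileaves over $L$ are precisely the preimages $R^{-1}(H^-)$ of unstable horospheres centered at $w$. Existence and uniqueness of an intersecting strong negative omnileaf thus reduce to existence and uniqueness of an unstable horosphere centered at $w$ that meets $H^+$ in $T^1\bbH^3$.

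For existence, I would pick any $p \in K \cap L$ and apply \Cref{lemma:StrongOmnileavesFlowlines} to the flowline $\flow{p}$ to obtain a single point $x \in \flow{p} \cap k$. Since $p \in L$, the flowline $\flow{p}$ lies in the weak negative omnileaf over $L$, so the strong negative omnileaf $l$ containing $x$ is automatically over $L$, and $x \in k \cap l$.

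For uniqueness, suppose $x_i \in k \cap l_i$, $i = 1, 2$, with $l_i = R^{-1}(H_i^-)$ for unstable horospheres $H_i^-$ centered at $w$. Then $R(x_i) \in H^+$ satisfies $\epsilon^-(R(x_i)) = E^-(x_i) = w$. Since $w \neq z$, the restriction $\epsilon^-|_{H^+}\colon H^+ \to S^2_\infty - \{z\}$ is a bijection, so $R(x_1) = R(x_2)$; this common vector lies on a unique unstable horosphere centered at $w$, forcing $H_1^- = H_2^-$ and hence $l_1 = l_2$.

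The argument is essentially a dictionary lookup: once strong omnileaves are identified with preimages of horospheres under $R$, the classical horosphere picture in $\bbH^3$ does all the geometric work, and flow-equivariance of $R$ is not even needed for this particular statement. The only subtlety worth flagging is the use of the distinct-endpoints property of quasigeodesic flowlines to ensure $z \neq w$; without it, $w$ could coincide with the center of $H^+$, and the horosphere-to-sphere map $\epsilon^-|_{H^+}$ would fail to be a bijection precisely at the relevant point.
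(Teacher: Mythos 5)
Your proof is correct and takes essentially the same route as the paper: both translate the statement through the comparison map $R$ into the fact that a stable horosphere centered at $z=e^+(K)$ and the unstable horospheres centered at $w=e^-(L)\neq z$ interact in exactly one vector along the geodesic from $w$ to $z$. The only cosmetic difference is that the paper handles existence and uniqueness at once by writing $k\cap l=R^{-1}\bigl(H^+_{R(a)}\cap H^-_{R(b)}\bigr)$ for points $a,b$ on a common flowline, whereas you split off uniqueness via the bijectivity of $\epsilon^-$ restricted to the stable horosphere.
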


\begin{proof}
For a unit vector $v$ pointing along the oriented geodesic from $e^-(L)$ to $e^+(K)$, let $H^{\pm}_v$ be the stable/unstable horosphere containing $v$.

Let $k$ be a strong positive omnileaf over $K$, and $l$ be a strong negative omnileaf over $L$. Since $K$ and $L$ intersect in $P$, we can choose $a,b\in\langle K\cap L\rangle$ such that $a\in k$ and $b\in l$. 
Then 
 \[
 k\cap l
 =R^{-1}(H^+_{R(a)})\cap R^{-1}(H^-_{R(b)})
 = R^{-1}(H^+_{R(a)}\cap H^-_{R(b)}).
 \]
The intersection $H^+_{R(a)}\cap H^-_{R(b)}$ is nonempty if and only if $R(a)=R(b)$, so the same is true for $k\cap l$ by the equation above. 
This shows that the unique negative omnileaf over $L$ intersected by $k$ is $R^{-1}(H^-_{R(a)})$.
\end{proof}

\subsection{Weak, strong, and ---; omnileaves, leaves, and sprigs}
\begin{definition}
    Let $k$ be a strong positive/negative omnileaf, and let $\widetilde{K} \supset k$ be the corresponding weak omnileaf.
    
    For each weak sprig $\widetilde{K}' \subset \widetilde{K}$, $k \cap \widetilde{K}'$ is called a \emph{strong positive/negative sprig}.
    
    For each weak leaf $\widetilde{K}'' \subset \widetilde{K}$, $k \cap \widetilde{K}''$ is called a \emph{strong positive/negative leaf}.
    
    These give decompositions of $\tM$ that we denote
    \begin{align*}
    \tSp^\ppmm &:=\{\text{strong positive/negative sprigs}\}, \text{ and}\\
    \tcL^\ppmm &:=\{\text{strong positive/negative leaves}\}.\qedhere
    \end{align*}
\end{definition}

We can also characterize strong leaves directly:

\begin{lemma}
    The strong positive/negative leaves are the connected components of strong positive/negative omnileaves.
\end{lemma}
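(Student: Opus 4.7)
The plan is to combine the homeomorphism $\nu|_k \colon k \to K$ from \Cref{prop:StrongOmnileavesProject} with the fact that weak leaves inside a weak omnileaf $\widetilde{K}$ correspond exactly, via $\nu$, to components of the omnileaf $K$ in the flowspace. I will argue for strong positive leaves; the negative case is identical.

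Fix a strong positive omnileaf $k$, let $\widetilde{K}$ be the weak positive omnileaf containing $k$, and let $K = \nu(\widetilde{K}) \in \Om^+$ be the corresponding $P$-omnileaf. By definition, the weak positive leaves contained in $\widetilde{K}$ are exactly the sets $\flow{K''}$ where $K''$ ranges over the connected components of $K$. Consequently each strong positive leaf inside $k$ has the form
\[
k \cap \widetilde{K}'' \;=\; k \cap \flow{K''}
\]
for some component $K''$ of $K$. Since $\nu|_k$ is a bijection onto $K$ (\Cref{lemma:StrongOmnileavesFlowlines}), every point of $k \cap \flow{K''}$ lies on a flowline $\flow{p}$ with $p \in K''$, and conversely $\nu|_k^{-1}(K'') = k \cap \flow{K''}$. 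Thus under the homeomorphism $\nu|_k \colon k \to K$, the strong positive leaves contained in $k$ correspond precisely to the connected components of $K$.

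It remains to observe that a homeomorphism carries components to components: the connected components of $k$ are exactly the images under $\nu|_k^{-1}$ of the connected components of $K$. Combining with the previous paragraph, the connected components of $k$ are precisely the strong positive leaves contained in $k$. Since this holds for every strong positive omnileaf, and since $\tOm^\pp$ partitions $\tM$, the strong positive leaves are exactly the connected components of the strong positive omnileaves.

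There is no substantive obstacle here; the statement is essentially a bookkeeping consequence of \Cref{prop:StrongOmnileavesProject} together with the definition of weak leaves as components of weak omnileaves.
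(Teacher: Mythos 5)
Your proof is correct and follows essentially the same route as the paper: both use the homeomorphism $\nu|_k\colon k\to K$ from \Cref{prop:StrongOmnileavesProject} to identify strong leaves in $k$ with $\nu|_k^{-1}(K'')$ for components $K''$ of $K$, and then invoke the fact that a homeomorphism induces a bijection between connected components.
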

\begin{proof}
    This follows from \Cref{prop:StrongOmnileavesProject}. Let $k$, $\wt K$, and $K$ be a strong omnileaf and corresponding weak omnileaf and omnileaf. The flowspace projection restricts to a homeomorphism $\nu|_k: k \to K$, and therefore induces a bijection between the components of $k$ and the components of $K$. 
    
    By definition, a strong leaf in $k$ is $k \cap \wt K''$ for a weak leaf  $\wt K'' \subset \wt K$, which is any set of the form $\wt K'' = \flow{K''}$ for a leaf $K'' \subset K$, which is any component of $K$. Therefore an equivalent definition of a strong leaf in $k$ is ``$k \cap \flow{K''}$ for any component $K''$ of $K$.'' Since $k \cap\flow{A} = (\nu|_k)^{-1}(A)$ for any subset $A \subset K$, an equivalent definition is ``$\nu|_k^{-1}(K'')$ for any component $K''$ of $K$.'' Since $\nu|_k$ induces a bijection between components, this is the same as ``a component of $k$.''
\end{proof}

A direct characterization of strong sprigs is less natural.

\section{Product structures}\label{section:productstructures}

    Let $\Phi$ be a quasigeodesic flow on closed hyperbolic $3$-manifold $M$. A \emph{product structure} over a subset $A \subset P$ is a homeomorphism 
    \[ \Pi: A \times \bbR \to \flow{A} \]
    such that
    \begin{itemize}
        \item $\Pi$ takes $\{p\} \times \bbR$ to $\flow{p}$ for each $p \in A$, and
        \item $\Pi$ conjugates the flow $\tPhi$ on $\flow{A}$ with the ``vertical flow'' on $A \times \bbR$, i.e. the flow defined by $(p, t) \cdot s = (p, t+s)$.
    \end{itemize}

    If $\Phi$ comes with a flow-equivariant comparison map $R$, we will say that \emph{strong positive sprigs are horizontal} in a product structure $\Pi$ over $A$ if for each positive $P$-sprig $K$, and each $y \in \bbR$, $\Pi((K \cap A) \times \{y\})$ is contained in a strong positive sprig.

Note that one could also ask for the weaker condition that strong positive \emph{leaves} are horizontal, or the stronger condition that strong positive \emph{omnileaves} are horizontal.

\begin{example}
    The suspension flow $\Phi$ of a pseudo-Anosov homeomorphism $\phi: \Sigma \to \Sigma$ has a natural product structure, defined over the whole flowspace, in which both strong positive and strong negative leaves are horizontal. This comes from identifying the flowspace with a lift $\wt \Sigma$ of the surface to the universal cover of the mapping torus, which we can identify with $\wt\Sigma\times \bbR$. Each strong positive/negative leaf lies in some fiber $\widetilde{\Sigma} \times \{t\}$, hence is horizontal.
    
    Since suspension flows have \emph{no perfect fits}, the omnileaves and leaves of $\Phi$ coincide. So, in fact, all strong positive and negative omnileaves are horizontal in this product structure. This is a very special case and we do not claim such a product structure always exists---in this paper, we will only produce product structures in which one of the strong sprig decompositions is horizontal at a time.
\end{example}

The following, which is the main result of this section, will be useful for completing the construction of the straightened flow.

\begin{theorem}\label{theorem:ProductStructure}
    Let $\Phi$ be a quasigeodesic flow on a closed hyperbolic $3$-manifold $M$ with a flow-equivariant comparison map $R$. Then there is a product structure
    \[ \Pi: P_{\lk} \times \bbR \to \tM_{\lk} \]
    over $P_\lk$ in which strong positive sprigs are horizontal.
\end{theorem}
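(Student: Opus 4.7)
The plan is to build a continuous section $\sigma\colon P \to \tM$ of the flowspace projection $\nu$ with the property that $\sigma$ sends each positive $P$-sprig into a single strong positive sprig. Given such a section, the formula $\Pi(p,t) := \sigma(p)\cdot t$ restricted to $P_\lk \times \bbR$ will give the product structure: it automatically carries each $\{p\} \times \bbR$ bijectively onto $\flow{p}$, it conjugates the vertical flow to $\tPhi$, and horizontal slices over $K \cap P_\lk$ for a positive $P$-sprig $K$ land in strong positive sprigs because the flow permutes strong positive omnileaves among themselves and fixes each weak positive sprig $\flow{K}$ setwise.

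To construct $\sigma$, fix a basepoint $o \in \tM$; for each $z \in S^2_\infty$, let $H^+(z)$ denote the stable horosphere in $T^1\tM$ centered at $z$ passing through the unit vector at $o$ aimed at $z$, yielding a continuous $S^2_\infty$-family of stable horospheres. For $p \in P$, flow-equivariance of $R$ together with the comparison-map property makes $R|_{\flow{p}}$ a homeomorphism from $\flow{p}$ onto the lifted geodesic $\gamma(p)$ from $e^-(p)$ to $e^+(p)$; since $e^+(p) \ne e^-(p)$, $\gamma(p)$ meets $H^+(e^+(p))$ in a single vector, and we define $\sigma(p) \in \flow{p}$ to be its unique $R$-preimage on $\flow{p}$. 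If $p, p'$ lie in the same positive $P$-sprig $K$, then $e^+(p) = e^+(p')$, so $\sigma(p)$ and $\sigma(p')$ share the strong positive omnileaf $R^{-1}(H^+(e^+(p)))$, and both lie in the weak sprig $\flow{K}$, placing them in a common strong positive sprig as required.

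Continuity of $\sigma$ is the only nontrivial step. The endpoint maps $e^\pm$ are continuous, the family $H^+(\cdot)$ is continuous on $S^2_\infty$, and lifted geodesics depend continuously on their endpoints, so $R(\sigma(p)) = \gamma(p) \cap H^+(e^+(p))$ depends continuously on $p$; this is a variant of \Cref{lemma:horospherefacts}. Now if $p_n \to p$, \Cref{lemma:ComparisonBDandUC} (which bounds how far $R$ displaces points) keeps $\{\sigma(p_n)\}$ in a bounded subset of $\tM$, so any subsequential limit $\sigma^\infty$ satisfies $\nu(\sigma^\infty) = p$ and $R(\sigma^\infty) = R(\sigma(p))$, and injectivity of $R|_{\flow{p}}$ forces $\sigma^\infty = \sigma(p)$.

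The map $\Pi\colon P_\lk \times \bbR \to \tM_\lk$ is then a continuous bijection intertwining the two flows, and its inverse $x \mapsto (\nu(x), t(x))$, where $t(x)$ is the unique time with $\sigma(\nu(x))\cdot t(x) = x$, is continuous; comparing with any background product structure $\tM \simeq P \times \bbR$ (available since $\Phi$ is product-covered) confirms that $\Pi$ is a homeomorphism. Horizontality of strong positive sprigs was verified in the construction of $\sigma$ and is preserved under flowing by $y$, completing the proof. The main obstacle is the continuity of $\sigma$, and it is resolved by combining the flow-equivariance of $R$ with the uniform displacement bound from \Cref{lemma:ComparisonBDandUC}.
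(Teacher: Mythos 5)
Your proof is correct, but it takes a genuinely different and more direct route than the paper. The paper reduces the theorem to finding a ``good'' section via \Cref{proposition:ProductStructureFromSection} and then builds that section combinatorially: it constructs sections over the positive-omnileaf saturation of a negative omnileaf by pushing off a chosen strong negative omnileaf (\Cref{lemma:SectionOverOmnileaves}), patches these along separation intervals (\Cref{lemma:ConcatenatingSections}, \Cref{proposition:GoodSectionsForSegments}), exhausts $\bSp^+$ by an increasing union of intervals in an order-tree fashion (\Cref{lem:ascendingunion}), and then needs a delicate final argument, using upper semicontinuity and the linking condition on $P_\lk$, to show the glued section is continuous. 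You sidestep all of the patching by making one global choice: the Busemann normalization at a basepoint $o$ gives a continuous family $z \mapsto H^+(z)$ of stable horospheres, and $\sigma(p) := (R|_{\flow{p}})^{-1}\bigl(H^+(e^+(p)) \cap R(\flow{p})\bigr)$ is a single formula defining a section over all of $P$, not just $P_\lk$. Since $e^+$ is constant on each positive $P$-sprig and a strong omnileaf meets a weak sprig in a single strong sprig by definition, sprig-horizontality is immediate -- in fact you get the stronger statement that strong positive \emph{omnileaves} are horizontal -- and your continuity argument (continuity of $R\circ\sigma$ from horosphere continuity, then the displacement bound of \Cref{lemma:ComparisonBDandUC} plus injectivity of $R$ on flowlines) is exactly the mechanism of \Cref{lemma:SingleOmnileafConvergence}, applied once globally instead of chart-by-chart. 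Two remarks: first, the theorem and its later uses never require the product structure itself to be $\pi_1(M)$-equivariant (the paper's section is not equivariant either; equivariance enters only through $R$ and the sprig decompositions), so your basepoint-dependent choice costs nothing; second, the one analytic ingredient you should state explicitly, rather than wave at, is the continuity of the map sending $(z^+, z^-)$, $z^+\neq z^-$, to the unique vector of the basepoint-normalized stable horosphere $H^+(z^+)$ with backward endpoint $z^-$ -- this is the precise variant of \Cref{lemma:horospherefacts} your argument needs, and it is of the same standard nature as the fact the paper itself uses without proof. With that spelled out, and citing \Cref{proposition:ProductStructureFromSection} for the homeomorphism claim (since $P_\lk$ is closed), your argument is a legitimate and substantially shorter proof of the theorem; what the paper's longer construction buys is an argument intrinsic to the decomposition data (and an exposition of the order-tree structure of $\bSp^+$), at the price of the continuity-of-gluing analysis that your global formula avoids.
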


We emphasize that it will be important to find a product structure in which strong positive \emph{sprigs} are horizontal, not just strong positive leaves.

The proof is technical, and the reader may wish to skip to \Cref{section:StraighteningFlow} on a first pass. The argument is similar to showing that a fiber bundle over a contractible CW complex is trivial.

\subsection{Product structures and sections}
The proof of \Cref{theorem:ProductStructure} will occupy the remainder of this section.

Note that a product structure $\Pi: A\times \bbR \to \flow{A} \subset \tM$ over a subset $A \subset P$ determines a corresponding continuous section of the flowspace projection $\nu: \tM \to P$ over $A$: the \emph{zero section} $\sigma: A \to \flow{A} \subset \tM$ defined by $\sigma(a) = \Pi(A, 0)$. Conversely, let us show that any continuous section over a closed subset determines a product structure:

\begin{proposition}\label{proposition:ProductStructureFromSection}
    Let $\Phi$ be a quasigeodesic flow on a closed hyperbolic $3$-manifold $M$.
    
    If $\sigma: A \to \tM$ is a section of the flowspace projection $\nu: \tM \to P$ defined over a closed subset $A$, then $\Pi(a, t) = \sigma(a) \cdot t$ defines a product structure
    \[ \Pi: A \times \bbR \to \flow{A}\]
    over $A$.

    If $\Phi$ is equipped with a flow-equivariant comparison map, then strong positive sprigs are horizontal under $\Pi$ if and only if $\sigma$ takes positive sprigs to strong positive sprigs.
\end{proposition}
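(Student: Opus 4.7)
The plan is to verify in turn that $\Pi$ is well-defined and continuous, that $\Pi(\{p\}\times\bbR)=\flow{p}$ for each $p\in A$, that $\Pi$ conjugates the vertical flow on $A\times\bbR$ with $\tPhi$ on $\flow{A}$, and finally that $\Pi$ is bijective with continuous inverse. The first three are immediate: $\Pi$ is the composition of $\sigma\times\id$ with the flow action, and $\Pi(a,t+s)=\sigma(a)\cdot(t+s)=\Pi(a,t)\cdot s$; meanwhile $\Pi(\{p\}\times\bbR)=\sigma(p)\cdot\bbR=\flow{p}$ because $\sigma(p)\in\flow{p}$. Surjectivity of $\Pi$ onto $\flow{A}$ holds because each $x\in\flow{a}$ has the form $\sigma(a)\cdot t$ for some $t$, and injectivity follows because distinct flowlines are disjoint and each flowline is parametrized faithfully by $\bbR$ (as $\Phi$ is nonsingular).

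The only nontrivial point is continuity of $\Pi^{-1}$. For this I would invoke product-coveredness of $\tPhi$: there is a homeomorphism $h\colon\tM\to P\times\bbR$ conjugating $\tPhi$ to the vertical flow and such that $\nu$ corresponds to projection to the first factor. Under $h$ the section $\sigma$ corresponds to the graph of a continuous function $\bar\sigma\colon A\to\bbR$, i.e.\ $h(\sigma(a))=(a,\bar\sigma(a))$, and $h\circ\Pi$ becomes the shear map $(a,t)\mapsto(a,\bar\sigma(a)+t)$, a homeomorphism of $A\times\bbR$ whose inverse is $(a,u)\mapsto(a,u-\bar\sigma(a))$. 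Composing with $h^{-1}\colon A\times\bbR\to h^{-1}(A\times\bbR)=\flow{A}$ exhibits $\Pi^{-1}$ as a continuous map, and hence $\Pi$ is the required homeomorphism. Closedness of $A$ is used only to identify the codomain $\flow{A}$ as a closed subspace of $\tM$.

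For the horizontality characterization, the key observation is that strong positive sprigs are flow-invariant in the sense that their time-$y$ images are again strong positive sprigs. Indeed, if $\wt{s}=k\cap\wt{K}'$ for a strong positive omnileaf $k$ and weak sprig $\wt{K}'=\flow{K'}$, then $\wt{s}\cdot y=(k\cdot y)\cap(\wt{K}'\cdot y)=(k\cdot y)\cap\wt{K}'$; here $k\cdot y$ is a strong positive omnileaf by the earlier flow-invariance of $\tOm^{\pp}$, and $\wt{K}'\cdot y=\wt{K}'$ because weak sprigs are unions of flowlines. Now for a positive $P$-sprig $K$ and $y\in\bbR$, $\Pi((K\cap A)\times\{y\})=\sigma(K\cap A)\cdot y$. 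If $\sigma(K\cap A)$ is contained in a strong positive sprig $\wt{s}$, then $\sigma(K\cap A)\cdot y\subset\wt{s}\cdot y$ is again contained in a strong positive sprig, proving one direction. Conversely, if $\Pi((K\cap A)\times\{y\})$ is contained in a strong positive sprig for every $y$, taking $y=0$ gives $\sigma(K\cap A)=\Pi((K\cap A)\times\{0\})$ contained in a strong positive sprig. I do not expect a substantive obstacle; the only careful step is matching conventions to deduce continuity of $\Pi^{-1}$ from product-coveredness, which the shear picture makes transparent.
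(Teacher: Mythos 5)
Your proof is correct, and it reaches the same conclusion as the paper but handles the one nontrivial step differently. The paper proves continuity of $\Pi^{-1}$ by writing $\Pi^{-1}(x)=(\nu(x),h(x))$, where $h(x)$ is the flow time from $\sigma(\nu(x))$ to $x$, and then checks continuity of $h$ by a sequential argument: using product-coveredness, flow segments between converging endpoints Hausdorff-converge, so their parametrized lengths converge. You instead use product-coveredness globally, identifying $\tM\simeq P\times\bbR$ flow-equivariantly so that $\sigma$ becomes the graph of a continuous $\bar\sigma\colon A\to\bbR$ and $\Pi$ becomes the shear $(a,t)\mapsto(a,\bar\sigma(a)+t)$, whose inverse is manifestly continuous; this is arguably cleaner and makes the continuity of the time function automatic rather than argued by hand. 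The only convention point to note is that the trivialization theorem quoted in the paper gives a conjugating homeomorphism $\tM\simeq P\times\bbR$ without asserting that $\nu$ is literally the first-factor projection; since flowlines go to vertical lines, the induced bijection of $P$ with the first factor is a homeomorphism (by the standard quotient-map fact), so one may compose to arrange this --- a cosmetic adjustment, not a gap. Your horizontality argument, via flow-invariance of strong positive sprigs (which follows from flow-invariance of strong omnileaves together with flow-invariance of weak sprigs) plus evaluation at $y=0$ for the converse, is exactly the content the paper dismisses as ``straightforward,'' so you have in fact supplied more detail there than the paper does.
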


Here, we say that $\sigma$ takes positive sprigs to strong positive sprigs if $\sigma(K \cap A)$ is contained in a strong positive sprig for each positive sprig $K \in \Sp^+$.

\begin{proof}
    It is immediate from the definition that $\Pi$ is a continuous bijection that conjugates the vertical flow to $\Phi$. To see that $\Pi$ is a product structure, it remains to show that $\Pi^{-1}$ is continuous. The claim about horizontality is straightforward.

    Note that 
\[
\Pi^{-1}(x)=(\nu(x),h(x)),
\]
where $h(x)$ is the time it takes to flow from $\sigma(\nu(x))$ to $x$. Hence it suffices to show that $h\colon \flow{A}\to \bbR$ is continuous. 

Suppose $x_1,x_2,\dots$ is a sequence of points in $\flow{A}$ converging to $x\in \flow{A}$. Let $y_i=\sigma(\nu(x_i))$ for all $i$ and $\sigma(\nu(x))=y$. Since $\nu$ and $\sigma$ are continuous, $\lim y_i=y$. Since $x_i\to x$ and $y_i\to y$, and the foliation of $\tM$ by flowlines is topologically conjugate to the foliation of $\bbR^3$ by vertical lines, the flow segments between $y_i$ and $x_i$ Hausdorff converge to the flow segment between $y$ and $x$. It follows that the parametrized lengths of these flow segments converge to $h(x)$, i.e. $\lim h(x_i)=h(x)$. This shows $h$ is continuous.
\end{proof}

\begin{example}
Let $K$ be a positive or negative omnileaf, let $k$ be any strong omnileaf over $K$, and let $\sigma\colon K\to \flow{K}$ be the section $\sigma(p)=\flow{p}\cap k$. Note that $\sigma$ is continuous by \Cref{lemma:SingleOmnileafConvergence}. Then \Cref{proposition:ProductStructureFromSection} shows that the flow on the corresponding weak leaf $\wt K$ is conjugate to the vertical flow on $K \times \bbR$ in such a way that strong omnileaves correspond to horizontal sets $K \times \{h\}$. 
\end{example}

For the remainder of this section, let us now fix a quasigeodesic flow $\Phi$ on a closed hyperbolic $3$-manifold $M$ with a flow-equivariant comparison map $R$. 
We will find a section $\sigma: P_\lk \to \tM_\lk$ taking positive sprigs into strong positive sprigs. Then \Cref{proposition:ProductStructureFromSection} will produce a product structure in which strong positive leaves are horizontal, proving \Cref{theorem:ProductStructure}.

\subsection{Saturating omnileaves}

The basic building block will be the ability to build appropriate sections over a set obtained by saturating a subset of a negative sprig by positive sprigs. The following lemma is a bit more general than what we need.

\begin{lemma}\label{lemma:SectionOverOmnileaves}
    Let $L$ be a negative omnileaf and let $A$ be the saturation of $L$ by positive omnileaves (that is, $A=\Om^+(L))$. Then there is a section over $A$ that takes each positive omnileaf to a strong positive omnileaf.
\end{lemma}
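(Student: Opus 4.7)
The plan is to use the comparison map $R$ to transport the problem to $T^1\tM$, where horospheres provide a canonical way to pick out a distinguished strong positive omnileaf over each positive omnileaf in $A$.

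First I would choose any strong negative omnileaf $l$ over $L$ (pick any $x_0 \in \flow{L}$ and take the strong negative omnileaf containing it) and let $H^- := R(l)$, an unstable horosphere centered at $z^-_L := e^-(L)$. For each $p \in A$ with $K(p) := \Om^+(p)$, set $z^+(p) := e^+(p)$. Since $K(p) \cap L \neq \emptyset$ and flowlines have distinct endpoints, $z^+(p) \neq z^-_L$, so the hyperbolic geodesic from $z^-_L$ to $z^+(p)$ meets $H^-$ in a unique vector $v(p)$. I would then define $k(p) := R^{-1}(H^+(p))$, where $H^+(p)$ is the unique stable horosphere centered at $z^+(p)$ passing through $v(p)$. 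This is a strong positive omnileaf over $K(p)$, and it depends only on $K(p)$ (via $z^+(p)$), not on $p$. By \Cref{lemma:StrongOmnileavesFlowlines}, $\flow{p}$ meets $k(p)$ in exactly one point, which I would take to be $\sigma(p)$. By construction $\sigma$ is a section of $\nu$ sending each positive omnileaf in $A$ into a single strong positive omnileaf; what remains is continuity.

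To prove continuity at $p \in A$, let $p_i \to p$. Continuity of $e^\pm$ gives $z^+(p_i) \to z^+(p)$ and $e^-(p_i) \to e^-(p)$, and since $z^-_L$ and $H^-$ are fixed, the points $v(p_i) \in H^-$ converge to $v(p)$ and the stable horospheres $H^+(p_i)$ converge to $H^+(p)$. The vector $R(\sigma(p))$ is characterized as the unique vector on $H^+(p)$ with $\epsilon^-$-image $e^-(p)$, since $\sigma(p) \in k(p) \cap \flow{p}$. So \Cref{lemma:horospherefacts} applies to give $R(\sigma(p_i)) \to R(\sigma(p))$. Because $R$ moves points a bounded distance (\Cref{lemma:ComparisonBDandUC}), $(\sigma(p_i))$ is bounded in $\tM$, and any accumulation point $y$ must satisfy both $\nu(y) = p$ (so $y \in \flow{p}$) and $R(y) = R(\sigma(p))$. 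Since $R$ is injective on each flowline (by flow-equivariance), $y = \sigma(p)$, hence $\sigma(p_i) \to \sigma(p)$.

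The main obstacle I anticipate is ensuring the distinguished strong positive omnileaf over a given $K$ is well-defined: $K \cap L$ need not be a single orbit as in the pseudo-Anosov case, and a priori different points of $K \cap L$ might select different strong positive omnileaves over $K$. The horosphere construction sidesteps this entirely by defining $k(p)$ directly from $(H^-, z^+(p))$ rather than from any chosen intersection point. Once that is arranged, continuity becomes a straightforward transfer of the continuity of horosphere geometry at infinity (\Cref{lemma:horospherefacts}) back to $\tM$ through the bounded-distance map $R$.
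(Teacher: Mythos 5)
Your proposal is correct and follows essentially the same route as the paper: the paper also fixes a strong negative omnileaf $l$ over $L$ and takes, over each positive omnileaf, the unique strong positive omnileaf meeting $l$ (your horosphere $H^+(p)$ through $v(p)\in R(l)$ is exactly that omnileaf, by the characterization of intersections via \Cref{lemma:uniqueStrongOmnileafIntersection}), then defines $\sigma(p)=\flow{p}\cap k(p)$. The continuity argument via \Cref{lemma:horospherefacts}, bounded displacement of $R$ (\Cref{lemma:ComparisonBDandUC}), and injectivity of $R$ on flowlines matches the paper's proof.
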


\begin{proof}
    Pick a strong negative omnileaf $l$ in $\flow{L}$, and let $X=\Om^\pp(l)$ be the saturation of $l$ by strong \emph{positive} omnileaves. Note that $\nu(X) = A$, and in fact $X$ intersects each flowline over $A$ in exactly one point. This is because for each positive omnileaf in $A$, there is exactly one strong positive omnileaf in $X$ (\Cref{lemma:uniqueStrongOmnileafIntersection}), and $\nu$ identifies the two bijectively (\Cref{lemma:StrongOmnileavesFlowlines}).
    Thus we can define a function 
    \begin{align*}
        \sigma: A &\to \flow{A}\\
         a&\mapsto \flow{a} \cap X,
    \end{align*}
    which takes each positive omnileaf in $A$ to a strong positive omnileaf. 
    To complete the lemma it remains to show that $\sigma$ is continuous. The argument is an extension of the one in \Cref{lemma:SingleOmnileafConvergence}, which implies that $\sigma$ is continuous over $L$.

    Let $a_1, a_2, \cdots \in A$ be a sequence of points that converge to a point $a  \in A$. For each $i \ge1$,   let $x_i = \sigma(a_i)$ and let $x = \sigma(a)$. We must show that $\lim x_i = x$.

    \textbf{Claim 1:} $R(x_i)$ converges to $R(x)$.

    Let $H^-_l$ be the unstable horosphere corresponding to the strong negative omnileaf $l = R^{-1}(H_l^-)$, which is centered at $z_l := e^-(L)$. For each $i$, let $H_i^+$ be the stable horosphere containing $R(x_i)$, and note that $H_i^+$ is centered at $e^+(a_i)$. Since by construction the strong positive omnileaf $R^{-1}(H^+_i)$ through $x_i$ intersects $l$, each $H^+_i$ intersects $H^-_l$. Similarly, let $H^+$ be the stable horosphere containing $R(x)$, which is centered at $e^+(a)$ and also intersects $H^-_l$.

    Since $a_i \to a$, it follows that $e^+(a_i) \to e^+(a)$. 
    Notice as well that $\epsilon^-(R(x_i)) = e^-(a_i) \to e^-(a) \neq e^+(a)$. Hence \Cref{lemma:horospherefacts} implies that $R(x_i)$ converges to the vector $r$ in $H^+$ with $\epsilon^-(r) = e^-(a)$, which is $R(x)$ as desired.

    \textbf{Claim 2:} $x_i$ converges to $x$.

    Since $R(x_i)$ converges to $R(x)$, and $R$ moves points a bounded distance (by \Cref{lemma:ComparisonBDandUC}), the $x_i$ are eventually contained in a compact set. Hence they must accumulate on a nontrivial subset of $R^{-1}(R(x))$. Since $a_i \to a$, the $x_i$ can only accumulate on $\flow{a}$. There is only one point in $\flow{a}$ that maps to $R(x)$, namely $x$. Hence $\lim x_i = x$. 
\end{proof}

\subsection{Finding horizontal product structures} \label{subsection:findingproductstructures}
For brevity, let us say that a section $\sigma: A \to \tM$ over a closed set $A \subset P$ is \emph{good} if for each positive $P$-sprig $K$, $\sigma(K)$ is contained in a strong positive sprig. To complete the proof of \Cref{theorem:ProductStructure} we need to find a good section over $P_\lk$.

We will build a good section over $P_\lk$ inductively using the structure of the decomposition space $\bSp^+$ of positive sprigs, and in particular the structure of separation intervals. 

We will need to be mindful of a subtlety here:  $\bSp^+$ is a decomposition of $\bP$, but our sections will be defined on subsets of $P$.
Given a subset $\cA \subset \bSp^+$, we will use the phrase \emph{a good section over $\cA$} to refer to a good section $|\cA|\cap P\to \tM$.

The construction has an inductive flavor. The basic piece is the following:

\begin{lemma}\label{lemma:SectionForCrossable} 
    Let $\cA \subset \bSp^+$, and suppose that there is some negative $P$-sprig that intersects every element of $\cA$. Then there is a good section over $\cA$.
\end{lemma}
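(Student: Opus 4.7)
The plan is to reduce the lemma to the already-established \Cref{lemma:SectionOverOmnileaves}, using the observation that a good section built at the level of omnileaves automatically refines to one at the level of sprigs, because sprigs are components of omnileaves and sections are continuous.

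First I would enlarge $L$ to the negative $P$-omnileaf $L^\flat$ containing it, and let $A^\flat := \Om^+(L^\flat) \subset P$ be its saturation by positive omnileaves. By \Cref{lemma:SectionOverOmnileaves}, there is a section $\sigma^\flat \colon A^\flat \to \tM$ such that for every positive $P$-omnileaf $\widehat K \subset A^\flat$, the image $\sigma^\flat(\widehat K)$ lies in a single strong positive omnileaf $k(\widehat K) \in \tOm^\pp$. Next I would check that $|\cA| \cap P \subset A^\flat$: for each $\bK \in \cA \subset \bSp^+$, the $P$-sprig $\bK \cap P$ meets $L$ by hypothesis, hence meets $L^\flat$; so the positive $P$-omnileaf $\widehat K$ containing $\bK \cap P$ belongs to $\Om^+(L^\flat)$, which gives $\bK \cap P \subset A^\flat$. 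Define $\sigma := \sigma^\flat\big|_{|\cA|\cap P}$; this is continuous and is a section of $\nu$.

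It remains to verify that $\sigma$ is good, i.e.\ takes each positive $P$-sprig $K \in \Sp^+$ of the form $K = \bK \cap P$ with $\bK \in \cA$ into a single strong positive sprig. By construction $\sigma(K) \subset k(\widehat K)$, a strong positive omnileaf. Since $\sigma$ is a section of $\nu$, also $\sigma(K) \subset \flow{K}$, which is the weak positive sprig corresponding to $K$. Therefore
\[
\sigma(K) \;\subset\; k(\widehat K) \cap \flow{K},
\]
and the right-hand side is by definition a strong positive sprig in $\tSp^\pp$. Hence $\sigma$ is a good section over $\cA$.

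The only subtle point is the indexing bookkeeping: elements of $\cA$ are $\bP$-sprigs but our section lives on $P$, and the good-section condition is stated in terms of $P$-sprigs. This is handled by the uniform relation $\bK \leftrightarrow \bK \cap P$ between components of $\bOm^+$ and of $\Om^+$, and there is no genuine analytic obstacle once \Cref{lemma:SectionOverOmnileaves} is in hand; the real work was done there.
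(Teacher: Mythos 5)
Your argument is correct and is essentially the paper's own proof: the paper simply cites \Cref{lemma:SectionOverOmnileaves} as giving this as a special case, and your write-up supplies the routine reduction (enlarge the negative sprig to its omnileaf, saturate, restrict, and note $\sigma(K)\subset k(\widehat K)\cap\flow{K}\in\tSp^\pp$). No gaps worth flagging.
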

\begin{proof}
    This is a special case of \Cref{lemma:SectionOverOmnileaves}.
\end{proof}

The following will enable the inductive piece:

\begin{lemma}\label{lemma:ConcatenatingSections}
	Let $\cA, \cB \subset \bSp^+$ be sets of positive sprigs such that $\cA \cap \cB$ is a single sprig. If there are good sections over $\cA$ and $\cB$ then there is a good section over $\cA \cup \cB$. This may be chosen to agree with any given good section over $\cA$.
\end{lemma}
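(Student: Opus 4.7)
The plan is to modify $\sigma_\cB$ by a constant time shift so that it matches $\sigma_\cA$ on the common sprig, and then paste the two sections together.

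Let $\bK$ be the unique element of $\cA\cap\cB$ and set $K := \bK\cap P$. Write $\sigma_\cA,\sigma_\cB$ for the given good sections on $|\cA|\cap P$ and $|\cB|\cap P$. Since each is a section of $\nu$, and $\nu$ restricts to a bijection between any strong positive sprig and the corresponding positive sprig (by the sprig-level version of \Cref{lemma:StrongOmnileavesFlowlines}, together with \Cref{prop:StrongOmnileavesProject}), the images $\sigma_\cA(K)$ and $\sigma_\cB(K)$ are \emph{entire} strong positive sprigs $k_\cA$ and $k_\cB$ sitting inside the common weak positive sprig $\flow{K}$.

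The key step is to show that $k_\cA$ and $k_\cB$ differ by a constant time translation. I would argue this through the comparison map: write $k_\cA = R^{-1}(H_\cA^+)\cap \flow{K}$ and $k_\cB = R^{-1}(H_\cB^+)\cap \flow{K}$, where $H_\cA^+, H_\cB^+$ are stable horospheres in $T^1\tM$, both centered at the ideal point $e^+(K)\in S^2_\infty$. Stable horospheres centered at a single ideal point form a one-parameter family under the geodesic flow $\tTheta$, so there is $t_0\in\bbR$ with $H_\cA^+\cdot t_0 = H_\cB^+$. Flow-equivariance of $R$ (\Cref{prop:comparisonmapexists}), together with $\tPhi$-invariance of $\flow{K}$, then yields $k_\cA\cdot t_0 = k_\cB$. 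Applying this fiberwise over $K$ gives $\sigma_\cB(p)\cdot(-t_0) = \sigma_\cA(p)$ for every $p\in K$.

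Now define $\sigma'_\cB(p) := \sigma_\cB(p)\cdot(-t_0)$. Since the time-$(-t_0)$ map of $\tPhi$ preserves both weak positive sprigs and strong positive omnileaves, it sends strong positive sprigs to strong positive sprigs; hence $\sigma'_\cB$ is still a good section over $\cB$, and by construction it agrees with $\sigma_\cA$ on $K$. Because distinct positive sprigs are disjoint, $|\cA|\cap|\cB|=\bK$, so $(|\cA|\cap P)\cap(|\cB|\cap P)=K$ (which is closed in $P$), while the domains $|\cA|\cap P$ and $|\cB|\cap P$ are themselves closed as the defining sets of sections (cf.~\Cref{proposition:ProductStructureFromSection}). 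The pasting lemma then produces a continuous section $\sigma$ on $(|\cA|\cup|\cB|)\cap P = |\cA\cup\cB|\cap P$ that equals $\sigma_\cA$ on $|\cA|\cap P$ and $\sigma'_\cB$ on $|\cB|\cap P$, and $\sigma$ inherits goodness from its two pieces. Since we shifted $\sigma_\cB$ rather than $\sigma_\cA$, the resulting section automatically agrees with the given good section over $\cA$.

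The only mild obstacle is the identification $k_\cA\cdot t_0 = k_\cB$; everything else is routine pasting. I do not foresee a deeper difficulty, since the flow-equivariance of the comparison map was built precisely to transport structural facts about horospheres to facts about strong omnileaves.
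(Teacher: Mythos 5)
Your proof is correct and follows essentially the same route as the paper: shift $\sigma_\cB$ by a constant flow time so that it agrees with $\sigma_\cA$ on the common sprig, then paste; in fact you supply the justification for the existence of that constant shift (both images are full strong sprigs $R^{-1}(H^\pm_{\cA/\cB})\cap\flow{K}$ over stable horospheres centered at $e^+(K)$, which form a single orbit of the geodesic flow, combined with flow-equivariance of $R$) that the paper leaves implicit in the phrase ``we can find a time $T$.'' The only small omission is the degenerate case $K=\bK\cap P=\emptyset$ (the common sprig lies entirely in $S^1_u$), where your horosphere argument has nothing to act on; there the lemma is trivial, since the two closed domains in $P$ are disjoint and no shift is needed, a case the paper flags parenthetically.
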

\begin{proof}
    Let $\bK$ be the positive sprig in the intersection, and let $K=\bK\cap P$. Given good sections $\sigma_\cA: |\cA|\cap P \to \tM$ and $\sigma_\cB: |\cB|\cap P \to \tM$ we can find a time $T \in \bbR$ such that $\sigma_\cB(p) \cdot T = \sigma_\cA(p)$ for all $p \in K$ (it is possible that $K=\emptyset$, but then there is no problem). We see that $\sigma'_\cB(\cdot) = \sigma_\cB(\cdot) \cdot T$ defines a new good section over $|\cB|$ that agrees with $\sigma_\cA$ on $K$, so we can combine them to obtain a good section over $|\cA \cup \cB|$.
\end{proof}

Let $\bSp^+_\star \subset \bSp^+$ be the set of positive sprigs $K \in \bSp^+$ with $\partial K$ disconnected, and note that $P_\lk \subset |\bSp^+_\star|$. 
Note also that the interior of every separation interval is contained in $\bSp^+_\star$, since a sprig separates $P$ if and only its set of ends is disconnected.

\begin{proposition}\label{proposition:GoodSectionsForSegments}
	Let $\bK_\alpha, \bK_\omega\in \bSp^+_\star$. Then there is a good section over $[\bK_\alpha, \bK_\omega]$.
\end{proposition}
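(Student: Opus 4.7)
The plan is to use compactness of $[\bK_\alpha,\bK_\omega]$, which is a compact interval in the decomposition space $\bSp^+$ by Lemma~\ref{lemma:separationintervals}, to cover it by finitely many closed sub-intervals each supporting a good section via Lemma~\ref{lemma:SectionForCrossable}, and then to glue those sections together using Lemma~\ref{lemma:ConcatenatingSections}.

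The essential local input is the following claim: for each $\bK\in [\bK_\alpha,\bK_\omega]$ there exist an open neighborhood $J_\bK$ of $\bK$ in the decomposition space and a negative sprig $\bL_\bK\in \bSp^-$ such that $\bL_\bK$ meets every element of $J_\bK$ in $P$. Granting this, compactness furnishes finitely many such neighborhoods $J_{\bK_1},\dots,J_{\bK_n}$ covering $[\bK_\alpha,\bK_\omega]$; I would then choose intermediate sprigs $\bK_\alpha=\bK'_0\prec\bK'_1\prec\cdots\prec\bK'_n=\bK_\omega$ so that each $[\bK'_{i-1},\bK'_i]$ lies inside some $J_{\bK_{j(i)}}$. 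Lemma~\ref{lemma:SectionForCrossable} then supplies a good section over each $[\bK'_{i-1},\bK'_i]$, and since consecutive sub-intervals overlap in the single sprig $\bK'_i$, iterated application of Lemma~\ref{lemma:ConcatenatingSections} glues these into a good section on all of $[\bK_\alpha,\bK_\omega]$.

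For the local input, note that $\bK\in \bSp^+_\star$ has $\partial\bK$ disconnected in $S^1_u$, so there are two distinct complementary intervals $I_1,I_2$ of $\partial\bK$ in $S^1_u$ separated by $\partial\bK$. Using density of negative sprig ends in $S^1_u$ together with the nesting and unlinking properties of the special decomposition $\Lambda^-=\partial\bSp^-$, I would construct a negative sprig $\bL_\bK$ whose end set has points $s_1\in I_1$ and $s_2\in I_2$. Such $\partial\bL_\bK$ automatically links $\partial\bK$, and Lemma~\ref{lemma:iflinkthenintersect} together with the proper muu property of $\bSp^\pm$ (Proposition~\ref{prop:sprigsarespidery}) gives $\bL_\bK\cap \bK\neq\emptyset$ in $P$. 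Upper semicontinuity of $\bSp^+$ implies that the condition ``$\partial\bK'$ separates $s_1$ from $s_2$ in $S^1_u$'' is open in $\bSp^+$, so there is a neighborhood $J_\bK$ of $\bK$ on which $\partial\bL_\bK$ continues to link $\partial\bK'$; Lemma~\ref{lemma:iflinkthenintersect} then yields $\bL_\bK\cap \bK'\neq\emptyset$ in $P$ for each $\bK'\in J_\bK$, as required. The main obstacle is this local step, specifically exhibiting a \emph{single} negative sprig whose end set lies in two prescribed complementary intervals of $\partial\bK$; the compactness-and-concatenation reduction is otherwise routine given the local claim.
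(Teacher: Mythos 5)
Your global strategy (cover the compact interval $[\bK_\alpha,\bK_\omega]$ by subintervals over which a single negative sprig crosses every element, apply \Cref{lemma:SectionForCrossable} on each, and glue with \Cref{lemma:ConcatenatingSections}) is exactly the paper's strategy, so the reduction is fine. The genuine gap is in the local step you yourself flag: producing a \emph{single} negative sprig $\bL_\bK$ whose ends lie in two prescribed complementary intervals of $\partial\bK$. Density of negative ends in $S^1_u$ gives a negative end in $I_1$ and a negative end in $I_2$, but nothing forces these to belong to the same negative sprig, and the nesting/unlinking axioms of a special decomposition cannot supply this: there are especial pairs (e.g.\ with $\Lambda^-$ the decomposition into singletons) in which no negative element links any positive element at all. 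So the claim is not a consequence of the abstract circle-decomposition machinery; it is a flow-theoretic fact. The paper gets it by citing \cite[Lemma~7.8]{Frankel_closing}, which produces a negative \emph{leaf} whose closure separates the two ends $k,k'$ of $\bK$ facing $\bK_\alpha$ \emph{and} intersects $\bK\cap P$ nontrivially (the latter is also needed, since linking alone, when ends are allowed to touch, only forces intersection which properness permits to be a single boundary point, and \Cref{lemma:SectionForCrossable} needs intersection with every element \emph{in $P$}). Without some such input your construction of $\bL_\bK$ does not go through.

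A secondary problem is the claim that upper semicontinuity of $\bSp^+$ makes ``$\partial\bK'$ separates $s_1$ from $s_2$'' an open condition in the decomposition space. Upper semicontinuity (the quotient map being closed) gives the opposite kind of statement: the set of elements meeting a fixed \emph{closed} set is closed, equivalently the set of elements contained in a fixed open set is open; it does not make ``meets both components of $S^1_u-\{s_1,s_2\}$'' open, because saturations of open sets need not be open for u.s.c.\ decompositions. The paper avoids this by arguing one-sidedly with the nesting property (\Cref{definition:spS}): having the leaf $L$ separating $k$ from $k'$, one chooses $\bK'$ between $\bK_\alpha$ and $\bK$ with ends so close to $k$ and $k'$ that every sprig in $[\bK',\bK]$ has ends linking $\partial\overline{L}$, hence crosses $L$ in $P$ by \Cref{lemma:iflinkthenintersect} and \Cref{prop:sprigsarespidery}; doing this on both sides of each interior point and at the two endpoints yields the open cover to which compactness and \Cref{lemma:ConcatenatingSections} are applied. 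Restricted to the separation interval your ``eventual linking'' is in fact true with the correct $L$, so this part is repairable, but it should be argued via nesting (or via convergence of ends inside the interval), not via upper semicontinuity alone.
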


To prove this we will build a cover of such a segment by sets that admit good sections, and stitch these together with \Cref{lemma:ConcatenatingSections}.

\begin{proof}
    
    Fix some $\bK \in (\bK_\alpha, \bK_\omega]$. We will show that there is a good section over $[\bK', \bK]$ for some $\bK' \in [\bK_\alpha, \bK)$. Let $U_\alpha$ be the component of $\bP - \bK$ that contains $\bK_\alpha$. See \Cref{figure:PSoverSegment}. Then $U_\alpha \cap S^1_u = (k, k')$ for ends $k, k' \in \bK \cap S^1_u$. By \cite[Lemma 7.8]{Frankel_closing}, there is a negative leaf  $L$ whose closure separates $k$ from $k'$ and intersects $\bK\cap P$ nontrivially. 
    Using the nesting property, one can find $\bK'$ between $\bK_\alpha$ and $\bK$ such that every positive sprig in $[\bK', \bK]$ intersects $L$. 
    Then \Cref{lemma:SectionForCrossable} provides a good section over $[\bK', \bK]$.
	
	\begin{figure}[ht]
		\centering
		\begin{minipage}{.49\textwidth}
			\centering
			\begin{overpic}[scale=.95]{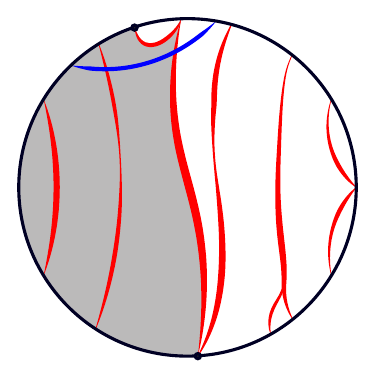}
				\tiny
				
				\put(15,40){\textcolor{red}{$\bK_\alpha$}}
				\put(82,40){\textcolor{red}{$\bK_\omega$}}
				
				\put(26,50){\textcolor{red}{$\bK'$}}
				\put(52.5,50){\textcolor{red}{$\bK$}}
				\put(75,50){\textcolor{red}{$\bK''$}}
				
				\put(35,85){\textcolor{blue}{$L$}}
				
				\put(51, 1){$k'$}
				\put(35, 95){$k$}
				
				\put(40,20){$U_\alpha$}
			\end{overpic}
		\end{minipage}
		\begin{minipage}{.49\textwidth}
			\centering
			\begin{overpic}[scale=.95]{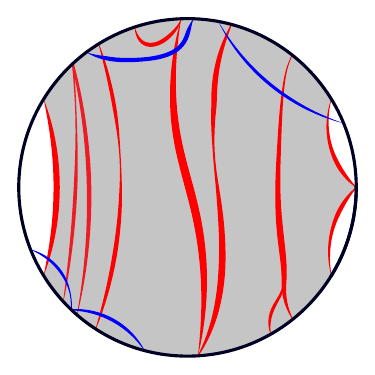}
				\tiny
				
				\put(7,48){\textcolor{red}{$\bK_\alpha$}}
				\put(87,49){\textcolor{red}{$\bK_\omega$}}				
		
			\end{overpic}
		\end{minipage}
		\caption{Product structures over segments.} \label{figure:PSoverSegment}
	\end{figure}
	
	A similar argument shows that for each $\bK \in [\bK_\alpha, \bK_\omega)$ one can find a good section over $[\bK, \bK'']$ for some $\bK'' \in (\bK, \bK_\omega]$. Also, for each $\bK \in (\bK_\alpha, \bK_\omega)$, one can combine these subsegments using \Cref{lemma:ConcatenatingSections} to find a subsegment $[\bK', \bK'']$ whose interior contains $\bK$, that admits a horizontal product structure.
	
	This shows that $[\bK_\alpha, \bK_\omega]$ has an open cover by separation intervals that admit horizontal product structures. Passing to a finite subcover and applying \Cref{lemma:ConcatenatingSections} again yields a horizontal product structure over $[\bK_\alpha, \bK_\omega]$.
\end{proof}

\begin{remark}
	The set of separation intervals endows $\bSp^+$ with the structure of an order tree as defined by Gabai-Kazez \cite{GabaiKazez}. Each segment is order-isomorphic to an interval in $\bbR$, but it is not clear that $\bSp^+$ is a countable union of segments---the key point is the possibility of uncountably many one-ended sprigs. As such we do not know whether $\bSp^+$ is an $\bbR$-order tree as defined by Gabai-Kazez. However, one can show that $\bSp^+_\star$ is an $\bbR$-order tree.
\end{remark}

\begin{lemma}[cf. \cite{GabaiKazez}, Proposition~3.1]\label{lem:ascendingunion}
	There is a sequence of subsets $T_1, T_2, \cdots \subset \bSp^+$ such that
	\begin{enumerate}[label=(\arabic*)]
		\item $T_1 = s_1$ is a separation interval,
		\item for each $i$, $T_i = T_{i - 1} \cup s_i$, where $s_i$ is a separation interval that intersects $T_{i-1}$ at exactly its initial point $\iota(s_i)$, 
		\item $\bSp^+_\star\subset\bigcup_{i=1}^\infty T_i$, and 
        \item for any $K, L \in \bSp^+_\star$, there is a finite $n$ such that $[K,L]\in T_n$.
	\end{enumerate}
\end{lemma}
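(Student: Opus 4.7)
The strategy is to mimic the construction of Gabai--Kazez by exhausting the natural order-tree structure on $\bSp^+$ (given by separation intervals) using a countable dense subset. Three ingredients drive the argument: separability of $\bSp^+_\star$, the existence of ``entrance points'' for closed subtrees, and a density-plus-nesting argument to guarantee that every element of $\bSp^+_\star$ is captured.

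First I would observe that $\bSp^+$ is a compact metrizable space (as an upper semicontinuous decomposition of the compact metrizable disc $\bP$ with Hausdorff decomposition space, by \Cref{theorem:USCConditions}), so the subspace $\bSp^+_\star$ is separable; fix a countable dense subset $D = \{\bL_1, \bL_2, \dots\} \subset \bSp^+_\star$. Next I would develop the tree structure: given a closed subset $T \subset \bSp^+$ that is \emph{convex} in the sense that $\bK, \bL \in T \Rightarrow [\bK, \bL] \subset T$, and given $\bK \notin T$, I would establish an ``entrance point'' $\bK^* \in T$ such that $[\bK, \bL] \cap T = [\bK^*, \bL]$ for every $\bL \in T$. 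The idea is: pick any $\bL_0 \in T$; by convexity and closedness, $[\bK, \bL_0] \cap T$ is a closed sub-interval of the compact-interval separation interval $[\bK, \bL_0]$ (using \Cref{lemma:separationintervals}), hence has a well-defined endpoint closest to $\bK$; one then verifies this is independent of the choice of $\bL_0$ by a standard ``three-point'' argument in the order tree.

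With entrance points in hand, the $T_n$ are built inductively. Set $T_1 = [\bL_1, \bL_2]$, and enumerate the countable collection of pairs from $D$. At each stage, pick the next pair $(\bL_a, \bL_b)$ not yet covered; if $\bL_a \notin T_n$, let $s_{n+1}$ be the separation interval from the entrance point of $\bL_a$ to $\bL_a$, and repeat for $\bL_b$. Then add the (sub-)interval $[\bL_a, \bL_b]$ itself, which by construction meets the current tree at exactly one endpoint (its entrance point). Conditions (1) and (2) are then built into the construction, and each $T_n$ remains a finite union of separation intervals, hence a closed convex subtree.

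For (3) and (4), the key geometric input is nesting. Given $\bK \in \bSp^+_\star$, $\bK$ has at least two complementary regions $U_1, U_2$ in $\bP$; by the nesting property of $\Lambda^+$ (Definition~\ref{definition:spS}\ref{it:spSNestingSequences}), each $U_i$ contains sprigs with disconnected boundary (i.e., elements of $\bSp^+_\star$) --- one iterates nesting twice in a complementary interval of $\del \bK$ to produce a separating sprig on that side. The collection of sprigs contained in $U_i$ is open in $\bSp^+$ (as the complement of the closed image of $\bP \setminus U_i$ under the closed quotient map), so by density, $D \cap U_i \neq \emptyset$. Picking $\bL_a \in D \cap U_1$ and $\bL_b \in D \cap U_2$ yields $\bK \in [\bL_a, \bL_b]$, and the same argument applied simultaneously to $K$ and $L$ produces a pair in $D$ whose separation interval contains $[K,L]$; this gives (4), and hence (3).

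The main obstacle is the tree-theoretic foundation --- in particular, verifying that entrance points are well-defined and that the enumeration scheme can indeed be carried out so that every pair from $D$ is eventually processed (requiring some bookkeeping to interleave entrance-point extensions with interval-completion steps). One must also be careful that $\bSp^+_\star$ may genuinely have uncountable local branching at each point (since $\del \bK$ can have a Cantor set of components), so one cannot simply add ``all'' branches at once; only the countably many that are hit by $D$ ever appear, and the nesting-plus-density argument is what ensures this is enough.
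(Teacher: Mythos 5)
Your proposal is correct in outline and follows the same Gabai--Kazez strategy as the paper (an increasing union of separation intervals, each attached at a single point, driven by a countable dense set), but it implements the bookkeeping differently. The paper takes a countable dense subset $\{p_i\}$ of $\bP$ itself, sets $\bK_i = \bSp^+(p_i)$, and always attaches new segments along the intervals $[\bK_0,\bK_{j(i)}]$ toward the fixed base sprig $\bK_0$; this makes (3) and (4) nearly immediate, since a complementary region of any $\bK\in\bSp^+_\star$ is open in $\bP$ and therefore contains some $p_n$, so $\bK\in[\bK_0,\bK_n]$, and the pretree-style trichotomy gives $[\bK,\bL]\subset[\bK_0,\bK_{i(\bK)}]\cup[\bK_0,\bK_{i(\bL)}]$. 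Your route instead takes a countable dense subset of the decomposition space $\bSp^+_\star$, which obliges you to prove metrizability/separability of $\bSp^+$, openness in $\bSp^+$ of the set of sprigs contained in a complementary region, and nonemptiness via the nesting property, plus general entrance-point machinery for arbitrary closed convex subtrees; all of this works (your nesting and openness arguments are sound, and your claim that $[\bK,\bL]\subset[\bL_a,\bL_b]$ for suitably placed $\bL_a,\bL_b$ is correct), but it is more machinery than the paper needs, since the paper only ever requires the entrance point of $T_{i-1}$ along a single interval based at $\bK_0$. Two small slips to repair: the step where you ``add the interval $[\bL_a,\bL_b]$ itself, which meets the current tree at exactly one endpoint'' is both unnecessary and, as stated, false once both $\bL_a$ and $\bL_b$ have been attached to a convex tree (the interval is then already contained in it, and adding it as a new $s_i$ would violate condition (2)); and ``a finite union of separation intervals, hence a closed convex subtree'' is a non sequitur --- convexity must be verified inductively from the entrance-point property (the analogue of the paper's asserted ``one easily sees that this is closed and convex''), not deduced from finiteness. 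Neither issue is fatal; with those repairs your argument proves the lemma.
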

\begin{proof}
	Let $\{p_i\}_{i=0}^\infty$ be a countable dense subset of 
    $\bP$. For each $i$, let $\bK_i = \bSp^+(p_i)$ be the positive sprig through $p_i$.
	
	Let $T_1 = s_1 = [\bK_0, \bK_1]$. This is closed and \emph{convex}, by which we mean it contains the separation interval between any two of its points.
	
	Assume that $T_{i-1}$ is closed and convex. Let $j(i) \geq i$ be the least index such $\bK_{j(i)}$ is not contained in $T_{i-1}$. Since $T_{i-1}$ is closed and convex, $T_{i-1} \cap [\bK_0, \bK_{j(i)}] = [\bK_0, \bL_i]$ for some $\bL_i$. Let $\sigma_i = [\bL_i, \bK_{j(i)}]$, and $T_i = T_{i-1} \cup \sigma_i$. One easily sees that this is closed and convex, and $\sigma_i \cap T_{i-1} = \bL_i = \iota(\sigma_i)$.
	
	To show (3), let $\bK \in \bSp^+_\star$. This has at least two complementary components in $\bP$ since $\del \bK$ is disconnected. Choose a $p_n$ such that $p_0$ and $p_n$ do not lie in the same complementary component of $\bK$. Then $\bK$ is contained in $[\bK_0, \bK_n]$, and hence in $\bigcup_{i=1}^\infty T_i$. %\mcom{modified}

    For (4), let $\bK, \bL\in \bcD_2^+$. 
    There exists an index $i(\bK)$ (resp. $i(\bL)$) so that $p_{i(\bK)}$ (resp. $p_{i(\bL)}$) lies in a complementary component of $\bK$ (resp. $\bL$) not containing $p_0$. Then $\bK\in [\bK_0, \bK_{i(\bK)}]$, and $\bL\in [\bK_0, \bK_{i(\bL)}]$. Given a positive sprig that separates $\bK$ from $\bL$, at least one of the following holds:
    \begin{itemize}
    \item the sprig separates 
    $\bK_{i(\bK)}$ from $\bK_0$,
    \item the sprig separates 
    $\bK_{i(\bL)}$ from $\bK_0$, or
    \item the sprig is equal to $\bK_0$.
    \end{itemize}
    This means we have $[\bK, \bL]\subset [\bK_0, \bK_{i(\bK)}]\cup [\bK_0, \bK_{i(\bL)}]$. In particular $[\bK, \bL]\subset T_{\max\{i(\bK),i(\bL)\}}$, establishing (4).
\end{proof}

\begin{corollary}\label{cor:goodsectionoverPlk}
	There is a good section over $P_\lk$. 
\end{corollary}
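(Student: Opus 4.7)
The plan is to iteratively extend good sections across an exhausting sequence of separation intervals from \Cref{lem:ascendingunion}, using \Cref{proposition:GoodSectionsForSegments} on each interval and \Cref{lemma:ConcatenatingSections} to keep the pieces compatible, and then check that the resulting common extension restricts continuously to $P_\lk$.

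First I would invoke \Cref{lem:ascendingunion} to obtain an ascending sequence $T_1 \subset T_2 \subset \cdots \subset \bSp^+$ with $T_i = T_{i-1} \cup s_i$, where each $s_i$ is a separation interval meeting $T_{i-1}$ only at its initial point $\iota(s_i)$, and with $\bSp^+_\star \subset \bigcup_i T_i$. I would inductively produce compatible good sections $\sigma_i \colon |T_i| \cap P \to \tM$, with $\sigma_{i+1}$ extending $\sigma_i$. The base case applies \Cref{proposition:GoodSectionsForSegments} to $T_1 = s_1$; for the inductive step, the same proposition supplies a good section $\tau_{i+1}$ on $|s_{i+1}| \cap P$, and since $T_i \cap s_{i+1}$ is the single sprig $\iota(s_{i+1})$, \Cref{lemma:ConcatenatingSections} lets me replace $\tau_{i+1}$ by a flow-time translate that agrees with $\sigma_i$ on this sprig, yielding $\sigma_{i+1}$. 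The family has a common extension $\sigma \colon \bigcup_i (|T_i| \cap P) \to \tM$ that sends each positive sprig into a strong positive sprig by construction. Because $P_\lk \subset |\bSp^+_\star| \cap P \subset \bigcup_i (|T_i| \cap P)$ by part~(3) of \Cref{lem:ascendingunion}, $\sigma|_{P_\lk}$ is defined.

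The hard part will be continuity of $\sigma|_{P_\lk}$. Each $\sigma_n$ is continuous on its closed domain $|T_n|\cap P$, but a sequence $p_n \to p$ in $P_\lk$ may have $\bSp^+(p_n) \in T_{m(n)}$ with $m(n)\to\infty$, so continuity of any single $\sigma_n$ does not suffice. My strategy is to construct a local continuous good section near each $p \in P_\lk$ that coincides with $\sigma$. Since $\bSp^+(p) \in \bSp^+_\star$, the nesting property of $\Lambda^+ = \partial \bSp^+$ (\Cref{definition:spS}\ref{it:spSNesting}) lets me pick a positive sprig in each complementary region of $\bSp^+(p)$ in $\bP$, close to $\bSp^+(p)$; applying \Cref{proposition:GoodSectionsForSegments} to the separation intervals between pairs of these sprigs and stitching via \Cref{lemma:ConcatenatingSections} produces a continuous good section $\tau$ on a $P$-neighborhood $V$ of $p$. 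The subtle step is verifying that, for $q \in V \cap P_\lk$, the sprig $\bSp^+(q)$ is contained in the union of these separation intervals---here the combination of upper semicontinuity of $\bSp^+$ and the linking condition defining $P_\lk$ is essential, since points of $P_\lk$ near $p$ cannot have their positive sprigs drift into a complementary region of $\bSp^+(p)$ without violating the linking with $\bSp^-$. After a single flow-time adjustment making $\tau$ agree with $\sigma$ at $p$, routing uniqueness for good sections (\Cref{prop:StrongOmnileavesProject}: within a strong positive sprig, $\nu$ is a homeomorphism onto the corresponding positive $P$-sprig, so the target strong sprig is determined by a single value) propagates the agreement across $V \cap P_\lk$, and continuity of $\sigma$ at $p$ follows from that of $\tau$.
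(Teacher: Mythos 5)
Your first paragraph coincides exactly with the paper's construction: the inductive use of \Cref{lem:ascendingunion}, \Cref{proposition:GoodSectionsForSegments}, and \Cref{lemma:ConcatenatingSections} to get a common extension $\sigma$ over $|T|\cap P\supset P_\lk$ is precisely what the paper does, and you correctly identify continuity on $P_\lk$ as the real issue. But your continuity argument has two genuine gaps. First, the ``subtle step'' you flag is not a side remark --- it is the entire content of the paper's proof, and you do not supply it. The paper argues along a sequence $p_n\to p$ in $P_\lk$: it first invokes \cite[Lemma 7.3]{Frankel_closing} to reduce to the case where the $p_n$ meet a single complementary region of $\bK=\bSp^+(p)$, and then rules out the sprigs $\bK_n=\bSp^+(p_n)$ degenerating toward a single end $a$ of $\bK$ by using the linking condition (each $\partial\bL_n$ meets the hull of $\partial\bK_n$) together with upper semicontinuity to force $a\in\partial\bL$, contradicting \Cref{lemma:SprigEndsDisjoint}; only then does property (4) of \Cref{lem:ascendingunion} place everything in a single $T_n$. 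Your local-chart version of this claim is not covered by the tools you cite: $\bSp^+(p)$ may have infinitely many complementary regions meeting every neighborhood of $p$, so ``pick a sprig in each complementary region and stitch'' is an infinite concatenation, and \Cref{proposition:GoodSectionsForSegments} together with \Cref{lemma:ConcatenatingSections} only give continuity over finite unions of separation intervals --- you are back at the original difficulty. Finiteness is exactly what the external Lemma 7.3 provides (and only along convergent sequences), and the exclusion of sprigs squeezed against an end of $\bK$ is exactly the linking/disjoint-ends argument; asserting that ``upper semicontinuity and linking are essential'' does not discharge it.

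Second, the propagation step is wrong as stated. A good section is not rigid once its value at one point is fixed: \Cref{prop:StrongOmnileavesProject} determines the section only along the single positive sprig through that point, and one can modify any good section by flowing each sprig for a time that varies continuously over the sprig space, producing a different good section agreeing with the original at $p$. So after adjusting $\tau$ to agree with $\sigma$ at $p$, on other sprigs through $V\cap P_\lk$ the two sections may differ by sprig-dependent flow times, and showing those times tend to $0$ as the sprigs approach $\bSp^+(p)$ is equivalent to the continuity you are trying to prove; nothing in your argument supplies it. The fix is to abandon the local-section route and run the paper's sequence argument directly (or, equivalently, import its conclusion that for $p_n\to p$ in $P_\lk$ the sprigs $\bSp^+(p_n)$ eventually lie in a fixed separation interval, hence in a single $T_n$ on which $\sigma=\sigma_n$ is already known to be continuous).
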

\begin{proof}
Using the notation from \Cref{lem:ascendingunion}, let $T=\bigcup_{i=1}^\infty T_i$. 
By \Cref{proposition:GoodSectionsForSegments}, there is a good section $\sigma_1$ over $|T_1|$. Assuming there is a good section $\sigma_i$ over $|T_i|$, we can use \Cref{lemma:ConcatenatingSections} to find a good section $\sigma_{i+1}$ over $|T_{i+1}|$ such that $\sigma_{i+1}|_{|T_i|}=\sigma_i$. By induction, we have good sections $\sigma_n\colon |T_n|\cap P\to \tM$ for all $n$, any two of which agree on the intersection of their domains. Thus we can define a function
\[
\sigma\colon |T|\cap P\to \tM
\]
by sending $x\in |T|$ to $\sigma(x) := \sigma_n(x)$, where $n$ is large enough so that $x\in |T_n|$. To finish the proof, it suffices to show that the restriction of $\sigma$ to $P_\lk$ is continuous.

Let $(p_n)_{n=1}^\infty$ be a sequence of points in $P_\lk$ converging to a point $p\in P_\lk$. We will show that $\lim_{n\to\infty} (\sigma(p_n))=\sigma(p)$. 

Let $\bK=\bcD^+(p)$, and $\bK_i=\bcD^+(p_i)$.
By \cite[Lemma 7.3]{Frankel_closing}, the sequence $(p_n)$ intersects at most finitely many complementary regions of $\bK$. 
Passing to a subsequence, we can assume that the sequence intersects at most one complementary region of $\bK$.
Given this assumption, we claim that all but finitely many of the $\bK_i$'s lie in the interval $[\bK, \bK_1]$.

Suppose otherwise. Then, by passing to a subsequence and relabeling, we can assume none of $\bK_1, \bK_2,\bK_3,\dots$ lie in $[\bK_1, \bK]$, and further that there is a single interval $I$ of $S^1_u-(\bK\cup \bK_1)$ such that  $\del\bK_i \subset \overline I$ for all $i$ (see \Cref{fig:contradiction}).
By upper semicontinuity (\Cref{theorem:USCConditions}), we have $\limsup\del \bK_i\subset \del \bK$, so in fact the $\del \bK_i$'s  Hausdorff converge to the endpoint $a$ of $I$ lying in $\del \bK$. 
Set $\bL_i=\bcD^-(p_i)$, and $\bL=\bcD^-(p)$. Since $p_i\in P_\lk$ for all $i$, the convex hull of $\del K_i$ in $S^1_u$ contains at least one point of $\del L_i$ for all $i$. Hence $a\in \limsup \del \bL_i\subset \del \bL$ (again using upper semicontinuity). Hence $\del \bK\cap \del \bL\ne \emptyset$, contradicting \Cref{lemma:SprigEndsDisjoint}.

Hence the $\bK_i$'s eventually lie in $[\bK,\bK_0]$. By \Cref{lem:ascendingunion}, we have $[\bK,\bK_0]\subset T_n$ for some $n$, and the result now follows from the continuity of $\sigma_n$.
\end{proof}

\begin{figure}
    \centering
    \begin{overpic}[]{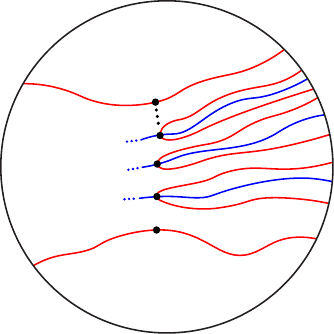}
				\tiny
				\put(42,72){$p$}
                \put(42,28){$p_1$}
				\put(42, 38){$p_2$}
                \put (42, 47){$p_3$}
                \put (43, 56){$p_4$}
                \put (86, 85){$a$}
                \put (15, 75) {$\bK$}
                \put (15, 25) {$\bK_1$}
	\end{overpic}
    \caption{Cartoon of a counterfactual situation in the proof of \Cref{cor:goodsectionoverPlk}; the blue leaves are the $\bL_i$.}
    \label{fig:contradiction}
\end{figure}

This completes the proof of \Cref{theorem:ProductStructure}.

 \section{Straightening a quasigeodesic flow}\label{section:StraighteningFlow}

In this section we show that the restriction of $\Phi$ to a certain subset $M_\lk\subset M$ is semiconjugate to a new quasigeodesic flow $\Psi$ on $M$, and show the semiconjugacy $M_\lk\to M$ is homotopic to the inclusion map. This construction will use positively horizontal product structures as well as straightening maps.

Many of the spaces and maps defined in this section fit into the diagram in \Cref{bigdiagram}, and the reader may find it helpful to consult that diagram while absorbing the definitions.

\subsection{Constructing the straightened flow} \label{subsec:buildingtheflow}

Fix a quasigeodesic flow $\Phi$ on a closed hyperbolic $3$-manifold $M$.
Recall that $\bSp^\pm$ denote the sprig decompositions of $\bP = P \sqcup S^1_u$. 
By \Cref{prop:comparisonmapexists} we can assume, after reparametrizing, that $\Phi$ admits a flow-equivariant comparison map $R$.

Let $Q$ be the corresponding straightened flowspace (recall the discussion in \Cref{sec:straighteningtheflowspace}), with singular foliations $\cF^\pm$, and straightening map
\[ s: P_{\lk} \to Q. \]
By \Cref{theorem:ProductStructure} we can fix a product structure
\[ \flow{P_{\lk}} \simeq P_{\lk} \times \bbR \]
over $P_{\lk}$ in which strong positive sprigs are horizontal. We define a map
\begin{align*}
	\tS\colon P_{\lk} \times \bbR 	&\to Q \times \bbR \\
 	(p, h) 						&\mapsto (s(p), h).
\end{align*}
Note that $\tS$ is closed map because $s$ is closed by \Cref{prop:properstraightening}, and hence $\tS$ is a quotient map.
Also, the $\pi_1(M)$-action on $\tM$ restricts to an action on $P_{\lk} \times \bbR$. We will use these facts to build a nice action of $\pi_1(M)$ on $Q\times \bbR$.

\begin{lemma}\label{lem:actionprops}
The action of any $g\in \pi_1(M)$ on $P_{\lk} \times \bbR$ sends point preimages of $\tS$ to point preimages of $\tS$. This induces an action of $\pi_1(M)$ on $Q\times \bbR$ by homeomorphisms.

\end{lemma}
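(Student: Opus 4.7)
The plan is to analyze the $\pi_1(M)$-action on $P_\lk \times \bbR$ in product-structure coordinates and show that it is already ``constant on fibers'' of $\tS$. First, because deck transformations commute with $\tPhi$ and the product structure conjugates $\tPhi$ to vertical translation, each $g \in \pi_1(M)$ must act by a continuous skew translation
\[ g \cdot (p, h) = (gp,\, h + f_g(p)) \]
for some continuous $f_g\colon P_\lk \to \bbR$. Two points $(p, h), (p', h')$ have the same $\tS$-image iff $h = h'$ and $s(p) = s(p')$, and equivariance of $s$ immediately gives $s(gp) = s(gp')$. So the lemma reduces to showing that $f_g(p) = f_g(p')$ whenever $s(p) = s(p')$. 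Since $s(p) = s(p')$ iff $\partial\bSp^+(p) = \partial\bSp^+(p')$ and $\partial\bSp^-(p) = \partial\bSp^-(p')$, and $\bK \mapsto \partial\bK$ is a bijection on the muu decomposition $\bSp^+$ by \Cref{lemma:circlediscrestrictionhomeo}, such $p, p'$ share a positive $P$-sprig, and I will in fact prove the stronger statement that $f_g$ is constant on each positive $P$-sprig.

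The heart of the argument uses horizontality of the strong positive sprigs in $\Pi$, combined with the equivariance of the comparison map $R$. Let $\sigma(p) := \Pi(p, 0)$ denote the zero section, so that $g\sigma(p) = \sigma(gp) \cdot f_g(p)$ by definition of $f_g$. Suppose $p, p'$ share a positive $P$-sprig. Horizontality gives that $\sigma(p), \sigma(p')$ lie in a common strong positive sprig, hence $R\sigma(p), R\sigma(p')$ lie on a common stable horosphere $H^+$, and $\pi_1$-equivariance of $R$ places $R(g\sigma(p)), R(g\sigma(p'))$ on the stable horosphere $gH^+$. On the other hand, $gp$ and $gp'$ also share a positive $P$-sprig (since the $\pi_1$-action preserves $\bSp^+$), so horizontality again gives $\sigma(gp), \sigma(gp')$ on a common stable horosphere $H^+_0$. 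By flow-equivariance of $R$, $R(g\sigma(p)) = R\sigma(gp) \cdot f_g(p) \in H^+_0 \cdot f_g(p)$, and this horosphere has the same center as $gH^+$, namely $e^+(\flow{gp})$. Two stable horospheres sharing a vector and a center must agree, so $H^+_0 \cdot f_g(p) = gH^+$; the identical argument for $p'$ gives $H^+_0 \cdot f_g(p') = gH^+$. Since the geodesic flow acts freely on the set of stable horospheres at a fixed center, $f_g(p) = f_g(p')$.

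Given this, each $g$ induces a well-defined map $\bar g\colon Q \times \bbR \to Q \times \bbR$ satisfying $\bar g \circ \tS = \tS \circ g$. Since $\tS$ is a continuous closed surjection it is a quotient map (as noted just before the lemma statement), so $\bar g$ is continuous; applying the same construction to $g^{-1}$ produces an inverse, so $\bar g$ is a homeomorphism. The identity $\overline{gh} = \bar g \circ \bar h$ is inherited from $\tS$ being surjective, making $g \mapsto \bar g$ a group action by homeomorphisms. The genuinely delicate point is the middle paragraph: it matters essentially that our product structure has the positive \emph{sprigs} horizontal (rather than merely positive leaves or omnileaves), and that $R$ is both $\pi_1$-equivariant and flow-equivariant --- without these inputs there is no way to force the vertical drift $f_g$ to be constant on $\tS$-fibers.
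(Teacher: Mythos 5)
Your proof is correct, and its overall skeleton is the same as the paper's: show that the $\pi_1(M)$-action on $P_\lk\times\bbR$ permutes the fibers of $\tS$, then use that $\tS$ is a closed (hence quotient) map to descend the action to $Q\times\bbR$, obtaining continuity and inverses from $g^{-1}$. The difference lies in how the key ``vertical'' step is justified. The paper argues directly on a fiber: $\tS^{-1}(z)=(\bK\cap\bL)\times\{t\}$ lies in a single strong positive sprig by horizontality, the deck action preserves strong positive sprigs, so $g\tS^{-1}(z)=(g\bK\cap g\bL)\times\{t'\}$ for a single $t'$, and $g\bK\cap g\bL$ is a full $s$-fiber because sprigs and linking are preserved. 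You instead write the action in product coordinates as a skew translation $(p,h)\mapsto(gp,h+f_g(p))$, reduce the lemma to constancy of $f_g$ on positive $P$-sprigs (using equivariance of $s$ and the bijection $\bK\mapsto\partial\bK$), and prove that constancy by pushing the zero section through $R$: both $H_0^+\cdot f_g(p)$ and $gH^+$ are stable horospheres centered at $e^+(gp)$ sharing a vector, and the geodesic flow acts freely on stable horospheres with a fixed center. This horosphere computation is an explicit, correct substitute for the paper's terser appeal to horizontality plus invariance of the strong sprig decomposition (whose implicit content is that a strong positive sprig meets each flowline over $gK\cap P_\lk$ in exactly one horizontal slice); what it buys is a self-contained verification of exactly where flow-equivariance and $\pi_1$-equivariance of $R$, and horizontality of sprigs rather than mere leaves, are used.
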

\begin{proof}
    For each point $z=(q, t) \in Q \times \bbR$, its preimage can be written as $\tS^{-1}(z) = s^{-1}(q) \times \{t\}$. Here, $s^{-1}(q) = \bK \cap \bL$, where $\bK$ and $\bL$ are positive and negative sprigs, respectively. Since strong positive sprigs are horizontal, we have
    \[
    g\tS^{-1}(z)=(g\bK\cap g\bL)\times \{t'\}
    \]
    for a single $t'$. Note that $g\bK\cap g\bL$ is an entire point preimage of $s$, since it is the intersection of two sprigs whose ends link. By the definition of $\tS$, then, $g\tS^{-1}(z)$ is an entire point preimage of $\tS$.
    Since $\tS$ is a quotient map, the  action of $\pi_1(M)$ on $P_\lk\times \bbR$ by homeomorphisms descends to one on $Q\times \bbR$.
    \end{proof}

    Taking the quotient by this $\pi_1(M)$-action produces a covering of a closed $3$-manifold:

\begin{lemma}\label{lem:quotientisacoveringmap}
	The action of $\pi_1(M)$ on $Q \times \bbR$ is cocompact, free and properly discontinuous, and has Hausdorff quotient.
\end{lemma}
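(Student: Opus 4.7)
\emph{Cocompactness} is immediate: $M_\lk$ is a closed subset of the compact manifold $M$, hence compact, and the $\pi_1(M)$-equivariant surjection $\tS$ descends to a continuous surjection $M_\lk \twoheadrightarrow (Q \times \bbR)/\pi_1(M)$, making the quotient compact.

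For \emph{proper discontinuity}, I will first argue that the straightening map $s\colon P_\lk \to Q$ is proper. For any $p \in P_\lk$, write $\bK := \bSp^+(p)$ and $\bL := \bSp^-(p)$. Since $p \in \bK \cap \bL \cap P$, the intersection property $(\cap_1)$ from \Cref{prop:sprigsarespidery} forces $\bK \cap \bL \subset P$, making the fiber $s^{-1}(s(p)) = \bK \cap \bL$ a closed subset of $\bP$ that avoids $S^1_u$—hence compact. A continuous closed surjection with compact fibers is proper, so $\tS = s \times \id$ is proper. Given compact $K \subset Q \times \bbR$, the preimage $\tS^{-1}(K) \subset \tM_\lk$ is then compact, and equivariance of $\tS$ gives
\[ \{g \in \pi_1(M) \mid gK \cap K \neq \emptyset\} \subseteq \{g \mid g\tS^{-1}(K) \cap \tS^{-1}(K) \neq \emptyset\}, \]
which is finite because $\pi_1(M) \acts \tM$ is properly discontinuous.

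\emph{Freeness} is the main obstacle. Suppose $g \neq e$ satisfies $g(q, t) = (q, t)$. Then $g$ preserves $\tS^{-1}(q, t) = (\bK \cap \bL) \times \{t\}$ setwise; since positive (resp.\ negative) sprigs partition $\bP$, this forces $g\bK = \bK$ and $g\bL = \bL$. Equivariance of the endpoint maps $\be^\pm$ then shows $g$ fixes two distinct points $z^\pm \in S^2_\infty$, so torsion-freeness of $\pi_1(M)$ makes $g$ a hyperbolic isometry with axis $\gamma$ (the geodesic from $z^-$ to $z^+$) and translation length $\tau \neq 0$. The preservation of the fiber with the second coordinate fixed forces the product-structure cocycle $\eta(g,\cdot)$ (where $g(p,h) = (gp, h + \eta(g,p))$ in the product coordinates from $\Pi$) to vanish on $\bK \cap \bL$; combining this with the $\pi_1$- and flow-equivariance of the comparison map $R$ yields
\[ R(\Pi(gp, 0)) = g \cdot R(\Pi(p, 0)) = R(\Pi(p, 0)) \cdot \tau = R(\Pi(p, \tau)) \]
for every $p \in \bK \cap \bL$. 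Since $R|_{\flow{p}}$ is injective (flow-equivariance together with the aperiodicity of the geodesic flow on $\gamma$), $gp = p$ would force $\tau = 0$; hence $gp \neq p$ for every $p \in \bK \cap \bL$.

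To conclude, I will observe that the orbit $\{g^n p\}_{n \in \bbZ}$ is discrete in $P_\lk$ (from proper discontinuity of $\pi_1(M) \acts \tM$) and lies in the compact set $\bK \cap \bL$, so it is finite. Hence $g^n p = p$ for some $n \neq 0$, with $g^n \neq e$ by torsion-freeness. But $g^n$ still preserves the fiber and acts hyperbolically with translation length $n\tau \neq 0$ on the same axis $\gamma$, so the displayed identity applied to $g^n$ forces $g^n p \neq p$, a contradiction.
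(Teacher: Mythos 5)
Your cocompactness argument is fine (it is the same idea as the paper's, phrased via descent rather than a fundamental domain), and your proper-discontinuity argument is correct but takes a genuinely different route: the paper handles freeness and proper discontinuity in one stroke by pushing a hypothetical bad sequence $y_i\to y$, $g_iy_i\to y$ forward under the equivariant map $R_s\colon Q\times\bbR\to T^1\tM$ and invoking that the deck action on $T^1\tM$ is free and properly discontinuous, whereas you pull compact sets back to $\tM$ using properness of $\tS$, which in turn rests on the observation that each fiber $\bK\cap\bL$ is a compact subset of $P$ by property ($\cap_1$). Both work; the paper's is shorter and never needs properness of $\tS$, while yours isolates the compactness of the fibers, which is a useful fact in its own right.

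The gap is in your freeness argument, at the step ``the orbit $\{g^np\}_{n\in\bbZ}$ is discrete in $P_\lk$ (from proper discontinuity of $\pi_1(M)\acts\tM$).'' Proper discontinuity of the deck action on $\tM$ does not by itself imply that orbits of the induced action on the flowspace are discrete --- for instance, for a pseudo-Anosov flow the element corresponding to a periodic orbit has flowspace orbits accumulating on its fixed point --- so the inference as stated is invalid. What saves you here is the constant-height property you established but did not exploit: since $g$ preserves $\bK\cap\bL$ and $\eta(g,\cdot)\equiv 0$ there, the cocycle identity gives $\eta(g^n,\cdot)\equiv 0$ on $\bK\cap\bL$ for every $n$, so $g^n\Pi(p,t)=\Pi(g^np,t)$; hence accumulation of $\{g^np\}$ in the compact set $\bK\cap\bL$ would force the $\tM$-orbit of the single point $\Pi(p,t)$ under infinitely many distinct elements to accumulate, which genuinely does contradict proper discontinuity. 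But once you notice this, the entire detour through hyperbolic isometries, translation lengths, and the displayed identity is unnecessary: if $g$ fixes $(q,t)$, then $g$ preserves the nonempty compact set $C=\Pi((\bK\cap\bL)\times\{t\})\subset\tM$, hence so does every power $g^n$; since $g$ has infinite order (torsion-freeness of $\pi_1(M)$), infinitely many distinct elements satisfy $g^nC\cap C\neq\emptyset$, contradicting proper discontinuity of $\pi_1(M)\acts\tM$ directly. I would either adopt this short argument or, if you keep your version, spell out the vanishing of $\eta(g^n,\cdot)$ and the lift of the flowspace orbit to constant height, since that is exactly what the parenthetical claim is silently using.
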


To be clear, by ``free and properly discontinuous" we mean that each element has a neighborhood disjoint from all its translates by nontrivial group elements.

\begin{proof}
Since $M$ is compact, there is a compact fundamental domain $A$ for the action $\pi_1(M)\acts \tM$. Since $\flow{P_{\lk}}$ is closed, the intersection $A_{\lk} := A \cap \flow{P_{\lk}}$ is compact. Also, the $\pi_1(M)$-orbit of every point in $\flow{P_{\lk}}$ intersects $A_{\lk}$. Then $\tS(A_{\lk})$ is a compact set that intersects the $\pi_1(M)$-orbit of every point in $Q \times \bbR$, so $\pi_1(M)$ acts cocompactly on $Q\times \bbR$.

    If the action is not free and properly discontinuous, then we can find a sequence of points $y_i \in Q \times \bbR$ and nontrivial elements $g_i \in \pi_1(M)$ such that $\lim y_i = y = \lim g_i \cdot y_i$. Then 
    \[
    \lim R_s(y_i) = R_s(y) = \lim R_s(g_i \cdot y_i) = \lim g_i R_s(y_i),
    \]
    contradicting the fact that $\pi_1(M) \acts T^1\bbH^3$ is free and properly discontinuous.

    If the quotient $\lquotient{\pi_1(M)}{Q \times \bbR}$ is not Hausdorff then one can find points $y, y' \in Q \times \bbR$ in distinct orbits and a sequence of points $y_i \in Q \times \bbR$ and pairwise distinct elements $g_i \in \pi_1(M)$ such that $\lim y_i = y$ and $\lim g_i \cdot y_i = y'$. Then 
    \[
    R_s(y) =\lim R_s(y_i)
    \]
    and
    \[
    R_s(y')=\lim R_s(g_i y_i) = \lim g_i R_s(y_i).
    \]
    If $R_s(y) = R_s(y')$ then this contradicts the fact that $\pi_1(M) \acts T^1\tM$ is free and properly discontinuous. If $R_s(y) \neq R_s(y')$ and these are in distinct orbits, then this contradicts the fact that $\lquotient{\pi_1(M)}{\tM}=T^1 M$ is Hausdorff. 
    
    Finally, if $R_s(y) \neq R_s(y')$ and these are in the same orbit then let $g$ be the element of $\pi_1(M)$ with $g \cdot R_s(y') = R_s(y)$ and let $h = gg_i$ for each $i$. Then
    \[
    \lim R_s(y_i) = R_s(y) = \lim h_i R_s(y_i),
    \]
    contradicting free and proper discontinuity once again.
\end{proof}

Now let
\[
\tM_s := Q \times \bbR \,\, \text{ and } \,\,\tM_\lk:= P_\lk\times \bbR.
\]
By \Cref{lem:quotientisacoveringmap}, the quotient
\[
M_s := \lquotient{\pi_1(M)}{\tM_s}
\]
is a closed $3$-manifold. Letting
\[
M_\lk:=\lquotient{\pi_1(M)}{P_\lk\times \bbR},
\]
The map $\tS\colon \tM_\lk\to \tM_s$ induces a map
\[
S\colon M_{\lk}\to M_s.
\]
The foliation of $M_s=Q \times \bbR$ by oriented vertical lines is $\pi_1(M)$-invariant, so it descends to an oriented foliation on $M_s$. To get a flow, we need to parametrize the leaves of this foliation. We will do this by pulling back the parametrization of the geodesic flow under a suitable map, which we construct in the next lemma. Before the statement, we set 
\[
R_\lk:=R|_{\tM_\lk}.
\]

\begin{lemma}
    The restricted comparison map $R_\lk$ induces a $\pi_1(M)$-equivariant map 
    \[
    R_s: M_s \to T^1\tM
    \]
    that sends vertical lines $\{q\}\times \bbR\subset Q\times \bbR=M_s$ monotonically to lifted geodesics.
\end{lemma}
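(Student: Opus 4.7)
The plan is to show $R_\lk$ is constant on the fibers of $\tS$, producing $R_s$ by the universal property of the quotient. Then equivariance and monotonicity will follow fairly directly from the corresponding properties of $R$ and $\tS$. Note that when the lemma writes ``$M_s$'' in the displayed line, the equation $Q \times \bbR = M_s$ makes clear that the target space of $R_s$ lives upstairs, i.e.\ we are really producing $R_s \colon \tM_s \to T^1\tM$ (the $\pi_1(M)$-equivariance is then equivariance of this lift).

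The main point is the constancy claim. Fix $(q,h) \in \tM_s$, so that $\tS^{-1}(q,h) = s^{-1}(q) \times \{h\}$. Unwinding the definition of the straightening map, $s^{-1}(q)$ equals $\bK^+ \cap \bK^- \cap P$ for a positive sprig $\bK^+ \in \bSp^+$ and negative sprig $\bK^- \in \bSp^-$ whose ends link in $S^1_u$. For any two points $p_1, p_2 \in s^{-1}(q)$, the flowlines $\flow{p_1}$ and $\flow{p_2}$ both have forward endpoint $z := e^+(\bK^+)$ (since they lie in a single positive sprig) and backward endpoint $w := e^-(\bK^-)$, so $R$ sends both flowlines to the \emph{same} oriented lifted geodesic $\gamma$ from $w$ to $z$. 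On the other hand, by \Cref{theorem:ProductStructure} the product structure identifying $\tM_\lk$ with $P_\lk \times \bbR$ has strong positive sprigs horizontal, so the points $\Pi(p_1,h), \Pi(p_2,h)$ lie in a single strong positive sprig, hence in a single strong positive omnileaf $R^{-1}(H^+)$, where $H^+$ is a stable horosphere centered at $z$. Therefore $R_\lk(p_i,h) \in H^+ \cap \gamma$. Since $\gamma$ is a geodesic landing at $z$ and $H^+$ is a horosphere centered at $z$, this intersection is a single point of $T^1\tM$, so $R_\lk(p_1,h) = R_\lk(p_2,h)$.

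Having established that $R_\lk$ is constant on fibers, and since $\tS$ is a quotient map (it is continuous, surjective, and closed, as noted just before \Cref{lem:actionprops}), the universal property yields a unique continuous map $R_s \colon \tM_s \to T^1\tM$ with $R_s \circ \tS = R_\lk$. For $\pi_1(M)$-equivariance, pick any $(p,h) \in \tS^{-1}(q,h)$ and $g \in \pi_1(M)$; then
\[
R_s(g \cdot (q,h)) = R_s\bigl(\tS(g \cdot (p,h))\bigr) = R_\lk(g \cdot (p,h)) = g \cdot R_\lk(p,h) = g \cdot R_s(q,h),
\]
using $\pi_1(M)$-equivariance of $\tS$ (\Cref{lem:actionprops}) and of $R$.

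For monotonicity, fix $q \in Q$ and pick $p \in s^{-1}(q)$. The map $\tS$ restricts to the identity on the $\bbR$-factor, so it sends $\{p\} \times \bbR$ homeomorphically onto $\{q\} \times \bbR$ preserving the vertical order. Meanwhile, under the product structure identification, $\{p\} \times \bbR$ corresponds to the flowline $\flow{p}$, parametrized by $\tPhi$, and $R_\lk$ sends this to $\gamma = R(\flow{p})$ monotonically, because $R$ is flow-equivariant (\Cref{prop:comparisonmapexists}) and therefore in particular monotone along flowlines. Composing, $R_s$ restricted to $\{q\} \times \bbR$ is a monotone parametrization of $\gamma$, as desired. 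The only subtle step is the first paragraph's fiber-constancy argument, which is where the positive-horizontality of the product structure plays a crucial role.
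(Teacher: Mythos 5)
Your proposal is correct and follows essentially the same route as the paper: show $R_\lk$ is constant on fibers of $\tS$ (using positive-horizontality of the product structure together with the fact that all flowlines over $s^{-1}(q)$ share both ideal endpoints, so the image lies in a single stable horosphere intersected with a single geodesic), then invoke the quotient-map property of $\tS$, with equivariance and monotonicity inherited from $R$ and the flow-equivariant parametrization. Your version actually spells out the fiber-constancy step more completely than the paper's terse "hence $R(\tS^{-1}(y))$ is a single point," and your reading that the map really lives on $\tM_s = Q\times\bbR$ matches the paper's own proof.
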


\begin{proof}
    Let $z=(q,t)\in M_s=Q\times \bbR$, and write $\tS^{-1}(z)=(\bK\cap \bL)\times \{t\}$ for positive and negative sprigs $\bK$ and $\bL$. 
    Since strong positive sprigs are horizontal, $\tS^{-1}(y)$ is contained in a \emph{single} strong positive sprig. Hence  $R(\tS^{-1}(y))$ is a single point, so $R_\lk$  induces a continuous map $R_s: Q \times \bbR \to T^1\tM$ since $\tS$ is a quotient map. This map is equivariant because $\tS$ and $R$ are.

    The monotonicity statement follows from the fact that $R$ is monotonic along flowlines, the restriction of $\tS$ to a single flowline is a bijection.
\end{proof}

Just as in \Cref{subsection:monotoneComparison}, we can use the map $R_s$, together with the geodesic flow on $T^1\bbH^3$, to parametrize the vertical foliation of $Q\times \bbR$ and obtain a flow $\tPhi_s$ on $\tM_s$ that descends to a flow $\Phi_s$ on $M_s$.

\begin{lemma}\label{lem:semiconjugacy1}
The map $S$ is a semiconjugacy from $\Phi_\lk$ to $\Phi_s$, i.e. for all $x\in M_\lk$, $t\in \bbR$,
\[
\Phi_\lk^t(x)\mapsto \Phi^t_s(S(x)).
\]
\end{lemma}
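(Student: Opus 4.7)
My plan is to reduce to the equivariant statement upstairs, namely that $\tS \circ \tPhi_\lk^t = \tPhi_s^t \circ \tS$ for all $t \in \bbR$, where $\tPhi_\lk$ denotes the restriction of $\tPhi$ to $\tM_\lk$ and $\tPhi_s$ is the lift of $\Phi_s$ to $\tM_s$. Since $S$, $\Phi_\lk$, and $\Phi_s$ are the $\pi_1(M)$-quotients of $\tS$, $\tPhi_\lk$, and $\tPhi_s$, the downstairs semiconjugacy descends from the upstairs one.

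Fix $x \in \tM_\lk$ and use the horizontal product structure from \Cref{theorem:ProductStructure} to write $x = (p,h) \in P_\lk \times \bbR$. Under this identification $\tPhi_\lk$ becomes the vertical flow, so $\tPhi_\lk^t(x) = (p, h+t)$. Since $\tS(p,h) = (s(p), h)$, the point $\tS(\tPhi_\lk^t(x)) = (s(p), h+t)$ lies on the vertical line through $\tS(x)$ in $\tM_s = Q \times \bbR$. The flow $\tPhi_s$ also preserves each vertical line by construction, so $\tPhi_s^t(\tS(x))$ lies on the same vertical line. The key observation is that $R_s$ is injective on each vertical line, since by construction it sends vertical lines monotonically onto lifted geodesics in $T^1\tM$. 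Hence to show the two candidate points agree, it suffices to verify that their images under $R_s$ coincide. This is a short computation:
\begin{align*}
R_s(\tS(\tPhi_\lk^t(x))) = R_\lk(\tPhi_\lk^t(x)) = R_\lk(x) \cdot t = R_s(\tS(x)) \cdot t = R_s(\tPhi_s^t(\tS(x))),
\end{align*}
where the first and third equalities use $R_s \circ \tS = R_\lk$ (from the construction of $R_s$ as the map induced by $R_\lk$ through the quotient $\tS$), the second uses the flow-equivariance of $R$ arranged in \Cref{subsection:monotoneComparison}, and the last uses the defining property of $\tPhi_s$ as the parametrization of the vertical foliation pulled back from the geodesic flow via $R_s$.

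I do not expect any serious obstacle here: the construction of $\tPhi_s$ was set up precisely so that $R_s$ intertwines it with the geodesic flow, and the reparametrization in \Cref{subsection:monotoneComparison} was made to guarantee flow-equivariance of $R$. The only ingredients to confirm are that $\tS$ sends vertical lines to vertical lines (which is immediate from $\tS(p,h) = (s(p),h)$) and that $R_s$ is injective on each vertical line (which is exactly the monotonicity statement appearing in the construction of $\tPhi_s$).
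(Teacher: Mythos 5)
Your proposal is correct and follows essentially the same route as the paper: reduce to the equivariant statement upstairs, run the computation $R_s(\tS(\tPhi_\lk^t(x)))=R_\lk(\tPhi_\lk^t(x))=\Theta^t(R_\lk(x))=R_s(\tPhi_s^t(\tS(x)))$, and conclude using that $R_s$ maps vertical lines monotonically (hence injectively) to lifted geodesics. Your explicit remark that both candidate points lie on the same vertical line is just a slightly more careful spelling-out of the paper's closing sentence.
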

\begin{proof}
    It suffices to show that $\tS(\tPhi_\lk^t(x))=\tPhi_s^t(x)$ for all $x\in \tM, t\in \bbR$. 
    This is a straightforward consequence of the fact that $\tPhi_s$ was parametrized using the map $R_s$, which was induced by the map $R_L$. Indeed, fixing $x\in M_\lk$ and $t\in \bbR$, we need $\tS(\tPhi_\lk^t(x))=\tPhi^t_s(\tS(x))$. We have
    \[
    R_s(\tS(\tPhi^t_\lk(x)))=R_\lk(\tPhi^t_\lk(x))=\Theta^t(R_\lk(x))=\Theta^t(R_s(\tS(x))=R_s(\tPhi_s^t(\tS(x))).
    \]
    To conclude, note that $R_s$ maps $\tPhi_s$-flowlines monotonically to lifts of geodesics.
\end{proof}

The manifold $M_s$ is hyperbolic by geometrization \cite{Perelman1, Perelman2, Perelman3}. By construction we have a natural identification of $\pi_1(M_s)$ with $\pi_1(M)$, so Mostow Rigidity furnishes a unique isometry 
\[MR\colon M_s\to M\]
inducing this identification.

\begin{definition}
We denote the flow on $M$ obtained by pushing forward $\Phi_s$ using $MR$ by $\Psi$. That is, $\Psi^t(x)=MR(\Phi_s^t(x))$.
\end{definition}

\begin{remark}
No appeal to geometrization is necessary if $M$ is Haken, in which case a result of Waldhausen \cite[Corollary 6.5]{Waldhausen} implies $M_s$ is homeomorphic to $M$ via a map inducing the identification of fundamental groups.
\end{remark}

We denote the restrictions of $\Phi$ and $\tPhi$ to $M_\lk$ and $\tM_\lk$ respectively by
\[
\Phi_\lk:= \Phi|_{M_\lk} \,\,\text{ and }\,\,\tPhi_\lk:= \tPhi|_{M_\lk}.
\]
The map $MR\circ S$ carries oriented flowlines of $\Phi_\lk$ to oriented flowlines of $\Psi$. In fact, by the way we parametrized $\tPhi_s$, it takes \emph{parametrized} flowlines to parametrized flowlines:

\begin{lemma}\label{lem:semiconjugacy2}
The map $MR\circ S$ is a semiconjugacy from $\Phi_\lk$ to $\Psi$, i.e. for all $x\in M_\lk$, $t\in \bbR$,
\[
\Phi_\lk^t(x)\mapsto \Psi^t((MR\circ S)(x)).
\]
\end{lemma}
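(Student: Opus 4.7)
The proof should be essentially a direct chase that combines the previous lemma (\Cref{lem:semiconjugacy1}) with the definition of $\Psi$. The plan is to observe that since $\Psi$ is \emph{defined} as the pushforward of $\Phi_s$ under the isometry $MR$, conjugation by $MR$ automatically transports the semiconjugacy $S\colon \Phi_\lk \to \Phi_s$ to a semiconjugacy $MR\circ S\colon \Phi_\lk\to \Psi$. So the content is really just functoriality: if $f$ semiconjugates $X$ to $Y$ and $h$ conjugates $Y$ to $Z$, then $h\circ f$ semiconjugates $X$ to $Z$.

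Concretely, I would fix $x\in M_\lk$ and $t\in\bbR$ and write the two sides separately. For the left-hand side, applying \Cref{lem:semiconjugacy1} gives
\[
(MR\circ S)(\Phi_\lk^t(x)) \;=\; MR\bigl(S(\Phi_\lk^t(x))\bigr) \;=\; MR\bigl(\Phi_s^t(S(x))\bigr).
\]
For the right-hand side, the definition $\Psi^t = MR\circ \Phi_s^t \circ MR^{-1}$ (which is the precise meaning of ``pushing forward $\Phi_s$ via $MR$,'' spelled out from the formula $\Psi^t(x)=MR(\Phi_s^t(x))$ in the text) yields
\[
\Psi^t\bigl((MR\circ S)(x)\bigr) \;=\; \Psi^t\bigl(MR(S(x))\bigr) \;=\; MR\bigl(\Phi_s^t(S(x))\bigr).
\]
The two expressions agree, which is precisely the claim.

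There is no real obstacle: the only nontrivial input is \Cref{lem:semiconjugacy1}, which has already been established, together with the fact that $MR$ is a homeomorphism with respect to which $\Psi$ and $\Phi_s$ are conjugate by construction. If anything, the only mild subtlety is notational — the paper writes $\Psi^t(x)=MR(\Phi_s^t(x))$ with $x$ appearing on both sides — but the intended meaning (push-forward along the isometry $MR\colon M_s\to M$) is unambiguous and the computation above is insensitive to this.
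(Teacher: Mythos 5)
Your proposal is correct and matches the paper's approach: the paper's proof is simply ``This follows immediately from \Cref{lem:semiconjugacy1},'' and your computation spells out exactly that deduction, combining \Cref{lem:semiconjugacy1} with the defining formula $\Psi^t = MR\circ \Phi_s^t\circ MR^{-1}$.
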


\begin{proof}
    This follows immediately from \Cref{lem:semiconjugacy1}.
\end{proof}

We have now constructed the flow and semiconjugacy from \Cref{theorem_main}. What remains is to show that:
\begin{enumerate}
    \item the semiconjugacy is homotopic to inclusion,
    \item $\Psi$ is quasigeodesic, and
    \item $\Psi$ is pseudo-Anosov.
\end{enumerate}
The first two points will be handled in \Cref{sec:homotopyquasigeodesity}, and the pseudo-Anosov property in \Cref{section:StraightenedFlowsPA}.

\subsection{Homotopy considerations and quasigeodesity of $\Psi$} \label{sec:homotopyquasigeodesity}

Let $\pi$ denote the projection $T^1\tM\to \tM$. Continuing to let $R\colon M\to T^1\tM$ denote our flow-equivariant comparison map for $\Phi$, there is a natural map $\wt G_\Phi\colon \tM\to \tM$ defined by $\wt G_\Phi:=\pi\circ R$. This map appeared in \Cref{lemma:ComparisonBDandUC}, where we noted that $d(x, \wt G_\Phi(x))$ is uniformly bounded. It follows that $\wt G_\Phi$ can be extended to $S^2_\infty$ by the identity map. In particular $\wt G_\Phi$ is surjective.

\begin{definition}\label{def:geodesificationmap}
     We call $\wt G_\Phi$ the \emph{geodesification map} for $\wt\Phi$.  
    Since $\wt G_\Phi$ is equivariant, it induces a surjection $G_\Phi\colon M\to M$ that sends each $\Phi$-orbit to the corresponding geodesic. We call $G_\Phi$ the \emph{geodesification map} for $\Phi$. Note that both geodesifications depend on $R$. 
\end{definition}

We remark that the geodesification map for $\Phi$ cannot be injective, since Zeghib proved that no closed hyperbolic manifold is foliated by geodesics \cite{Zeghib}.

\begin{lemma}
The geodesification map $G_\Phi$, defined above, is homotopic to the identity on $M$.
\end{lemma}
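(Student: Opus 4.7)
The plan is to construct an explicit straight-line homotopy in the universal cover and descend it to $M$. The key input is \Cref{lemma:ComparisonBDandUC}, which gives that $d(x, \pi(R(x))) = d(x, \wt G_\Phi(x))$ is uniformly bounded on $\tM$; in particular, it is always finite, so $x$ and $\wt G_\Phi(x)$ are joined by a unique geodesic segment in $\bbH^3 = \tM$.

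First I would define $\wt H\colon \tM \times [0,1] \to \tM$ by letting $\wt H(x,t)$ be the point at parameter $t$ along the unique unit-speed-reparametrized geodesic from $x$ to $\wt G_\Phi(x)$. Because $\bbH^3$ is uniquely geodesic and the geodesic between two points depends continuously on its endpoints, and because $\wt G_\Phi = \pi \circ R$ is continuous, $\wt H$ is continuous. By construction $\wt H(\cdot, 0) = \mathrm{id}_{\tM}$ and $\wt H(\cdot,1) = \wt G_\Phi$.

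Next I would check $\pi_1(M)$-equivariance. The map $\wt G_\Phi$ is $\pi_1(M)$-equivariant because $R$ is a comparison map, hence equivariant, and the bundle projection $\pi\colon T^1\tM \to \tM$ is also equivariant. The deck group $\pi_1(M)$ acts on $\bbH^3$ by isometries, so it sends the geodesic from $x$ to $\wt G_\Phi(x)$ to the geodesic from $gx$ to $g\wt G_\Phi(x) = \wt G_\Phi(gx)$, parametrized compatibly. Therefore $\wt H(gx,t) = g\wt H(x,t)$ for all $g \in \pi_1(M)$, $x \in \tM$, $t \in [0,1]$.

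Finally, since $\wt H$ is equivariant with respect to the trivial action on $[0,1]$ and the deck action on $\tM$, it descends to a continuous map $H\colon M \times [0,1] \to M$ with $H(\cdot,0) = \mathrm{id}_M$ and $H(\cdot,1) = G_\Phi$. This is the desired homotopy. There is no real obstacle here; everything reduces to the boundedness of $d(x, \wt G_\Phi(x))$ together with the fact that $\bbH^3$ is uniquely geodesic and $\pi_1(M)$ acts by isometries.
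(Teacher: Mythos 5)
Your proof is correct, but it takes a genuinely different route from the paper. You build an explicit geodesic ``straight-line'' homotopy upstairs: since $\wt G_\Phi = \pi\circ R$ is $\pi_1(M)$-equivariant (because $R$ and the bundle projection are), since $\bbH^3$ is uniquely geodesic with geodesics depending continuously on endpoints, and since deck transformations are isometries, the interpolation $\wt H$ is continuous and equivariant and hence descends to a homotopy from $\mathrm{id}_M$ to $G_\Phi$. The paper instead argues \`a la Handel--Thurston: it uses the fact that $\wt G_\Phi$ moves points a bounded distance to extend it to $S^2_\infty$ by the identity, homotopes $G$ so the lift fixes a point, shows from the boundary behavior that the lift commutes with every deck transformation, concludes that $G_\Phi$ induces the identity on $\pi_1(M)$, and finishes using that $M$ is a $K(\pi,1)$. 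Your argument is more elementary and explicit, but it leans directly on the equivariance of the chosen lift; the paper's argument is written so that it only needs the weaker-looking hypothesis that some lift extends to the identity on the sphere at infinity, which is why it routes through asphericity. Two small points: the boundedness from \Cref{lemma:ComparisonBDandUC} is not actually needed for your construction (finiteness of distance and unique geodesics suffice; boundedness is what the paper uses for the extension to $S^2_\infty$), and your phrase ``unit-speed-reparametrized geodesic'' should be the constant-speed parametrization of the segment on $[0,1]$ (so that $t=1$ lands on $\wt G_\Phi(x)$); with that wording fixed, equivariance of the parametrization under isometries is immediate and the descent goes through as you say.
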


\begin{proof}
As noted above, the particular lift $\wt G_\Phi$ of $G_\Phi$ continuously extends to $\partial_\infty \bbH^3$ by the identity. This in itself may already satisfy the reader, but for completeness we explain why this property suffices to prove the lemma, essentially following ideas in \cite[\S1]{HandelThurston}.

For convenience, we use $\tG$ and $G$ to denote $\tG_\Phi$ and $G_\Phi$ for the remainder of this proof.

First, by a homotopy of $G$ we may assume that $\wt G$ fixes some $\wt p\in \wt M$ and still moves points a uniformly bounded distance. Next, we claim that $\wt G$ commutes with each deck transformation of $\wt M$. It suffices to prove this for primitive deck transformations. Let $g$ be a primitive deck transformation, and let $\omega\in S^2_\infty$ be the terminal point of $g$'s geodesic axis.
For all $x\in \wt M$, we have
\[
\lim_{n\to \infty} (\wt G g \wt G^{-1})^n(x)=\wt G(\lim_{n\to\infty} g^n(\wt G^{-1}(x)))=\wt G(\omega)=\omega.
\]
Hence the axes of $g$ and $\wt G g\wt G^{-1}$ have the same terminal point, so they are equal. Since $g$ is primitive, $\wt G g\wt G^{-1}=g^k$ for some $k$. Thus $g= (\wt G^{-1} g\wt G)^k$ for some $k$, so $k=1$ (again because $g$ is primitive). This shows $\wt G$ commutes with $g$ as desired.

This implies that for any deck tranformation $h$, we have 
\[
\wt G(h(\wt p))=h(\wt G(\wt p))=h(\wt p).
\]
Hence $G$ induces the identity map on $\pi_1(M,p)$, where $p$ is the image of $\wt p$ under the covering projection. Since $M$ is a $K(\pi,1)$, the lemma follows.
\end{proof}

We now define a few more maps:
\begin{itemize}
\item The identification of $\pi_1(M)$ with $\pi_1(M_s)$ allows us to identify the Gromov boundaries of each with $S^2_\infty$. Let $\wt{MR}\colon \wt M_s\to \wt M$ be the lift of $MR$ that fixes $S^2_\infty$ pointwise.
\item We can define a comparison map $R_\Psi\colon \wt M\to T^1 \wt M$ by $R_\Psi=R_s\circ (\wt {MR})^{-1}$, with associated geodesifications $\wt G_\Psi\colon \tM\to \tM$ and $G_\Psi\colon M\to M$.
\end{itemize}

These maps fit into the diagram in \Cref{bigdiagram}. %By a slight abuse, we use $i$ to denote all inclusions. 
We note that all of the four-sided and 3-sided complementary regions commute by the definitions of the various maps. The next proof will show in particular that the entire diagram commutes.

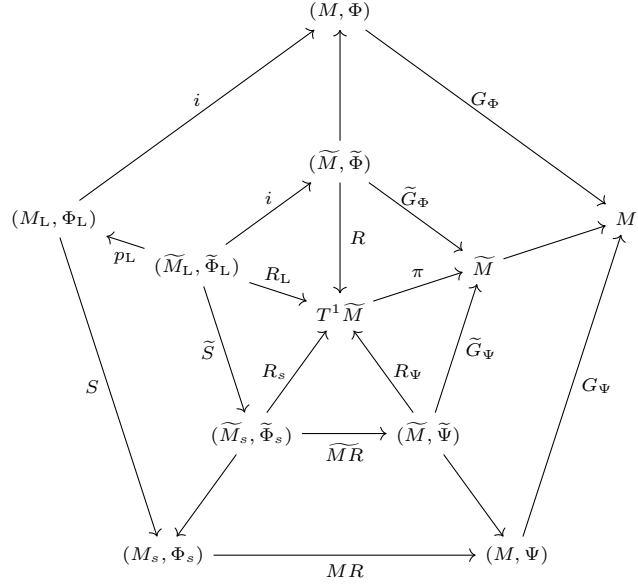
\begin{figure}
\begin{center}
    \begin{tikzpicture}[scale=2]
\node (O) at (0,0){\scriptsize$T^1\tM$};
\node (A2) at (90:2){\scriptsize$(M,\Phi)$};
\node (A1) at (90:1){\scriptsize$(\tM,\wt\Phi)$};
\node (B2) at (162:2){\scriptsize$(M_\lk, \Phi_\lk)$};
\node (B1) at (162:1){\scriptsize$(\tM_\lk, \wt\Phi_\lk)$};
\node (C2) at (234:2){\scriptsize$(M_s, \Phi_s)$};
\node (C1) at (234:1){\scriptsize$(\tM_s, \wt\Phi_s)$};
\node (D2) at (306:2){\scriptsize$(M, \Psi)$};
\node (D1) at (306:1){\scriptsize$(\tM, \wt\Psi)$};
\node (E1) at (18:1){\scriptsize$\tM$};
\node (E2) at (18:2){\scriptsize$M$};
\draw
%inner spokes
(A1) edge[->] node[right] {\scriptsize$R$} (O)
(B1) edge[->] node[above] {\scriptsize$R_\lk$} (O)
(C1) edge[->] node[left] {\scriptsize$R_s$} (O)
(D1) edge[->] node[right] {\scriptsize$R_\Psi$} (O)
(O) edge[->] node[above] {\scriptsize$\pi$} (E1)
%outer spokes
(A1) edge[->] node[right] {} (A2)
(B1) edge[->] node[below] {\scriptsize$p_{\lk}$} (B2)
(C1) edge[->] node[left] {} (C2)
(D1) edge[->] node[left] {} (D2)
(E1) edge[->] node[below] {} (E2)
%inner pentagon
(B1) edge[->] node[above] {\scriptsize$i$} (A1)
(B1) edge[->] node[left] {\scriptsize$\wt S$} (C1)
(C1) edge[->] node[below] {\scriptsize$\wt{MR}$} (D1)
(D1) edge[->] node[right] {\scriptsize$\wt G_\Psi$} (E1)
(A1) edge[->] node[above] {\scriptsize$\wt G_\Phi$} (E1)
%outer pentagon
(B2) edge[->] node[above] {\scriptsize$i$} (A2)
(B2) edge[->] node[left] {\scriptsize$S$} (C2)
(C2) edge[->] node[below] {\scriptsize$MR$} (D2)
(D2) edge[->] node[right] {\scriptsize$G_\Psi$} (E2)
(A2) edge[->] node[above] {\scriptsize$G_\Phi$} (E2);
\end{tikzpicture}
\end{center}
\caption{The maps defined in this section fit into the above diagram, which is referred to in the proof of \Cref{prop:homotopictoinclusion}. By a slight abuse we use $i$ to denote the inclusions $M_\lk\hookrightarrow M$ and $\tM_\lk \hookrightarrow \tM$. Each outer spoke is covering map or the restriction of one.}
\label{bigdiagram}
\end{figure}

\begin{proposition}\label{prop:homotopictoinclusion}
The semiconjugacy from $\Phi_\lk$ to $\Psi$ is homotopic to inclusion $i\colon M_\lk\hookrightarrow M$, i.e.
\[
MR\circ S\simeq i.
\]
\end{proposition}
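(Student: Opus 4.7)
The strategy is to chase the diagram preceding the proposition, leveraging the fact (just proved) that $G_\Phi \simeq \mathrm{id}_M$ together with the analogous statement $G_\Psi \simeq \mathrm{id}_M$, which I will establish.

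First, I would check that the diagram commutes; in particular, as maps $M_\lk \to M$,
\[ G_\Phi \circ i \;=\; G_\Psi \circ MR \circ S. \]
Unwinding upstairs, $R_\Psi \circ \wt{MR} \circ \tS = R_s \circ \tS = R \circ i$: the first equality is the definition $R_\Psi := R_s \circ \wt{MR}^{-1}$, and the second is the fact that $R_s$ was constructed precisely as the factorization of $R|_{\wt M_\lk} = R_\lk$ through $\tS$ (using that strong positive sprigs are horizontal, so $R_\lk$ is constant on fibers of $\tS$). Applying $\pi$ gives $\wt G_\Psi \circ \wt{MR} \circ \tS = \wt G_\Phi \circ i$, which descends to the displayed equality.

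Next, I would establish $G_\Psi \simeq \mathrm{id}_M$. The proof mirrors the one just given for $G_\Phi$: it suffices to show that $\wt G_\Psi$ moves points a uniformly bounded distance, so that after a homotopy one may assume it fixes a chosen basepoint in $\wt M$ and still moves points a bounded distance, after which the ``commutes with primitive deck transformations'' argument runs verbatim. Boundedness of $d(y, \wt G_\Psi(y))$ follows from $\pi_1(M)$-equivariance of $\wt G_\Psi$ together with compactness of $M$: the map $\tS$ is equivariant by \Cref{lem:actionprops}, $R$ and $\pi$ are equivariant by construction (so $R_s$ is equivariant), and $\wt{MR}$ is equivariant because it is the lift of the Mostow-Rigidity isometry fixing $S^2_\infty$ pointwise (hence inducing the canonical identification $\pi_1(M_s) \cong \pi_1(M)$). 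Composing, $\wt G_\Psi = \pi \circ R_s \circ \wt{MR}^{-1}$ is equivariant, so $d(y, \wt G_\Psi(y))$ descends to a continuous function on the compact space $M$ and is therefore bounded.

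Finally, combining these three ingredients yields
\[ i \;\simeq\; G_\Phi \circ i \;=\; G_\Psi \circ MR \circ S \;\simeq\; MR \circ S, \]
where the first homotopy comes from restricting $G_\Phi \simeq \mathrm{id}_M$ to $M_\lk$ and the last from postcomposing $G_\Psi \simeq \mathrm{id}_M$ with $MR \circ S$. The main obstacle is not the homotopy-theoretic content, which is essentially formal once the earlier argument is available, but rather the bookkeeping: one must carefully identify the three $\pi_1(M)$-actions in play (on $\wt M$, on $\wt M_\lk \subset \wt M$, and on $\wt M_s$) and verify that the various maps are equivariant with respect to them in order to deduce both the commutativity of the diagram and the boundedness needed to run the geodesification argument for $G_\Psi$.
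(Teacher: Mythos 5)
Your proposal is correct and follows essentially the same route as the paper: one checks that the diagram commutes (giving $G_\Psi \circ MR \circ S = G_\Phi \circ i$, in the paper via precomposition with the surjective covering projection $p_\lk$) and then uses that both geodesification maps $G_\Phi$ and $G_\Psi$ are homotopic to the identity. Your extra step of verifying equivariance and bounded displacement of $\wt G_\Psi$ just makes explicit why the geodesification lemma applies to $\Psi$ as well, which the paper leaves implicit.
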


\begin{proof}
    Since the triangles and squares of the diagram in \Cref{bigdiagram} commute, we have 
    \[
    G_\Psi\circ MR\circ S\circ p_\lk=G_\Phi\circ i\circ p_\lk
    \]
    where $p_\lk\colon \wt M_\lk\to M_\lk$ is the restriction of the covering projection $\wt M\to M$. Since $p_\lk$ is surjective, we have 
    \[
    G_\Psi\circ MR\circ S=G_\Phi\circ i.
    \]
    Since $G_\Phi$ and $G_\Psi$ are homotopic to the identity, we have $MR\circ S\simeq i$.
\end{proof}

We conclude the section by showing:

\begin{proposition}\label{prop:flowisquasigeodesic}
$\Psi$ is quasigeodesic.
\end{proposition}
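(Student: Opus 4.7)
My plan is to apply the Fenley--Mosher--Calegari characterization from \Cref{proposition:QGFlowCharacterization}: it suffices to show that every orbit of $\wt\Psi$ has well-defined, distinct endpoints in $S^2_\infty$, and that these endpoints vary continuously with the basepoint in $\wt M$.

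The key object is the map $R_\Psi = R_s\circ \wt{MR}^{-1}\colon \wt M\to T^1\wt M$ introduced just before \Cref{bigdiagram}. First I would verify three structural properties of $R_\Psi$. (1) It is continuous and $\pi_1(M)$-equivariant: equivariance uses that $\wt{MR}$ intertwines the two deck actions (the action on $\wt M_s$ being transported from $M$ via the Mostow Rigidity isometry implementing the fixed identification $\pi_1(M_s)\cong\pi_1(M)$), and that $R_s$ is $\pi_1(M)$-equivariant by construction. (2) It is flow-equivariant in the sense $R_\Psi\circ \wt\Psi^t = \Theta^t\circ R_\Psi$, where $\Theta$ is the geodesic flow on $T^1\wt M$: this is immediate since $\wt\Psi$ is the $\wt{MR}$-pushforward of $\wt\Phi_s$, and $\wt\Phi_s$ was parametrized precisely so that $R_s$ is flow-equivariant. (3) $R_\Psi$ sends each $\wt\Psi$-orbit monotonically onto a lifted geodesic, because $R_s$ sends each vertical line $\{q\}\times\bbR\subset\wt M_s$ monotonically onto a lifted geodesic.

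Next I would show, exactly as in \Cref{lemma:ComparisonBDandUC}, that the bundle projection $\pi\circ R_\Psi\colon \wt M\to \wt M$ moves points a uniformly bounded distance: by $\pi_1(M)$-equivariance it descends to a continuous map $M\to M$, and by compactness of $M$ the displacement function $x\mapsto d(x,\pi(R_\Psi(x)))$ is uniformly bounded on $\wt M$.

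Combining these, for each $x\in \wt M$ the $\wt\Psi$-orbit through $x$ lies in a uniformly bounded neighborhood of the oriented lifted geodesic tangent to $R_\Psi(x)$. Hence the orbit has well-defined endpoints in $S^2_\infty$ equal to $\epsilon^\pm(R_\Psi(x))$; these are distinct since they are the endpoints of a hyperbolic geodesic; and they vary continuously in $x$ since $R_\Psi$ and $\epsilon^\pm$ are continuous. \Cref{proposition:QGFlowCharacterization} then yields that $\Psi$ is quasigeodesic. I do not anticipate a serious obstacle here: essentially all the work was already done in constructing $\wt\Phi_s$ with $R_s$ flow-equivariant and monotone, and in invoking Mostow Rigidity; once $R_\Psi$ is assembled, it plays precisely the role of a monotone flow-equivariant comparison map for $\wt\Psi$, and the conclusion is then standard.
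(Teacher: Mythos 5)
Your argument is correct, and it takes a slightly different route than the paper's. The paper's proof is essentially three lines: it gets well-defined, distinct endpoints for each $\wt\Psi$-flowline from \Cref{prop:homotopictoinclusion} (each $\wt\Psi$-orbit is the image of a quasigeodesic $\wt\Phi$-flowline under $\wt{MR}\circ\wt S$, which is a bounded distance from the inclusion, so the orbit inherits the endpoints of the original flowline), and then gets continuity of the endpoint maps by observing that they are induced from the endpoint maps of $\tPhi$ through the quotient map $\wt{MR}\circ\wt S$. You instead bypass \Cref{prop:homotopictoinclusion} entirely and compare $\wt\Psi$ directly to the geodesic flow: you check that $R_\Psi=R_s\circ\wt{MR}^{-1}$ is $\pi_1(M)$-equivariant, flow-equivariant, and monotone on orbits, rerun the compactness argument of \Cref{lemma:ComparisonBDandUC} to get uniformly bounded displacement of $\pi\circ R_\Psi$, and conclude that each orbit tracks the geodesic tangent to $R_\Psi(x)$ at bounded distance, so its endpoints are $\epsilon^\pm(R_\Psi(x))$, distinct and continuous in $x$; then \Cref{proposition:QGFlowCharacterization} applies, exactly as in the paper. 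Both proofs are short because all the work was done in constructing $R_s$ and $\wt{MR}$; the paper's version maximally reuses already-proved statements (and the quasigeodesity of the original $\Phi$), while yours is more self-contained at this step, does not depend on the homotopy-to-inclusion result, and has the mild bonus of exhibiting $R_\Psi$ explicitly as a monotone flow-equivariant comparison map for $\Psi$ with the endpoint maps written as $\epsilon^\pm\circ R_\Psi$ (the paper effectively uses this same data later, e.g.\ in \Cref{section:StraightenedFlowsPA} and in treating $G_\Psi$ around \Cref{bigdiagram}). The only point worth making explicit in your write-up is the standard fact that a path staying a bounded hyperbolic distance from a geodesic ray converges to the same ideal point, which is what turns bounded displacement plus flow-equivariance into ``well-defined endpoints.''
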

\begin{proof}
    \Cref{prop:homotopictoinclusion} implies that each flowline of $\wt\Psi$ has well-defined and distinct endpoints. It therefore suffices, by \Cref{proposition:QGFlowCharacterization}, to show that the positive and negative endpoint maps for $\tPsi$ are continuous. But these are just the maps induced by the endpoint maps for $\tPhi$ under the quotient map $\wt {MR}\circ \wt S$, so they are continuous.
\end{proof}

\section{Straightened flows are pseudo-Anosov}\label{section:StraightenedFlowsPA}

In this section we complete the proof of our main theorem, \Cref{theorem_main}.

In \Cref{section:StraighteningFlow} we constructed a quasigeodesic flow $\Psi$ on $M$ which is strongly redolent of a pseudo-Anosov flow. Most notably, the sprig decompositions of its compactified flowspace restrict to a transverse pair of singular foliations of the flowspace, so the lifted flow $\tPsi$ on $\tM$ has 2-dimensional $\tPsi$-invariant singular foliations which are preserved by the $\pi_1(M)$-action. Downstairs in $M$, this means that $\Psi$ preserves a pair of transverse singular foliations (the positive and negative decompositions).

What remains is to prove that $\Psi$ is genuinely pseudo-Anosov. To do this, we will build a metric compatible with the topology of $M$ with respect to which it is easy to show that points in strong positive/negative leaves are forward/backward asymptotic. This itself verifies all the properties in the definition of a topological pseudo-Anosov flow except the existence of a Markov partition. Rather than directly constructing one, we will use our results about forward and backward asymptoticity to directly prove that $\Psi$ is expansive (\Cref{defn:expansive}); it is known that all expansive flows are topologically pseudo-Anosov (see \Cref{prop:ifexpansivethenpA}). 

We conclude the section, and the paper, by proving that the strong stable and unstable decompositions of $\Psi$ are uniformly exponentially contracted/expanded in forward time.

\subsection{Setup for the section}

In this section $\Psi$ will denote a quasigeodesic flow on a closed hyperbolic 3-manifold $M$ such that the associated positive/negative leaf decompositions of the flowspace form a pair of transverse singular foliations that intersect efficiently.

Given such a flow, one can construct a flow-equivariant comparison map $R$ as in \Cref{section:comparisonmaps}, which yields strong stable and unstable decompositions. As in \Cref{section:productstructures} we can find a product structure on $\tM$ so that all strong positive leaves are horizontal.

This allows us to cover $\tM$ by flowboxes $A\times I$ (see \ref{it:weakcontraction} of \Cref{sec:pAflows}) so that each strong positive leaf that intersects a flow box does so in a connected subset of a horizontal slice $A\times\{t\}$. We call such a flowbox \emph{positively adapted}. This is, in particular, a system of charts that shows that the strong positive decomposition is a 1-dimensional singular foliation of $\tM$. Similarly, we can define \emph{negatively adapted} flowboxes and conclude the strong negative leaf decompositions of $\tM$ are singular foliations. Since both are flow-invariant and $\pi_1(M)$-invariant, they project to $\Psi$-invariant 1-dimensional singular foliations of $M$.

\subsection{The polygonal metric}

Let us set some notation. For $x,y\in \tM$, let  $\td_\bbH(x,y)$ denote the distance between $x$ and $y$ in the hyperbolic metric; similarly let $d_\bbH$ denote hyperbolic distance downstairs in $M$.
For a metric on $T^1\tM$, we use the canonical (Sasaki) metric, which is preserved by the action of $\pi_1(M)$ on $T^1\tM$. This metric has the property that geodesics in $\tM$ lift to geodesics in $T^1\tM$, and stable/unstable horospheres are uniformly exponentially contracted/expanded by the geodesic flow in forward time.

Given a subset $A \subset \tM$, we denote the diameter of $R(A)$ in $T^1\tM$ by 
\[
\diam_R(A).
\]
%\mcom{$\diam^\pm$ commented out here} 
If $A$ lies in a single strong positive/negative sprig, then $R(A)$ lies in a single stable/unstable horosphere, and we write 
\[\diam^\pm(A)\] 
for the diameter of $R(A)$ in the induced path metric on this horosphere.

Any two points $x, y$ in a strong positive or negative leaf $k \in \cF^\ppmm$ are connected by a unique oriented sub-arc of $k$. We call this a \emph{strong positive} or \emph{strong negative segment}, and denote it by $k[x, y]$. A \emph{flow segment} is a compact arc $x \cdot [t_0, t_1]$ in a flowline. This may be oriented to agree or disagree with the flow, depending on whether $t_0 \leq t_1$ or $t_0 \geq t_1$.

The initial and terminal points of a flow segment or strong positive or negative segment $\sigma$ will be denoted by $\iota(\sigma) = p$ and $\tau(\sigma) = q$.

\begin{definition}
	A \emph{polygonal path} from $x\in\tM$ to $y\in \tM$ is a finite concatenation
	\[ 
    \alpha = \sigma_0 \star \sigma_1 \star \cdots \star \sigma_n 
    \]
	of flow segments, strong positive segments, and strong negative segments, where $x = \iota(\sigma_0)$, $\tau(\sigma_i) = \iota(\sigma_{i+1})$ for each $i$, and $\tau(\sigma_n) = y$.
\end{definition}

\begin{lemma}
	Any two points in $\tM$ can be joined by a polygonal path.
\end{lemma}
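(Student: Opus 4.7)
The plan is to show that the binary relation ``$x \sim y$ if $x$ and $y$ are joined by a polygonal path'' is an equivalence relation on $\tM$ whose equivalence classes are open, then invoke connectedness of $\tM$ (which is homeomorphic to $\bbH^3$) to conclude there is a single class. Reflexivity follows from a degenerate flow segment, symmetry from reversing the concatenation of pieces, and transitivity from concatenation, so we have an equivalence relation.

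To prove openness, fix $x\in\tM$ and choose a positively adapted flow box $A\times I$ around $x=(a_\star,t_\star)$. We claim every $y\in A\times I$ satisfies $y\sim x$. A flow segment first takes $y=(a,s)$ to $(a,t_\star)$, so it suffices to connect $(a,t_\star)$ to $x$. Since $A$ is a standard bifoliated $2n$-gon, there is a piecewise path $a=b_0,b_1,\dots,b_n=a_\star$ in $A$ whose consecutive pairs $b_i,b_{i+1}$ lie on a common leaf of one of the two transverse foliations of $A$. We build a polygonal path from $(a,t_\star)$ to $x$ by visiting points $(b_i,h_i)\in A\times I$, starting from $h_0:=t_\star$:
\begin{itemize}
\item If $b_i,b_{i+1}$ share a positive leaf $\lambda^+$ of $A$, then by positive adaptation the horizontal arc in $(\lambda^+\cap A)\times\{h_i\}$ from $(b_i,h_i)$ to $(b_{i+1},h_i)$ lies in a single strong positive leaf, giving a strong positive segment; set $h_{i+1}:=h_i$.
\item If $b_i,b_{i+1}$ share a negative leaf $\lambda^-$ of $A$, then $(b_i,h_i)$ and every $(b_{i+1},t)\in\{b_{i+1}\}\times I$ lie in the common weak negative leaf (locally $\lambda^-\times I$). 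By \Cref{lemma:StrongOmnileavesFlowlines}, the strong negative leaf through $(b_i,h_i)$ meets $\{b_{i+1}\}\times I$ in exactly one point $(b_{i+1},h_{i+1})$, and the connecting subarc is a strong negative segment.
\end{itemize}
A final flow segment from $(a_\star,h_n)$ to $x=(a_\star,t_\star)$ completes the polygonal path.

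The main technical point is ensuring that the strong negative subarcs used in the second bullet remain inside the flow box: their heights could in principle leave $I$. To arrange this, one shrinks $A$ and $I$ so that each plaque $\lambda^{\pm}\cap A$ is connected and so that the continuous sections of the flowline projections (which describe strong negative leaves inside weak negative plaques, again by \Cref{lemma:StrongOmnileavesFlowlines}) stay within $I$ throughout. Since $A$ has only finitely many prongs at its center, a piecewise path in $A$ with finitely many short pieces always suffices. This establishes openness of $\sim$-equivalence classes, and connectedness of $\tM$ then forces a single class, proving the lemma.
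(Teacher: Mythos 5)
Your overall strategy (show that ``joinable by a polygonal path'' is an equivalence relation with open classes, then use connectedness of $\tM$) is legitimate and genuinely different in execution from the paper's: the paper first proves the statement downstairs in the flowspace $P$, where the analogous open-and-closed argument takes place inside foliation charts and the pieces are just positive/negative leaf segments, and then lifts such a path to a polygonal path in $\tM$ with no flow segments, ending on the flowline through $y$, and appends a single flow segment. Working in $P$ first is exactly what lets the paper avoid all vertical bookkeeping; your direct argument in $\tM$ reintroduces it, and that is where your proof has a real gap.

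The gap is the point you flag and then only sketch. Two assertions are not justified. First, in the positive-leaf step, that the whole horizontal arc $(\lambda^+\cap A)\times\{h_i\}$ lies in a single strong positive leaf: positive adaptation only says that a strong positive leaf meets the box in a connected subset of one horizontal slice; it does not by itself prevent the strong positive leaf through $(b_i,h_i)$ from leaving the box over the plaque (for instance when $h_i$ is near an endpoint of $I$), so you need either an intermediate-value argument using the continuity of the section from \Cref{prop:StrongOmnileavesProject}, or the global horizontality of strong positive sprigs coming from \Cref{theorem:ProductStructure}. Second, in the negative-leaf step, the claim that after shrinking $A$ and $I$ the heights $h_{i+1}$ stay in $I$ ``throughout'' requires a \emph{uniform} statement that strong negative leaves through points of a small box have uniformly small height variation over their plaques, i.e.\ continuous dependence of the strong-leaf sections on the basepoint; this is plausible and can be extracted from the comparison-map/horosphere arguments behind \Cref{lemma:SectionOverOmnileaves} and \Cref{lemma:horospherefacts}, but you assert it rather than prove it. Note also that none of this control is actually needed, since a polygonal path is not required to stay in a flowbox: for $y$ close to $x$ the weak negative leaf through $y$ meets the weak positive leaf through $x$, so one gets a three-piece path (strong positive segment from $x$, then a strong negative segment to a point of the flowline through $y$ using \Cref{lemma:StrongOmnileavesFlowlines}, then a flow segment) — this is exactly the construction the paper uses in the proof of \Cref{lemma:compatiblemetrics}, and it yields openness with no height bookkeeping at all.
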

\begin{proof}
	First note that any two points in $P$ may be joined by a polygonal path, i.e. a finite formal concatenation of positive and negative segments. Indeed, the set of points that may be joined to a chosen basepoint by a polygonal path is both closed and open in any chart, and hence in $P$.
	
	Let $x$ and $y$ be points in $\tM$. Then a polygonal path from $\nu(x)$ to $\nu(y)$ in $P$ lifts to a unique polygonal path, with no flow segments, from $x$ to some point $y' \in y \cdot \bbR$. Concatenating with a flow segment yields a polygonal path from $x$ to $y$.
\end{proof}

 \begin{definition}
 	Define the \emph{polygonal length} of a positive/negative segment $k[x, y]$ by 
     \[
     \ell_\diamond(k[x,y]) := \diam^\pm(k[x, y]),
     \]
     and of a flow segment $x \cdot [t_0, t_1]$ by 
     \[
     \ell_\diamond(x \cdot [t_0, t_1])=\diam_R(x \cdot [t_0, t_1]).
     \] 
     (Note that the above equals $|t_1-t_0|$ since $R$ is flow-equivariant).
     Define the \emph{polygonal length} of a polygonal path $\alpha = \sigma_0 \star \cdots \star \sigma_n$ by
 	\[ 
     \ell_\diamond(\alpha) := \sum_{i = 0}^n \ell_\diamond(\sigma_i). 
     \]
 	The \emph{distance} between points $x, y \in \tM$ is defined by
 	\[ 
     \td(x, y) = \inf \{\ell_\diamond(\alpha) \mid \alpha \text{ polygonal path from } x \text{ to } y \}. \qedhere\]
\end{definition}

\begin{lemma}\label{lem:compareRdiampoly}
    For any polygonal arc $\alpha$, $\diam_R(\alpha)\le \ell_\diamond(\alpha)$.
\end{lemma}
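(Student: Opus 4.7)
The plan is to reduce the bound on $\diam_R(\alpha)$ to a segment-by-segment estimate using the triangle inequality in the Sasaki metric on $T^1\tM$, and then verify the required bound on each individual segment.

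First, I would establish the segment-wise estimate: for each segment $\sigma$ appearing in a polygonal path, $\diam_R(\sigma) \le \ell_\diamond(\sigma)$. If $\sigma = x \cdot [t_0, t_1]$ is a flow segment, this is an equality by definition, since $\ell_\diamond(\sigma) = \diam_R(\sigma)$. If $\sigma = k[x,y]$ is a strong positive or negative segment, then $R(\sigma)$ lies in a single stable or unstable horosphere $H^\pm \subset T^1\tM$. The inclusion $H^\pm \hookrightarrow T^1\tM$ is distance non-increasing with respect to the horospherical path metric, so
\[
\diam_R(\sigma) \;\le\; \diam^{\pm}(\sigma) \;=\; \ell_\diamond(\sigma).
\]

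Next, write $\alpha = \sigma_0 \star \sigma_1 \star \cdots \star \sigma_n$. Given any two points $p, q \in \alpha$, say with $p \in \sigma_i$ and $q \in \sigma_j$ for $i \le j$, I would chain together Sasaki-distances by passing through the endpoints of the intervening segments:
\[
d_{T^1\tM}(R(p), R(q)) \;\le\; d_{T^1\tM}(R(p), R(\tau(\sigma_i))) + \sum_{k=i+1}^{j-1} d_{T^1\tM}(R(\iota(\sigma_k)), R(\tau(\sigma_k))) + d_{T^1\tM}(R(\iota(\sigma_j)), R(q)).
\]
Each term on the right is bounded by $\diam_R$ of the corresponding segment, which by the previous paragraph is bounded by $\ell_\diamond$ of that segment. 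Summing and extending to all of $\alpha$ yields
\[
d_{T^1\tM}(R(p), R(q)) \;\le\; \sum_{k=0}^{n} \ell_\diamond(\sigma_k) \;=\; \ell_\diamond(\alpha).
\]
Taking the supremum over $p, q \in \alpha$ gives $\diam_R(\alpha) \le \ell_\diamond(\alpha)$.

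There is no real obstacle here; the only point worth checking carefully is the comparison between horospherical intrinsic distance and the restricted Sasaki distance, but this is a standard feature of the Sasaki metric on $T^1\bbH^3$ (horospheres are smooth submanifolds and their intrinsic path metric dominates the ambient one). Everything else is bookkeeping with the triangle inequality.
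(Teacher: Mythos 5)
Your proposal is correct and follows essentially the same route as the paper: the segment-wise bound $\diam_R(\sigma)\le \ell_\diamond(\sigma)$ via the fact that the horospherical path metric dominates the ambient Sasaki distance (with equality by definition for flow segments), followed by concatenation. The only cosmetic difference is that the paper packages your explicit triangle-inequality chaining through shared endpoints into the general observation that $\diam(A\cup B)\le \diam(A)+\diam(B)$ for intersecting sets.
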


\begin{proof}
     In any metric space, if sets $A$ and $B$ intersect nontrivially then $\diam(A\cup B)\le \diam (A)+\diam (B)$. Hence if $\alpha=\sigma_1\star\cdots\star \sigma_n$ is a polygonal path, then 
     \[
     \diam_R(\alpha) 
     \le
     \sum_{i=1}^n\diam_R(\sigma_i) 
     \le \sum_{i=1}^n\ell_\diamond(\sigma_i)=\ell_\diamond(\alpha).
     \]
     For the second inequality above, we have simply used that the distance between two points in the same stable or unstable horosphere, with respect to the path metric on that horosphere, is bounded below by their distance in $T^1\tM$.
  \end{proof}

In particular if $\alpha_1, \alpha_2, \dots$ is a sequence of polygonal arcs with $\ell_\diamond(\alpha_i) \to 0$, then $\diam_R(\alpha_i) \to 0$. 

\begin{lemma}\label{lemma:Rtd}
	If $A \subset \tM$ is a connected set such that $\diam(R(A)) = 0$, then $A$ is a single point. 
\end{lemma}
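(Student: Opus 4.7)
The plan is as follows. Assume $A$ is nonempty (otherwise there is nothing to prove). The hypothesis $\diam(R(A)) = 0$ forces $R(A) = \{v\}$ for a single vector $v \in T^1\tM$. Let $H^+$ and $H^-$ be the stable and unstable horospheres through $v$, so that $\{v\} = H^+ \cap H^-$. Then $A$ is contained in
\[
R^{-1}(v) = R^{-1}(H^+) \cap R^{-1}(H^-) = k_0 \cap l_0,
\]
where $k_0$, $l_0$ are the strong positive and negative omnileaves cut out by $H^+$ and $H^-$.

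The first move would be to use connectedness to pass from omnileaves to leaves: since strong positive (resp.\ negative) leaves are by definition the connected components of strong positive (resp.\ negative) omnileaves, the connected set $A$ lies in a single strong positive leaf $k \subset k_0$ and a single strong negative leaf $l \subset l_0$. I would then push the problem down to the flowspace via the projection $\nu\colon \tM \to P$. Flow-equivariance and monotonicity of $R$ along each flowline imply that $R^{-1}(v)$ meets each flowline in at most one point, so $\nu|_A$ is injective. Since $\nu|_{k_0}$ is a homeomorphism carrying components to components (and similarly for $l_0$), $\nu(k)$ is a positive $P$-leaf $K$ and $\nu(l)$ is a negative $P$-leaf $L$, and therefore $\nu(A) \subset K \cap L$.

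Finally, the standing assumption of this section is that the positive and negative leaf decompositions of $P$ form a transverse pair of singular foliations with efficient intersection, so $K \cap L$ contains at most one point. Combined with injectivity of $\nu|_A$, this gives $|A| \le 1$, and nonemptiness of $A$ then forces $A$ to be a single point. The only delicate aspect of the argument is that efficient intersection is imposed at the level of leaves rather than omnileaves; this is precisely why the connectedness of $A$ has to be exploited, namely to refine the containment $A \subset k_0 \cap l_0$ to $A \subset k \cap l$ before invoking the hypothesis.
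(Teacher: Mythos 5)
Your proof is correct and follows essentially the same route as the paper's: both arguments come down to flow-equivariance forcing $R^{-1}(v)$ to meet each flowline in at most one point, together with the standing efficient-intersection hypothesis that a positive $P$-leaf meets a negative $P$-leaf in at most one point. The only difference is packaging — you run the connectedness argument upstairs through strong omnileaves and their components (strong leaves) before projecting, whereas the paper phrases it as total disconnectedness of $R^{-1}(v)$ with the components identified downstairs in the flowspace — so no gap to report.
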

%\mcom{if we don't want to use a metric on $T^1\tM$, and switch to $\diam_\tG$ instead of $\diam_R$, I think this lemma is false}
\begin{proof}
    It suffices to show that for each $v \in T^1\tM$, the preimage $R^{-1}(v)$ is totally disconnected.

	The flowlines that intersect $R^{-1}(v)$ correspond to the set of points $p \in Q$ with $e^+(p) = z$ and $e^-(p) = w$, where $z$ and $w$ are the endpoints of the geodesic through $v$ in $S^2_\infty$. This set is totally disconnected, since each component is contained in the intersection of a positive and negative leaf (i.e. the intersection of a component of $e^+(p)$ with a component of $e^-(p)$), and each positive leaf intersects each negative leaf in at most one point.

    Since $R$ is flow-equivariant, $R^{-1}(v)$ intersects each flowline in at most one point. The lemma follows.
\end{proof}

\begin{lemma}\label{lemma:compatiblemetrics}
	$\td$ is a metric that is compatible with the topology on $\tM$.
\end{lemma}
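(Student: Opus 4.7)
The plan is to verify the metric axioms and then to establish compatibility by showing containment of each topology in the other.

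Symmetry and the triangle inequality are immediate: any polygonal path $\sigma_0 \star \cdots \star \sigma_n$ from $x$ to $y$ can be reversed to give a polygonal path from $y$ to $x$ of the same $\ell_\diamond$ (flow segments become reverse-oriented flow segments, and strong segments remain strong segments of the same polygonal length), and concatenating paths yields the triangle inequality. The nontrivial axiom is nondegeneracy. Suppose $\td(x,y)=0$, and choose polygonal paths $\alpha_i$ from $x$ to $y$ with $\ell_\diamond(\alpha_i)\to 0$. By \Cref{lem:compareRdiampoly}, $\diam_R(\alpha_i)\to 0$, so for any fixed $\epsilon>0$ eventually $R(\alpha_i)\subset B_\epsilon(R(x))$. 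Because $R$ moves points a uniformly bounded distance (\Cref{lemma:ComparisonBDandUC}) and $\pi\colon T^1\tM\to \tM$ is $1$-Lipschitz, the $\alpha_i$ are eventually contained in a common compact subset of $\tM\cong\bbH^3$. Hence by \Cref{lem:subseqhauscon} we may extract a Hausdorff-convergent subsequence $\alpha_{i_j}\to A$. The limit $A$ is closed, connected by \Cref{lem:connectedlimit} (with $x \in \liminf \alpha_{i_j}$), and contains $x$ and $y$. Since $R$ is uniformly continuous, \Cref{lemma:continuoushausdorfflim} gives $R(A)=\lim_H R(\alpha_{i_j})$, which is a single point because $\diam_R(\alpha_{i_j})\to 0$. \Cref{lemma:Rtd} then forces $A$ to be a single point, so $x=y$.

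For topological compatibility I will prove containment in both directions. First, the identity $(\tM,\text{manifold})\to(\tM,\td)$ is continuous at $x$: fix a positively adapted flow box $U$ around $x$ provided by the local product structure combining $\tPhi$ with the strong positive and negative foliations. Any $y\in U$ close to $x$ may be reached from $x$ by a polygonal path consisting of at most three segments (one strong positive, one strong negative, one flow), navigated within the prongs of the local bifoliated model. The polygonal length of each constituent segment tends to $0$ as $y\to x$: for flow segments this is by flow-equivariance of $R$, and for strong segments it follows from uniform continuity of $R$ together with the fact that the leafwise path metric on a stable or unstable horosphere in $T^1\tM$ agrees with the ambient metric at first order on small scales, so $\diam^{\pm}$ is controlled by $\diam_R$. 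Conversely, the identity $(\tM,\td)\to(\tM,\text{manifold})$ is continuous: if $\td(x,x_n)\to 0$ but $x_n\not\to x$ in the manifold topology, pass to a subsequence staying outside some manifold-open neighborhood of $x$ and choose polygonal paths $\alpha_n$ from $x$ to $x_n$ with $\ell_\diamond(\alpha_n)\to 0$. Running the boundedness-and-Hausdorff-limit argument from the nondegeneracy step, a subsequential Hausdorff limit of $\alpha_n$ is a connected subset of $R^{-1}(R(x))$ containing $x$, hence $\{x\}$ by \Cref{lemma:Rtd}; in particular $x_n\to x$ in the manifold topology, a contradiction.

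The main obstacle is the nondegeneracy step, which requires simultaneously controlling the ambient geometry of the $\alpha_i$ (to extract a Hausdorff-convergent subsequence) and the passage to a limit under $R$. Both are handled cleanly by the two properties of the comparison map isolated in \Cref{lemma:ComparisonBDandUC}: the bounded distance moved by $R$ keeps the paths in a common compact region, and its uniform continuity ensures that $R$ commutes with Hausdorff limits.
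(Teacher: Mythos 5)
Your verification of the metric axioms and your proof that $\td(x,x_i)\to 0$ implies $x_i\to x$ are correct and essentially the paper's argument; indeed, using the bounded displacement of $R$ (\Cref{lemma:ComparisonBDandUC}) to confine the entire polygonal paths $\alpha_i$ to a compact set, instead of truncating them at a metric sphere about $x$ as the paper does, is a mild and valid streamlining. The genuine gap is in the other containment, continuity of the identity from the manifold topology to $\td$. Your three-segment path (strong positive, then strong negative, then flow) is exactly the paper's construction, but the justification that its pieces have small polygonal length does not go through as stated. You appeal to a ``local product structure combining the flow with the strong positive and negative foliations'': no chart in which \emph{both} strong foliations are horizontal is constructed anywhere in the paper, and the paper explicitly declines to claim such a structure exists (see the discussion in \Cref{section:productstructures}); horizontality of both would amount to local joint integrability of the strong decompositions, which already fails for the horospherical foliations of the geodesic flow that they pull back. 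Moreover, ``uniform continuity of $R$'' only bounds $\diam_R$ of a segment once you already know that segment is small in $\tM$ --- which is precisely the point at issue for the strong \emph{negative} segment, since the strong negative leaf containing it varies with $y$ and is a priori only a continuous section over a shrinking base segment, so a large ``vertical'' excursion is not excluded by anything you have said. Likewise, ``flow-equivariance of $R$'' does not by itself show that the flow time from the endpoint of the negative segment to $y$ tends to $0$; one needs to know both endpoints of the flow segment converge to $x$ and to rule out a long excursion along the orbit (the paper's accumulation argument, or an appeal to uniform quasigeodesity).

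The paper fills exactly this hole by working in a product structure in which the fixed strong positive leaf $k$ through $x$ \emph{and the strong negative leaves through its points} are horizontal (a mirrored instance of \Cref{lemma:SectionOverOmnileaves}; equivalently one can use the horosphere-continuity statement \Cref{lemma:horospherefacts} to see that $R$ of the negative segments shrinks to $R(x)$), and then argues that the flow segments themselves converge to $x$, so their time-lengths tend to $0$. Your remark comparing the horospherical path metric with the ambient Sasaki metric at small scales is fine, and is implicitly used by the paper as well, but it only enters after the $R$-images of the segments are known to be small. So the architecture of your proof matches the paper's; what is missing is the control of the strong negative segment and of the flow segment in the ``manifold topology $\Rightarrow\ \td$'' direction, and that control requires tools (the section over the negative saturation of $k$, or \Cref{lemma:horospherefacts}) that your write-up does not supply.
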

\begin{proof}
	It is immediate that $\td$ is symmetric and satisfies the triangle inequality, and that $\td(x, x) = 0$ for all $x \in \tM$.

    Next we claim that $x_i \in \tM$ converges to $x\in \tM$ if and only if $\lim \td(x, x_i) = 0$. In the special case when $(x_i)$ is a constant sequence, this shows that $d(x,y)=0$ implies $x=y$, so $\td$ is in fact a metric. More generally, it shows that $\td$ is compatible with the topology of $\tM$.

	\begin{itemize}
	\item[($\Rightarrow$)]First suppose that $\lim x_i = x$.
	
	Let $\wt K$ be the weak positive leaf through $x$. 
    Since $\lim x_i = x$, the weak negative leaf $\wt L_i$ through $x_i$ eventually intersects $\wt K$. Let $k$ be the strong positive leaf through $x$, and let $l_i$ be the strong positive leaf over $L_i$ that intersects $k$. This intersects the orbit $x_i \cdot \bbR$ at some point $y_i=x_i \cdot t_i$. Build a polygonal path $\alpha_i = \sigma^+_i \star \sigma^-_i \star \sigma_i$, where $\sigma^+_i$ is the segment in $k$ from $x$ to $k \cap l_i$, $\sigma^-_i$ is the segment in $l_i$ from $k \cap l_i$ to $y_i$, and $\sigma_i$ is the flow segment from $y_i$ to $x_i$.

    It is clear that the $\sigma_i^+$ converge to $x$. Working in a product structure near $x$ for which $k$ and its saturation by negative leaves are horizontal, we see that the segments $\sigma_i^-$ converge to $x$; note this implies $y_i\to x$. Since the flowlines of $\tPhi$ foliate $\tM$, the $\sigma_i$ must accumulate on a subset of the orbit through $x$. But if $x_i'$ is any point in $\sigma_i$, the sequence $x_1', x_2',\dots$ can only accumulate on $x$ since $x_i,y_i\to x$. We conclude that the $\sigma_i$ converge to $x$.
    
    Since $\sigma^+_i$, $\sigma^-_i$, and $\sigma_i$ all converge to $x$ by above, we have $\lim R(\sigma^\pm_i) = \lim R(\sigma_i) = R(x)$. Hence $\diam^\pm(\sigma^\pm_i)\to 0$,
    so $\lim \td(x, x_i) = 0$.

	\item[($\Leftarrow)$] Conversely, suppose that $\lim \td( x, x_i) = 0$.

    Choose polygonal paths $\alpha_i$ from $x$ to $x_i$ such that $\ell_\diamond(\alpha_i)<\td(x, x_i)+\frac{1}{i}$, so $\lim \ell_\diamond(\alpha_i)=0$. Then $\diam_R(\alpha_i)$ goes to zero by \Cref{lem:compareRdiampoly}, so $\lim  R(\alpha_i) = R(x)$.

    Suppose $x_i$ does not converge to $x$ in $\tM$. Then up to taking a subsequence, we can find an $\epsilon>0$ so that $x_i\notin \overline N_\epsilon(x)$ for all $i$, where $\overline N_\epsilon(x)$ is the closed ball of radius $\epsilon$ about $x$ in the hyperbolic metric. Let $\beta_i$ be the largest initial subpath of $\alpha_i$ contained in $\overline N_\epsilon(x)$. By taking a further subsequence, we can arrange for the $\beta_i$ to Hausdorff-converge to a set $\beta$ by \Cref{lem:subseqhauscon}. This set is connected by \Cref{lem:connectedlimit}, and meets $\del \overline N_\epsilon (x)$. However, applying \Cref{lemma:continuoushausdorfflim} to $R|_{\overline N_\epsilon(x)}$ we have 
    \[
    R(\beta)=\lim(R(\beta_i))=R(x).
    \]
    Hence $\beta=\{x\}$ by \Cref{lemma:Rtd}, a contradiction. Hence $\lim x_i=x$.\qedhere
    \end{itemize}
\end{proof}

Observe that $\td$ is invariant under deck transformations, so it descends to a metric $d$ on $M$ defined by setting 
\[
d(x,y)=\min\{\td(\wt x, \wt y)\mid \text{$\wt x$ lift of $x$, $\wt y$ lift of $y$}\}.
\]
The metric $d$ is uniformly equivalent to the hyperbolic metric on $M$ (i.e. the maps $(M, d_\bbH)\to (M, d)$ and $(M, d)\to (M,d_\bbH)$ are uniformly continuous) since $M$ is compact. Hence $\wt d$ is uniformly equivalent to the hyperbolic metric $\td_\bbH$ on $\wt M$. In particular, for any $\epsilon>0$, there exists $\delta>0$ so that whenever $\td(a,b)<\delta$, then $\td_\bbH(a,b)<\epsilon$ .

With this observation it is easy to show that strong stable/unstable leaves are \emph{topologically} contracted/expanded by the flow:

\begin{proposition}\label{proposition:WeakContraction}
	If $x, y \in \tM$ are contained in a single strong positive (or negative) leaf, then $\td(x \cdot t, y \cdot t) \to 0$ as $t \to \infty$ (resp. $-\infty$).
\end{proposition}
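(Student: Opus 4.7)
The plan is to exploit two key facts: the flow-equivariant comparison map $R$ intertwines $\tPsi$ with the geodesic flow $\Theta$ on $T^1\tM$, and $R$ sends each strong positive leaf into a single stable horosphere in $T^1\tM$. Horospheres are uniformly exponentially contracted by $\Theta$ in forward time in the horospherical path metric, so if we can exhibit a polygonal path from $x\cdot t$ to $y\cdot t$ whose length is controlled by this horospherical diameter, we are done.

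Concretely, suppose $x,y$ lie in a single strong positive leaf $k \in \tcL^{\pp}$, and let $\sigma := k[x,y]$ be the strong positive segment from $x$ to $y$. Because the strong positive decomposition $\tOm^{\pp}$ is flow-invariant and the strong positive leaves are components of strong positive omnileaves, for each $t\in\bbR$ the set $\sigma\cdot t$ is the strong positive segment $(k\cdot t)[x\cdot t, y\cdot t]$ from $x\cdot t$ to $y\cdot t$. In particular $\sigma\cdot t$ is a single polygonal arc, so
\[
\td(x\cdot t,y\cdot t)\;\le\;\ell_\diamond(\sigma\cdot t)\;=\;\diam^+(\sigma\cdot t)\;=\;\diam\bigl(R(\sigma\cdot t)\bigr),
\]
where the last diameter is measured in the path metric of the stable horosphere containing $R(k\cdot t)$.

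Now apply flow-equivariance of $R$: $R(\sigma\cdot t)=R(\sigma)\cdot t=\Theta^t(R(\sigma))$. The set $R(\sigma)$ is the continuous image of the compact arc $\sigma$ and lies in a single stable horosphere $H^+\subset T^1\tM$, so it has finite path-metric diameter $D_0$. Because $M$ is hyperbolic, the geodesic flow $\Theta$ contracts stable horospheres uniformly and exponentially in their intrinsic (equivalently, Sasaki-induced path) metric, giving $\diam(\Theta^t(R(\sigma)))\le e^{-t}D_0\to 0$ as $t\to\infty$. Combined with the bound above, $\td(x\cdot t,y\cdot t)\to 0$. The strong negative case is verbatim the same with the roles of stable/unstable reversed and $t\to-\infty$, using that $R$ sends strong negative leaves into unstable horospheres.

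The only mild subtlety to verify en route is that $\sigma\cdot t$ genuinely lies in a single strong positive sprig, so that the notation $\diam^+(\sigma\cdot t)$ (defined in the text only for subsets of a single sprig) is legitimate; this follows because strong positive leaves are contained in strong positive sprigs ($\tcL^{\pp}$ refines $\tSp^{\pp}$, as noted in the discussion following the definition of strong leaves) and this refinement is preserved by the flow. Everything else is a direct bookkeeping consequence of flow-equivariance of $R$ and the classical exponential contraction of horospheres; no real obstacle arises.
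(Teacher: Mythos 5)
Your proof is correct and follows essentially the same route as the paper: bound $\td(x\cdot t, y\cdot t)$ by the polygonal length $\diam^+$ of the flowed strong segment $k[x,y]\cdot t$, then use flow-equivariance of $R$ together with the exponential contraction of stable horospheres under the geodesic flow. The extra bookkeeping you include (flow-invariance of the strong decomposition, legitimacy of $\diam^+$ on $\sigma\cdot t$) is implicit in the paper's shorter argument and does not change the substance.
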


\begin{proof}
    Suppose without loss of generality that $x$ and $y$ are contained in the same strong positive leaf $k \in \cF^\pp$, and let $c = k[x, y]$ be the strong positive segment from $x$ to $y$. Then 
    \[
    0\le\lim_{t\to\infty}\td(x\cdot t, y\cdot t)\le \lim_{t\to\infty}\diam^+ (c \cdot t).
    \]
    Since the geodesic flow contracts distance in stable horospheres, the righthand limit is 0.
\end{proof}

We remark that until this point, we had not established that leaves of the 2-dimensional $\Psi$-invariant foliations contain at most one singular orbit.

\Cref{proposition:WeakContraction} establishes that $\Psi$ possesses the weak contraction property \ref{it:weakcontraction} from \Cref{sec:pAflows}. Next we will show it possesses property \ref{it:markovpartition}, i.e. has a Markov partition, by way of showing it is expansive.

\subsection{Expansiveness}

Before defining expansiveness, we make a definition:

\begin{definition}\label{definition:fellowtravelutr}
If $F$ is a flow on a  metric space $X$, we say that two points $x,y\in X$ \emph{ $\delta$-fellow travel up to reparameterization} if there exists $\delta>0$, and an increasing homeomorphism $h\colon \bbR\to \bbR$ with $h(0)=0$, such that 
\[
d(F^t(x), F^{h(t)}(y))<\delta \text{ for all $t\in \bbR$.}
\]
Note that since $h^{-1}$ is also an increasing homeomorphism fixing 0, the roles of $x$ and $y$ here are symmetric. 

If $\alpha$ and $\beta$ are orbits respectively containing points $x$ and $y$ that $\delta$-fellow travel up to reparameterization, we say that the orbits $\alpha$ and $\beta$ themselves $\delta$-fellow travel up to reparameterization.
We also use this terminology with forward/backward half orbits.
\end{definition}

The following definition is due to Bowen--Walters (see \cite[Thm. 3]{BowenWalters}):

\begin{definition}\label{defn:expansive}
    Let $X$ be a compact metric space. A nonsingular flow $F$ on $X$ is \emph{expansive} if it satisfies the following property:
    \begin{enumerate}[label=(E\arabic*)]
    \item for all $\epsilon>0$, there exists $\delta>0$ such that if $x, y\in X$ $\delta$-fellow travel up to reparametrization, then $y=F^t(x)$ for some $t\in (-\epsilon,\epsilon)$.\qedhere
    \end{enumerate}
\end{definition}

Expansiveness is independent of metric, and is a conjugacy invariant (\cite[Cor. 4]{BowenWalters}). 
We will use the following alternative form of the definition.

\begin{lemma}
    Let $F$ be a nonsingular flow on a compact metric space $X$. Then $F$ is expansive if and only if it satisfies the following property:
    \begin{enumerate}[label=\emph{(E\arabic*)}, start=2]
    \item for all $\epsilon>0$, there exists $\delta>0$ such that if $x,y\in X$ $\delta$-fellow travel up to reparameterization, then $y$ lies in the same orbit as $x$ and the flow segment from $y$ to $x$ lies in $N_\epsilon(x)$.
    \end{enumerate}
\end{lemma}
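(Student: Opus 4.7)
The plan is to prove both directions as routine consequences of compactness, uniform continuity of the flow, and nonsingularity. The equivalence is essentially the observation that under these hypotheses, ``$y = F^t(x)$ with $|t|$ small'' is equivalent to ``$y = F^t(x)$ with the flow segment from $x$ to $y$ contained in a small ball around $x$.''

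For the direction (E1) $\Rightarrow$ (E2), given $\epsilon > 0$, I would use uniform continuity of the map $(t,x) \mapsto F^t(x)$ on the compact set $[-1,1] \times X$ to produce $\epsilon_0 \in (0, \min(1,\epsilon))$ with the property that $|s| \le \epsilon_0$ implies $d(F^s(x), x) < \epsilon$ for every $x \in X$. Applying (E1) with $\epsilon_0$ in place of $\epsilon$ yields a $\delta > 0$ such that $\delta$-fellow traveling forces $y = F^t(x)$ with $|t| < \epsilon_0$. Since then every $s$ between $0$ and $t$ also satisfies $|s| < \epsilon_0$, the entire flow segment from $y$ to $x$ lies in $N_\epsilon(x)$, giving (E2).

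For the direction (E2) $\Rightarrow$ (E1), the main step is the following lemma, which is the heart of the argument: for every $\epsilon > 0$ there exists $\eta > 0$ such that if $y = F^t(x)$ with $|t| \ge \epsilon$, then the flow segment from $x$ to $y$ is not contained in $N_\eta(x)$. I would prove this by contradiction: if it fails, choose sequences $x_n \in X$ and $t_n$ with (say) $t_n \ge \epsilon$ and $F^{[0, t_n]}(x_n) \subset N_{1/n}(x_n)$. By compactness, pass to a subsequence with $x_n \to x_* \in X$. Since $[0, \epsilon] \subset [0, t_n]$, we have $F^{[0, \epsilon]}(x_n) \subset N_{1/n}(x_n)$, and taking $n \to \infty$ using continuity of $F$ on the compact set $[0, \epsilon] \times X$ yields $F^{[0, \epsilon]}(x_*) = \{x_*\}$. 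A semigroup argument (every $t \in \bbR$ decomposes as $k\epsilon + r$ with $r \in [0, \epsilon)$) then shows $x_*$ is a global fixed point of $F$, contradicting nonsingularity. Given the lemma, (E1) follows by applying (E2) with $\eta$ in place of $\epsilon$ to extract a $\delta$: $\delta$-fellow traveling then produces $y = F^t(x)$ on the same orbit with flow segment in $N_\eta(x)$, and the lemma forces $|t| < \epsilon$.

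The expected main obstacle is the limiting argument in the (E2) $\Rightarrow$ (E1) direction, specifically converting the ``no global fixed point'' form of nonsingularity stated earlier in the paper into a uniform lower bound on how quickly flow segments escape small neighborhoods. This is where nonsingularity and compactness of $X$ interact in an essential way; the rest of the proof is purely a matter of uniform continuity and careful quantifier management.
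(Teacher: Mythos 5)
Your proof is correct, and both directions follow the same overall strategy as the paper: uniform continuity of the flow on a compact time interval for (E1)$\Rightarrow$(E2), and a compactness-and-contradiction argument to get a uniform ``escape'' estimate for (E2)$\Rightarrow$(E1). The one genuine difference is in the key estimate for the second direction. The paper proves the pointwise statement that for any fixed $t$ strictly less than the shortest positive period $t_0$ (or $t_0=1$ if there are no periodic orbits) there is $\eta>0$ with $d(x,F^t(x))>\eta$ for all $x$; the contradiction there is with the minimality of $t_0$, and the argument only directly yields (E1) for $\epsilon<t_0$ (which suffices since (E1) is monotone in $\epsilon$, a point the paper leaves implicit). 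Your lemma instead bounds from below how far a flow segment of duration at least $\epsilon$ must stray from $N_\eta(x)$, for \emph{every} $\epsilon>0$, and your limiting argument terminates in a global fixed point, contradicting nonsingularity directly. This buys you a cleaner quantifier structure --- no minimal period, no case distinction on whether periodic orbits exist, and no implicit monotonicity step --- at the cost of proving a weaker (but exactly sufficient) conclusion than the paper's fixed-time displacement bound. The only loose ends in your sketch are routine: the case $t_n\le-\epsilon$ in the contradiction argument (handled symmetrically, e.g.\ by recentering at $F^{t_n}(x_n)$ or taking the limit of $F^{[-\epsilon,0]}(x_n)$), and the observation that the segment from $y$ to $x$ and the segment from $x$ to $y$ are the same point set, so your lemma applies to the segment produced by (E2).
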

\begin{proof}
    The proof follows arguments in \cite{BowenWalters}, but we include it since the statement is not in that paper explicitly.

    Let $t_0$ be shortest positive period of an orbit of $\Phi$ (if $F$ has no periodic orbit, set $t_0=1$). We claim that  for any $t\in (0,t_0)$, there exists $\eta>0$ such that $d(x,F^t(x))>\eta$ for every $x\in X$. If not, there exists $t\in (0,t_0)$ and a sequence  $(x_i)$ in $X$ such that $d(x_i, F^t(x_i))\to 0$. Up to taking a subsequence, $(x_i)\to x$ for some $x\in X$, so $d(x, F^t(x))=\lim_{n\to\infty}d(x_n, F^t(x_n))=0$, a contradiction since $t<t_0$.

    Now, suppose that (E2) holds and let $0<\epsilon<T_0$. By the claim we just proved, there exists $\eta$ such that $d(x,F^\epsilon(x))>\eta$ for all $x\in X$. Use (E2) to choose a $\delta$ for $\eta$. Note that if the flow segment from $x$ to $y$ is contained in $N_\eta(x)$, then $y\in F^{(-\epsilon,\epsilon)}(x)$.

    Conversely, suppose that (E1) holds and let $\epsilon>0$. Since $X$ is compact, we can choose $\epsilon'$ so that $F^{(-\epsilon',\epsilon')}(x)\subset N_\epsilon(x)$ for all $x\in X$. Using (E1) to choose a $\delta$ for $\epsilon'$ gives (E2).
\end{proof}

In particular, if $F$ is \emph{not} expansive, then there exists some $\epsilon>0$ such that for all $\delta>0$, there exist $x$ and $y$ that $\delta$-fellow travel up to reparameterization such that either $x$ and $y$ lie on distinct orbits, or the flow segment from $y$ to $x$ is not contained in $N_\epsilon(x)$.

We will need a couple of  basic facts about flowboxes:

\begin{lemma}\label{lemma:oneplaque}
    No weak positive or weak negative leaf of $\tPsi$ intersects a flowbox in more than one plaque.
\end{lemma}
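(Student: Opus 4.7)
The plan is to reduce to the flowspace $Q$ and apply the efficient intersection property of the emuu pair $\bcF^\pm$. Fix a flowbox $\wt F = A \times I$ and a weak positive leaf $\wt K$ of $\tPsi$; the case of a weak negative leaf is symmetric. Since the flow is vertical in $\wt F$, the slice $A \times \{0\}$ is a local transversal that maps homeomorphically onto an open set $U \subset Q$ via the flowspace projection, and the image of the bifoliation on $A$ is precisely $\cF^\pm\cap U$, which is a standard bifoliated $2n$-gon chart for $\cF^\pm$. Under this identification, each plaque of $\wt K$ in $\wt F$ has the form $\lambda \times I$ for a leaf $\lambda$ of $A$'s positive foliation, and projects to a component of $K \cap U$, where $K$ is the leaf of $\cF^+$ containing the projection of $\wt K$. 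Thus it suffices to prove that no leaf $K$ of $\cF^+$ intersects a standard bifoliated chart $U \subset Q$ in more than one plaque.

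Suppose for contradiction that $K$ meets $U$ in two distinct plaques $\lambda_1, \lambda_2$. The key input is that leaves of $\cF^+$ and $\cF^-$ intersect in at most one point: this follows from $\cF^\pm = \Mon(\bcF^\pm \capdot Q)$ together with the efficient intersection of the emuu pair $\bcF^\pm$ (\Cref{prop:emuutofoliations} and the construction in \Cref{sec:straighteningtheflowspace}). So it is enough to produce a single plaque $\mu$ of $\cF^-$ in $U$ that meets both $\lambda_1$ and $\lambda_2$: such a $\mu$ lies in one leaf $L \in \cF^-$, and $K \cap L$ would then contain two distinct points, a contradiction.

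The existence of such $\mu$ is a case analysis on the standard bifoliated $2n$-gon model of $U$. If $U$ has no singular point ($n = 2$), then every negative plaque of $U$ crosses every positive plaque of $U$, so any negative plaque through a point of $\lambda_1$ works. If $U$ has an $n$-pronged singular point ($n \geq 3$), partition $U$ into sectors bounded by the prongs of the singular positive plaque. If $\lambda_1$ and $\lambda_2$ lie in the same sector, then they are parallel positive leaves in that sector and any negative plaque in the sector crosses both. If $\lambda_1$ or $\lambda_2$ is the singular positive plaque itself, then it has a prong in the sector of the other plaque, and a negative plaque in that sector crosses both. Finally, if $\lambda_1$ and $\lambda_2$ are non-singular plaques in distinct sectors $S_i \neq S_j$, take $\mu$ to be the singular negative plaque of $U$: it is connected (its $n$ prongs meet at the singular point), its prong in $S_i$ meets $\lambda_1$, and its prong in $S_j$ meets $\lambda_2$.

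The case analysis exhausts all possibilities, yielding the required contradiction. I expect the main subtlety to be the case where both plaques are non-singular and lie in different sectors of a singular chart; the use of the singular negative plaque, and in particular its connectedness across all sectors, is what makes the efficient intersection argument go through uniformly.
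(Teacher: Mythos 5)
Your proof is correct and takes essentially the same approach as the paper: the paper's entire proof is the single observation that projecting two plaques of one weak leaf to the flowspace would force a positive leaf and a negative leaf to intersect in more than one point, contradicting efficient intersection. Your sector-by-sector case analysis in the standard bifoliated $2n$-gon (in particular using the connected singular negative plaque to reach plaques in different sectors) simply makes explicit the step the paper leaves implicit.
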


\begin{proof}
    Projecting such a configuration to the flowspace would force a negative and positive leaf to intersect in more than one point, a contradiction.
\end{proof}

\begin{lemma}\label{lem:separationconstant}
There exist $\eta, B>0$ such that for all $x\in \tM$, $N_\eta (x)$ is contained in a positively adapted flowbox of diameter less than $B$. The same is true replacing ``positive" by ``negative."
\end{lemma}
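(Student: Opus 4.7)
The plan is to prove the statement by a compactness/Lebesgue-number argument downstairs in $M$, then transfer upstairs using $\pi_1(M)$-equivariance of everything in sight. The negative case is symmetric to the positive case, so I will focus only on the positive statement.

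First I would note that the paper has already observed, just above the lemma, that $\tM$ is covered by positively adapted flowboxes. Since the flow $\tPsi$, the strong positive decomposition, and the comparison map $R$ are all $\pi_1(M)$-equivariant, the collection of positively adapted flowboxes is $\pi_1(M)$-invariant, and hence descends to an open cover of $M$ by (images of) positively adapted flowboxes. (Here it is useful to work with flowboxes that are small enough to embed in $M$ under the covering projection.) By compactness of $M$, extract a finite subcover $U_1,\dots,U_k$ of $M$. Let $B$ be an upper bound for the diameters of the $U_i$ in the metric $d$ descended from $\td$; this is finite because each $U_i$ is relatively compact.

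Next I would apply the Lebesgue number lemma to the open cover $\{U_i\}$ of the compact metric space $(M,d)$: there exists $\eta_0 > 0$ such that every subset of $M$ of $d$-diameter less than $\eta_0$ lies in some $U_i$. Since $d$ is uniformly equivalent to the hyperbolic metric on $M$ (Lemma~\ref{lemma:compatiblemetrics} and the comment immediately following it), and $M$ is compact, there is a uniform injectivity radius $r_0 > 0$ such that the covering projection $\pi\colon \tM \to M$ restricts to a homeomorphism (in fact a local isometry for $\td,d$) on every $\td$-ball of radius $r_0$. Set $\eta := \min\{\eta_0/3,\, r_0/2\}$.

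Now given any $\tx \in \tM$, the image $\pi(N_\eta(\tx))$ has $d$-diameter at most $2\eta < \eta_0$, so it is contained in some $U_i$. The component of $\pi^{-1}(U_i)$ containing $\tx$ is a $\pi_1(M)$-translate of one of the positively adapted flowboxes in the original cover of $\tM$, hence is itself a positively adapted flowbox, and $\pi$ maps it homeomorphically to $U_i$. Because $\pi$ is a local isometry and $N_\eta(\tx)$ lies inside the radius-$r_0$ ball on which $\pi$ is an isometric embedding, $N_\eta(\tx)$ is contained in this lifted flowbox, whose $\td$-diameter equals the $d$-diameter of $U_i$ and is therefore at most $B$. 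There is no serious obstacle in this argument; the only subtle point is the compatibility of the metric $\td$ upstairs with $d$ downstairs, which is ensured by the compactness of $M$ together with the invariance of $\td$ under deck transformations.
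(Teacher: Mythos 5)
Your proposal is correct and follows essentially the same route as the paper: cover the compact manifold $M$ by (projections of) positively adapted flowboxes, pass to a finite subcollection, take $B$ from their diameters and $\eta$ from a Lebesgue-number-type choice, then lift back to $\tM$ using deck invariance. The only quibble is your claim that the $\td$-diameter of a lifted flowbox \emph{equals} the $d$-diameter of $U_i$ (it is only bounded below by it in general); this is harmless, since one can simply take $B$ to bound the $\td$-diameters of the finitely many flowboxes upstairs, which are deck-invariant.
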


\begin{proof}
    Using a product structure for $\tM$ in which strong positive leaves are horizontal we can find, for every point $x\in M$, a positively adapted flowbox containing $x$ in its interior. By compactness we may pass to a finite subcollection whose interiors cover $M$. Choose $B$ to be larger than the maximum diameter in this finite collection of flowboxes, and choose $\eta$ small enough that $N_\eta(x)$ is contained in one of the finite collection. Now take all lifts of this collection of flowboxes to $\tM$.
    For the negative case, the argument is similar.
\end{proof}

The following is a useful criterion for expansiveness in our setting.

\begin{lemma}\label{expcrit}
    Suppose that $\tPsi$ satisfies the following: there exists $\delta>0$ such that if $x,y\in \tM$ $\delta$-fellow travel up to reparameterization, then $x$ and $y$ lie in the same flowline.

    Then $\Psi$ is expansive.
\end{lemma}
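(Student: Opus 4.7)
The plan is to prove the contrapositive via a lifting argument: if expansiveness fails downstairs, we lift a violating fellow-traveling pair to $\tM$, apply the hypothesis to conclude the lifts lie on a common flowline, and then use compactness together with the absence of periodic orbits in $\tM$ to force the separation to be small, yielding a contradiction.

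\textbf{Lifting step.} Since $\pi_1(M)$ acts on $(\tM, \td)$ by isometries, freely and properly discontinuously with compact quotient, there is a uniform injectivity radius $L > 0$ so that every open $L/2$-ball in $\tM$ projects injectively to $M$. Given $x, y \in M$ that $\delta'$-fellow travel via $h$ with $\delta' < L/2$, I would build lifts $\tilde x, \tilde y$ that $\delta'$-fellow travel in $\tM$ via the same $h$ as follows: pick any lift $\tilde x$ of $x$ and, for each $t$, define $\tilde y_t$ to be the unique lift of $\Psi^{h(t)}(y)$ within $\delta'$ of $\tPsi^t(\tilde x)$. Uniqueness of this lift plus the fact that orbits of $\Psi$ lift uniquely (and continuously in the starting point) imply $\tilde y_t = \tPsi^{h(t)}(\tilde y_0)$, so taking $\tilde y := \tilde y_0$ gives $\td(\tPsi^t(\tilde x), \tPsi^{h(t)}(\tilde y)) < \delta'$ for all $t \in \bbR$.

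\textbf{Setting up the contradiction.} Suppose $\Psi$ is not expansive: there exists $\epsilon_0 > 0$ and pairs $x_n, y_n \in M$ that $(1/n)$-fellow travel (via $h_n$) while violating property (E2) of the lemma preceding the statement. For $n$ large, the lifting step produces lifts $\tilde x_n, \tilde y_n$ with $\td(\tPsi^t(\tilde x_n), \tPsi^{h_n(t)}(\tilde y_n)) < 1/n$ for all $t$, and once $1/n \leq \delta$ the hypothesis forces $\tilde y_n = \tPsi^{t_n}(\tilde x_n)$ for some $t_n \in \bbR$. Projecting, $y_n = \Psi^{t_n}(x_n)$, so $y_n$ lies on the $\Psi$-orbit of $x_n$; the (E2)-failure must then come from the flow segment $x_n \cdot [0, t_n]$ escaping $N_{\epsilon_0}(x_n)$, which by uniform continuity of $\Psi$ on the compact manifold $M$ gives a uniform lower bound $|t_n| \geq \delta_1 > 0$.

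\textbf{Forcing $t_n \to 0$.} After passing to subsequences, let $x_n \to x$ in $M$ and choose lifts $\tilde x_n \to \tilde x$ in $\tM$; then $\tilde y_n \to \tilde x$ too, since $\td(\tilde x_n, \tilde y_n) < 1/n$. If $|t_n|$ were unbounded, uniform quasigeodesity of the orbits of $\tPsi$ would force $\td_\bbH(\tilde x_n, \tPsi^{t_n}(\tilde x_n)) \to \infty$, contradicting the convergence of both $\tilde x_n$ and $\tPsi^{t_n}(\tilde x_n) = \tilde y_n$ to $\tilde x$. So $t_n$ is bounded, and any subsequential limit $t^*$ satisfies $\tPsi^{t^*}(\tilde x) = \tilde x$; since $\tPsi$ on $\tM \simeq Q \times \bbR$ has no periodic orbits (orbits are properly embedded lines), $t^* = 0$, so $t_n \to 0$. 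This contradicts $|t_n| \geq \delta_1 > 0$. The main technical obstacle is really just the lifting step; once the lifts are in hand, everything else is compactness plus the product-covered structure ruling out periodic orbits upstairs.
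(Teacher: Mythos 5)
Your argument is correct, and it shares the paper's first move: both proofs lift a $\delta_0$-fellow-traveling pair to $\tM$ using an injectivity-radius bound (the paper's one-line ``since $\delta_0<\mathrm{inj}(M)$'' is exactly your lifting step, spelled out), and then invoke the hypothesis to put the lifts on a single flowline. After that the two proofs diverge. The paper verifies (E2) directly and quantitatively: it first proves a flowbox claim --- for every $\epsilon$ there is $\eta$ so that two points of a single flowline of $\tPsi$ at $\td$-distance less than $\eta$ have their connecting flow segment inside $N_\epsilon$, using a cover of $M$ by flowboxes of diameter less than $\epsilon$ and the fact that flowlines meet lifted flowboxes in connected sets --- and then simply takes $\delta_0=\min(\delta,\eta,\mathrm{inj}(M))$. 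You instead argue by contradiction with a compactness scheme: translate the offending pairs by deck transformations so the lifts converge, use uniform quasigeodesity of $\tPsi$ to bound the connecting times $t_n$, use injectivity of the orbit parametrization upstairs (the product-covered structure, so $\tPsi^{t^*}(\tilde x)=\tilde x$ forces $t^*=0$) to get $t_n\to 0$, and contradict the uniform lower bound $|t_n|\ge\delta_1$ coming from continuity of the flow on compact $M$. Both routes work; the paper's buys an explicit $\delta_0$ and uses only local flowbox structure, while yours trades that local claim for softer global inputs (uniform quasigeodesity and aperiodicity of the lifted flow), which is a legitimate substitution since both are already established for $\Psi$ at this point of the paper. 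Two spots in your writeup deserve a sentence each if you flesh it out: in the lifting step, the identity $\tilde y_t=\tPsi^{h(t)}(\tilde y_0)$ should be justified by the standard open-and-closed argument in $t$ (continuity of the distance function plus uniqueness of nearby lifts below the systole); and when you ``choose lifts $\tilde x_n\to\tilde x$,'' you must translate the pairs $(\tilde x_n,\tilde y_n)$ simultaneously by deck transformations, which is fine because $\td$, $\tPsi$, and the relation $\tilde y_n=\tPsi^{t_n}(\tilde x_n)$ are all deck-equivariant, so the times $t_n$ are unchanged. Neither point is a gap.
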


The key to this lemma is that the lifts to $\wt M$ of long flow segments in $M$ have endpoints that are far apart.

\begin{proof}
First we make the following claim: for all $\epsilon>0$, there exists $\eta>0$ such that if $x$ and $y$ lie in the same flowline of $\tPsi$ and $d(x,y)<\eta$, then the flow segment from $x$ to $y$ lies in $N_\epsilon(x)$.

Indeed, by compactness, we can cover $M$ by flowboxes of diameter $<\epsilon$. Choose $\eta>0$ small enough so that for all $x\in M$, $N_\eta(x)$ is contained in a flowbox. Lifting these flowboxes to $\tM$ we have the property that every $\eta$-neighborhood in $\tM$ is contained in a lifted flowbox of diameter $<\epsilon$. Now if $x$ and $y$ are in the same flowline and $y\in N_\eta(x)$, then $x$ and $y$ lie in a flowbox $A\times I$ of diameter $<\epsilon$, which must contain the flow segment from $x$ to $y$ since flowlines have connected intersection with flowboxes in $\tM$. Since $\diam(A\times I)<\epsilon$, we have $A\times I\subset N_\epsilon(x)$. This proves the claim.

Now take any $\epsilon>0$, and use the claim to choose a suitable $\eta$. Let $\delta$ be the constant from the lemma statement, and let $\delta_0$ be the minimum of $\delta$, $\eta$, and $\mathrm{inj}(M)$, where $\mathrm{inj}(M)$ is the injectivity radius of $M$.

Suppose that $x,y\in M$ $\delta_0$-fellow travel up to reparameterization. Since $\delta_0<\mathrm{inj}(M)$, we can choose lifts $\wt x$ and $\wt y$ of $x$ and $y$ to $M$ that $\delta_0$-fellow travel up to reparameterization. By the assumption, $\wt x$ and $\wt y$ must on the same flowline since $\delta_0<\delta$. Since $\delta_0<\eta$, the flow segment from $\wt x$ to $\wt y$ lies in $N_\epsilon (\wt x)$ by the claim. Downstairs in $M$, this implies the flow segment from $x$ to $y$ lies in $N_\epsilon(x)$, so $\Psi$ is expansive.
\end{proof}

\begin{lemma}\label{lem:HalfOrbitFT}
Any two forward half-orbits 
%\mcom{if we want to use this terminology (fine with me) we should alter \Cref{definition:fellowtravelutr} to be consistent with it}
that $\eta$-fellow travel up to reparametrization must lie on distinct weak negative leaves. Any two backwards half-orbits that $\eta$-fellow travel up to reparametrization must lie on distinct weak positive leaves. Here $\eta$ is the constant in \Cref{lem:separationconstant}.
\end{lemma}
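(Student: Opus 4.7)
The plan is to argue by contraposition and use the comparison map $R$ to export the problem to hyperbolic geometry. Suppose two distinct forward half-orbits $\tPsi^{[0,\infty)}(x)$ and $\tPsi^{[0,\infty)}(y)$ lie on the same weak negative leaf $\wt L$; I will show they cannot $\eta$-fellow travel (or indeed fellow travel at any bounded distance) under any increasing reparametrization $h\colon [0,\infty)\to [0,\infty)$ with $h(0)=0$. The mechanism is that on a single weak unstable leaf of the geodesic flow, distinct flowlines share only a past endpoint and separate unboundedly in forward time regardless of reparametrization, and this divergence is transported back to $\tM$ through the $1$-Lipschitz property of $R$.

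First I would invoke efficient intersection of the singular foliations $\cF^\pm$ on the flowspace $Q$: since $x,y$ project to distinct points of $Q$ lying on a common leaf of $\cF^-$, they must lie on different leaves of $\cF^+$, so $E^-(x)=E^-(y)$ while $E^+(x)\ne E^+(y)$. By flow-equivariance of $R$, the images $R(x)\cdot t$ and $R(y)\cdot h(t)$ travel along two distinct oriented lifted geodesics in $\bbH^3$ that share the common past endpoint $E^-(x)$ but head to distinct future endpoints in $S^2_\infty$. A standard hyperbolic-geometry estimate shows that under any increasing reparametrization of the arclength, the base points $\pi(R(x)\cdot t)$ and $\pi(R(y)\cdot h(t))$ accumulate at distinct points of $S^2_\infty$, and consequently $d_\bbH(\pi(R(x)\cdot t),\pi(R(y)\cdot h(t)))\to\infty$; hence the Sasaki distance between $R(x)\cdot t$ and $R(y)\cdot h(t)$ also diverges.

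To transfer the divergence back to $\tM$, I would note that $R\colon (\tM,\td)\to (T^1\tM, d_{T^1\tM})$ is $1$-Lipschitz: for any polygonal arc $\alpha$ from $a$ to $b$, \Cref{lem:compareRdiampoly} gives $d_{T^1\tM}(R(a),R(b))\le \diam_R(\alpha)\le \ell_\diamond(\alpha)$, and taking the infimum yields $d_{T^1\tM}(R(a),R(b))\le \td(a,b)$. Combined with flow-equivariance, this forces $\td(\tPsi^t(x),\tPsi^{h(t)}(y))\ge d_{T^1\tM}(R(x)\cdot t,R(y)\cdot h(t))\to\infty$, contradicting the $\eta$-fellow-traveling hypothesis. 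The backwards-half-orbits statement about weak positive leaves follows by the time-reversed and sign-reversed argument.

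The main obstacle I anticipate is the hyperbolic-geometry step: checking carefully that two distinct geodesics in $\bbH^3$ sharing a past endpoint have base points that separate unboundedly under an \emph{arbitrary} continuous monotone reparametrization $h$, not just under matched parameters. The cleanest way I see is to fix a horoball based at the shared past endpoint and observe that both geodesics eventually leave any such horoball and thereafter head toward their distinct future endpoints in $S^2_\infty$; since $h(t)\to\infty$ with $t\to\infty$, both parametrized points must eventually lie far inside the horoball complement near their respective future endpoints, forcing the hyperbolic distance between them to grow without bound.
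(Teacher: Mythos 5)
There is a genuine gap at the first step. From the fact that $x$ and $y$ project to distinct points on a common leaf of $\cF^-$, efficient intersection only gives you that they lie on \emph{distinct positive leaves}; it does not give $E^+(x) \neq E^+(y)$. Positive leaves are defined as connected components of the point preimages $(e^+)^{-1}(z)$, so two distinct positive leaves can perfectly well be rooted at the same ideal point $z$ (they are then different components of the same positive omnileaf). Indeed, the paper explicitly allows for several distinct orbits sharing \emph{both} ideal endpoints: in the proof of \Cref{lemma:Rtd} the set of orbits with a prescribed endpoint pair is only shown to be totally disconnected, not a single point. In that degenerate case your two half-orbits have identical geodesifications -- $R$ sends both into the very same lifted geodesic -- so the ``distinct geodesics sharing a past endpoint diverge'' mechanism produces no divergence at all; after a suitable reparametrization the images $R(x)\cdot t$ and $R(y)\cdot h(t)$ can even coincide, and the $1$-Lipschitz estimate from \Cref{lem:compareRdiampoly} then yields nothing. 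This is not a corner case one can wave away: two distinct orbits shadowing one geodesic is exactly the configuration the lemma (and the ensuing expansiveness argument) must rule out, and ruling it out is close to the content of what is being proved, so it cannot be assumed.

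A symptom of the problem is that your argument never uses the constant $\eta$ or the adapted flowboxes from \Cref{lem:separationconstant}, even though the statement is calibrated to that constant. The paper's proof works locally and makes no use of distinctness of roots: $\eta$-fellow traveling places, at every forward time, a point of the second half-orbit inside a negatively adapted flowbox of diameter at most $B$ around the corresponding point of the first, producing a strong negative segment of bounded diameter joining them; since this happens for all forward time, every orbit through that segment has the same forward ideal point, so by connectedness the segment lies in a single weak positive leaf -- contradicting that a strong negative segment meets any weak positive leaf in at most one point (\Cref{lemma:oneplaque} together with efficient intersection). Your geodesic-divergence argument is fine, and even gives a stronger conclusion, in the subcase $E^+(x)\neq E^+(y)$, but to repair the proof you would need a separate argument for the case of equal forward roots, and the natural one is essentially the paper's flowbox argument.
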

\begin{proof}
    Let $\alpha$ and $\beta$ be distinct forward half-orbits in the same weak negative leaf that $\eta$-fellow travel up to reparametrization. Each point $x \in \alpha$ is contained in negatively adapted flow box of diameter at most $B$ that contains $N_\eta(x)$. This intersects $\beta$, so it follows that there is a strong negative segment $k[x, y]$ of diameter at most $B$ from $x$ to a point $y \in \beta$. This applies for every point $x \in \alpha$, so every orbit that intersects the segment $k[x, y]$ must have the same forward endpoint. Since $k[x,y]$ is connected, it must lie in a single weak positive leaf. However, $k[x,y]$ intersects any weak positive leaf in at most one point by \Cref{lemma:oneplaque}, a contradiction.
    The argument for backwards half-orbits in the same weak positive leaf is similar.
\end{proof}

\begin{proposition}
$\Psi$ is expansive.
\end{proposition}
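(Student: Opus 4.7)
My plan is to verify the hypothesis of \Cref{expcrit}: I must exhibit $\delta_0 > 0$ such that any two points of $\tM$ that $\delta_0$-fellow travel up to reparametrization lie on the same flowline of $\tPsi$. Let $\eta > 0$ be the constant from \Cref{lem:separationconstant}. I will take $\delta_0 < \eta$ small enough that any two points of $\tM$ at $\td$-distance less than $\delta_0$ project under the flowspace projection $\nu$ into a common bifoliated chart of $\cF^\pm$; such a $\delta_0$ exists by lifting a finite cover of $M$ by bifoliated flowboxes and applying the Lebesgue-number lemma.

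Suppose toward a contradiction that $x, y \in \tM$ lie on distinct flowlines of $\tPsi$ and $\delta_0$-fellow travel up to reparametrization, and set $p := \nu(x)$, $q := \nu(y)$. Since $\delta_0 < \eta$, applying \Cref{lem:HalfOrbitFT} to both the forward and backward half-orbits forces $x$ and $y$ to lie on \emph{distinct} weak positive leaves \emph{and} on \emph{distinct} weak negative leaves; equivalently, $p$ and $q$ lie in distinct leaves of both $\cF^+$ and $\cF^-$. By construction $p$ and $q$ lie in a common bifoliated chart, and a local analysis of the standard bifoliated $2n$-gon (valid whether the chart is centered at a regular or singular point) provides a point
\[ r \in \cF^+(p) \cap \cF^-(q), \]
which must be distinct from both $p$ and $q$ because of the distinctness of the positive and negative leaves through them. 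Choose any $\tilde r \in \tM$ over $r$; then the orbit through $\tilde r$ lies in the same weak positive leaf as $x$ and in the same weak negative leaf as $y$.

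By \Cref{proposition:WeakContraction}, there is a unique $s \in \bbR$ such that $\tilde r \cdot s$ lies in the same strong positive leaf as $x$, and hence $\td(x \cdot t, \tilde r \cdot (t+s)) \to 0$ as $t \to +\infty$. Combining this forward asymptoticity with the $\delta_0$-fellow travel of $x$ and $y$ (under some reparametrization $h$) via the triangle inequality yields $\td(y \cdot h(t), \tilde r \cdot (t+s)) < \eta$ for all sufficiently large $t$. After shifting the time origins so that $0$ corresponds to a sufficiently late starting time, the forward half-orbits of $y$ and $\tilde r$ $\eta$-fellow travel up to reparametrization; since these half-orbits are distinct and lie in the same weak negative leaf, this contradicts \Cref{lem:HalfOrbitFT}. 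The main obstacle is quantitative — keeping the combined forward fellow-travel distance strictly below $\eta$ — and this is precisely why I chose $\delta_0$ strictly less than $\eta$: the residual slack $\eta - \delta_0$ absorbs the triangle-inequality error from forward asymptoticity, while the freedom to restart the half-orbits at a late time makes this slack accessible.
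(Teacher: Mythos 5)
Your overall strategy---verifying the hypothesis of \Cref{expcrit} by producing an auxiliary orbit lying in the same weak positive leaf as $x$ and the same weak negative leaf as $y$, then contradicting \Cref{lem:HalfOrbitFT} via \Cref{proposition:WeakContraction} and the triangle inequality---has the right shape, and the second half of your argument (the choice of $s$, the $\eta-\delta_0$ slack, restarting the half-orbits at a late time) is fine. The gap is the existence of $r$. It is simply not true that for two arbitrary points $p \neq q$ of a standard bifoliated $2n$-gon the positive plaque through $p$ meets the negative plaque through $q$. In a singular chart ($n \ge 3$), a regular positive plaque crosses exactly one negative prong and is contained in the union of the two sectors adjacent to that prong; likewise a regular negative plaque crosses one positive prong and lies in two sectors. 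If $p$ and $q$ sit in sectors that are far apart in the cyclic order, these two pairs of sectors are disjoint, so the plaques are disjoint in the chart; and nothing forces the global leaves $\cF^+(p)$ and $\cF^-(q)$ to meet outside the chart either (near a singular orbit this intersection can genuinely be empty). At the point where you invoke the ``local analysis,'' the only information you have extracted from fellow traveling is that $p,q$ lie in a common chart and lie on distinct positive and distinct negative leaves, and that is not enough to produce $r$. So the proof as written fails precisely at singularities, which is the case one cannot ignore.

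The paper's proof avoids this by not working with a single pair: it negates the criterion of \Cref{expcrit}, obtaining pairs of distinct flowlines $\alpha_i,\beta_i$ that $\delta_i$-fellow travel with $\delta_i \to 0$, translates by deck transformations so that both sequences converge to a single flowline $\zeta$, and then intersects the weak positive leaves through $\alpha_i,\beta_i$ with the weak negative leaf through $\zeta$. Because $\zeta$ is the common limit (so its projection plays the role of the \emph{center} of the relevant chart, or of a product chart at a regular point), these intersections do exist for large $i$, giving two bridging orbits $\alpha'_i,\beta'_i$ in one weak negative leaf; the same triangle-inequality argument then contradicts \Cref{lem:HalfOrbitFT}. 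Your argument can be repaired along these lines---replace the single pair $(x,y)$ by a sequence with $\delta_i\to 0$ and build the bridge through the limiting flowline---but not by a purely local chart analysis for one pair of nearby points.
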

\begin{proof}
    Suppose for a contradiction that $\Psi$ is not expansive. 

    Let $(\alpha_1,\beta_1),(\alpha_2,\beta_2),\dots$ be a sequence of pairs of distinct $\tPhi$-flowlines such that $\alpha_i$ and $\beta_i$ $\delta_i$-fellow travel up to reparameterization, where $\delta_i\to 0$. Since $M$ is compact, we can assume after applying deck transformations that the sequences $(\alpha_i)$ and $(\beta_i)$ both limit (in the Kuratowski sense) on a single flowline $\zeta$. By \Cref{lem:HalfOrbitFT} we can assume after passing to a tail that $\alpha_i$ and $\beta_i$ lie on distinct positive leaves. After possibly passing to a further tail one can assume that the (distinct) weak positive leaves through $\alpha_i$ and $\beta_i$ intersect the weak negative leaf through $\zeta$ in distinct orbits $\alpha'_i$, $\beta'_i$.

    Now fix $I$ sufficiently large so that $\delta_I < \eta$. By \Cref{proposition:WeakContraction}, there are forward half orbits of $\alpha'_I$ and $\alpha_I$ that fellow travel arbitrarily closely and forward half orbits of $\beta'_I$ and $\beta_I$ that fellow travel arbitrarily closely. Then using the triangle inequality we can choose forward half orbits of $\alpha'$ and $\beta'$ that $\eta$-fellow travel up to reparametrization. This contradicts \Cref{lem:HalfOrbitFT}, since these lie in the same weak negative leaf.
\end{proof}

\begin{proposition}[Inaba-Matsumoto, Oka, Brunella, Iakovoglou]\label{prop:ifexpansivethenpA}
    Let $F$ be an expansive flow on a closed 3-manifold $M$. Then $F$ is topologically pseudo-Anosov. 
\end{proposition}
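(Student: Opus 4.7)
The strategy is to cite and combine the results in the literature that the proposition attributes to Inaba–Matsumoto, Oka, Brunella, and Iakovoglou. The task splits into two conceptually distinct pieces matching conditions \ref{it:weakcontraction} and \ref{it:markovpartition} of \Cref{definition:pseudoAnosov}: (i) produce the pair of transverse $2$-dimensional singular foliations $\cF^{s/u}$ with the asymptoticity property, and (ii) produce a Markov partition.

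For (i), the plan is to invoke the work of Inaba--Matsumoto and Oka, who showed that any expansive nonsingular flow on a closed $3$-manifold carries a canonical pair of flow-invariant, transverse, $2$-dimensional singular foliations $\cF^s$ and $\cF^u$, with finitely many singular orbits around which each foliation has a standard $n$-pronged model ($n \geq 3$). The forward-asymptoticity of orbits in a common stable leaf (and the backward version for unstable) is a formal consequence of expansiveness together with the local product structure of $\cF^s \cap \cF^u$: if two orbits in a common stable leaf were not forward asymptotic up to reparametrization, one could run the flow forward and use the product structure to produce, in the limit, two distinct orbits that $\delta$-fellow travel for every $\delta > 0$ but are not on the same flowline, violating expansiveness. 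This yields property \ref{it:weakcontraction}.

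For (ii), the plan is to build a Markov partition in the style of Bowen and Ratner, adapted to the $3$-dimensional singular foliated setting. Using the local product structure from $\cF^s$ and $\cF^u$, cover $M$ by finitely many flow boxes of the form $R_\alpha \times [0, T_\alpha]$, where each $R_\alpha$ is a rectangle built from plaques of $\cF^s$ and $\cF^u$ meeting a fixed transversal. Shrink and refine these rectangles so their transitions under the first-return map satisfy the Markov condition; this refinement is where expansiveness is essential, because it forces iterated refinements to shrink to points. Brunella's work on foliations associated with $\bbR$-actions and Iakovoglou's recent results complete this step in the generality needed here, establishing that the abstract expansive flow can be modeled by a suspension of a subshift of finite type, which is exactly condition \ref{it:markovpartition}.

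The main obstacle is handling the singular orbits of $\cF^{s/u}$: the local models at prongs are not genuine product flow boxes, so the Markov construction must be carried out with a priori control over how rectangles meet singular leaves, and the refinement procedure must remain compatible with the pronged structure. This is the content of Iakovoglou's contribution, and invoking it directly is what allows the combined argument to conclude that $\Psi$ satisfies both \ref{it:weakcontraction} and \ref{it:markovpartition}, hence is pseudo-Anosov in the sense of \Cref{definition:pseudoAnosov}.
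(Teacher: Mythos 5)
Your proposal follows essentially the same route as the paper: the statement is proved purely by citation, with Inaba--Matsumoto and Oka supplying the transverse singular foliations and asymptoticity (property \ref{it:weakcontraction}), and Brunella and Iakovoglou supplying the Markov partition (property \ref{it:markovpartition}). One correction to your attributions: the Brunella/Iakovoglou split is not ``product-structure construction vs.\ handling singular prongs'' --- Brunella proves that \emph{transitive} expansive flows admit Markov partitions, and Iakovoglou's contribution is removing the transitivity hypothesis; your sketched Bowen--Ratner refinement argument and the hand-waved asymptoticity argument are therefore unnecessary (the cited results give \ref{it:weakcontraction} and \ref{it:markovpartition} directly) and would not stand on their own as written, so if you keep them, present them only as motivation rather than as part of the proof.
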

\begin{proof}
    The fact that expansive flows possess singular foliations satisfying property \ref{it:weakcontraction} in \Cref{sec:pAflows} is due to Inaba-Matsumoto \cite{InabaMatsumoto} and Oka \cite{Oka}. Brunella proved in \cite{Brunella} that transitive expansive flows admit Markov partitions, and the non-transitive case was proven by Iakovoglou \cite{Iakovoglou}, so property \ref{it:markovpartition} holds as well.
\end{proof}

We remark that the singular foliations  that $\Psi$ possesses by way of being expansive must agree with the projections $\cL^\pm$ of $\tcL^\pm$ to $M$. Leaves of each correspond to maximal connected flow-saturated subsets in which all orbits are forward (backward) asymptotic.

We immediately obtain:

\begin{corollary}\label{corollary:PsiispA}
    $\Psi$, together with the weak positive/negative foliations $\cL^\pm$, admits a Markov partition.
\end{corollary}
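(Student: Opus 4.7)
The plan is to invoke \Cref{prop:ifexpansivethenpA} directly. The preceding proposition establishes that $\Psi$ is expansive, so \Cref{prop:ifexpansivethenpA} immediately yields a pair of transverse singular foliations $\cG^{s/u}$ satisfying property \ref{it:weakcontraction} together with a Markov partition. Hence the only thing left to verify is that the foliations $\cG^{s/u}$ produced by the Inaba--Matsumoto--Oka--Brunella--Iakovoglou machinery actually coincide with the foliations $\cL^\pm$ that we have constructed from the sprig decompositions.

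To identify $\cG^{s}$ with $\cL^+$ (and symmetrically $\cG^u$ with $\cL^-$), I would rely on the intrinsic description noted in the remark preceding the statement: the leaves of each foliation are precisely the maximal connected $\Psi$-saturated subsets of $M$ along which all orbits are pairwise forward asymptotic. For $\cG^s$ this is a standard consequence of property \ref{it:weakcontraction} together with expansiveness (forward asymptoticity of two orbits in the same leaf holds by \ref{it:weakcontraction}, while expansiveness prevents two distinct leaves from containing orbits that are mutually forward asymptotic). For $\cL^+$ this is already in the definition once one lifts to $\tM$: a weak positive leaf $\tK \in \tcL^+$ is by construction a connected component of $(E^+)^{-1}(z)$, and \Cref{proposition:WeakContraction} together with the fact that flowlines with distinct forward endpoints diverge in $\tM$ shows that $\tK$ is precisely a maximal connected flow-saturated set of mutually forward asymptotic orbits. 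Descending to $M$ gives the same description for $\cL^+$.

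Once both foliations are characterized by the same intrinsic property, they must agree, and the Markov partition supplied by \Cref{prop:ifexpansivethenpA} is therefore a Markov partition with respect to $\cL^\pm$. The one step requiring minor care is the divergence statement used above: one needs that if two orbits $\alpha,\beta$ of $\tPsi$ have distinct forward endpoints in $S^2_\infty$, then their forward half-orbits do not fellow-travel. This is immediate from quasigeodesity of $\Psi$ (\Cref{prop:flowisquasigeodesic}) via the Morse Lemma, since the two lifted orbits remain within bounded distance of two distinct geodesic rays in $\bbH^3$, whose distance grows without bound. This, I expect, is the only substantive verification in an otherwise routine concluding argument.
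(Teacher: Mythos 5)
Your proposal is correct and follows essentially the same route as the paper: the corollary is deduced immediately from \Cref{prop:ifexpansivethenpA} once $\Psi$ is known to be expansive, with the foliations supplied by that result identified with $\cL^\pm$ exactly as in the remark preceding the corollary, namely by characterizing leaves as maximal connected flow-saturated sets of mutually forward (resp.\ backward) asymptotic orbits. Your write-up merely spells out this identification (via \Cref{proposition:WeakContraction} and quasigeodesity plus the Morse Lemma for the divergence of orbits with distinct forward endpoints), which the paper leaves implicit.
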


\subsection{Uniform exponential contraction/expansion}

To round out our discussion of $\Psi$'s nice properties, we will now upgrade \Cref{proposition:WeakContraction} by showing that strong positive and negative leaves are uniformly exponentially contracted and expanded; see \Cref{theorem:UniformExponentialContractionExpansion} below. We will focus on uniform exponential contraction of strong positive leaves, since the result for negative leaves is similar.

The Sasaki metric on $T^1 \tM$ is well adapted to the geodesic flow: there exists $\lambda>1$ such that if $A$ is a compact subset of a stable/unstable horosphere $S^\pm$, then 
\[
\diam_{S^\pm}(A\cdot t)=\lambda^{\mp t}\diam_{S^\pm}(A)
\]
for all $t\in \bbR$, where $\diam_{S^\pm}$ denotes diameter in the induced path metric on $S^\pm$. (In fact $\lambda=e$, as pointed out to us by M. Stover).
This implies:
 \begin{observation}
 	If $A$ is a compact subset of a single strong positive/negative leaf, then 
 	\[ 
    \diam^\pm(A \cdot t) = \lambda^{\mp t} \diam^\pm(A) 
    \]
 	for all $t$.
 \end{observation}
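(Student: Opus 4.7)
The plan is to derive the observation as an immediate consequence of the displayed Sasaki-metric property for the geodesic flow applied to horospheres, combined with the flow-equivariance of the comparison map $R$.

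First, I would fix a compact set $A$ contained in a single strong positive leaf $k \in \tcL^\pp$, the negative case being entirely symmetric. By the definition of a strong positive leaf, $k \subset R^{-1}(H^+)$ for some stable horosphere $H^+$; in particular $R(A)$ is a compact subset of $H^+$, so $\diam^+(A)$ is literally $\diam_{H^+}(R(A))$.

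Next, I would use that the strong positive decompositions are flow-invariant (as established earlier in \Cref{section:strongdecompositions}), together with the flow-equivariance of $R$, to compute $R(A\cdot t) = R(A)\cdot t$, where the flow on the right is the geodesic flow. This set lies in the stable horosphere $H^+\cdot t$, so $\diam^+(A\cdot t) = \diam_{H^+\cdot t}(R(A)\cdot t)$.

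Finally, applying the displayed Sasaki-metric identity
\[
\diam_{H^+\cdot t}(R(A)\cdot t) = \lambda^{-t}\,\diam_{H^+}(R(A))
\]
gives $\diam^+(A\cdot t)=\lambda^{-t}\diam^+(A)$, as desired. The only potential subtlety is checking that the pushed-forward horosphere $H^+\cdot t$ is again a stable horosphere (so that $\diam^+$ is defined on $A\cdot t$ in the first place), but this is standard for the geodesic flow on $T^1\tM$. There is no real obstacle here; the statement is essentially a translation of a property of the geodesic flow into the language of strong leaves via $R$.
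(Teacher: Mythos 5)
Your proposal is correct and is exactly the argument the paper intends: the Observation is stated as an immediate consequence of the displayed Sasaki-metric identity for horospheres, transported to strong leaves via the flow-equivariance of $R$ (so $R(A\cdot t)=R(A)\cdot t$ lies in the stable horosphere $H^+\cdot t$), which is precisely what you spell out. No gap and no difference in approach.
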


In particular if $\sigma = k[x, y]$ is a segment in a strong positive leaf, then $\Psi$ uniformly contracts $\diam^+(\sigma)$ in the sense that  $\diam^+(\sigma \cdot t) = \lambda^{-t} \diam^+(\sigma)$. %where $\lambda > 1$ is the dilatation of the geodesic flow. 
To see that $\Psi$ uniformly contracts distance in strong positive leaves, we will show that $\diam^+(\sigma)$ and $\td(x, y)$ are uniformly comparable up to some scale:

\begin{proposition}[Bounded Distortion]\label{proposition:BoundedDistortion}
	There are constants $C > 0$ and $c > 0$ such that, if $\sigma = k[x, y]$ is a strong segment with $\td(x, y) \leq C$, then
	\[ c \diam^+(\sigma) \leq \td(x, y) \leq \diam^+(\sigma). \]
\end{proposition}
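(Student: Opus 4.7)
The upper bound $\td(x,y) \leq \diam^+(\sigma)$ is immediate: $\sigma$ is itself a polygonal path from $x$ to $y$ with $\ell_\diamond(\sigma) = \diam^+(\sigma)$ by definition.

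For the lower bound, the plan is to use the comparison map $R$ to move the question to a stable horosphere in $T^1\tM$, where the relevant geometry is Euclidean. Two basic ingredients feed into the argument. First, by \Cref{lem:compareRdiampoly} applied to polygonal paths approximating $\td(x,y)$, we obtain $d_{T^1\tM}(R(x), R(y)) \leq \td(x,y)$. Second, since $T^1\bbH^3$ is homogeneous and stable horospheres are smooth embedded submanifolds, there are constants $C_0, K > 0$ such that any two points $u, v$ in a common stable horosphere $S^+$ with $d_{T^1\tM}(u,v) \leq C_0$ satisfy $d_{S^+}(u,v) \leq K d_{T^1\tM}(u,v)$, where $d_{S^+}$ denotes the intrinsic horospheric path metric. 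Combining: once $\td(x,y)$ is small enough, $d_{S^+}(R(x), R(y)) \leq K \td(x,y)$.

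The substantive point is to pass from the horospheric distance $d_{S^+}(R(x), R(y))$ between endpoints to the horospheric diameter $\diam^+(\sigma) = \diam_{S^+}(R(\sigma))$ of the whole arc. The key observation is that $R$ restricted to a strong positive omnileaf $k$ is injective: two points of $k$ with the same $R$-image share the same forward endpoint (being in the same weak positive leaf) and the same backward endpoint (being on the same geodesic in $T^1\tM$), hence correspond to the same point of the flowspace $Q$ by efficient intersection of $\cF^\pm$, hence lie on the same flowline, on which $R$ is monotone. At sufficiently small $\td$-scale, on any single strong positive plaque $\rho$ inside a positively adapted flowbox of uniformly bounded size---and such flowboxes cover $\tM$ uniformly by cocompactness of the $\pi_1(M)$-action and \Cref{lem:separationconstant}---the restriction $R|_\rho$ is a uniformly bi-Lipschitz embedding into its horospheric image, so $\diam^+(\sigma) \leq K' d_{S^+}(R(x), R(y))$ whenever $\sigma$ is contained in such a plaque.

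The main obstacle is therefore to show that, for $\td(x,y)$ sufficiently small, the segment $\sigma = k[x,y]$ is contained in a single such plaque rather than straying out of a fixed flowbox and returning---the kind of pathological near-return of a strong positive leaf that would allow $\diam^+(\sigma)$ to be large while $x$ and $y$ remain close. I would argue this by a compactness-and-contradiction scheme. If the lower bound failed, one would obtain sequences $\sigma_n = k_n[x_n, y_n]$ with $\td(x_n, y_n) \to 0$ and $\diam^+(\sigma_n)/\td(x_n, y_n) \to \infty$. After translating via deck transformations to place $x_n$ in a compact set and passing to subsequences, $x_n \to x$ and $y_n \to x$ (by compatibility of $\td$ with the standard topology, \Cref{lemma:compatiblemetrics}). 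Extracting subsequential Hausdorff limits of the arcs $R(\sigma_n)$ in $T^1\tM$ and of the polygonal paths $\alpha_n$ from $x_n$ to $y_n$ with $\ell_\diamond(\alpha_n) \to 0$, one would show that a nontrivial horospheric limit forces the $\alpha_n$ to transversely cross a nontrivial arc in the positive leaf through $\nu(x)$ in the bifoliated flowspace $Q$, and such a crossing has $\ell_\diamond$-cost bounded below in terms of the horospheric scale of the limit. Executing this last step carefully---ruling out near-returns of strong positive leaves at arbitrarily small $\td$-scale using the transverse singular foliation structure on $Q$---is where I expect the principal work to lie.
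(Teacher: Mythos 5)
The upper bound is fine, but the lower bound as written has a genuine gap, and it sits exactly where you say you expect "the principal work to lie." Your chain needs $\diam^+(\sigma) \leq K' d_{S^+}(R(x), R(y))$, i.e.\ that the $R$-image of the whole strong segment has horospheric diameter comparable to the horospheric separation of its two endpoints. You justify this by asserting that on a plaque inside a positively adapted flowbox the restriction of $R$ is a uniformly bi-Lipschitz embedding, but nothing in the setup supports that: strong leaves are only topological objects (the paper explicitly avoids assuming they are rectifiable), $R$ is merely continuous, and "bi-Lipschitz with respect to $\td$" is itself a local form of the bounded-distortion statement you are trying to prove, so the argument is circular. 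Even granting bi-Lipschitzness, it would bound $\diam^+(\sigma)$ by the $\td$-diameter of $\sigma$, not by $\td(x,y)$, and comparing those two quantities is again the whole problem. The danger is not only that $\sigma$ might leave a flowbox and return: even inside one chart the image arc $R(\sigma)$ could a priori wander far in the horosphere while its endpoints stay horospherically close. Your closing compactness sketch correctly identifies that one must bound below the $\ell_\diamond$-cost of any competing polygonal path in terms of the horospheric scale of $\sigma$, but that bound is precisely the quantitative heart of the proposition and is not supplied.

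For comparison, the paper's proof supplies this missing ingredient with a different mechanism. It first reduces to segments with $\diam^+ \leq 1$ (the constant $C$ comes from \Cref{lem:constantC}), then argues by contradiction: given $\sigma_i = k_i[x_i,y_i]$ with $\td(x_i,y_i)/\diam^+(\sigma_i) \to 0$ and near-optimal polygonal paths $\alpha_i$, it places everything in small negatively adapted flowboxes, subdivides $\sigma_i$ into subsegments each \emph{parallel} (in the sense of the rake construction) to pairwise disjoint strong positive subsegments $\mu_i^j$ of $\alpha_i$, and invokes the Parallel Distortion lemma (\Cref{lemma:ParallelDistortion}) --- itself proved by flowing to normalize $\diam^+ = 1$ and a cocompactness limit --- to get $\diam^+(\mu_i^j) \geq b\,\diam^+(\sigma_i^j)$. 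Summing over $j$ gives $\ell_\diamond(\alpha_i) \geq b\,\diam^+(\sigma_i)$, the desired contradiction. The rake/parallel-segment machinery and this subdivision-summation step are exactly what replaces your unproved crossing-cost estimate; without an argument of that kind (or some other substitute), your proposal does not yet constitute a proof.
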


We will begin by understanding the difference in diameter of nearby ``parallel'' segments.

\begin{definition}
    Two segments $\mu_0, \mu_1 \subset P$ of positive leaves are \emph{parallel} if each negative leaf that intersects $\mu_0$ also intersects $\mu_1$ and vice versa.
	
    Two segments $\sigma_0, \sigma_1 \subset \tM$ of strong positive leaves are \emph{parallel} if the corresponding positive leaf segments $\nu(\sigma_0)$ and $\nu(\sigma_1)$ are parallel.
\end{definition} 

Consider parallel positive leaf segments $\sigma$ and $\sigma'$. For each point $x \in \sigma$, there is a unique polygonal path $\alpha_x$ that starts at $x$, traverses a single strong negative segment, then a single flow segment, and ends at a point $x' \in \sigma'$. We call $\tau_x$ the \emph{tine} at $x$. The family of tines $\{\tau_x\}_{x \in \sigma}$ from $\sigma$ to $\sigma'$ is called the \emph{rake} from $\sigma$ to $\sigma'$. See \Cref{figure:Rake}.

We call the maximal polygonal length of these tines the \emph{rake distance} 
\[d_\rake(\sigma, \sigma') = \sup_{x \in \sigma} \ell_\diamond(\tau_x).\]
This is not generally symmetric, but it is still useful: $\ell_\diamond(\tau_x)$ is continuous in $x \in \sigma$, 
so the rake distance between compact segments is finite, and it bounds the Hausdorff distance (with respect to $\td$) above, i.e. $d_H(\sigma, \sigma') \leq d_{\rake}(\sigma,\sigma')$.

\begin{figure}[ht]
	\centering
	\includegraphics{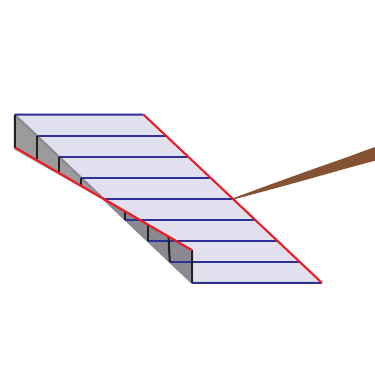}
	\caption{A rake.} \label{figure:Rake}
\end{figure}

The next lemma says that given a strong positive leaf segment, nearby strong positive leaf segments have roughtly the same $\diam^+(\cdot)$.

\begin{lemma}\label{lemma:ParallelDistortion}
	There are constants $\epsilon > 0$ and $b > 0$ such that, if $\sigma$ is a strong positive leaf segment with $\diam^+(\sigma) \leq 1$, and $\sigma'$ is a parallel segment with
	\[ d_\rake(\sigma, \sigma') < \epsilon, \]
	then
	\[ b \diam^+(\sigma) \leq \diam^+(\sigma'). \]%\scom{Simpler to have symmetric inequality?}
\end{lemma}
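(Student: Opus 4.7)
The strategy is to transfer the question to $T^1 \tM$ via the comparison map $R$, where the problem becomes a statement about holonomy between nearby stable horospheres, and then invoke compactness and smoothness of the geodesic flow's local product structure to get uniform bounds.

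First, I would record what $R$ does to the three kinds of segments appearing in a tine. Since $R$ takes strong negative segments into unstable horospheres and is flow-equivariant, if $\tau_x = l[x, y] \star (y \cdot [0, t])$ is a tine with $\ell_\diamond(\tau_x) < \epsilon$, then $R(x)$ and $R(y)$ lie on a common unstable horosphere at horosphere-distance at most $\diam^-(l[x, y]) < \epsilon$, and $R(x') = R(y) \cdot t$ with $|t| < \epsilon$. Thus the rake induces a well-defined map $\rho\colon R(\sigma) \to R(\sigma')$ in which each $R(x) \in H^+$ is moved first along an unstable horosphere and then by the geodesic flow, each motion of magnitude less than $\epsilon$.

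The heart of the argument is the following \emph{uniform holonomy bound}: there exist $\epsilon_0 > 0$ and $b > 0$ such that for every $v \in T^1 \tM$, every $w$ in the unstable horosphere of $v$ with $d_{H^-}(v, w) < \epsilon_0$, and every $|t| < \epsilon_0$, the holonomy
\[
\rho_{v, w, t}\colon H^+(v) \to H^+(w \cdot t),
\]
defined on a neighborhood of $v$ by moving along unstable horospheres to reach $H^-(w)$ and then applying $\Theta^t$, is $b^{-1}$-bi-Lipschitz (in the horosphere path metrics) on any subset of $H^+(v)$ of diameter $\leq 1 + \epsilon_0$. This is a consequence of the $C^\infty$ local product structure of $T^1 \tM = T^1\bbH^3$ as a weak-stable/weak-unstable/flow foliated space, combined with cocompactness of $\pi_1(M) \acts T^1 \tM$: the entire family of such holonomies is $\pi_1(M)$-equivariant, so its distortion descends to a continuous function on (a compact piece of) $T^1 M$, which is bounded on sets of bounded diameter.

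Applying this with $\epsilon \leq \epsilon_0$, the map $\rho$ induced by the rake is the restriction of such a holonomy, and it satisfies $d_{(H^+)'}(\rho(R(x_1)), \rho(R(x_2))) \geq b \cdot d_{H^+}(R(x_1), R(x_2))$ for any pair $R(x_1), R(x_2) \in R(\sigma)$. Taking the supremum over such pairs and using that the tines realize $\rho(R(x_i)) = R(x_i')$ yields $\diam^+(\sigma') \geq b \cdot \diam^+(\sigma)$, as desired. (Strictly, $\diam^+(\sigma)$ is defined via pairs on $R(\sigma)$, not on $\sigma$ itself, so there is no issue with any possible failure of $R$ to be injective on $\sigma$.) The main technical obstacle is making the uniform holonomy bound precise---once one writes down a standard trivialization of the weak-stable/weak-unstable/flow structure of $T^1 \bbH^3$ on a small ball and transports it by $\pi_1(M)$, compactness of $T^1 M$ does the rest.
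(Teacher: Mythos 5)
Your proposal is correct in substance, but it takes a genuinely different route from the paper. The paper argues by contradiction with a soft compactness scheme: take putative counterexample pairs $(\sigma_i,\sigma_i')$, renormalize by flowing backwards so that $\diam^+(\sigma_i)=1$ (which scales numerator and denominator of the bad ratio equally and only shrinks the rakes), translate by deck transformations, extract Hausdorff limits, and use continuity of $R$ and of $\diam^+$ under Hausdorff convergence to force the ratio to tend to $1$, a contradiction. You instead prove the estimate directly in $T^1\tM\simeq T^1\bbH^3$: the rake-induced map on $R$-images is a holonomy of the stable/unstable/flow structure of the geodesic flow, and smoothness plus homogeneity (indeed the full isometry group of $\bbH^3$ acts transitively preserving this structure, so cocompactness of $\pi_1(M)$ is not even needed) gives a uniform lower Lipschitz bound for such holonomies with small displacement on sets of bounded horospherical diameter. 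What your route buys is an effective, non-contradiction argument with explicit source of the constants; what the paper's route buys is that it needs no quantitative input about horospherical holonomy beyond what is already used elsewhere (flow-equivariance of $R$ and behavior of $\diam^+$ under limits). One imprecision to fix: your holonomy family $\rho_{v,w,t}$ is described as moving along unstable horospheres ``to reach $H^-(w)$'' and then applying a single time-$t$ flow, but in the rake both the unstable displacement and the flow time vary from point to point. The correct statement is that the rake-induced map is the holonomy along weak unstable leaves from the stable horosphere $H^+$ containing $R(\sigma)$ to the one containing $R(\sigma')$: since strong negative segments and flow segments both preserve the backward ideal point, $R(x')$ is the unique vector of the target stable horosphere with $\epsilon^-(R(x'))=\epsilon^-(R(x))$, and each tine of length $<\epsilon$ certifies that this holonomy displaces $R(x)$ by less than $2\epsilon$ in $T^1\tM$. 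With the uniform bound formulated for this weak-unstable holonomy (small displacement, source set of diameter at most $1$), your deduction of $\diam^+(\sigma')\ge b\,\diam^+(\sigma)$ goes through as written.
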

\begin{proof}
	Otherwise, we can find sequences of parallel strong positive leaf segments $\sigma_i$ and $\sigma'_i$ such that
	\[ \diam^+(\sigma_i) \leq 1 \text{ for all } i, \]
	\[ d_{\rake}(\sigma_i, \sigma'_i) \to 0 \text{ as } i \to \infty, \]
	and 
	\[ \frac{\diam^+(\sigma'_i)}{\diam^+(\sigma_i)} \to 0 \text{ as } i \to \infty. \]
		
	For each $i$ there is a time $t_i \leq 0$ such that $\diam^+(\sigma_i \cdot t_i) = 1$. Replace $\sigma_i$ by $\sigma_i \cdot t_i$ and $\sigma'_i$ by $\sigma'_i \cdot t_i$. This scales the numerator and denominator by the same factor, so we still have $\frac{\diam^+(\sigma'_i)}{\diam^+(\sigma_i)} \to 0$. Flowing a rake for negative time shrinks its strong negative segments and preserves the lengths of its orbit segments, so we still have $d_{\rake}(\sigma_i, \sigma'_i) \to 0$.
	
	The $\sigma_i$ are segments of uniformly bounded diameter in the strong positive singular foliation. After composing with elements of $\pi_1(M)$ and taking a subsequence, we can assume that they Hausdorff-limit on a strong positive leaf segment $\sigma = \lim_H \sigma_i$. The Hausdorff distance from $\sigma_i$ to $\sigma'_i$ is bounded above by $d_{\rake}(\sigma_i, \sigma'_i)$, which tends to $0$, so $\lim_H\sigma'_i = \sigma$ as well. But then $R(\sigma_i)$ and $R(\sigma_i')$ Hausdorff-limit on $R(\sigma)$, so 
    \[ \lim_{i \to \infty} \diam^+(\sigma'_i) = \diam^+(\sigma) = \lim_{i \to \infty} \diam^+(\sigma_i) = 1,\]%\mcom{this uses that the metrics in horospheres vary continuously. say this?}
    so we have $\frac{\diam^+(\sigma'_i)}{\diam^+(\sigma_i)} \to 1$, a contradiction.
\end{proof}

\begin{proposition}
    There is a constant $c > 0$ such that
    \[ c \diam^+(k[x, y]) \leq \td(x, y) \leq \diam^+(k[x, y]) \]
    for every strong positive leaf segment $k[x, y]$ with $\diam^+(k[x, y]) \leq 1$.
\end{proposition}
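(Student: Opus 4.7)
The upper bound $\td(x,y) \le \diam^+(k[x,y])$ is immediate: $k[x,y]$ is itself a polygonal path from $x$ to $y$ consisting of a single strong positive segment, of polygonal length $\diam^+(k[x,y])$.

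For the lower bound, I would argue by contradiction using the flow to rescale. Suppose no such $c > 0$ exists; then for each $i$ there are strong positive segments $\sigma_i = k_i[x_i, y_i]$ with $\diam^+(\sigma_i) \le 1$ and polygonal paths $\alpha_i$ from $x_i$ to $y_i$ with $\ell_\diamond(\alpha_i) < \diam^+(\sigma_i)/i$. Set $t_i = \log(\diam^+(\sigma_i))/\log\lambda \le 0$, chosen so that $\diam^+(\sigma_i \cdot t_i) = 1$. Writing $\ell_\diamond(\alpha_i) = L_i^+ + L_i^- + L_i^0$ as the sum of the strong positive, strong negative, and flow contributions, under the flow by $t_i$ these scale to $L_i^+/\diam^+(\sigma_i)$, $L_i^- \cdot \diam^+(\sigma_i)$, and $L_i^0$ respectively. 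Each is bounded above by a constant times $1/i$, using $\diam^+(\sigma_i) \le 1$ and $L_i^\bullet \le \diam^+(\sigma_i)/i$; hence $\ell_\diamond(\alpha_i \cdot t_i) \to 0$, so $\td(x_i \cdot t_i, y_i \cdot t_i) \to 0$.

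Next I would apply deck transformations to place $x_i \cdot t_i$ in a compact fundamental domain, and pass to a subsequence so that $x_i \cdot t_i \to \wt x$ for some $\wt x \in \tM$. By the compatibility of $\td$ with the topology established in \Cref{lemma:compatiblemetrics}, also $y_i \cdot t_i \to \wt x$, and by uniform equivalence of $\td$ with the hyperbolic metric, $\td_\bbH(x_i \cdot t_i, y_i \cdot t_i) \to 0$ as well. The goal is to contradict $\diam^+(\sigma_i \cdot t_i) = 1$ by exploiting that both endpoints of $\sigma_i \cdot t_i$ are collapsing to a common point.

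The contradiction rests on a local uniform continuity claim: there exist $\eta > 0$ and a function $F \colon [0,\eta] \to [0,\infty)$ with $F(s) \to 0$ as $s \to 0$, such that whenever $x, y$ lie on a common strong positive leaf and $\td_\bbH(x,y) \le \eta$, one has $\diam^+(k[x,y]) \le F(\td_\bbH(x,y))$. Choosing $\eta$ as in \Cref{lem:separationconstant}, every $\eta$-ball in $\tM$ is contained in a positively adapted flowbox of diameter at most $B$; in such a flowbox each strong positive leaf appears as a connected subset of a single horizontal slice, so $k[x,y]$ is confined to that slice and in fact to a neighborhood of $x$ that shrinks with $\td_\bbH(x,y)$. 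Uniform continuity of the comparison map $R$, together with local comparability of the horospherical metric on stable horospheres in $T^1\bbH^3$ with the Sasaki metric, then yields the claim. Applied to our sequence, this forces $\diam^+(\sigma_i \cdot t_i) \to 0$, contradicting $\diam^+(\sigma_i \cdot t_i) = 1$. The main obstacle is verifying the local uniform continuity claim: one must carefully leverage the positively adapted flowbox structure to ensure the strong positive arc $k[x,y]$ does not wander away from $x$ when $y$ is close to $x$, and then control the horospherical diameter of its $R$-image uniformly on bounded scales.
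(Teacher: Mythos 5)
Your argument is correct in outline but takes a genuinely different route from the paper. The paper proves the lower bound by comparing a competing polygonal path $\alpha_i$ directly to the segment $\sigma_i$: it shows $x_i,y_i$ (hence $\alpha_i$) collapse to a point, places everything in small negatively adapted flowboxes, subdivides $\sigma_i$ into pieces parallel to disjoint strong positive subsegments of $\alpha_i$ (a ``rake''), and invokes \Cref{lemma:ParallelDistortion} to force $\ell_\diamond(\alpha_i)\ge b\,\diam^+(\sigma_i)$. You instead normalize by flowing: since $\diam^\pm$ scales by exactly $\lambda^{\mp t}$ and flow segments are preserved, flowing by $t_i=\log_\lambda\diam^+(\sigma_i)\le 0$ makes $\diam^+(\sigma_i\cdot t_i)=1$ while $\ell_\diamond(\alpha_i\cdot t_i)\to 0$ (your three-term estimate is right), so $\td$ and hence $\td_\bbH$ between the endpoints tends to $0$; a uniform modulus ``$\td_\bbH$ small on a common strong positive leaf $\Rightarrow \diam^+$ of the segment small'' then contradicts $\diam^+(\sigma_i\cdot t_i)=1$. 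This bypasses rakes and \Cref{lemma:ParallelDistortion} entirely; the price is that all the work is concentrated in your modulus claim, which is precisely a quantitative strengthening of the paper's \Cref{lem:constantC} (stated there only as ``$\td(x,y)\le C\Rightarrow\diam^+\le 1$'' and proved by a translate-and-take-limits compactness argument).

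The one place your sketch is not yet a proof is the justification of that claim. Invoking \Cref{lem:separationconstant} as stated only puts $k[x,y]$ inside a positively adapted flowbox of diameter at most $B$, which bounds $\diam^+(k[x,y])$ by a constant but does not make it shrink with $\td_\bbH(x,y)$. To get a genuine modulus you need, for every $\epsilon>0$, positively adapted flowboxes of diameter $<\epsilon$ covering $M$ together with a uniform Lebesgue-type constant $\delta(\epsilon)$ (i.e.\ rerun the proof of \Cref{lem:separationconstant} with $\epsilon$ in place of $B$), plus the observation that two points of a common strong positive leaf lying in such a box have their segment contained in the box (the leaf meets the box in a connected subset of a horizontal slice, and connected subsets of these $1$-dimensional leaves contain the arcs between their points); alternatively, argue by contradiction with deck translations exactly as in the paper's proof of \Cref{lem:constantC}, since $\td_\bbH$, the strong leaves, and $\diam^+$ are all deck-invariant. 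After that, uniform continuity of $R$ (\Cref{lemma:ComparisonBDandUC}) and the uniform local comparability of the horospherical path metric with the Sasaki metric (uniform by homogeneity of the stable horosphere foliation, and already used implicitly by the paper) finish the claim as you indicate. With that refinement spelled out, your proof is complete and no circularity arises, since none of these ingredients depend on the proposition or on \Cref{lemma:ParallelDistortion}.
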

\begin{proof}
    The righthand inequality is obvious, since $k[x, y]$ is itself a polygonal path from $x$ to $y$ with $\ell_\diamond(k[x, y]) = \diam^+(k[x, y])$.
	
    If the lemma were false, then we could find strong positive leaf segments $\sigma_i = k_i[x_i, y_i]$ with
    \[ \diam^+(\sigma_i) \leq 1 \text{ for all } i \]
    and
    \[ \frac{\td(x_i, y_i)}{\diam^+(\sigma_i)} \to 0 \text{ as } i \to \infty. \]
    Fix polygonal paths $\alpha_i$ from $x_i$ to $y_i$ such that 
    \[ \frac{\ell_\diamond(\alpha_i)}{\diam^+(\sigma_i)} \to 0. \]
	
    After applying deck transformations we can assume that the $\sigma_i = k_i[x_i, y_i]$ converge to a strong positive leaf segment $k[x, y]$, where $x = \lim_{i \to \infty} x_i$ and $y = \lim_{i \to \infty} y_i$. Since the denominator in the preceding limits is bounded above, the numerators must go to $0$; this implies that the $\td$-diameters of the $\alpha_i$ go to 0. Hence $k[x,y]=x=y$ is a point and the $\alpha_i$ converge to $x$.

    Let $B_i$ be a sequence of negatively adapted flow boxes around $x$ with diameters tending to $0$. Let $\epsilon_i$ be the maximal $\diam^+(\cdot)$ of the strong negative leaf segments in $B_i$ and let $t_i$ be  $\diam_R(\cdot)$ of the flow segments in $B_i$; both $\epsilon_i$ and $t_i$ tend to $0$. After taking subsequences we can assume that $\sigma_i \subset B_i$ for all $i$.

    For each $i$, we claim that can subdivide $\sigma_i$ into segments
    \[ \sigma_i = \sigma_i^1 \star \cdots \star\sigma_i^{n(i)}, \]
    such that each $\sigma_i^j$ is parallel to a strong positive subsegment $\mu_i^j$ of the polygonal path $\alpha_i$, and such that the
    \[ \mu_i^1, \cdots, \mu_i^{n(i)} \subset \alpha_i \]
    are disjoint. To see this, observe that the weak negative plaque in $B_i$ through each point $x \in \sigma_i$ intersects $\alpha_i$ at either an interior point of a strong positive leaf segment in $\alpha_i$ or in endpoints of two strong positive leaf segments. Compactness of $\sigma_i$ yields the existence of the desired subdivision.

    Since this is taking place in $B_i$, the rake distance between $\sigma_i^j$ and $\mu_i^j$ is bounded above by $\epsilon_i+t_i$, which tends to $0$.
    When $i$ is sufficiently large, this is less than the $\epsilon$ in Lemma~\ref{lemma:ParallelDistortion}, so $\diam^+(\mu_i^j) \geq b \diam^+(\sigma_i^j)$ for all $j$. Adding these for $j = 1, 2, \cdots, n(i)$ yields 
    \[ \ell_\diamond(\alpha_i) \geq \sum_{j = 1}^{n(i)} \diam^+(\mu_i^j) \geq b \sum_{j = 1}^{n(i)} \diam^+(\sigma_i^j) \geq b \diam^+(\sigma_i), \]
    which contradicts $\frac{\ell_\diamond(\alpha_i)}{\diam^+(\sigma_i)} \to 0$.
\end{proof} 

\begin{lemma}\label{lem:constantC}
	There is a constant $C > 0$ such that if $x, y$ are points in a strong leaf $k$ with $\td(x, y) \leq C$, then $\diam^+(k[x, y]) \leq 1$.
\end{lemma}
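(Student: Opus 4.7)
The plan is a flow-box and Lebesgue number argument. If $\td(x,y)$ is small, then $x$ and $y$ should lie in a common positively adapted flow box $B$; since $k \cap B$ is a single connected plaque by positive adaptedness, the segment $k[x,y]$ stays in this plaque. If $B$ is chosen small enough, the plaque will have $\diam^+ \le 1$.

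The execution proceeds in three steps. \textbf{Step 1 (horosphere comparability):} since the stable foliation of the geodesic flow on $T^1 M$ is a smooth foliation of a compact manifold, there exists $\epsilon > 0$ such that any connected subset of a stable horosphere $H^+ \subset T^1\tM$ whose diameter in the Sasaki metric on $T^1\tM$ is at most $\epsilon$ has horosphere-intrinsic diameter at most $1$. This follows from the fact that on each horosphere the intrinsic and ambient metrics are locally uniformly comparable, with comparison constants uniform by cocompactness of the $\pi_1(M)$-action on $T^1\tM$. \textbf{Step 2 (small flow box cover):} cover $M$ by finitely many positively adapted flow boxes $U_1,\dots,U_n$ of $d_\bbH$-diameter less than some $\delta_0 < \mathrm{inj}(M)/2$. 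By shrinking $\delta_0$ using uniform continuity of $R$, we may ensure that the $R$-image of any lift $\wt U_i \subset \tM$ has $T^1\tM$-diameter at most $\epsilon$. For each strong positive leaf $k$, the intersection $k \cap \wt U_i$, if nonempty, is a single connected plaque by positive adaptedness, and its $R$-image is a connected subset of a stable horosphere of $T^1\tM$-diameter at most $\epsilon$; by Step 1, $\diam^+(k \cap \wt U_i) \le 1$.

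\textbf{Step 3 (Lebesgue number):} by Lebesgue's lemma applied to the cover $\{U_i\}$ of $M$, there exists $C_0 > 0$ such that any subset of $M$ of $d_\bbH$-diameter at most $C_0$ lies in some $U_i$. Using the uniform equivalence of $\td$ and $d_\bbH$ from the proof of \Cref{lemma:compatiblemetrics}, choose $C > 0$ so that $\td$-diameter at most $C$ implies $d_\bbH$-diameter at most $\min(C_0,\delta_0)$. Suppose $\td(x,y) \le C$ with $x,y$ in a common strong positive leaf $k$. Then $\bar x, \bar y \in M$ satisfy $d_\bbH(\bar x,\bar y) \le C_0$ and thus lie in some $U_i$; since $d_\bbH(x,y) \le \delta_0 < \mathrm{inj}(M)/2$, the points $x$ and $y$ lie in a common lift $\wt U_i$. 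Hence $k[x,y] \subset k \cap \wt U_i$, and therefore $\diam^+(k[x,y]) \le 1$ by Step 2.

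The main obstacle is Step 1, which requires uniform local comparability between the intrinsic metric on stable horospheres and the ambient Sasaki metric on $T^1\tM$. This is classical, following from smoothness of the stable foliation of the geodesic flow together with cocompactness, but must be stated carefully. Steps 2 and 3 are then routine: Step 2 uses standard flow-box existence together with uniform continuity of $R$, while Step 3 combines the Lebesgue number lemma with injectivity radius considerations to lift covering information from $M$ to $\tM$.
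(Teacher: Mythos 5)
Your argument is correct, and it takes a somewhat different route from the paper's. The paper proves the lemma by contradiction and compactness: given segments $\sigma_i = k_i[x_i, y_i]$ with $\diam^+(\sigma_i) > 1$ and $\td(x_i, y_i) \to 0$, it translates by deck transformations so that $x_i, y_i \to x$, and then a single positively adapted flowbox at $x$ traps the segments $\sigma_i$, forcing them to shrink to $x$ and contradicting $\diam^+(\sigma_i) > 1$. You argue directly and quantitatively: a finite cover of $M$ by small positively adapted flow boxes, a Lebesgue number, uniform continuity of $R$ (\Cref{lemma:ComparisonBDandUC}), and the uniform equivalence of $\td$ with the hyperbolic metric (noted after \Cref{lemma:compatiblemetrics}) produce an explicit $C$. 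The core mechanism --- once $x$ and $y$ are $\td$-close, an adapted flowbox traps the whole segment $k[x,y]$ in a single plaque --- is the same in both proofs; what you add is an explicit statement of the comparability between the ambient Sasaki metric and the intrinsic horosphere metric (your Step 1), which the paper uses only implicitly when it passes from ``$\sigma_i$ converges to $x$'' to a contradiction with $\diam^+(\sigma_i) > 1$. Your version buys explicit constants and avoids extracting subsequential limits; the paper's is shorter because it leans on sequential compactness. Two small points to shore up: (i) \Cref{lem:separationconstant} only provides adapted flow boxes of uniformly \emph{bounded} diameter, so you should note why positively adapted flow boxes of arbitrarily small diameter exist around every point (take small bifoliated charts in the flowspace times small intervals, transported by the product structure of \Cref{theorem:ProductStructure}; efficient intersection guarantees each positive leaf meets such a chart in a single plaque, so the boxes are adapted); and (ii) the assertion $k[x,y] \subset k \cap \wt U_i$ deserves one line: strong leaves are uniquely arc-connected and each interior point of $k[x,y]$ separates $x$ from $y$ in $k$, so any connected subset of $k$ containing $x$ and $y$ --- in particular the plaque $k \cap \wt U_i$ --- contains $k[x,y]$. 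Neither point is a gap in substance; the paper's own proof makes the same moves implicitly.
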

\begin{proof}
	Otherwise, we could find a sequence of strong positive leaf segments $\sigma_i = k_i[x_i, y_i]$ with 
	\[ \diam^+(\sigma_i) > 1 \text{ for all } i \]
	and
	\[ \td(x_i, y_i) \to 0 \text{ as } i \to \infty. \]
	
	After translating by elements of $\pi_1(M)$ and taking a subsequence, we can assume that $x_i$ converges to a point $x$. Since $\td_\diamond(x_i, y_i)$ goes to zero, $y_i$ also converges to $x$. Since a positively adapted flowbox at $x$ intersects each strong positive leaf in a single plaque by \Cref{lemma:oneplaque}, this means that $\sigma_i$ converges to $x$, which contradicts $\diam^+(\sigma_i) > 1$.
\end{proof}

The preceding lemma and proposition combine to prove \Cref{proposition:BoundedDistortion}. 

\begin{theorem}[Uniform exponential contraction/expansion]\label{theorem:UniformExponentialContractionExpansion}
	There are constants $C, a > 0, \lambda>1$ such that:
	\begin{itemize}
		\item If $x, y \in \tM$ are contained in the same strong positive leaf, and $\td(x, y) \leq C$, then 
		\[ \td(x \cdot t, y \cdot t) \leq a \lambda^{-t} \td(x, y) \text{ for all } t \geq 0. \]
		
		\item If $x, y \in \tM$ are contained in the same strong negative leaf, and $\td(x, y) \leq C$, then 
		\[ \td_\diamond(x \cdot t, y \cdot t) \leq a \lambda^t \td_\diamond(x, y) \text{ for all } t \leq 0.\]
	\end{itemize}
\end{theorem}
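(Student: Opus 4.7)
The plan is to combine the Bounded Distortion estimate (\Cref{proposition:BoundedDistortion}) with the fact, noted just before it, that the geodesic flow uniformly exponentially contracts the intrinsic horospherical diameter $\diam^\pm$. All the real work has already been done: the nontrivial content is Bounded Distortion, and given that, the theorem should follow by a short direct calculation. I will treat the strong positive case; the negative case is symmetric under time reversal.

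Let $C, c > 0$ be the constants furnished by \Cref{proposition:BoundedDistortion}, and let $\lambda > 1$ be the constant such that $\diam^+(A \cdot t) = \lambda^{-t} \diam^+(A)$ for every compact subset $A$ of a strong positive leaf and every $t \in \bbR$. Fix $x, y$ in a strong positive leaf $k$ with $\td(x, y) \leq C$, and set $\sigma = k[x, y]$. The segment $\sigma \cdot t = k[x \cdot t, y \cdot t]$ is then a single strong positive segment from $x \cdot t$ to $y \cdot t$, hence qualifies as a (one-piece) polygonal path with polygonal length $\ell_\diamond(\sigma \cdot t) = \diam^+(\sigma \cdot t)$. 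Therefore
\[
\td(x \cdot t, y \cdot t) \;\leq\; \ell_\diamond(\sigma \cdot t) \;=\; \diam^+(\sigma \cdot t) \;=\; \lambda^{-t}\,\diam^+(\sigma).
\]
Applying the lower half of Bounded Distortion ($c\, \diam^+(\sigma) \leq \td(x,y)$, which uses the hypothesis $\td(x,y)\leq C$) gives $\diam^+(\sigma) \leq c^{-1}\td(x,y)$, so
\[
\td(x \cdot t, y \cdot t) \;\leq\; c^{-1}\,\lambda^{-t}\,\td(x, y) \quad \text{for all } t \geq 0.
\]
Setting $a := c^{-1}$ yields the positive-leaf estimate. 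The strong-negative-leaf case is handled identically: the reversed-time flow contracts $\diam^-$ by $\lambda^{-|t|}$, and the Bounded Distortion argument applies verbatim with signs and time direction reversed (the analogue of \Cref{lemma:ParallelDistortion} and \Cref{lem:constantC} used in the proof of \Cref{proposition:BoundedDistortion} is symmetric in $+/-$).

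There is essentially no obstacle left at this stage; the main subtlety to check is merely that we are allowed to invoke Bounded Distortion using the hypothesis $\td(x,y) \leq C$ on the \emph{original} pair, and not need to reapply it to $(x \cdot t, y \cdot t)$. This is fine precisely because the upper bound $\td(x \cdot t, y \cdot t) \leq \diam^+(\sigma \cdot t)$ is the trivial one-sided inequality valid for \emph{any} strong positive segment (it does not require any small-diameter hypothesis), so we only use the stronger comparison $c\,\diam^+(\sigma) \leq \td(x,y)$ at time zero, where the hypothesis $\td(x,y) \leq C$ is in force by assumption.
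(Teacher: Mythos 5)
Your proof is correct and follows essentially the same route as the paper: flow the strong segment $k[x,y]$, use that it is itself a (one-piece) polygonal path to bound $\td(x\cdot t, y\cdot t)$ by $\diam^+(k[x,y]\cdot t)=\lambda^{-t}\diam^+(k[x,y])$, and then invoke Bounded Distortion only at time zero to get $\diam^+(k[x,y])\le c^{-1}\td(x,y)$, yielding $a=c^{-1}$ (the paper takes $\lambda=e$ explicitly, with the negative case by symmetry exactly as you say).
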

\begin{proof}
	Let $C$ be the constant from \Cref{lem:constantC}, and let $x$ and $y$ be points in a strong positive leaf $k$ such that $\td(x,y)\le C$. Then
	\[ \td(x \cdot t, y \cdot t) \leq \diam^+(k[x \cdot t, y \cdot t]) = e^{-t} \diam^+(k[x, y]), \]
	where the inequality comes from the fact that $k[x \cdot t, y \cdot t]$ is itself a polygonal path from $x \cdot t$ to $y \cdot t$, and the equality comes from the fact that $R(k[x \cdot t, y \cdot t]) = \Theta_t(R(k[x, y])$, where $\Theta$ is the geodesic flow. By Proposition~\ref{proposition:BoundedDistortion}, we have 
	\[ \diam^+(k[x, y]) \leq \frac{1}{c} \td_\diamond(x, y). \]
	Combining, we have
	\[ \td_\diamond(x \cdot t, y \cdot t) \leq \frac{1}{c} e^{-t} \diam^+([x, y]), \]
	so we can take $a=\frac{1}{c}$ and $\lambda=e$.
	A similar argument works for points in a strong negative leaf.
\end{proof}

\subsection{Proof of the main theorem}
We have now proven all the elements of \Cref{theorem_main}. For the reader's convenience, we recall the statement here and assemble the proof.

\maintheorem*

\begin{proof}
    We define $M_\lk$ and $\Psi$ as in \Cref{subsec:buildingtheflow}. Note that this involves  reparametrizing $\Phi$ so that it admits a flow-equivariant comparison map. By \Cref{lem:semiconjugacy2} there is a semiconjugacy $M_\lk\to M$ carrying $\Phi_\lk=\Phi|_{M_\lk}$ onto $\Psi$. In \Cref{prop:homotopictoinclusion} we proved that this semiconjugacy is homotopic to the inclusion map $M_\lk\hookrightarrow M$.

    By \Cref{prop:flowisquasigeodesic}, $\Psi$ is quasigeodesic.

    What remains is to verify $\Psi$ is pseudo-Anosov. We will prove the slightly stronger statement that $\Psi$ satisfies \ref{it:weakcontraction}, \ref{it:strongcontraction}, and \ref{it:markovpartition} from \Cref{sec:pAflows}.
    By \Cref{proposition:WeakContraction}, $\Psi$ satisfies the weak contraction property \ref{it:weakcontraction}.
    By \Cref{theorem:UniformExponentialContractionExpansion}, $\Psi$ satisfies the strong contraction property \ref{it:strongcontraction}.
    By \Cref{corollary:PsiispA}, $\Psi$ admits a Markov partition, hence satisfies property \ref{it:markovpartition}.
\end{proof}

\bibliographystyle{alphaurl}
\bibliography{bibliography}

\end{document}